\newtheorem{prethm}{{\bf Theorem}}[section]
\newenvironment{thm}{\begin{prethm}{\hspace{-0.5
em}{\bf.}}}{\end{prethm}}
\newtheorem{prepro}{{\bf Theorem}}
\newtheorem{precor}[prethm]{{\bf Corollary}}
\newenvironment{cor}{\begin{precor}{\hspace{-0.5
em}{\bf.}}}{\end{precor}}
\newtheorem{preconj}[prethm]{{\bf Conjecture}}
\newenvironment{conj}{\begin{preconj}{\hspace{-0.5
em}{\bf.}}}{\end{preconj}}
\newtheorem{preremark}[prethm]{{\bf Remark}}
\newenvironment{remark}{\begin{preremark}\em{\hspace{-0.5
em}{\bf.}}}{\end{preremark}}
\newtheorem{prelem}[prethm]{{\bf Lemma}}
\newenvironment{lem}{\begin{prelem}{\hspace{-0.5
em}{\bf.}}}{\end{prelem}}
\newtheorem{preque}[prethm]{{\bf Question}}
\newenvironment{que}{\begin{preque}{\hspace{-0.5
em}{\bf.}}}{\end{preque}}
\newtheorem{preobserv}[prethm]{{\bf Observation}}
\newenvironment{observ}{\begin{preobserv}{\hspace{-0.5
em}{\bf.}}}{\end{preobserv}}
\newtheorem{predef}[prethm]{{\bf Definition}}
\newtheorem{preproposition}[prethm]{{\bf Proposition}}
\newtheorem{preproof}{{\bf Proof.}}
\newtheorem{preprooff}{{\bf Proof}}
\newenvironment{proof}[1]{\begin{preproof}{\rm
#1}\hfill{$\Box$}}{\end{preproof}}
\newtheorem{preproofs}{{\bf The second proof of }}
\newtheorem{preprooft}{{\bf Third proof of }}
\newtheorem{preproofF}{{\bf Proof of}}
\title{\bf\Large 
Equitable factorizations of highly edge-connected graphs: complete characterizations
%
}
\author{{\normalsize{\sc Morteza Hasanvand${}$} }\vspace{3mm}
\\{\footnotesize{${}$\it Department of Mathematical
 Sciences, Sharif
University of Technology, Tehran, Iran}}
{\footnotesize{}}\\{\footnotesize{ $\mathsf{morteza.hasanvand@alum.sharif.edu }$ }}}
\date{}
\begin{document}
\maketitle
\begin{abstract}{
In this paper, we show that every highly edge-connected graph $G$, under a necessary and sufficient degree condition, can be edge-decomposed into $k$ factors $G_1,\ldots, G_k$ such that for each vertex $v\in V(G_i)$ with $1\le i\le k$, $|d_{G_i}(v)-d_G(v)/k|<1$. 
This characterization covers graphs having at least $k-1$ vertices with degree not divisible by $k$. 
In addition, we investigate almost equitable factorizations in arbitrary edge-connected graphs.

Next, we establish a simpler criterion for the existence of factorizations $G_1,\ldots, G_k$ satisfying $d_{G_i}(v)\ge \lfloor d_G(v)/k\rfloor$ for all vertices $v$ (reps. $d_{G_i}(v)\le \lceil d_G(v)/k\rceil$).
As an application, we come up with a criterion to determine whether a highly edge-connected graph with $\delta(G)\ge \delta_1+\cdots+ \delta_m$ (resp. $\Delta(G)\le \Delta_1+\cdots+ \Delta_m$) can be edge-decomposed into factors $G_1,\ldots, G_m$ satisfying $\delta(G_i)\ge \delta_i$ (resp. $\Delta(G_i)\le \Delta_i$) for all $i$ with $1\le i \le m$, provided that $\delta_1+\cdots+ \delta_m$ is divisible by an odd number $p$ and $\delta_i\ge p-1\ge 2$ (resp. $\Delta_1+\cdots+ \Delta_m$ is divisible by $p$ and $\Delta_i\ge p-1\ge 2$). 
 
For graphs of even order, we replace an odd-edge-connectivity condition.
In particular, for the special case $m=2$, we refine the needed odd-edge-connectivity further by giving a sufficient odd-edge-connectivity condition for a graph $G$ to have a partial parity factor $F$ such that for each vertex $v$ with a given parity constraint, $| d_{F}(v)-\varepsilon d_G(v)|< 2$, and for all other vertices $v$, $| d_{F}(v)-\varepsilon d_G(v)|\le 1$, where $\varepsilon $ is a real number and $0< \varepsilon < 1$. 

Finally we introduce another application on the existence of almost even factorizations of odd-edge-connected graphs.
In particular, we prove that every odd-$k$-edge-connected graph $G$ can be edge-decomposed into $k$ factors $G_1,\ldots, G_k$ such that for each vertex $v\in V(G_i)$ with $1\le i\le k$, $|d_{G_i}(v)-d_G(v)/k|< 2$, $d_{G_1}(v)\stackrel{2}{\equiv} d_G(v)$, and $d_{G_i}(v)$ is even provided that $i\neq 1$.
\\
\\
\noindent {\small {\it Keywords}: Factorization; equitable edge-coloring; odd-edge-connectivity;
 modulo orientation; parity factor. }} {\small
}
\end{abstract}
%
%
%
%
%
%
%
%
%
%
\section{Introduction}
In this article, graphs may have loops and multiple edges.
Let $G$ be a graph. The vertex set, the edge set, the minimum degree, and the maximum degree of $G$ are denoted by 
$V(G)$, $E(G)$, $\delta(G)$, and $\Delta(G)$, respectively.
We denote by $d_G(v)$ the degree of a vertex $v$ in the graph $G$, whether $G$ is directed or not.
Also the out-degree and in-degree of $v$ in a directed graph $G$ are denoted by $d_G^+(v)$ and $d_G^-(v)$.
An orientation of $G$ is said to be 
{\bf $p$-orientation}, if for each vertex $v$, $d_G^+(v)\stackrel{k}{\equiv}p(v) $,
 where $p:V(G)\rightarrow \mathbb{Z}_k$ is a mapping and $\mathbb{Z}_k$ is the cyclic group of order $k$.
Note that for any two integers $x$ and $y$, we write $x\stackrel{k}{\equiv}y$ when $x-y$ is divisible by $k$.
For an integer $x$, we denote by $[x]_k$ the unique integer in $\{0,\ldots, k-1\}$ congruent (modulo $k$) to $x$ (this is a little different from a similar definition in \cite{ModuloBounded}).
For any two disjoint vertex sets $A$ and $B$, 
we denote by $G[A,B]$ the bipartite factor of $G$ with the bipartition $(A,B)$.
Likewise, we denote by $d_G(A,B)$ the number of edges with one end in $A$ and the other one in $B$.
Also, we denote by $d_G(A)$ the number of edges of $G$ with exactly one end in $A$.
Note that $d_G(\{v\})$ denotes the number of non-loop edges incident with $v$,
while $d_G(v)$ denotes the degree of $v$.
We denote by $G[A]$ the induced subgraph of $G$ with the vertex set $A$ containing
precisely those edges of $G$ whose ends lie in $A$.
Th graph obtained from $G$ by contracting all vertices of $A$ into a single vertex is denoted by $G/A$
 (note that this operation does not create new loops).
The bipartite index {\bf $bi(G)$} of a graph $G$ is the smallest number of all $E(G) \setminus E(H)$ taken over all bipartite
factors $H$.
For a graph $G$, {\bf splitting a vertex $v$} means to replace $v$ by two (or more) new vertices $v_1$ and $v_2$ such every non-loop edges incident with $v$ becomes incident with either $v_1$ or $v_2$, and every loop incident with $v$ becomes incident with both of them.
Clearly, this operation is not necessarily unique whereas it keeps the other ends of non-loop edges unchanged.
Let $g$ and $f$ be two integer-valued functions on $V(G)$.
A {\bf parity $(g,f)$-factor} of $G$ refers to a spanning subgraph $F$ such that for each vertex $v$, $g(v)\le d_F(v)\le f(v)$, and
also $d_F(v)$, $g(v)$, and $f(v)$ have the same parity.
When $g=f$, this graph is called an {\bf $f$-factor} as well.
For a vertex set $V_0$, we say that a factor $F$ is {\bf $V_0$-partially parity $(g,f)$-factor} if for all $v\in V(G)$, $g(v)\le d_F(v)\le f(v)$, and for all $v\in V_0$, $d_F(v)$, $g(v)$, and $f(v)$ have the same parity.
An {\bf $f$-parity factor} is a spanning subgraph $F$ such that for each vertex $v$, 
$d_F(v)$ and $f(v)$ have the same parity.
Note that a parity $(g,f)$-factor is also an $f$-parity factor.
We say also that a factor $F$ is a {\bf $V_0$-partially $f$-parity factor}, if for all $v\in V_0$, 
$d_F(v)$ and $f(v)$ have the same parity.
An {\bf even graph} refers to a graph with all vertex-degrees even.
A graph $G$ is called 
{\bf $m$-tree-connected}, if it contains $m$ edge-disjoint spanning trees. 
Likewise, a graph $G$ is said to be
{\bf $(m,l)$-partition-connected}, if it can be edge-decomposed into an $m$-tree-connected factor and a factor $F$ having an orientation such that for each vertex $v$, $d^+_F(v)\ge l(v)$, where $l$ is a nonnegative integer-valued function on $V(G)$ and $d^+_F(v)$ denotes the out-degree of $v$ in $F$.
Note that an $(m_1+m_2, l_1+l_2)$-partition-connected graph can be edge-decomposed into an $(m_1, l_1)$-partition-connected factor and an $(m_2, l_2)$-partition-connected factor.
A graph $G$ is termed {\bf essentially $\lambda$-edge-connected},
 if all edges of any edge cut of size strictly less than $\lambda$ are incident with a common vertex.
A graph $G$ is called {\bf odd-$\lambda$-edge-connected}, if $d_G(A)\ge \lambda$
for every vertex set $A$ with $d_G(A)$ odd.
Note that $2\lambda$-edge-connected graphs are
odd-$(2\lambda+1)$-edge-connected.
For a vertex set $V_0$, we say that $G$ is {\bf $V_0$-partially $\lambda$-edge-connected}, if $d_G(A)\ge \lambda$, 
for all nonempty subsets $A$ of $V_0$ with $A\neq V(G)$.
In other words, $G/(V(G)\setminus V_0)$ is $\lambda$-edge-connected.
Similarly, we say that $G$ is {\bf $V_0$-partially $m$-tree-connected}, if $G/(V(G)\setminus V_0)$ is $m$-tree-connected.
Likewise, we say that $G$ is {\bf $V_0$-partially odd-$\lambda$-edge-connected}, if $d_G(A)\ge \lambda$, 
for all nonempty subsets $A$ of $V_0$ with $d_G(A)$ odd.
For a vertex set $Q$, we say that $G$ is {\bf $V_0$-partially odd-$(\lambda, Q)$-edge-connected}, if $d_G(A)\ge \lambda$, 
for all subsets $A$ of $V_0$ with $|A\cap Q|$ odd.
If $V_0=V(G)$, we say that $G$ is {\bf odd-$(\lambda, Q)$-edge-connected}.
A factorization $G_1,\ldots, G_k$ of $G$ is called 
a {\bf $k$-equitable factorization}, if for each vertex $v$, $|d_{G_i}(v)-d_G(v)/k|<1$, where $1\le i\le k$.
The {\bf $k$-core} of a graph $G$ refers to the induced subgraph of $G$ with the vertex set $\{v\in V(G):d_G(v)\stackrel{k}{\equiv}0\}$.
Throughout this article, all variables $m$ are nonnegative integers and all variables $k$ are positive integers.

In 1971 de Werra observed that bipartite graphs admit equitable factorizations.
\begin{thm}{\rm (\cite{Werra})}\label{thm:Werra}
{Every bipartite graph $G$ can be edge-decomposed into $k$ factors $G_1,\ldots, G_k$ such that for each $v\in V(G_i)$, $1\le i\le k$,
 $ | d_{G_i}(v)-d_G(v)/k|< 1$.
}\end{thm}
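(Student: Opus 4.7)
The plan is to proceed by induction on $k$, with the base case $k=1$ being trivial. For $k\ge 2$, I would produce a single balanced factor $G_1\subseteq G$ satisfying $d_{G_1}(v)\in\{\lfloor d_G(v)/k\rfloor,\lceil d_G(v)/k\rceil\}$ for every vertex $v$, and then apply the inductive hypothesis to the bipartite residual $G-G_1$ with $k-1$ in place of $k$ to obtain $G_2,\ldots,G_k$.

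That this closes the induction is a short arithmetic check: writing $d_G(v)=qk+r$ with $0\le r<k$, the constraint on $d_{G_1}(v)$ forces $d_{G-G_1}(v)$ to equal one of $q(k-1)$, $q(k-1)+r$, or $q(k-1)+r-1$, depending on $r$ and on which of $q,q+1$ the value $d_{G_1}(v)$ takes. In every case one verifies that $\{\lfloor d_{G-G_1}(v)/(k-1)\rfloor,\lceil d_{G-G_1}(v)/(k-1)\rceil\}\subseteq\{q,q+1\}=\{\lfloor d_G(v)/k\rfloor,\lceil d_G(v)/k\rceil\}$, so the equitable $(k-1)$-factorization of $G-G_1$ given by the inductive hypothesis automatically yields the desired $k$-equitable factorization of $G$ when combined with $G_1$.

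The heart of the proof, and the main obstacle, is producing the balanced factor $G_1$. Using the bipartition $(X,Y)$ of $G$, I would set up a standard flow network: source $s$, sink $t$, arcs $s\to x$ with bounds $[\lfloor d_G(x)/k\rfloor,\lceil d_G(x)/k\rceil]$ for each $x\in X$, analogous arcs $y\to t$ for $y\in Y$, and unit-capacity arcs $x\to y$ for each edge $xy\in E(G)$. The fractional flow sending $1/k$ along every edge-arc is feasible, since $d_G(v)/k$ lies inside $[\lfloor d_G(v)/k\rfloor,\lceil d_G(v)/k\rceil]$; standard integrality of flows with integer bounds then yields a feasible integer flow, whose support among the edge-arcs is the factor $G_1$ we want.

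Bipartiteness is used twice: to orient edges cleanly in the flow network, and to ensure that $G-G_1$ is still bipartite so that the inductive hypothesis applies. A purely combinatorial alternative to the flow step would be a swap argument on alternating bichromatic trails: starting from any $k$-partition of $E(G)$, whenever some vertex has $d_{G_i}(v)-d_{G_j}(v)\ge 2$, swap the two colors along a carefully chosen alternating trail out of $v$ in the $i,j$-bichromatic subgraph, using bipartiteness to control the parity of the trail and guarantee that a potential like $\sum_v\sum_i (d_{G_i}(v))^2$ strictly decreases, so the process terminates in an equitable partition.
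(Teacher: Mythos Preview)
The paper does not prove Theorem~\ref{thm:Werra}; it is quoted as de~Werra's 1971 result and used as a black box (e.g.\ inside the proof of Theorem~\ref{thm:strongly-equitable:bipartite}). So there is no ``paper's own proof'' to compare against.

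Your flow-plus-induction argument is correct. The fractional $1/k$-flow certifies feasibility of the bounded-flow network, integrality gives the balanced factor $G_1$, and your arithmetic check that $\{\lfloor d_{G-G_1}(v)/(k-1)\rfloor,\lceil d_{G-G_1}(v)/(k-1)\rceil\}\subseteq\{q,q+1\}$ is accurate in all three cases, so the induction closes. This is a clean, self-contained route.

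One caution on the alternative swap argument you sketch at the end: since you are working with an arbitrary $k$-partition rather than a proper edge-colouring, the $(i,j)$-bichromatic subgraph can have arbitrarily high degree, so its components are not just paths and even cycles, and a single Kempe swap need not decrease $\sum_{v,i}(d_{G_i}(v))^2$. The standard way to make that approach rigorous is either to pass through K\"onig's theorem after splitting vertices into near-degree-$k$ pieces (this is exactly what the paper does in the proof of Theorem~\ref{thm:factorization:main:directed}), or to argue more carefully with Eulerian decompositions. Your primary flow argument avoids this issue entirely, so the proposal stands as written; just don't lean on the final paragraph as a complete alternative proof without further work.
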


In 1994 Hilton and de Werra~\cite{Hilton-Werra} showed that simple graphs whose $k$-cores have no edge must admit $k$-equitable factorizations. 
In 2011 Zhang and Liu~\cite{Zhang-Liu} extended their result to a more general class of simple graphs and concluded the following theorem which was originally conjectured by Hilton (2008)~\cite{HILTON2008645}.
\begin{thm}{\rm (\cite{Zhang-Liu})}
{Let $G$ be a simple graph. If the $k$-core of $G$ forms a forest, then $G$ can be edge-decomposed into $k$ factors $G_1,\ldots, G_k$ such that for each $v\in V(G_i)$, $1\le i\le k$,
 $ | d_{G_i}(v)-d_G(v)/k|< 1$.
}\end{thm}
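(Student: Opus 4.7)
My plan is to reduce the theorem to de Werra's equitable edge-coloring of bipartite graphs (Theorem~\ref{thm:Werra}) by means of a carefully chosen vertex-splitting, in the spirit of Hilton and de Werra. For each vertex $v$ of $G$ with $d_G(v)=q_vk+r_v$ (where $0\le r_v<k$), I would split $v$ into $q_v$ \emph{full} copies of degree $k$ and, when $r_v>0$, one additional \emph{remainder} copy of degree $r_v$, distributing the incident edges among these copies; call the resulting multigraph $G^*$. If the splittings can be carried out so that $G^*$ is bipartite, then Theorem~\ref{thm:Werra} supplies an equitable $k$-edge-coloring of $G^*$. Since every full copy has degree $k$, each color contributes exactly one edge at every full copy; since every remainder copy has degree $r_v<k$, each color contributes $0$ or $1$ edge at the remainder copy. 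Merging the copies of $v$ back together shows that each color uses either $q_v$ or $q_v+1$ edges at $v$, which is precisely the required bound $|d_{G_i}(v)-d_G(v)/k|<1$.

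The real content is thus to choose the splittings so that $G^*$ is bipartite. Here the hypothesis that the $k$-core $H$ of $G$ is a forest is crucial. The vertices of $H$ satisfy $r_v=0$ and therefore have no remainder copy, so their copies are rigidly specified up to permutation; the non-$k$-core vertices, on the other hand, admit an extra remainder copy that provides parity-correcting flexibility. I would first fix a proper $2$-coloring $c\colon V(H)\to\{0,1\}$ of the forest $H$, and then, processing the components of $G-V(H)$ one at a time, choose for each non-$k$-core vertex $v$ both a $2$-coloring of its copies and an assignment of incident edges to those copies, subject to: (i) each full copy receives exactly $k$ edges; (ii) the remainder copy receives exactly $r_v$ edges; (iii) every edge of $G^*$ joins two copies of opposite colors. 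Condition (iii) reduces to a local Hall-type bipartite incidence problem at each non-$k$-core vertex, and the remainder copy supplies exactly the slack needed for Hall's condition to hold.

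The main obstacle I foresee is proving that these local Hall-type choices can be made globally consistent. The danger is a cyclic obstruction: if the $k$-core contained a cycle, the $2$-coloring $c$ along that cycle would impose conflicting parity constraints on adjoining non-$k$-core vertices, leaving no valid global assignment. The forest hypothesis on $H$ rules out precisely such obstructions, since any potential conflict propagates along a tree path and can be resolved by traversing the tree from its leaves inward, making local adjustments at the remainder copies as we go. With this in hand, $G^*$ is bipartite, Theorem~\ref{thm:Werra} applies, and the desired equitable decomposition of $G$ follows by collapsing the copies.
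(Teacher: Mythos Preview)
The paper does not prove this theorem; it is quoted from Zhang--Liu \cite{Zhang-Liu} as background, with no argument supplied. So there is no proof in the paper to compare against, and I can only assess your proposal on its own merits.

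Your reduction has a genuine gap: the split graph $G^*$ need not admit any bipartite realisation under the splitting rule you describe, even when the $k$-core is empty. Take $G=C_5$ and $k=3$. Every vertex has degree $2<k$, so $q_v=0$, $r_v=2$, and each vertex becomes a single remainder copy of degree $2$; hence $G^*=G=C_5$, which is not bipartite. More generally, whenever $G$ contains an odd cycle all of whose vertices have degree strictly less than $k$, your splitting does nothing on that cycle and leaves it intact in $G^*$, so no $2$-colouring of copies can make $G^*$ bipartite. The forest hypothesis on the $k$-core constrains only the vertices with $r_v=0$; it says nothing about odd cycles among non-$k$-core vertices, so your final paragraph's claim that ``the forest hypothesis rules out precisely such obstructions'' is aimed at the wrong target.

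What would actually suffice is not bipartiteness of $G^*$ but merely $\chi'(G^*)\le k$: any proper $k$-edge-colouring of $G^*$ pulls back to the desired factorisation of $G$, since full copies have degree exactly $k$ and remainder copies degree $<k$. Bipartiteness is one way to guarantee this, but far from the only one, and as the example shows it is too much to ask. The Zhang--Liu argument does not proceed via a bipartite reduction; it is a direct edge-colouring argument using Vizing-type fan recolouring, in which the forest structure of the $k$-core is used to resolve colour conflicts along tree paths. Your intuition that ``conflicts propagate along a tree path and can be resolved from the leaves inward'' is in the right spirit, but the resolution happens at the level of Kempe/fan chains in an edge-colouring, not at the level of a vertex-splitting that forces bipartiteness.
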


Recently, the present author (2022) gave a sufficient degree condition for the existence of equitable factorizations in edge-connected graphs by proving the following result.
This is a generalization of a result due to Thomassen (2019)~\cite{Thomassen-2020} who proved that every $(3k-3)$-edge-connected $kr$-regular graph of even order can be edge-decomposed into $r$-regular factors. 
\begin{thm}{\rm (\cite{EquitableFactorizations-2022})}\label{intro:thm:factorization:Z}
{Let $G$ be a graph satisfying $|E(G)|\stackrel{k}{\equiv}\sum_{v\in Z}d_G(v)$ for a vertex subset $Z\subseteq V(G)$.
If $G$ is $(3k-3)$-edge-connected, then it can be edge-decomposed into $k$ factors $G_1,\ldots, G_k$ such that for each $v\in V(G_i)$, $1\le i\le k$, $|d_{G_i}(v)-d_G(v)/k|< 1$.
}\end{thm}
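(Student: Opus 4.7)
The plan is to combine two established ingredients: the Lovász--Thomassen--Wu--Zhang modulo $k$-orientation theorem (every $(3k-3)$-edge-connected graph $G$ admits a $p$-orientation whenever $\sum_{v\in V(G)} p(v)\stackrel{k}{\equiv}|E(G)|$) and de Werra's bipartite equitable factorization (Theorem \ref{thm:Werra}). The bridge is a vertex-splitting construction that converts $G$, equipped with a carefully chosen modulo orientation, into a bipartite graph where de Werra applies directly.

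The key move is to define $p\colon V(G)\to \mathbb{Z}_k$ by $p(v)=[d_G(v)]_k$ for $v\in Z$ and $p(v)=0$ otherwise. The hypothesis $|E(G)|\stackrel{k}{\equiv}\sum_{v\in Z}d_G(v)$ immediately yields $\sum_{v\in V(G)} p(v)\stackrel{k}{\equiv}|E(G)|$, so the Lovász--Thomassen--Wu--Zhang theorem provides an orientation $D$ of $G$ with $d^+_D(v)\stackrel{k}{\equiv}p(v)$ for every $v$. I then form the bipartite graph $B$ on the vertex set $\{v^+,v^-:v\in V(G)\}$ by replacing every arc $uv$ of $D$ with the edge $u^+v^-$; this gives $d_B(v^+)=d^+_D(v)$, $d_B(v^-)=d^-_D(v)$, and identifies $E(B)$ with $E(G)$. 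Applying de Werra's theorem to $B$ yields an equitable $k$-factorization $B_1,\ldots,B_k$, and pulling this back through the edge identification produces a factorization $G_1,\ldots,G_k$ of $G$ with $d_{G_i}(v)=d_{B_i}(v^+)+d_{B_i}(v^-)$.

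To verify equitability at a vertex $v$, observe that the choice of $p$ forces exactly one of $d^+_D(v),d^-_D(v)$ to be divisible by $k$. For $v\in Z$, writing $d^+_D(v)=km+r(v)$ and $d^-_D(v)=kn$ with $r(v)=[d_G(v)]_k$, de Werra guarantees that $d_{B_i}(v^+)\in\{m,m+1\}$ with the value $m+1$ occurring on exactly $r(v)$ indices, while $d_{B_i}(v^-)=n$ for all $i$. Hence $d_{G_i}(v)\in\{m+n,m+n+1\}$ with $m+n+1$ occurring for exactly $r(v)$ indices, matching $d_G(v)/k=m+n+r(v)/k$ and yielding $|d_{G_i}(v)-d_G(v)/k|<1$. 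The case $v\notin Z$ is symmetric via exchange of $v^+$ and $v^-$. The main obstacle, and the source of the $(3k-3)$-edge-connectivity hypothesis, is the invocation of the modulo orientation theorem; once the orientation is in hand, the bipartite reduction packages the problem into a form to which de Werra's theorem applies verbatim, and the tailored choice of $p$ ensures that the three-value range $\{m+n,m+n+1,m+n+2\}$ one would naively obtain from de Werra collapses to the desired two-value range.
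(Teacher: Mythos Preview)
Your proof is correct and follows essentially the same approach as the paper (and the cited source \cite{EquitableFactorizations-2022}): obtain a $p$-orientation via Theorem~\ref{thm:modulo:2k-2:3k-3} with $p(v)\equiv d_G(v)$ on $Z$ and $p(v)\equiv 0$ off $Z$, split each vertex into its out- and in-copies to form the bipartite graph $G_D$, and apply de~Werra's Theorem~\ref{thm:Werra}. This is precisely the mechanism underlying Lemma~\ref{lem:factorization:main:directed:original} and Theorem~\ref{thm:factorization:main:directed} in the paper, so your argument matches the intended proof.
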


In this paper, we characterize degree sequences modulo $k$ for the existence of equitable factorizations in highly edge-connected graphs during several steps. We begin with a simpler condition by proving the following theorem in Section~\ref{sec:Factorizations}.
 This result contains graphs having at least $k-1$ vertices whose degrees are not divisible by $k$, and
 together with Theorem~\ref{intro:thm:sum-min-x-k-x}, it is surprisingly enough to characterize degree sequences without equitable factorizations for small numbers $k\le 7$, see Table 1.
\begin{thm}\label{intro:thm:sum-min-x-k-x}
{Let $G$ be a graph satisfying $\sum_{v\in V(G)}\min \{[d_G(v)]_k, [k-d_G(v)]_k\}\ge k-1$. If $G$ is $10(k-1)$-edge-connected, then it can be edge-decomposed into $k$ factors $G_1,\ldots, G_k$ such that for each $v\in V(G_i)$, $1\le i\le k$, $ | d_{G_i}(v)-d_G(v)/k|< 1$. 
}\end{thm}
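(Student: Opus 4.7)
The plan is to reduce the statement to Theorem~\ref{intro:thm:factorization:Z}. Since $10(k-1) \ge 3k-3$, the edge-connectivity hypothesis of the present theorem is already strong enough to invoke that earlier result. The whole task therefore shrinks to producing a vertex subset $Z \subseteq V(G)$ with $|E(G)| \equiv \sum_{v \in Z} d_G(v) \pmod k$, starting from the residue-sum hypothesis $\sum_{v} \min\{[d_G(v)]_k,\,[k-d_G(v)]_k\} \ge k-1$.

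The key combinatorial ingredient I would establish is an additive lemma in $\mathbb{Z}_k$: given residues $a_1, \ldots, a_n \in \mathbb{Z}_k$ with $\sum_i \min(a_i, k-a_i) \ge k-1$, together with the handshake constraint that $\sum_i a_i$ is even, the target $t$ defined by $2t \equiv \sum_i a_i \pmod k$ is expressible as a subset sum $\sum_{i \in I} a_i \bmod k$ for some $I \subseteq \{1,\ldots,n\}$. I would prove this by adding residues one at a time and tracking the set $S$ of attainable subset sums: at each step, either the new element $a$ already lies in the stabilizer of $S$ (so $S$ is a union of cosets of $\langle a\rangle$ and we proceed), or $|S \cup (S+a)|$ strictly exceeds $|S|$ by an amount controlled by a Kneser/Olson-type inequality in terms of $\min(a, k-a)$. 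After all insertions, the cumulative growth together with the parity constraint forces $S$ to contain $t$.

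Applied to the residues $\{[d_G(v)]_k : v \in V(G)\}$, this lemma yields the subset $Z$ required by Theorem~\ref{intro:thm:factorization:Z}, at which point the equitable factorization follows. When $k$ is odd this reasoning is clean: since $2$ is invertible in $\mathbb{Z}_k$ and $2|E(G)| = \sum_v d_G(v)$ already lies in the subgroup generated by the residues, the target sits inside that subgroup automatically. The step I expect to be the principal obstacle is the even-$k$ case, where $|E(G)|\bmod k$ is only determined modulo $k/2$ by the degree sequence and can genuinely land in a non-trivial coset of the generated subgroup that is not hit by subset sums of the residues. To close this gap I would exploit the slack of $7(k-1)$ between the assumed $10(k-1)$-edge-connectivity and the $(3k-3)$ demanded by Theorem~\ref{intro:thm:factorization:Z}: remove a bounded collection $E^{*}$ of at most $O(k)$ edges so that $G - E^{*}$ is still $(3k-3)$-edge-connected and its degrees and edge count jointly satisfy the modular condition; apply Theorem~\ref{intro:thm:factorization:Z} to $G - E^{*}$; and then reinsert the edges of $E^{*}$ one at a time into factors in which both endpoints sit at the low end of their equitable range. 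A short pigeonhole count on the numbers $[d_G(v)]_k$ of factors that can attain the higher degree at each vertex guarantees that such host factors always exist, which closes the argument.
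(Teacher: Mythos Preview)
Your proposed subset-sum lemma is false, and not only in the even-$k$ case. Take $k=9$ and a graph whose only vertices of degree not divisible by $9$ have residues $3,3,4$ (for instance, degrees $12,12,13,9,\ldots$ with all remaining degrees divisible by $9$); then $\sum_v\min\{[d_G(v)]_9,[9-d_G(v)]_9\}=3+3+4=10\ge 8$, while $|E(G)|\equiv 5\pmod 9$, and the subset sums of $\{3,3,4\}$ modulo $9$ are exactly $\{0,1,3,4,6,7\}$, which misses $5$. So no $Z$ with $|E(G)|\equiv\sum_{v\in Z}d_G(v)\pmod 9$ exists, and you cannot reduce to Theorem~\ref{intro:thm:factorization:Z} for such graphs. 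Your ``clean'' odd-$k$ argument conflates lying in the subgroup generated by the residues with being an actual subset sum of them; these differ as soon as a residue would have to be used with a negative sign.

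The paper does not attempt to force $m=[|E(G)|-\sum_{v\in Z}d_G(v)]_k$ down to $0$. It takes $Z$ minimising $m$, observes (Lemma~\ref{lem:Z:equitable:minimum}) that the minimiser automatically satisfies $[d_G(v)]_k\le k-2m$ on $Z$ and $[d_G(v)]_k\ge 2m$ off $Z$, and then applies a new sufficient condition (Theorem~\ref{thm:new-sufficient-condition}) tailored precisely to the case $m>0$. That theorem is where the $5(k-1)$-tree-connectivity (hence $10(k-1)$-edge-connectivity) is actually used: one splits $G$ into a bipartite graph $H$ with a prescribed degree profile modulo $k$ while keeping $H$ essentially $(k-1)$-edge-connected away from a marked pair $z_1,z_2$ (Corollary~\ref{cor:split:tree-connected:3m}), and then extracts a carefully weighted $(g,f)$-factor of $H$ via a Folkman--Fulkerson argument (Theorem~\ref{thm:strongly-equitable:bipartite}). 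The hypothesis $\sum_v\min\{[d_G(v)]_k,[k-d_G(v)]_k\}\ge k-1$ enters only to rule out the single forbidden value $k-2m$ in condition~(3) of Theorem~\ref{thm:new-sufficient-condition}. Your edge-removal fallback is not a substitute for this machinery: you would need it for odd $k$ as well, you give no argument that a suitable edge set exists, and the reinsertion pigeonhole (finding a factor where \emph{both} endpoints sit strictly below their ceiling) does not follow from the count you describe.
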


 In Section~\ref{sec:lower-upper-bound-factorizations}, we establish a simpler criterion for the existence of factorizations $G_1,\ldots, G_k$ satisfying $d_{G_i}(v)\ge \lfloor d_G(v)/k\rfloor$ for all vertices $v$ (reps. $d_{G_i}(v)\le \lceil d_G(v)/k\rceil$). In particular, we prove the following simpler version of Theorem~\ref{intro:thm:sum-min-x-k-x}.

\begin{thm}
{Let $G$ be a graph satisfying $\sum_{v\in V(G)}[d_G(v)]_k\ge k-1$ {\rm (}resp. $\sum_{v\in V(G)}[k-d_G(v)]_k\ge k-1${\rm)}. If $G$ is $3(k-1)$-edge-connected, then it can be edge-decomposed into $k$ factors $G_1,\ldots, G_k$ such that for each $v\in V(G_i)$, $1\le i\le k$,
$d_{G_i}(v)\ge \lfloor d_G(v)/k\rfloor$ {\rm (}reps. $d_{G_i}(v)\le \lceil d_G(v)/k\rceil${\rm)}.
}\end{thm}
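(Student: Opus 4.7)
The plan is to combine the modulo orientation theorem of Lovász, Thomassen, Wu and Zhang---which says that every $(3k-3)$-edge-connected graph admits, for any $p\colon V(G)\to\mathbb{Z}_k$ satisfying $\sum_{v}p(v)\equiv |E(G)|\pmod k$, a $p$-orientation---with de Werra's bipartite equitable factorization theorem (Theorem~\ref{thm:Werra}). I describe the lower-bound version in detail; the upper-bound version is dual.

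Given a $p$-orientation of $G$ (whose $p$ will be chosen momentarily), I would form the auxiliary bipartite multigraph $B$ on vertex set $X\cup Y$, where $X$ and $Y$ are two disjoint copies of $V(G)$, and each directed edge $u\to v$ contributes an edge of $B$ joining $u_X$ to $v_Y$. Then $d_B(v_X)=d^+_G(v)$ and $d_B(v_Y)=d^-_G(v)$, and $E(B)$ is naturally identified with $E(G)$. Applying Theorem~\ref{thm:Werra} to $B$ yields a partition $E(G)=E(G_1)\cup\cdots\cup E(G_k)$ in which, at each vertex $v$,
\[
d_{G_i}(v)=d_{B_i}(v_X)+d_{B_i}(v_Y)\ge \lfloor d^+_G(v)/k\rfloor+\lfloor d^-_G(v)/k\rfloor.
\]
The arithmetic identity $\lfloor a/k\rfloor+\lfloor b/k\rfloor=\lfloor(a+b)/k\rfloor$ holds precisely when $[a]_k+[b]_k<k$, so to force the right-hand side to equal $\lfloor d_G(v)/k\rfloor$ it suffices to choose the orientation so that $[d^+_G(v)]_k+[d^-_G(v)]_k=[d_G(v)]_k$ for every $v$.

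This is achieved by taking $p(v)\in\{0,1,\ldots,[d_G(v)]_k\}$: then $[d^-_G(v)]_k=[d_G(v)]_k-p(v)$ lies in the same range and the two sum to $[d_G(v)]_k<k$. The hypothesis $\sum_v[d_G(v)]_k\ge k-1$ is used precisely here to guarantee that such a $p$ can be selected with $\sum_v p(v)\equiv |E(G)|\pmod k$: as $p(v)$ varies independently over $\{0,1,\ldots,[d_G(v)]_k\}$, the attainable integer values of $\sum_v p(v)$ form the interval $[0,\sum_v[d_G(v)]_k]$, which spans at least $k$ consecutive integers and hence hits every residue modulo~$k$, including $|E(G)|\bmod k$.

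For the upper-bound version, the ceiling identity $\lceil a/k\rceil+\lceil b/k\rceil=\lceil(a+b)/k\rceil$ holds precisely when $[a]_k=0$, or $[b]_k=0$, or $[a]_k+[b]_k>k$; translating gives the admissible set $p(v)\in\{0\}\cup\{[d_G(v)]_k,[d_G(v)]_k+1,\ldots,k-1\}$. Via the substitution $p'(v)=(k-p(v))\bmod k$, this becomes $p'(v)\in\{0,1,\ldots,[k-d_G(v)]_k\}$ with sum condition $\sum_v p'(v)\equiv -|E(G)|\pmod k$, so $\sum_v[k-d_G(v)]_k\ge k-1$ again guarantees feasibility. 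The main obstacle in both cases will be the correct bookkeeping of these admissible ranges for $p(v)$ together with the feasibility argument for the sum constraint; once these are in place, the proof reduces to a clean application of the orientation theorem followed by Theorem~\ref{thm:Werra}.
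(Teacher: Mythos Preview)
Your argument is correct and follows essentially the same route as the paper: find a $p$-orientation with $[d^+_G(v)]_k\le [d_G(v)]_k$ via Theorem~\ref{thm:modulo:2k-2:3k-3}, then split into the bipartite graph and apply Theorem~\ref{thm:Werra} (the paper packages this last step inside Lemma~\ref{lem:factorization:main:directed:original}). The only cosmetic difference is that the paper fixes $p(v)\in\{0,[d_G(v)]_k\}$ for all but one exceptional vertex $z$ (chosen greedily so that the leftover residue $m$ satisfies $m\le [d_G(z)]_k$), whereas you allow $p(v)$ to range over the full interval $\{0,\ldots,[d_G(v)]_k\}$ and observe directly that the attainable sums cover $k$ consecutive integers---a slightly cleaner feasibility argument, but the same idea.
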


Moreover, we prove the following two results for the existence of factorizations with bounded minimum (resp. maximum) degrees in graphs with larger minimum (resp. maximum) degree. These theorems generalize two recent results due to Thomassen (2019)~\cite[Theorems 4 and 5]{Thomassen-2020} who proved them for graphs of even order and odd vertex degrees, respectively.
For graphs of even order, we replace an odd-edge-connectivity condition.
In Section~\ref{sec:epsilon-parity-factor}, we refine further the special case $m=2$ to improve a recent result due to Qin and Wu~(2022) \cite{Qin-Wu-2022}.
\begin{thm}\label{intro:thm:delta}
{Let $k$ be a positive integer and let $\delta_1,\ldots,\delta_m$ be positive integers with $\delta_i\ge p-1\ge 2$ and $kp=\delta_1+\cdots+ \delta_m$, where $p$ is an odd integer. Let $G$ be a $(3k-3)$-edge-connected graph satisfying $\delta(G)\ge \delta_1+\cdots+ \delta_m$.
Then $G$ can be edge-decomposed into factors $G_1,\ldots, G_m$ satisfying $\delta(G_i)\ge \delta_i$ for all $i$ with $1\le i \le m$, 
if and only if one of the following conditions holds:
\begin{enumerate}{
\item [$\bullet$] 
$|V(G)|$ is even.
\item [$\bullet$] 
$\sum_{v\in V(G)}(d_G(v)-(\delta_1+\cdots+ \delta_m))\ge |\{i:\delta_i \text{ is odd}\}|-1$.
}\end{enumerate}
}\end{thm}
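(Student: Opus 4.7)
The plan is to handle the two directions separately. \emph{Necessity} is a quick parity count. Assume the factorization $G_1,\ldots,G_m$ exists and suppose $|V(G)|$ is odd (otherwise condition~(1) holds and we are done). For each $i$ with $\delta_i$ odd, the integer $\sum_v d_{G_i}(v)=2|E(G_i)|$ is even while $|V(G)|\delta_i$ is odd, so the nonnegative integer $\sum_v (d_{G_i}(v)-\delta_i)$ is odd and therefore at least $1$. Summing over $i$ yields
$$
\sum_{v\in V(G)}\bigl(d_G(v)-(\delta_1+\cdots+\delta_m)\bigr) \;\ge\; |\{i : \delta_i \text{ is odd}\}|,
$$
which immediately implies condition~(2).

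For \emph{sufficiency}, set $k:=(\delta_1+\cdots+\delta_m)/p$. My plan is to apply the simpler criterion for lower-bound factorizations announced earlier in Section~\ref{sec:lower-upper-bound-factorizations} in order to decompose $G$ into $k$ factors $H_1,\ldots,H_k$ satisfying $d_{H_j}(v)\ge \lfloor d_G(v)/k\rfloor \ge p$ for every vertex $v$, and then to assemble the target $G_i$'s from the $H_j$'s. When every $\delta_i$ is a multiple of $p$, the assembly is immediate: partition $\{1,\ldots,k\}$ into blocks $B_i$ of sizes $\delta_i/p$ and let $G_i:=\bigcup_{j\in B_i} H_j$, so that $\delta(G_i)\ge |B_i|\cdot p=\delta_i$. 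In general one exploits the vertex-wise excess $d_G(v)-kp$ together with the flexibility of the lower-bound factorization (some values $d_{H_j}(v)$ strictly exceed $p$) to cover those indices $i$ for which $\delta_i\not\equiv 0\pmod p$. Conditions~(1) and~(2) enter precisely at this stage: they provide enough parity slack so that the required lower bounds $\delta_i$ can be met at every vertex simultaneously, with the case $|V(G)|$ even handled via an odd-edge-connectivity refinement of the same factorization criterion.

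The \emph{main obstacle} is this assembly step when some $\delta_i$ is not divisible by $p$: since $\sum_i \lceil \delta_i/p\rceil$ generally exceeds $k$, no naive block partition of $\{1,\ldots,k\}$ can yield the prescribed lower bounds, and the distribution of vertex-wise excess degree across the $H_j$'s must be redone with care, vertex by vertex, so that each $G_i$ accumulates at least $\delta_i$ edges at every vertex. The hypothesis $\delta_i\ge p-1\ge 2$ with $p$ odd, together with the parity slack supplied by~(1) or~(2), is precisely what makes these simultaneous redistributions feasible, the oddness of $p$ guaranteeing that the local parity corrections can be paired up consistently with the underlying (odd-)edge-connectivity assumption. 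Verifying this is the technical heart of the proof.
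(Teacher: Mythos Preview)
Your necessity argument is correct and matches the paper's. The sufficiency sketch, however, leaves the ``technical heart'' unverified, and the mechanism you gesture at---vertex-by-vertex redistribution of excess---is not what actually works. There are two concrete gaps.

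\textbf{The assembly step.} The paper's mechanism (Lemma~\ref{lem:factorization:minimum-degree}) is global, not local. Set $J=\{i:\delta_i\text{ odd}\}$, reserve the factors $H_j$ for $j\in J$ (each with $\delta(H_j)\ge p$), and merge the remaining $k-|J|$ factors into one graph $G'$. Since $p$ is odd, $k\equiv|J|\pmod 2$, so $\delta(G')\ge(k-|J|)p$ is even; now Kano's $[2a,2b]$-factorization (Corollary~\ref{cor:[2a,2b]-factorization}) re-decomposes $G'$ into $n=(k-|J|)p/2$ factors $F_1,\ldots,F_n$ each of minimum degree $\ge 2$. For $i\in J$ let $G_i=H_i$ together with $(\delta_i-p)/2$ of the $F_t$; for $i\notin J$ let $G_i$ be the union of $\delta_i/2$ of the $F_t$. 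The hypothesis $\delta_i\ge p-1$ combined with the parity of $\delta_i$ makes these counts nonnegative integers summing to $n$. No vertex-by-vertex bookkeeping is needed, and it is unclear how your proposed local redistribution would yield a genuine factorization.

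\textbf{Obtaining the initial $H_j$'s.} You assume the lower-bound criterion always produces $H_1,\ldots,H_k$ with $d_{H_j}(v)\ge p$ everywhere, but that criterion (Corollary~\ref{cor:factorization:main}) requires $\sum_v\lfloor d_G(v)/k\rfloor$ even or $\sum_v[d_G(v)]_k\ge k-1$. The genuinely hard case is $|V(G)|$ odd with all $\lfloor d_G(v)/k\rfloor=p$ and $\sum_v[d_G(v)]_k<k-1$: then no such factorization exists. The paper handles this by choosing a single vertex $z$, constructing a specific modulo-$k$ orientation (Theorem~\ref{thm:modulo:2k-2:3k-3}) concentrating the defect at $z$, and applying Theorem~\ref{thm:factorization:main:directed} to obtain $H_1,\ldots,H_k$ with $d_{H_j}(v)\ge p$ for all $v\ne z$, while at $z$ there is a tripartition $J_0,J_1,J_2$ with $d_{H_j}(z)\in\{p-1,p,p+1\}$ respectively and $|J_0|\le|J_2|$. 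Pairing each index in $J_0$ with one in $J_2$ leaves $k-2|J_0|$ indices $j$ with $\delta(H_j)\ge p$, and condition~(2) is precisely what bounds $|J_0|$ so that $k-2|J_0|\ge|\{i:\delta_i\text{ odd}\}|$, allowing Lemma~\ref{lem:factorization:minimum-degree} to finish. Condition~(1) is not handled by an odd-edge-connectivity refinement as you suggest; rather, $|V(G)|$ even makes $\sum_v\lfloor d_G(v)/k\rfloor=p|V(G)|$ even, so Corollary~\ref{cor:factorization:main} applies directly.
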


\begin{thm}\label{intro:thm:Delta}
{Let $k$ be a positive integer and let $\Delta_1,\ldots,\Delta_m$ be positive integers with $\Delta_i\ge p-1\ge 2$ and $kp=\Delta_1+\cdots+ \Delta_m$, where $p$ is an odd integer. Let $G$ be a $(3k-3)$-edge-connected graph satisfying $\Delta(G)\le \Delta_1+\cdots+ \Delta_m$.
Then $G$ can be edge-decomposed into factors $G_1,\ldots, G_m$ satisfying $\Delta(G_i)\le \Delta_i$ for all $i$ with $1\le i \le m$, 
if and only if one of the following conditions holds:
\begin{enumerate}{
\item [$\bullet$] 
$|V(G)|$ is even.
\item [$\bullet$] 
$\sum_{v\in V(G)}(\Delta_1+\cdots+ \Delta_m-d_G(v))\ge |\{i:\Delta_i \text{ is odd}\}|-1$.
}\end{enumerate}
}\end{thm}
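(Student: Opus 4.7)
The plan is to reduce Theorem~\ref{intro:thm:Delta} to the minimum-degree version Theorem~\ref{intro:thm:delta} by embedding $G$ as a subgraph of a $kp$-regular host $H$ and restricting a factorization of $H$ back to $G$. The central observation is that if $H$ is $kp$-regular and $H_1,\ldots,H_m$ is a factorization of $H$ with $\delta(H_i)\ge \Delta_i$, then $\sum_i d_{H_i}(v) = d_H(v) = kp = \sum_i \Delta_i$ combined with the pointwise lower bounds forces $d_{H_i}(v)=\Delta_i$ for every vertex $v$ and every $i$, so each $H_i$ is $\Delta_i$-regular. The restrictions $G_i := H_i \cap G$ then partition $E(G)$ and satisfy $d_{G_i}(v) \le d_{H_i}(v) = \Delta_i$, yielding $\Delta(G_i) \le \Delta_i$ as required.

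To build $H$ with $|V(H)|$ even (so that the parity clause of Theorem~\ref{intro:thm:delta} is vacuous), I would split on the parity of $|V(G)|$. If $|V(G)|$ is even, I take $V(H)=V(G)$ and add multi-edges and loops within $V(G)$ so that every vertex reaches degree $kp$; the total deficit $\sum_v (kp-d_G(v))$ is even since $|V(G)|\cdot kp$ is, and $H\supseteq G$ retains $(3k-3)$-edge-connectivity. If $|V(G)|$ is odd, I adjoin a single new vertex $u$, insert $kp-d_G(v)$ edges between $u$ and each $v\in V(G)$, and place loops on $u$ (plus a few intra-$V(G)$ multi-edges if necessary) so that $d_H(v)=kp$ for every $v\in V(G)$, $d_H(u)=kp$, and $|V(H)|=|V(G)|+1$ is even. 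Apply Theorem~\ref{intro:thm:delta} to $H$ with $\delta_i:=\Delta_i$ and perform the restriction described above.

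The main obstacle will be verifying the $(3k-3)$-edge-connectivity of $H$ in the odd case: the edge cut separating $\{u\}$ from $V(G)$ consists precisely of the non-loop $u$-edges, whose count is bounded above by $t:=\sum_v(kp-d_G(v))$, so when $t$ is smaller than $3k-3$ the naive embedding does not deliver the hypotheses of Theorem~\ref{intro:thm:delta} directly. I expect to bridge this by invoking the odd-edge-connectivity variant promised in the paper for graphs of even order: since $|V(H)|$ is even, it suffices that every odd edge cut of $H$ has size at least $3k-3$, and the cut at $u$ has parity $a\equiv t\equiv |V(G)|\cdot kp\pmod 2$; hence when $kp$ is even this cut is even and not constrained, whereas when $kp$ is odd the parity relations between $t$ and $q:=|\{i:\Delta_i\text{ odd}\}|-1$ force $t\ge q+1=|\{i:\Delta_i\text{ odd}\}|$, which already provides enough multiplicity at $u$ for the remaining connectivity check.

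For necessity, suppose $G$ admits the desired decomposition $G_1,\ldots,G_m$ and that $|V(G)|$ is odd. For each $i$ with $\Delta_i$ odd, the quantity $\sum_v(\Delta_i-d_{G_i}(v))=\Delta_i|V(G)|-2|E(G_i)|$ is a nonnegative odd integer, hence at least $1$. Summing over all such $i$ yields $t\ge |\{i:\Delta_i\text{ odd}\}|=q+1$, which by the parity relations $t\equiv kp\pmod 2$ and $q\equiv kp+1\pmod 2$ is equivalent to the stated bound $t\ge q$.
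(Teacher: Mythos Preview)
Your reduction to Theorem~\ref{intro:thm:delta} via a $kp$-regular host is an elegant idea and is genuinely different from the paper's route (Theorem~\ref{thm:Delta1-Deltam}), which argues the maximum-degree case directly using modulo orientations and Lemma~\ref{lem:factorization:maximum-degree}. For even $|V(G)|$ your embedding works cleanly: $H\supseteq G$ inherits $(3k-3)$-edge-connectivity, Theorem~\ref{intro:thm:delta} applies, and the regularity forces each $H_i$ to be $\Delta_i$-regular, so restriction succeeds. The necessity argument is also correct.

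The genuine gap is in the odd $|V(G)|$ case, and your proposed rescue via odd-edge-connectivity does not close it. The result you are reaching for is Corollary~6.8, whose hypothesis is $V_0$-partial odd-$(3k-3,V_0)$-edge-connectivity with $V_0=\{v:d_H(v)\le kp+k-2\}$. For a $kp$-regular host $H$ with $k\ge 2$ one has $V_0=V(H)$, so the condition reads: $d_H(A)\ge 3k-3$ for every $A$ with $|A|$ odd. In particular $A=\{u\}$ must satisfy $d_H(\{u\})\ge 3k-3$, regardless of the parity of that cut. Your argument that ``when $kp$ is even this cut is even and not constrained'' confuses the two notions of odd-edge-connectivity used in the paper: the one in Corollary~6.8 constrains sets of odd cardinality, not cuts of odd size. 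When $kp$ is odd these notions happen to coincide for regular graphs, but even then your lower bound $t\ge |\{i:\Delta_i\text{ odd}\}|$ need not reach $3k-3$; for instance with $k=3$, $p=3$, $\Delta_1=\Delta_2=\Delta_3=3$ you only get $t\ge 3$ while $3k-3=6$. Concretely, a $(3k-3)$-edge-connected $G$ of odd order with $t$ small (a few vertices of degree $kp-1$, the rest of degree $kp$) will produce a host $H$ whose cut at $u$ violates every available edge-connectivity hypothesis, so neither Theorem~\ref{intro:thm:delta} nor Corollary~6.8 applies to $H$. The paper's direct proof avoids this obstruction by never leaving $G$: it finds a suitable modulo-$k$ orientation of $G$ itself (Theorem~\ref{thm:modulo:2k-2:3k-3}), applies Theorem~\ref{thm:factorization:main:directed} to obtain $k$ pieces whose maximum degrees are controlled at the single bad vertex $z$ in a balanced way, and then regroups via Lemma~\ref{lem:factorization:maximum-degree}.
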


In 1982 Hilton constructed the following parity version of Theorem~\ref{thm:Werra} 
on the existence of even factorizations with bounded degrees of even graphs.
This result is recently generalized in \cite[Theorems 4.2 and 4.4]{EquitableFactorizations-2022} in two ways
 for highly odd-edge-connected graphs.
\begin{thm}{\rm (\cite{Hilton1982})}\label{thm:Hilton}
{Every even graph $G$ can be edge-decomposed into $k$ even factors $G_1,\ldots, G_k$ such that for each $v\in V(G_i)$ with $1\le i\le k$, 
 $|d_{G_i}(v)-d_{G}(v)/k|<2.$
}\end{thm}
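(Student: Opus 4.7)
The plan is to reduce to de Werra's theorem (Theorem~\ref{thm:Werra}) through an Eulerian-orientation construction and then to repair parity defects by Kempe-style swaps. Since $G$ is even, fix an Eulerian orientation $\vec G$ with $d^{+}_{\vec G}(v)=d^{-}_{\vec G}(v)=d_G(v)/2$ for every $v$, and build the bipartite auxiliary graph $B$ on the vertex set $V(G)^{+}\sqcup V(G)^{-}$ (two disjoint copies of $V(G)$) by inserting, for each arc $u\to v$ of $\vec G$, the edge $u^{+}v^{-}$. Then $B$ is bipartite and $d_B(v^{+})=d_B(v^{-})=d_G(v)/2$ at every vertex.

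Applying Theorem~\ref{thm:Werra} to $B$ produces a $k$-equitable edge-decomposition $B_1,\ldots,B_k$ in which
\[ d_{B_i}(v^{+}),\ d_{B_i}(v^{-})\ \in\ \Bigl\{\Bigl\lfloor\tfrac{d_G(v)}{2k}\Bigr\rfloor,\Bigl\lceil\tfrac{d_G(v)}{2k}\Bigr\rceil\Bigr\}. \]
Each $B_i$ pulls back to an arc subset of $\vec G$, whose underlying undirected factor I denote $G_i$. Since $d_{G_i}(v)=d_{B_i}(v^{+})+d_{B_i}(v^{-})$ and both summands differ from $d_G(v)/(2k)$ by strictly less than $1$, a quick case check shows $|d_{G_i}(v)-d_G(v)/k|<2$ for every $v$, so the equitability bound comes for free from de Werra.

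The main obstacle is that the factors $G_i$ produced this way need not be even: $G_i$ is even at $v$ precisely when $d_{B_i}(v^{+})\equiv d_{B_i}(v^{-})\pmod{2}$. Each discrepancy $d_{B_i}(v^{+})-d_{B_i}(v^{-})$ lies in $\{-1,0,+1\}$, and since $\sum_{i}(d_{B_i}(v^{+})-d_{B_i}(v^{-}))=d_B(v^{+})-d_B(v^{-})=0$ at each $v$, the number of colours with a parity defect at any given $v$ is even. I plan to pair these defective colours and eliminate them by swapping colours $i$ and $j$ along alternating paths in $B_i\cup B_j$: starting at a $+$-side defect of colour $i$ and following the Kempe chain, one reaches a vertex carrying the compensating $j$-side defect, and the swap clears both endpoints while leaving every interior vertex unchanged. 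Choosing the swaps so as not to push any $d_{B_i}(v^{\pm})$ outside the window $\{\lfloor d_G(v)/(2k)\rfloor,\lceil d_G(v)/(2k)\rceil\}$ is the only delicate point, and I expect essentially all the technical work of the proof to lie in verifying that such a sequence of swaps always exists and terminates. Once all defects are cleared, each $G_i$ is even while still satisfying $|d_{G_i}(v)-d_G(v)/k|<2$, as required.
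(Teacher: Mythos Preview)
Your reduction via the Eulerian orientation and the bipartite split $B$ is the same first move as the paper's (see the proof of Theorem~\ref{thm:factorization:main:directed}, from which Corollary~\ref{cor:Hilton:generalized} recovers Hilton's theorem for even $G$). The divergence is in how the parity condition is secured. The paper never applies de~Werra to $B$ and then repairs: instead it further splits each $v^{+}$ and $v^{-}$ into pieces of degree exactly $k$ plus one remainder vertex $v_0^{\pm}$ of degree less than $k$, inserts $k-\max\{d(v_0^{+}),d(v_0^{-})\}$ artificial $v_0^{+}v_0^{-}$ edges, and then takes a \emph{proper} $k$-edge-colouring via K\"onig. In the Eulerian case $d(v_0^{+})=d(v_0^{-})$, so both remainders acquire degree $k$; since the artificial edges join $v_0^{+}$ to $v_0^{-}$, the set of colours appearing on real edges at $v_0^{+}$ coincides with that at $v_0^{-}$. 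This forces $d_{B_i}(v^{+})=d_{B_i}(v^{-})$ for every $i$ directly, so each $G_i$ is even with no post-processing.

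Your Kempe-swap plan, by contrast, has a genuine gap. The factors $B_i$ are not matchings, so $B_i\cup B_j$ is not a disjoint union of paths and cycles, and the phrase ``alternating path \ldots\ leaving every interior vertex unchanged'' does not apply as written: recolouring a single edge $v^{+}w^{-}$ from $i$ to $j$ shifts $d_{B_i}(w^{-})$ and $d_{B_j}(w^{-})$ by $\mp 1$, possibly pushing one of them outside the window, and continuing along a trail merely propagates the violation. Turning this into a terminating procedure that simultaneously synchronises $v^{+}$ with $v^{-}$ at every vertex, while never leaving $\{\lfloor d_G(w)/(2k)\rfloor,\lceil d_G(w)/(2k)\rceil\}$ anywhere, is exactly the nontrivial content you have deferred. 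The artificial-edge trick is the missing idea and sidesteps all of this in one stroke.
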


Moreover, we conjectured that Theorem~\ref{thm:Hilton} can be developed to the following version on even graphs. 
In particular, we proved the following conjecture for the special case $k=2$ and confirmed a weaker version of it by replacing the upper bound of $6$ on $|d_{G_i}(v)-\varepsilon_i d_G(v)|$.

\begin{conj}{\rm (\cite{EquitableFactorizations-2022})}
{Let $\varepsilon_1,\ldots,\varepsilon_k$ be $k$ nonnegative real numbers with $\varepsilon_1+\cdots +\varepsilon_k=1$.
If $G$ is an even graph, then it can be edge-decomposed into $k$ even factors $G_1,\ldots, G_k$ satisfying for each $v\in V(G)$, $|d_{G_i}(v)-\varepsilon_i d_G(v)|<2$, where $1\le i\le k$.
}\end{conj}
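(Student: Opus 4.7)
My plan is to proceed by induction on $k$. The case $k=1$ is trivial (take $G_1=G$), and $k=2$ is exactly the result recorded in \cite{EquitableFactorizations-2022}. For the inductive step, given weights $\varepsilon_1,\ldots,\varepsilon_k$ summing to $1$ and an even graph $G$, I would first apply the $k=2$ case with the weight pair $(\varepsilon_1,1-\varepsilon_1)$ to peel off an even factor $G_1\subseteq G$ with $|d_{G_1}(v)-\varepsilon_1 d_G(v)|<2$ at every vertex; the remaining graph $G'=G\setminus E(G_1)$ is again even, and I would decompose it using the inductive hypothesis with the renormalized weights $\varepsilon_i'=\varepsilon_i/(1-\varepsilon_1)$ for $2\le i\le k$.

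To estimate the final deviation, the algebraic identity
\[
d_{G_i}(v)-\varepsilon_i d_G(v) \;=\; \bigl(d_{G_i}(v)-\varepsilon_i' d_{G'}(v)\bigr) \;-\; \varepsilon_i'\bigl(d_{G_1}(v)-\varepsilon_1 d_G(v)\bigr),
\]
combined with the inductive bound $|d_{G_i}(v)-\varepsilon_i' d_{G'}(v)|<2$, yields only $|d_{G_i}(v)-\varepsilon_i d_G(v)|<2+2\varepsilon_i'\le 4$. This is the main obstacle: a straightforward induction roughly doubles the error per step and only recovers the weak bound of $6$ already noted in the paper. The sharp bound $<2$ cannot come from peeling off one factor at a time without additional control on the rounding.

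To close the gap, I would pursue one of two refinements. The first is to strengthen the $k=2$ base case so that the extracted factor achieves $|d_{G_1}(v)-\varepsilon_1 d_G(v)|\le 1$ (i.e.\ $d_{G_1}(v)$ is the closest even integer to $\varepsilon_1 d_G(v)$); the inductive estimate would then improve to $1+\varepsilon_i'\le 2$, and strictness could be recovered by tracking the cases of simultaneous equality at each layer. The second, and more robust, path is non-inductive: first pre-assign per-vertex even targets $d_i(v)\in 2\mathbb{Z}$ with $|d_i(v)-\varepsilon_i d_G(v)|<2$ and $\sum_{i=1}^{k} d_i(v)=d_G(v)$, which is always possible by a simple rounding argument since each $d_G(v)$ is even; then realize these targets as an edge-decomposition via a Tutte-type $f$-factor existence theorem applied to an auxiliary bipartite structure whose one side is $E(G)$ and whose other side encodes the color-degree budget at each vertex. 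The hardest step in the second approach—and the one I expect to be the true technical core—is verifying the corresponding deficiency condition; this can likely be handled by decomposing $G$ into closed trails via an Eulerian partition and pairing edge-occurrences at each vertex in a way compatible with the prescribed target degrees, since the local parity constraints already match.
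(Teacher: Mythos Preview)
The statement you are attempting to prove is labelled a \emph{Conjecture} in the paper, not a theorem: the paper explicitly says it is known only for $k=2$ and that the general case is open, with only the weaker bound $6$ established. So there is no ``paper's own proof'' to compare against, and your proposal should be read as an attack on an open problem.

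Your diagnosis of why the naive peel-off induction fails is correct, but neither of your two proposed fixes closes the gap. For the first refinement, you would need the $k=2$ case in the stronger form $|d_{G_1}(v)-\varepsilon_1 d_G(v)|\le 1$; this is not what \cite{EquitableFactorizations-2022} proves, and even if it were available once, your induction would have to \emph{propagate} the $\le 1$ bound (not merely $<2$) through every layer to keep the accumulated error under control---your sentence ``strictness could be recovered by tracking the cases of simultaneous equality'' hides exactly the difficulty, since at each step the error term $\varepsilon_i'$ can be arbitrarily close to $1$.

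For the second refinement, the rounding step (choosing even targets $d_i(v)$ with $\sum_i d_i(v)=d_G(v)$ and $|d_i(v)-\varepsilon_i d_G(v)|<2$) is indeed always possible vertex by vertex, but the realization step is not: a fixed target vector need not be achievable as a factor decomposition. For instance, in $C_4$ with $k=2$ and $\varepsilon_1=\varepsilon_2=1/2$, the targets $(2,0)$ on two opposite vertices and $(0,2)$ on the other two satisfy all your local constraints yet admit no decomposition. So one cannot first fix targets and then appeal to a Tutte-type criterion; the choice of targets and the feasibility check are entangled, and your Eulerian-trail pairing sketch does not address this. In short, both routes stall at precisely the point that makes the conjecture nontrivial.
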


In Section~\ref{thm:almost even factorizations}, we turn our attention to the existence of almost even factorizations in arbitrary graphs and highly odd-edge-connected graphs. In particular, we prove the following conjecture for the 
case that almost all of $\varepsilon_i$ are the same number, except possibly $\varepsilon_1$, and confirm a weaker version of it by replacing the upper bound of $6$ on $|d_{G_i}(v)-\varepsilon_i d_G(v)|$.
\begin{conj}\label{intro:conj:new}
{Let $\varepsilon_1,\ldots,\varepsilon_k$ be $k$ nonnegative real numbers with $\varepsilon_1+\cdots +\varepsilon_k=1$.
If $G$ is a graph, then it can be edge-decomposed into $k$ factors $G_1,\ldots, G_k$ satisfying the following properties:
\begin{enumerate}{
\item [$\bullet$] For each $v\in V(G)$, $|d_{G_i}(v)-\varepsilon_i d_G(v)|<2$, where $1\le i\le k$.
\item [$\bullet$] For each $v\in V(G)$, there exists at most one index $j_v$ with $d_{G_{j_v}}(v)$ odd (in particular, there is not such index when $d_G(v)$ is even).
}\end{enumerate}
Furthermore, if $G$ is odd-$\lceil 1/\varepsilon_1 \rceil$-edge-connected, then $j_v=1$ for all odd-degree vertices $v$.
}\end{conj}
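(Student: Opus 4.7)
The plan is to prove the conjecture by peeling off $G_1$ via a partial parity factor and then recursively decomposing the even remainder. First, I would apply the partial parity factor theorem (the refined $m=2$ case announced in the abstract) with parameter $\varepsilon=\varepsilon_1$ and parity set $V_0=\{v:d_G(v)\text{ odd}\}$. The odd-$\lceil 1/\varepsilon_1\rceil$-edge-connectivity hypothesis is exactly the assumption required there, and it produces a factor $G_1$ with $|d_{G_1}(v)-\varepsilon_1 d_G(v)|<2$ (in fact $\le 1$ at even-degree vertices) such that $d_{G_1}(v)$ is odd precisely when $d_G(v)$ is odd. This immediately delivers the last bullet of the conjecture: $j_v=1$ at every odd-degree vertex.

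With $G_1$ removed, $G'=G-G_1$ is an even graph, so the task reduces to the even-graph analogue of the same conjecture with $k-1$ rescaled weights $\varepsilon_i'=\varepsilon_i/(1-\varepsilon_1)$. I would induct on $k$, at each round peeling off one more even factor via the $V_0=\emptyset$ version of the partial parity factor theorem, so that only the $\le 1$ bound is relevant inside $G'$. In the low-connectivity regime where the last bullet of the conjecture does not apply, the same scheme still goes through once one has chosen, for each odd-degree vertex $v$, which coordinate $j_v$ shall carry the odd degree; the odd-edge-connectivity assumption serves only to pin all of these $j_v$ to the single index $1$.

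The main obstacle is that a naïve composition of the individual $<2$ bounds fails to yield the sharp bound across iterations. From
\[
d_{G_i}(v)-\varepsilon_i d_G(v)=\bigl(d_{G_i}(v)-\varepsilon_i' d_{G'}(v)\bigr)-\varepsilon_i'\bigl(d_{G_1}(v)-\varepsilon_1 d_G(v)\bigr),
\]
the triangle inequality only yields $|d_{G_i}(v)-\varepsilon_i d_G(v)|<2+2\varepsilon_i'$, and after $k-1$ iterations this degrades to the weakened bound of $6$ acknowledged in the paper. To recover the sharp $<2$ I would replace sequential peeling by a simultaneous construction: formulate a joint integer-rounding problem on the edge--vertex incidence structure with target profile $(\varepsilon_1 d_G(v),\ldots,\varepsilon_k d_G(v))$ at every vertex $v$, and realise the rounded profile through a simultaneous modulo-$2$ orientation combined with a partition-connected decomposition, in the spirit of Thomassen's modulo orientation theorem and the $(m,\ell)$-partition-connectedness tools that appear elsewhere in the paper. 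Making this joint rounding simultaneously compatible with all $k$ parity constraints, while keeping each coordinate within $<2$ of its fractional target, is in my view the hard step that currently keeps the full conjecture open.
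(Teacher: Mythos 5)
The statement you were asked to prove is a conjecture, and the paper does not prove it in full either: it only establishes the case $k=2$ (Corollary~\ref{cor:even-factor:atleast2/3}), the case $\varepsilon_2=\cdots=\varepsilon_k$ (Corollary~\ref{cor:G0:edge-decomposition:Eulerian}, via Theorem~\ref{thm:epsilon:k}), and a weakened version with the bound $6$ in place of $2$ (Corollary~\ref{cor:V1:V2:epsiloni}, via the Correa--Goemans-style binary-tree argument of Theorem~\ref{thm:6}). Your opening move is exactly the paper's: peel off $G_1$ as a parity factor with $|d_{G_1}(v)-\varepsilon_1 d_G(v)|<2$ and $d_{G_1}(v)\stackrel{2}{\equiv}d_G(v)$, which is Theorem~\ref{thm:parity:version} (in the form of Corollary~\ref{cor:even-factor:atleast2/3} with $V_0=V(G)$), and the odd-$\lceil 1/\varepsilon_1\rceil$-edge-connectivity is indeed what pins $j_v=1$. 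Your diagnosis of the obstruction --- that iterated peeling accumulates error and only yields the constant $6$ --- is also the correct reading of where the paper stands.

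There are, however, two genuine gaps. First, your recursion on the even remainder $G'$ is internally inconsistent: the ``$V_0=\emptyset$ version'' of Theorem~\ref{thm:parity:version} gives the $\le 1$ bound but imposes no parity, so the peeled factor need not be even and the second bullet (no odd index at even-degree vertices) can fail; to force even factors you must take $V_0=V(G')$ with $f=0$ (Corollary~\ref{cor:Eulerian}), and then each peel only guarantees $<2$, after which the accumulation you compute degrades the bound --- so the scheme as written cannot deliver $<2$ even in principle. Second, the proposed repair --- a simultaneous modulo rounding compatible with all $k$ parity constraints --- is only named, not constructed; that is precisely the open content of the conjecture, so no proof has been given. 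It is worth noting how the paper gets the sharp bound in its solvable special case: when $\varepsilon_2=\cdots=\varepsilon_k$, instead of comparing each factor to its fractional target step by step, it decomposes $G\setminus E(G_1)$ so that the factor degrees at each vertex are pairwise within $2$ of one another (Corollary~\ref{cor:Hilton:generalized}) and then uses the global identity $d_{G_1}(v)+\sum_{i\ge 2} d_{G_i}(v)=d_G(v)$ to force every $|d_{G_i}(v)-\frac{1-\varepsilon_1}{k-1}d_G(v)|<2$ at once; this avoids accumulation entirely, but it exploits the equality of the remaining weights and does not extend to arbitrary $\varepsilon_2,\ldots,\varepsilon_k$.
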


%
%
%
%
%
%
%
%
%

\section{Preliminary results: highly edge-connected splitting graphs}
Let $G$ be a graph with a given orientation $D$. We denote by $G_D$ be the graph obtained from $G$ and $D$ by splitting every vertex into two vertices $v^+$ and $v^-$ such that
 the incident edges of $v^+$ are directed away from $v$ in $G$ and 
the incident edges of $v^-$ are directed toward $v$ in $G$.
In this construction, every loop in $G$ incident with $v$ is transformed into an edge between $v^+$ and $v^-$.
Clearly, this graph is bipartite. In this section, we provides a sufficient edge-connectivity condition to guarantee the existence of an orientation $D$ with restricted out-degrees such that the bipartite graph $G_D$ is close to be highly tree-connected.
 For this purpose, we need to recall the following theorem from~\cite{ModuloBounded, Lovasz-Thomassen-Wu-Zhang-2013}.
\begin{thm}{\rm (\cite{ModuloBounded, Lovasz-Thomassen-Wu-Zhang-2013})}\label{thm:modulo:2k-2:3k-3}
{Let $G$ be a graph, let $k$ be a positive integer, and let $p:V(G)\rightarrow \mathbb{Z}_k$ be a mapping with $|E(G)| \stackrel{k}{\equiv} \sum_{v\in V(G)}p(v)$. If $(3k-3)$-edge-connected or $G$ is $(2k-2)$-tree-connected, then it admits a $p$-orientation modulo $k$.
}\end{thm}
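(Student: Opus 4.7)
My plan is to prove the theorem by induction on $|V(G)|+|E(G)|$, reducing to smaller graphs via modular edge-splitting. The base case $|V(G)|=1$ is immediate: all edges are loops, and $d_D^+(v) = |E(G)| \equiv p(v) \pmod{k}$ by hypothesis regardless of how loops are counted.

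For the inductive step, I would pick a vertex $v$ (say of minimum degree) and apply a splitting-off theorem to find two edges $e_1 = u_1v$ and $e_2 = u_2 v$ incident to $v$ such that the graph $G'$ obtained by deleting $e_1,e_2$ and inserting a new edge $u_1u_2$ (a loop at $u_1$ if $u_1=u_2$) still satisfies the relevant connectivity hypothesis. For the $(3k-3)$-edge-connected branch, this is Mader's classical edge-splitting theorem: since $3k-3\ge 2$, at any non-cut vertex there is an admissible splitting preserving all pairwise edge-connectivities of $V(G)\setminus\{v\}$, and the extra unit of edge-connectivity absorbs the possible loss at $v$ itself. For the $(2k-2)$-tree-connected branch, one invokes the tree-connectivity analogue (a splitting theorem for edge-disjoint spanning trees in the style of Frank or Bang-Jensen--Frank--Jackson), which similarly yields a splitting preserving $(2k-2)$ edge-disjoint spanning trees in $G'$.

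To propagate the modular condition I would set $p'(v) = p(v)-1 \pmod{k}$ and $p'(x) = p(x)$ for $x \neq v$. Then $\sum_{x} p'(x) \equiv \sum_{x} p(x) - 1 \equiv |E(G)|-1 = |E(G')| \pmod{k}$, so the induction hypothesis applies to $(G',p')$. Lift the resulting $p'$-orientation of $G'$ back to $G$ by orienting $e_1,e_2$ so that together with the new edge $u_1u_2$ they form a directed $u_1$-to-$u_2$ path through $v$ (or a directed $2$-cycle at $u_1$ in the loop case), matching the orientation chosen for $u_1u_2$ in $G'$; a direct case check shows that the out-degree of $v$ increases by exactly $1$ relative to $G'$ while every other out-degree is preserved, so $d_G^+(v) \equiv p'(v)+1 = p(v) \pmod{k}$ and $d_G^+(x) \equiv p(x) \pmod{k}$ for $x\neq v$. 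Iterate this step; each splitting reduces $|E(G)|$ by one, and once $v$ becomes isolated it may be deleted, strictly decreasing $|V(G)|$.

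The main obstacle is the splitting-off lemma itself, and this is the step that requires the specific thresholds $3k-3$ and $2k-2$. Mader-type theorems only guarantee splittings that preserve \emph{local} edge-connectivities, so one must argue that this suffices for the \emph{global} hypothesis needed by the induction --- in particular, that splitting cannot drop the connectivity below the required threshold when one totals the effect of completely dismantling $v$. When no admissible splitting exists at $v$, a structural analysis of the ``tight'' edge-cuts through $v$ constrains the neighborhood of $v$ so severely that it contradicts the assumed edge-connectivity (resp. tree-connectivity), completing the argument.
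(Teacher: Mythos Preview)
The paper does not prove this statement; it is quoted as a known result from \cite{ModuloBounded, Lovasz-Thomassen-Wu-Zhang-2013} and used as a black box throughout, so there is no in-paper proof to compare your attempt against.

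Your sketch nonetheless has a genuine gap. In the one-splitting-at-a-time induction you describe, after replacing $e_1,e_2$ by $u_1u_2$ the degree of $v$ drops by $2$. If $d_G(v)=3k-3$ to begin with, then $d_{G'}(v)=3k-5$ and $G'$ is no longer $(3k-3)$-edge-connected, so the induction hypothesis does not apply to $G'$. Mader's theorem only preserves local edge-connectivities among vertices of $V(G)\setminus\{v\}$; your phrase ``the extra unit of edge-connectivity absorbs the possible loss at $v$'' has no basis, since the hypothesis gives exactly $3k-3$ with no slack. Performing a \emph{complete} Mader splitting at $v$ and then deleting $v$ does restore the global connectivity on $V(G)\setminus\{v\}$, but then matching the congruence $|E(G')|\equiv\sum_{x\neq v}p(x)\pmod k$ would force $d_G(v)/2\equiv p(v)\pmod k$, which is not assumed; and when $d_G(v)$ is odd a complete splitting is impossible anyway. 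The same objection applies to the tree-connected branch, where in addition there is no off-the-shelf lifting lemma preserving $(2k-2)$-tree-connectivity of the kind you invoke.

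The argument in \cite{Lovasz-Thomassen-Wu-Zhang-2013} sidesteps this by strengthening the inductive statement: a vertex $z_0$ is singled out, some edges at $z_0$ are pre-oriented, and the connectivity requirement is relaxed at $z_0$ so that lifting there is harmless; the out-degree condition at $z_0$ is then forced by the global congruence once all other vertices are handled. The thresholds $3k-3$ and $2k-2$ arise from counting how many edges at $z_0$ must be available to pre-orient, not from a one-unit buffer in the connectivity.
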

We begin with the following conclusion that allows us to make the proof easier. 
\begin{cor}\label{cor:F_1-F_2}
{Let $G$ be a graph, let $k$ be a positive integer, and let $p:V(G)\rightarrow \mathbb{Z}_k$ a mapping with
$|E(G)| \stackrel{k}{\equiv} \sum_{v\in V(G)}p(v)$. 
Let $F_1$ and $F_2$ be two edge-disjoint factors of $G$ and let $U_1,U_2$ be a bipartition of $V(G)$. 
If $G\setminus E(F_1\cup F_2)$ is $(2k-2)$-tree-connected, 
then every vertex $v$ can be split into two vertices $v_1$ and $v_2$
such that the resulting graph $H$ satisfies the following properties:
\begin{enumerate}{
\item [$\bullet$]

For all $v\in V(G)$, $d_H({v_1})\stackrel{k}{\equiv}p(v)$.

\item [$\bullet$]

$H$ is a bipartite graph with the bipartition $(V_1,V_2)$ for which $V_i=\{v_i:v\in V(G)\}$.

\item [$\bullet$]

$H[S_i]$ contains the transformed edges of $F_i[U_1, U_2]$, where 
$S_i = \{v_1:v\in U_i\}\cup \{v_2:v\not\in U_i\}$.

\item [$\bullet$] 
The edges of $H[S_1, S_2]$ are the same transformed edges of $G[U_1]\cup G[U_2]$.

\item [$\bullet$]
For every edge $xy$ of $F_1\cup F_2$ with both ends in $U_i$, we can arbitrarily have $x_1y_2 \in E(H)$ or $x_2y_1 \in E(H)$.
}\end{enumerate}
}\end{cor}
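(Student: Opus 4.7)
The plan is to reduce the problem to a single application of Theorem~\ref{thm:modulo:2k-2:3k-3} on the $(2k-2)$-tree-connected subgraph $G\setminus E(F_1\cup F_2)$, and then read $H$ off as the standard split $G_D$ attached to the resulting orientation $D$ of $G$.

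I first record the dictionary between orientations of $G$ and candidate splits. Adopting the convention that a directed edge $x\to y$ in $D$ is transformed into the edge $x_1 y_2$ of $H$ (with $x_1\in V_1,\,y_2\in V_2$), the bipartiteness of $H$ with parts $(V_1,V_2)$ is automatic, every loop at $v$ becomes a single edge $v_1 v_2$, and one obtains $d_H(v_1)=d_D^{+}(v)$ for every $v\in V(G)$. Under this convention, an edge of $G[U_1,U_2]$ with ends $x\in U_1$, $y\in U_2$ lands in $H[S_1]$ precisely when it is oriented from $x$ to $y$, and in $H[S_2]$ in the opposite case, whereas each edge of $G[U_1]\cup G[U_2]$ lands in $H[S_1,S_2]$ regardless of how it is oriented. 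Consequently, the third bullet of the conclusion is equivalent to orienting every edge of $F_1[U_1,U_2]$ from $U_1$ to $U_2$ and every edge of $F_2[U_1,U_2]$ from $U_2$ to $U_1$, while the fifth bullet simply reflects the freedom to orient the remaining edges of $F_1\cup F_2$ (which all lie inside some $U_i$) in either direction.

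With this dictionary in hand, I would fix the orientations of the edges of $F_1\cup F_2$ as dictated by the constraints above, using an arbitrary choice for the edges lying inside $U_1$ or $U_2$. Writing $d^{*}(v)$ for the resulting partial out-degree of $v$ and setting $p'(v):=[p(v)-d^{*}(v)]_k\in \mathbb{Z}_k$, the hypothesis $|E(G)|\stackrel{k}{\equiv}\sum_{v}p(v)$ together with $\sum_{v}d^{*}(v)=|E(F_1\cup F_2)|$ yields
\[
\sum_{v}p'(v)\stackrel{k}{\equiv}|E(G)|-|E(F_1\cup F_2)|=|E(G\setminus E(F_1\cup F_2))|.
\]
Since $G\setminus E(F_1\cup F_2)$ is $(2k-2)$-tree-connected by hypothesis, Theorem~\ref{thm:modulo:2k-2:3k-3} applied with the mapping $p'$ supplies an orientation of $G\setminus E(F_1\cup F_2)$ realizing $p'$ modulo $k$; gluing it onto the prescribed orientation of $F_1\cup F_2$ produces a full orientation $D$ of $G$ with $d_D^{+}(v)\stackrel{k}{\equiv}p(v)$ for every $v$.

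Setting $H=G_D$ under the identification $v^{+}=v_1$ and $v^{-}=v_2$, the identity $d_H(v_1)=d_D^{+}(v)$ delivers the first bullet, and the placement analysis in the first paragraph delivers the remaining four. The step I expect to demand the most care is precisely that dictionary: one must verify that the bipartition $(V_1,V_2)$, the partition $(S_1,S_2)$, and the required placements of $F_1[U_1,U_2]$ and $F_2[U_1,U_2]$ all line up consistently under the chosen transformation $x\to y\mapsto x_1 y_2$; once this bookkeeping is checked, the rest collapses to a single invocation of Theorem~\ref{thm:modulo:2k-2:3k-3} and a routine out-degree count.
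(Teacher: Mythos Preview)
Your proof is correct and follows essentially the same approach as the paper: prescribe the orientation on $F_1\cup F_2$ so that $F_i[U_1,U_2]$ is directed from $U_i$ to $U_{3-i}$, adjust the target $p$ by the resulting partial out-degree, apply Theorem~\ref{thm:modulo:2k-2:3k-3} to the $(2k-2)$-tree-connected remainder, and read off $H$ as the split $G_D$. Your bookkeeping is in fact tidier than the paper's, which writes $d^+_{F_i}(v)$ for $v\in U_i$ where the full out-degree $d^+_{F_1\cup F_2}(v)$ is what is actually needed.
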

\begin{proof}
{First we orient the edges of $F_i[U_1,U_2]$ from $U_i$ to $V(G)\setminus U_i$. 
Next we arbitrarily orient the edges of $F_i[U_1]\cup F_i[U_2]$.
For each $v\in U_i$, we define $p'(v)=p(v)-d^+_{F_i}(v)$ (modulo $k$).
Let $G'=G\setminus E(F_1\cup F_2)$.
It is easy to see that
$\sum_{v\in V(G)} p'(v) \stackrel{k}{\equiv} 
\sum_{v\in V_1} (p(v)-d_{F_1}(v))+\sum_{v\in V_2} (p(v)-d_{F_2}(v))
\stackrel{k}{\equiv}
|E(G)|-|E(F_1\cup F_2)|=|E(G')|$.
Thus by Theorem~\ref{thm:modulo:2k-2:3k-3}, the graph $G'$ has an orientation 
such that for each vertex $v$, $d_{G'}^+(v) \stackrel{k}{\equiv} p'(v)$. 
Consider the orientation of $G$ obtained from the orientations of $F_1$, $F_2$, and $G'$. 
Obviously, for all vertices $v\in U_i$, $d^+_G(v)=d^+_{G'}(v)+d^+_{F_i}(v) \stackrel{k}{\equiv}p(v)$.
First we split every vertex $v$ into two vertices $v_1$ and $v_2$ such that
 the incident edges of $v_1$ are directed away from $v$ in $G$ and 
the incident edges of $v_2$ are directed toward $v$ in $G$.
In this construction, every loop in $G$ incident with $v$ is transformed into an edge between $v_1$ and $v_2$.
 Call the resulting (loopless) bipartite graph $H$. Obviously, for all vertices $v$, 
 $d_H(v_1) =d^+_G(v) \stackrel{k}{\equiv}p(v)$.
 Since all edges of $F_i[U_1,U_2]$ are directed from $U_i$ to $V(G)\setminus U_i$,
the graph $H[S_i]$ contains the edges of $F_i[U_1,U_2]$.
For every edge $xy$ directed from $x$ to $y$ in $G[U_i]$ we must have $x_1y_2\in E(H[S_1, S_2])$.
Since we could arbitrarily specify the orientations of those edges in $F_i[U_1]\cup F_i[U_2]$, the last item is consequently derived.
Hence the result holds.
}\end{proof}
\begin{figure}[h]
 \centering
 \includegraphics[scale =1.3]{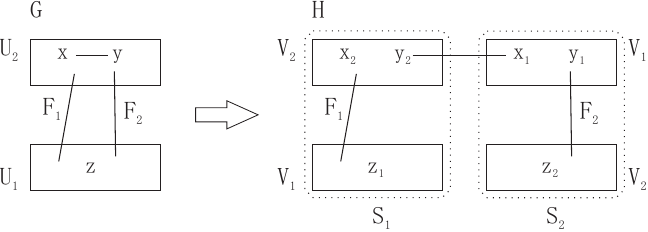}
 \caption{Splitting graph $G$ to make a modulo bipartite graph $H$.}
 \label{L-z0}
\end{figure}
\begin{lem}{\rm (\cite{B})}\label{lem:bipartite:partition-connected}
{Every $(2m, 2l)$-partition-connected graph $G$ has a bipartite $(m, l)$-partition-connected factor, where $l$ is a nonnegative integer-valued function on $V(G)$.
}\end{lem}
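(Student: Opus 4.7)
My approach exploits the additivity of $(m, l)$-partition-connectivity (observed in the paragraph preceding Theorem~1.1) combined with a vertex-splitting/orientation construction analogous to the one used to define $G_D$ in Corollary~2.2.

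\textbf{Step 1.} Because $(2m, 2l)$-partition-connectivity decomposes additively, write $G$ as an edge-disjoint union $G = H_1 \cup H_2$ with each $H_i$ being $(m, l)$-partition-connected. Unpacking the definition, $G = T_1 \cup T_2 \cup F_1 \cup F_2$ with each $T_i$ being $m$-tree-connected and each $F_i$ admitting an orientation $D_i$ satisfying $d_{F_i}^+(v) \ge l(v)$ for every $v$. The task is now to select a bipartition $(A, B)$ of $V(G)$ such that the bipartite factor $H := G[A, B]$ retains an $m$-tree-connected spanning subgraph and a complementary factor orientable with out-degree at least $l$.

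\textbf{Step 2.} Extend $D_1, D_2$ to an orientation $D$ of the whole graph $G$ by orienting $T_1, T_2$ judiciously, and then apply the vertex-splitting of Corollary~2.2, replacing each $v \in V(G)$ by $v^+$ and $v^-$ according to whether incident arcs leave or enter $v$. The resulting graph $G_D$ is bipartite with sides $V^+$ and $V^-$, and its partition-connectivity can be analyzed by applying Nash-Williams/Tutte separately to the two halves of the tree-packing inherited from $T_1, T_2$. Project back to $V(G)$ by a folding map $v^{\pm} \mapsto v$, and use the sides $V^+, V^-$ as a certificate guiding the choice of $(A, B)$ so that at least half of each tree's edges and at least half of each $D_i$-out-arc are bipartite with respect to $(A, B)$.

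\textbf{Step 3.} Verify that $H = G[A, B]$ is $(m, l)$-partition-connected. For $m$-tree-connectedness, Nash-Williams/Tutte gives that for every partition $\mathcal{P}$ of $V(G)$ into $r$ parts, $T_1 \cup T_2$ contributes at least $2m(r-1)$ crossing edges, and the pairing from Step~2 ensures that at least $m(r-1)$ of them lie in $H$. For the orientable factor, pair each arc of $D_1$ that is monochromatic under $(A, B)$ with a suitable arc of $D_2$ that is bipartite and oppositely directed, and swap; this preserves the lower bounds $d^+ \ge l$ because the doubled hypothesis $d^+ \ge 2l$ provides exactly the slack needed. Hakimi's orientation theorem then certifies the resulting oriented factor.

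The hard part is Step~2: one must orient $T_1, T_2$ so that the bipartition induced by the splitting simultaneously captures enough edges from each $T_i$ to preserve $m$-tree-connectivity \emph{and} enough edges of $F_1 \cup F_2$ to permit an orientation with $d^+ \ge l$ on the bipartite part. This is a joint orientation/matroid-union problem --- essentially one needs a common refinement of two partition-type inequalities --- and is the technical heart of the argument carried out in \cite{B}. Once this coordinated orientation is in hand, Step~3 reduces to routine applications of Nash-Williams/Tutte and Hakimi's orientation theorem.
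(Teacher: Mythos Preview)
The paper does not prove this lemma; it is simply quoted from \cite{B} without proof, so there is no in-paper argument to compare against.

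Regarding the proposal itself, there is a genuine gap. The vertex-splitting construction $G_D$ produces a bipartite graph on the doubled vertex set $V^+\cup V^-$, not a bipartite factor $G[A,B]$ of $G$ itself. The folding map $v^{\pm}\mapsto v$ collapses the two sides back together, so one cannot simply read off a bipartition $(A,B)$ of $V(G)$ from the splitting; you say $V^+,V^-$ serve as ``a certificate guiding the choice of $(A,B)$'', but no actual rule for choosing $(A,B)$ is given. In Step~3 you then assert that ``the pairing from Step~2 ensures that at least $m(r-1)$ of them lie in $H$'' for every partition $\mathcal{P}$ --- but this is exactly the Nash--Williams/Tutte inequality that must be established, and no argument is offered beyond the assertion that the pairing ``ensures'' it. The arc-swapping claim for the orientable part is similarly unsubstantiated: why do suitable swap partners always exist, and why do the swaps preserve $d^+\ge l$ on the bipartite side?

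Most tellingly, you explicitly concede that the hard part ``is the technical heart of the argument carried out in \cite{B}''. Since the lemma itself is taken from \cite{B}, deferring the key step to that reference is circular: it amounts to saying that the proof of the lemma is in the paper that proves the lemma. What remains in your write-up is an outline whose load-bearing step is neither carried out nor reducible to the tools available in the present paper.
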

Before stating the main result, we are going to present a shorter proof but for graphs with higher partition-connectivity.
\begin{thm}\label{thm:split:tree-connected}
{Let $G$ be a graph, let $k$ be a positive integer, let $m_i$ be a nonnegative integer, and let $l_i$ be a nonnegative integer-valued function on $V(G)$, where $i=1,2$.
If $G$ is $(2k-2+2m_1+2m_2, 2l_1+2l_2)$-partition-connected, then
there exists a bipartition $(U_1, U_2)$ of $V(G)$ such that if $p:V(G)\rightarrow \mathbb{Z}_k$ is a mapping with
$|E(G)| \stackrel{k}{\equiv} \sum_{v\in V(G)}p(v)$, then every vertex $v$ can be split into two vertices $v_1$ and $v_2$
such that the resulting graph $H$ satisfies the following properties:
\begin{enumerate}{
\item [$\bullet$]

For all $v\in V(G)$, $d_H({v_1})\stackrel{k}{\equiv}p(v)$.

\item [$\bullet$]

$H$ is a bipartite graph with the bipartition $(V_1,V_2)$ for which $V_i=\{v_i:v\in V(G)\}$.

\item [$\bullet$] 
$H[S_i]$ is $(m_i, l_i)$-partition-connected, where $S_i = \{v_1:v\in U_i\}\cup \{v_2:v\not\in U_i\}$.

\item [$\bullet$]
The edges of $H[S_1, S_2]$ are the same transformed edges of $G[U_1]\cup G[U_2]$.
}\end{enumerate}
}\end{thm}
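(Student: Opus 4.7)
The plan is to reduce the theorem to Corollary~\ref{cor:F_1-F_2} by producing two edge-disjoint bipartite factors $F_1, F_2$ that share a common bipartition $(U_1, U_2)$ and leave behind a $(2k-2)$-tree-connected remainder.

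Concretely, I would first invoke the subadditive property of partition-connectivity stated in the introduction to edge-decompose $G$ into a $(2k-2, 0)$-partition-connected factor $G_0$ (which is the same thing as a $(2k-2)$-tree-connected factor) and a $(2m_1+2m_2, 2l_1+2l_2)$-partition-connected factor $G_1$. Applying Lemma~\ref{lem:bipartite:partition-connected} to $G_1$, I would extract a bipartite $(m_1+m_2, l_1+l_2)$-partition-connected factor $H_0$ of $G_1$ with some bipartition $(U_1, U_2)$ of $V(G)$. This $(U_1, U_2)$ is the bipartition promised by the theorem.

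Next, I would use subadditivity once more to edge-decompose $H_0$ into an $(m_1, l_1)$-partition-connected factor $F_1$ and an $(m_2, l_2)$-partition-connected factor $F_2$. Since $H_0$ is bipartite with bipartition $(U_1, U_2)$, every subfactor of it is also bipartite with the same bipartition; in particular, $F_i = F_i[U_1, U_2]$ for $i = 1, 2$. Because $G \setminus E(F_1 \cup F_2)$ contains $G_0$, it remains $(2k-2)$-tree-connected, so Corollary~\ref{cor:F_1-F_2} applies with these $F_1, F_2$ and this $(U_1, U_2)$. It yields a splitting of each vertex $v$ into $v_1, v_2$ giving a bipartite graph $H$ with bipartition $(V_1, V_2)$ such that $d_H(v_1) \equiv p(v) \pmod k$, such that $H[S_i]$ contains the transformed edges of $F_i[U_1, U_2] = F_i$, and such that $H[S_1, S_2]$ consists precisely of the transformed edges of $G[U_1] \cup G[U_2]$, which handles three of the four bullets at once.

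For the remaining bullet, I observe that the map sending a vertex $v \in U_i$ to $v_1$ and a vertex $v \in U_{3-i}$ to $v_2$ is a bijection from $V(G)$ onto $S_i$, and under this bijection the transformed copy of $F_i$ inside $H[S_i]$ is graph-isomorphic to $F_i$ itself. Consequently $H[S_i]$ contains an $(m_i, l_i)$-partition-connected spanning subgraph and therefore is itself $(m_i, l_i)$-partition-connected. The main obstacle is precisely the coordination of bipartitions: applying Lemma~\ref{lem:bipartite:partition-connected} separately to two partition-connected factors would produce two unrelated bipartitions, and Corollary~\ref{cor:F_1-F_2} needs a single one. The device of first producing a combined bipartite partition-connected factor and only then splitting it in two bypasses this issue; once this coupling is in place the remaining steps are a direct assembly of the corollary and the lemma.
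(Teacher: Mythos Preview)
Your proposal is correct and follows essentially the same route as the paper: decompose $G$ into a $(2k-2)$-tree-connected piece and a $(2m_1+2m_2,2l_1+2l_2)$-partition-connected piece, apply Lemma~\ref{lem:bipartite:partition-connected} to the latter to obtain a bipartite $(m_1+m_2,l_1+l_2)$-partition-connected factor with bipartition $(U_1,U_2)$, split that factor into $F_1,F_2$ by subadditivity, and then invoke Corollary~\ref{cor:F_1-F_2}. Your explicit mention of the bijection $V(G)\to S_i$ that carries $F_i$ isomorphically into $H[S_i]$ is a welcome clarification of a step the paper passes over in one line.
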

\begin{proof}
{Decompose $G$ into a $(2k-2)$-tree-connected factor $G_0$ and
a $(2m_1+2m_2, 2l_1+2l_2)$-partition-connected factors $G'$.
By Lemma~\ref{lem:bipartite:partition-connected}, there exists a bipartition $(U_1, U_2)$ of $V(G)$ 
such that the bipartite graph $G'[U_1, U_2]$ is $(m_1+m_2, l_1+l_2)$-partition-connected. 
We decompose $G'[U_1, U_2]$ into two factors $F_1$ and $F_2$ such that $F_i$ is $(m_i, l_i)$-partition-connected. 
Thus by Corollary~\ref{cor:F_1-F_2}, every vertex $v$ can be split into two vertices $v_1$ and $v_2$
such that the resulting graph $H$ is a bipartite graph with the bipartition $(V_1,V_2)$ for which $V_i=\{v_i:v\in V(G)\}$, and
(i1) for all $v\in V_1$, $d_H({v_1})\stackrel{k}{\equiv}p(v)$,
(i2) $H[S_i]$ contains the transformed edges of $F_i[U_1, U_2]=F_i$, where $S_i = \{v_1:v\in U_i\}\cup \{v_2:v\not\in U_i\}$,
and (i3) the edges of $H[S_1, S_2]$ are the same transformed edges of $G[U_1]\cup G[U_2]$.
By item (i2), the graph $H[S_i]$ must be $(m_i, l_i)$-partition-connected. This completes the proof.
}\end{proof}
In the following, we are going to improve the partition-connectivity needed in Theorem~\ref{thm:split:tree-connected} 
with a longer technical proof.
Our proof is based on the following simpler version of Lemma~\ref{lem:bipartite:partition-connected}.
\begin{lem}{\rm (\cite{Modulo-Factors-Bounded})}\label{lem:bipartite}
{Every $2m$-tree-connected graph has a bipartite $m$-tree-connected factor.
}\end{lem}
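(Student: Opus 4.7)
The plan is to derive the statement from Nash-Williams and Tutte's tree-packing theorem together with a local-swap argument. Since $G$ is $2m$-tree-connected, its edge set decomposes into $2m$ edge-disjoint spanning trees $T_1,\dots,T_{2m}$; pair them as $(T_{2i-1},T_{2i})$ for $i=1,\dots,m$ and set $H_i = T_{2i-1}\cup T_{2i}$, so that each $H_i$ is a $2$-edge-connected spanning subgraph of $G$ with $2(|V(G)|-1)$ edges.

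The reduction I would make is that it suffices to find a single bipartition $(U_1,U_2)$ of $V(G)$ for which $H_i[U_1,U_2]$ is connected for every $i$. Indeed, each such $H_i[U_1,U_2]$ then contains a spanning tree of $G$; these $m$ spanning trees are automatically edge-disjoint because the $H_i$ are; and they all lie inside the bipartite subgraph $G[U_1,U_2]$. By the Nash-Williams--Tutte theorem, the factor $G[U_1,U_2]$ is therefore $m$-tree-connected and bipartite, as required.

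To produce such a bipartition, I would run a swap-based improvement procedure: start from any bipartition and, while some $H_i[U_1,U_2]$ is disconnected, flip a suitably chosen boundary vertex across $(U_1,U_2)$. If $S\subsetneq V(G)$ is one side of a disconnection of $H_i[U_1,U_2]$, then by the $2$-edge-connectedness of $H_i$ there are at least two $H_i$-edges between $S$ and its complement; by the deficiency, each such edge has both endpoints on the same side of $(U_1,U_2)$, and flipping an appropriate endpoint turns one of these into a crossing edge of $H_i$. The main obstacle is \emph{simultaneity}: a flip that benefits some $H_i$ may destroy crossing edges in another $H_j$. To handle this I would use a global monovariant, for example the lexicographically sorted vector whose $i$-th coordinate is the number of components of $H_i[U_1,U_2]$, and argue that a legal strictly potential-decreasing flip always exists until every $H_i[U_1,U_2]$ is connected. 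Carefully verifying the existence of such a flip at each step, by combining the $2$-edge-connectedness of the deficient $H_i$ with an accounting of which other $H_j$'s can be locally harmed, is the technical core of the argument; this is in the spirit of, but strictly simpler than, the proof of the partition-connected refinement in Lemma~\ref{lem:bipartite:partition-connected}, because no auxiliary orientation-based out-degree constraints need to be tracked when $l\equiv 0$.
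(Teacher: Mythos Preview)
The paper does not prove this lemma; it is quoted from~\cite{Modulo-Factors-Bounded} and used as a black box. So there is no ``paper's proof'' to compare against, and your attempt must stand on its own.

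Unfortunately your reduction is not just incomplete but false. You fix in advance a pairing $H_i=T_{2i-1}\cup T_{2i}$ and then ask for a single bipartition $(U_1,U_2)$ making every $H_i[U_1,U_2]$ connected. This is strictly stronger than what the lemma asserts, and it can fail. Take $G=2K_4$ on $\{a,b,c,d\}$ (every edge doubled); this graph is $4$-tree-connected, so $m=2$. Choose the four spanning trees
\[
T_1=\{ab,bc,cd\},\quad T_2=\{ab,da,cd\},\quad T_3=\{ac,bc,bd\},\quad T_4=\{ac,da,bd\},
\]
using the two parallel copies of each edge to keep them disjoint, and set $H_1=T_1\cup T_2$, $H_2=T_3\cup T_4$. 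Then $H_1$ is a 4-cycle $abcd$ with the edges $ab,cd$ doubled, and $H_2$ is a 4-cycle $acbd$ with the edges $ac,bd$ doubled. A direct check of all seven nontrivial bipartitions shows that $H_1[U_1,U_2]$ is connected only for $(\{a,c\},\{b,d\})$, while for that same bipartition $H_2[U_1,U_2]=\{bc,da\}$ is a matching and hence disconnected. So no bipartition makes both $H_i[U_1,U_2]$ connected, and your swap procedure cannot terminate successfully --- there is simply no target state, regardless of which monovariant you pick. (The lemma itself is fine here: $G[\{a,c\},\{b,d\}]$ is a doubled $4$-cycle, which is $2$-tree-connected; the two spanning trees just do not respect your chosen pairing.)

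The moral is that you cannot commit to a decomposition into $2$-edge-connected pieces before choosing the cut. A correct argument has to let the $m$ spanning trees of the bipartite factor be found \emph{after} the bipartition is fixed, e.g.\ by choosing $(U_1,U_2)$ to maximise $|E(G[U_1,U_2])|$ and then verifying the Nash--Williams--Tutte partition inequality for $G[U_1,U_2]$ directly, or by the argument in~\cite{Modulo-Factors-Bounded}.
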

%
\begin{lem}\label{lem:bipartite-index:decomposition}
{Let $G$ be a graph and let $l$ be a nonnegative integer-valued function on $V(G)$.
If $G$ is $(m_0+ 2m, l)$-partition-connected, then $G$ can be edge-decomposed into three factors $G_0$, $G_1$, $M$
such that $G_0$ is $(m_0, l)$-partition-connected,
$G_1$ is an $m$-tree-connected bipartite graph with a bipartition $(U_1,U_2)$, and $M$ is a factor of $G[U_1]\cup G[U_2]$ consists of at least $\min \{m, e_{G}(U_1)+e_{G}(U_2)\}$ edges.
}\end{lem}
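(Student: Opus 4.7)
The plan is to exploit the given partition-connectivity by peeling off an $(m_0,l)$-partition-connected factor, applying Lemma~\ref{lem:bipartite} to the remaining $2m$-tree-connected part, and then running an extremal exchange argument to secure the lower bound on $|E(M)|$.

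First, since $G$ is $(m_0+2m,l)$-partition-connected, I would decompose $G=G_0'\cup H$ as an edge-disjoint union where $G_0'$ is $(m_0,l)$-partition-connected and $H$ is $2m$-tree-connected. Next, Lemma~\ref{lem:bipartite} yields a bipartition $(U_1,U_2)$ of $V(G)$ and a bipartite $m$-tree-connected factor $G_1\subseteq H$; without loss of generality we take $G_1$ to be the union of $m$ edge-disjoint bipartite spanning trees, so that $|E(G_1)|=m(|V(G)|-1)$. Let $M$ consist of the inner edges of $H$ with respect to $(U_1,U_2)$; since $G_1$ is bipartite for this bipartition, $E(M)\subseteq E(H)\setminus E(G_1)$. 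Set $G_0:=G\setminus (E(G_1)\cup E(M))\supseteq G_0'$, so $G_0$ is still $(m_0,l)$-partition-connected. The triple $(G_0,G_1,M)$ is an edge-decomposition of $G$ and satisfies every structural requirement, leaving only the cardinality bound on $M$ to establish.

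For the bound $|E(M)|\ge\min\{m,\,e_G(U_1)+e_G(U_2)\}$, I would choose the quadruple $(G_0',H,G_1,(U_1,U_2))$ so as to maximize $|E(M)|$ and then argue by contradiction. Assuming $|E(M)|<\min\{m,\,e_G(U_1)+e_G(U_2)\}$, the bound $|E(M)|<e_G(U_1)+e_G(U_2)$ produces an inner edge $e\in E(G_0')$, while $|E(M)|<m$ combined with $|E(H)|\ge 2m(|V(G)|-1)$ and $|E(G_1)|=m(|V(G)|-1)$ forces $E(H)\setminus E(G_1)$ to contain at least one crossing edge $f$ (otherwise $|E(M)|=|E(H)|-|E(G_1)|\ge m(|V(G)|-1)\ge m$, assuming $|V(G)|\ge 2$). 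The intended swap is $G_0'\to G_0'-e+f$ and $H\to H-f+e$: the edge $e$ then becomes an inner edge of the new $H$, while since $f\notin E(G_1)$ the same $G_1$ remains a bipartite $m$-tree-connected factor of the new $H$ with bipartition $(U_1,U_2)$. Thus the count of inner edges of $H$ increases by one, contradicting maximality.

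The main obstacle is to arrange the swap so that $G_0'-e+f$ remains $(m_0,l)$-partition-connected and $H-f+e$ remains $2m$-tree-connected. The latter is standard via the matroid-exchange property of the tree-packing matroid, as long as $f$ is selected from a suitable circuit of $H$. The former is the subtle point: if a direct swap with this particular $f$ does not preserve partition-connectivity of $G_0'$, one typically invokes an augmenting-path argument using both the tree-packing and the $l$-out-orientation structure of $G_0'$ to produce---possibly after a chain of local swaps---a valid crossing edge $f'\in E(H)\setminus E(G_1)$ for which the exchange does go through. Controlling this interaction between the partition-connectivity structure on $G_0'$ and the tree-packing structure on $H$ is the technically delicate step of the proof.
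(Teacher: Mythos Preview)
Your setup is sound, but the proof stops precisely where the real work begins: the exchange step that should produce an inner edge $e\in E(G_0')$ and a crossing edge $f\in E(H)\setminus E(G_1)$ with $G_0'-e+f$ still $(m_0,l)$-partition-connected and $H-f+e$ still $2m$-tree-connected is left as a sketch. This is not a routine matroid exchange: you are asking for a simultaneous swap that preserves the spanning property in two different structures at once, and the ``augmenting-path argument'' you allude to is neither stated precisely nor proved. As written, the argument does not establish the lemma.

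The paper circumvents this difficulty entirely. Having obtained the bipartition $(U_1,U_2)$ from $H$, it only needs an argument in the hard case $e_H(U_1)+e_H(U_2)<m$: then among any $2m$ edge-disjoint connected spanning factors of $H$, at least $m$ are automatically bipartite (there are fewer than $m$ inner edges to distribute), and those $m$ bipartite factors are taken as $G_1$. The remaining graph $G\setminus E(G_1)$ is then $(m_0+m,l)$-partition-connected, and the paper \emph{re-decomposes it from scratch} into connected factors $\mathcal{T}_1,\ldots,\mathcal{T}_{m_0+m}$ together with a $(0,l)$-factor $\mathcal{L}$ chosen to minimise its number of inner edges. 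The only exchange ever needed is a trivial one between $\mathcal{L}$ and a single connected factor $\mathcal{T}_m$: if $\mathcal{L}$ contains an inner edge $xy$ oriented out of $x$, then $\mathcal{T}_m+xy$ has a cycle through $x$, so some edge $xz\in E(\mathcal{T}_m)$ can be swapped for $xy$ while keeping $\mathcal{T}_m$ connected and leaving all out-degrees in $\mathcal{L}$ unchanged. This single-vertex, single-tree exchange replaces your two-structure augmenting path and is the key idea you are missing.
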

\begin{proof}
{First, we decompose $G$ into three factors $H_0$, $L$, and $H$ such that $H_0$ is
 $m_0$-tree-connected, $L$ is $(0,l)$-partition-connected, and $H$ is $2m$-tree-connected. 
By Lemma~\ref{lem:bipartite}, there exists a bipartition $(U_1, U_2)$ of $V(G)$ such that
 the graph $H[U_1, U_2]$ is $m$-tree-connected.
If $e_{H}(U_1)+e_{H}(U_2)\ge m$, 
then we set $G_0=H_0\cup L$, $G_1=H[U_1,U_2]$, and $M=H[U_1]\cup H[U_2]$.
So, suppose that $e_{H}(U_1)+e_{H}(U_2)< m$. 
Let $T_1,\ldots, T_{2m}$ be a factorization of $H$ such that each of them is connected.
We may assume that each of $T_{m+1}\ldots, T_{2m}$ does not contain any edge of $H[U_1]\cup H[U_2]$ 
and let $G_1$ be the union of them. In other words, $G_1$ is an $m$-tree-connected bipartite graph with a bipartition $(U_1,U_2)$.
On the other hand, $G\setminus E(G_1)$ must be $(m_0+m, l)$-partition-connected, because it contains both of $H_0\cup L$ and
$T_1\cup \cdots \cup T_m$.
Therefore, we can decompose $G\setminus E(G_1)$ into factors $\mathcal{L}$ and $\mathcal{T}_1,\ldots, \mathcal{T}_{m+m_0}$
 such that $\mathcal{L}$ is $(0,l)$-partition-connected and $\mathcal{T}_i$ is connected.
In this case, we may assume that the graph $\mathcal{L}$ contains minimum number of edges of $G[U_1]\cup G[U_2]$.
We consider an orientation for $\mathcal{L}$ such that for each vertex $v$, $d^+_{\mathcal{L}}(v)\ge l(v)$.
We may also assume that $e_{\mathcal{T}_i}(U_1)+e_{\mathcal{T}_i}(U_2) \ge e_{\mathcal{T}_{i+1}}(U_1)+e_{\mathcal{T}_{i+1}}(U_2)$. 
If $e_{\mathcal{T}_m}(U_1)+e_{\mathcal{T}_m}(U_2)\neq 0$, then 
we set $G_0=\mathcal{T}_{m+1}\cup \cdots \cup \mathcal{T}_{m+m_0}\cup \mathcal{L}$, $G_1=H[U_1,U_2]$, and 
$M=\mathcal{T}_{1}\cup \cdots \cup \mathcal{T}_{m}$. Note that $e_{M}(U_1)+e_{M}(U_2)\ge m$.
So, suppose $e_{\mathcal{T}_m}(U_1)+e_{\mathcal{T}_m}(U_2)= 0$.
We claim that $e_{\mathcal{L}}(U_1)+e_{\mathcal{L}}(U_2)= 0$. 
Otherwise, if there is an edge $xy\in E(\mathcal{L}[U_1]\cup \mathcal{L}[U_2])$ which is directed from $x$ to $y$, 
then there is an edge $xz\in \mathcal{T}_m$ such that $\mathcal{T}_m-xz+xy$ is still connected. 
On the other hand, it is easy to see that $\mathcal{L}-xy+xz$ is still $(0, l)$-partition-connected.
Since $xz\in \mathcal{T}_m[U_1, U_2]$, $\mathcal{L}-xy+xz$ contains smaller number of edges of 
$G[U_1]\cup G[U_2]$ which is a contradiction. Hence the claim holds. In this case, we again
 set $G_0=\mathcal{T}_{m+1}\cup \cdots \cup \mathcal{T}_{m+m_0}\cup \mathcal{L}$, $G_1=H[U_1,U_2]$, and 
$M=\mathcal{T}_{1}\cup \cdots \cup \mathcal{T}_{m}$.
Note that $e_{M}(U_1)+e_{M}(U_2)= e_{G}(U_1)+e_{G}(U_2)\ge \min\{bi(G), m\}$.
This completes the proof.
}\end{proof}
Now, we have all ingredient to prove the main result of this section.
\begin{thm}\label{thm:split:partition-connected:3m}
{Let $G$ be a graph with $z\in V(G)$, let $k$ be a positive integer, and let $p:V(G)\rightarrow \mathbb{Z}_k$ a mapping with
$|E(G)| \stackrel{k}{\equiv} \sum_{v\in V(G)}p(v)$. Let $l_i$ be a nonnegative integer-valued function on $V(G)$, where $i=1,2$.
If $G$ is $(2k-2+3m, l_1+l_2)$-partition-connected, then every vertex $v$ can be split into two vertices $v_1$ and $v_2$
such that the resulting graph $H$ satisfies the following properties:
\begin{enumerate}{
\item [$\bullet$]
For all $v\in V(G)$, $d_H({v_1})\stackrel{k}{\equiv}p(v)$.
\item [$\bullet$]
$H$ is a bipartite graph with the bipartition $(V_1,V_2)$ for which $V_i=\{v_i:v\in V(G)\}$.

\item [$\bullet$]
$H$ is $(m, l)$-partition-connected after adding at most $\max\{0, m-bi(G)\}$ new copies of the edge $z_1z_2$, where $l(v_i)=l_i(v)$ for all $v\in V(G)$.
}\end{enumerate}
}\end{thm}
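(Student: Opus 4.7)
The plan is to mirror the proof of Theorem~\ref{thm:split:tree-connected} but upgrade its appeal to Lemma~\ref{lem:bipartite:partition-connected} to the sharper Lemma~\ref{lem:bipartite-index:decomposition}. The guiding idea is that an $m$-tree-connected bipartite factor $G_1\subseteq G[U_1,U_2]$ will, after splitting, supply an $m$-tree-connected spanning subgraph inside $H[S_1]$, while a ``non-bipartite'' factor $M\subseteq G[U_1]\cup G[U_2]$---together with at most $\max\{0,m-bi(G)\}$ copies of the edge $z_1z_2$---will supply the $\ge m$ edges across the $(S_1,S_2)$-cut needed to stitch $H$ into something $(m,l)$-partition-connected.

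First I would apply Lemma~\ref{lem:bipartite-index:decomposition} with $m_0:=2k-2+m$ to decompose $G=G_0\cup G_1\cup M$, where $G_0$ is $(2k-2+m,\,l_1+l_2)$-partition-connected, $G_1$ is $m$-tree-connected and bipartite with bipartition $(U_1,U_2)$, and $M\subseteq G[U_1]\cup G[U_2]$ satisfies $|E(M)|\ge\min\{m,\,e_G(U_1)+e_G(U_2)\}\ge\min\{m,\,bi(G)\}$, the last inequality following from the definition of $bi(G)$ as a minimum over bipartitions. I then further split $G_0=T\cup L_1\cup L_2$ into a $(2k-2)$-tree-connected factor $T$ and two $(0,l_i)$-partition-connected factors $L_i$, using the standard decomposition of partition-connected graphs recorded in the introduction.

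Next, set $F_1:=G_1\cup L_1$ and $F_2:=L_2$. These are edge-disjoint, and $G\setminus E(F_1\cup F_2)=T\cup M$ is $(2k-2)$-tree-connected, so Corollary~\ref{cor:F_1-F_2} applies and produces a bipartite graph $H$ satisfying the first two bullets of the statement, together with: $H[S_1]$ contains the transformed copy of $G_1$ (hence an $m$-tree-connected spanning subgraph of $S_1$) by item~(iii), and $H[S_1,S_2]$ contains the transformed edges of $G[U_1]\cup G[U_2]$---and in particular of $M$---by item~(iv). It then remains to adjoin $\max\{0,\,m-bi(G)\}$ copies of $z_1z_2$ and prove that the resulting graph $H^+$ is $(m,l)$-partition-connected. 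I would do so by decomposing $H^+=H_m^+\cup L^+$ with $H_m^+$ an $m$-tree-connected factor and $L^+$ a $(0,l)$-partition-connected factor; the $(0,l)$ piece $L^+$ I extract from the transformed $L_1,L_2$, using the freedom in item~(v) of the Corollary to orient the within-part edges of each $L_i$ so that the inherited orientation of $L^+\subseteq H^+$ delivers $d^+(v_i)\ge l_i(v)$ on the corresponding $V_i$-copy.

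The hard part will be the $m$-tree-connectivity of $H_m^+$, which I plan to verify through the Nash--Williams/Tutte inequality $e_{H_m^+}(\pi)\ge m(r-1)$ for every partition $\pi$ of $V(H)$ into $r$ parts. The transformed $G_1$ controls the induced partition on $S_1$ via its $m$-tree-connectivity there, while the at least $m$ transversal edges (coming from transformed $M$ plus the extra copies of $z_1z_2$) control the cross-contributions between parts that straddle $S_1$ and $S_2$. The extremal case is precisely the cut $(S_1,S_2)$, for which one needs exactly $m$ bipartite edges; this is why the correction term $\max\{0,\,m-bi(G)\}$ appears and why $bi(G)$---rather than the value $e_G(U_1)+e_G(U_2)$ produced by Lemma~\ref{lem:bipartite-index:decomposition} for one particular bipartition---is the right quantity to subtract.
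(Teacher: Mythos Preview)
There is a genuine gap in the tree-connectivity step. Having an $m$-tree-connected spanning subgraph inside $H[S_1]$ together with at least $m$ edges across the cut $(S_1,S_2)$ is \emph{not} enough to make $H^+$ $m$-tree-connected. Take the partition with $S_1$ as a single block and each vertex of $S_2$ as a singleton: you need $m\,|S_2|$ crossing edges, but your identified structure supplies only the transformed $G_1$ (entirely inside $S_1$, contributing nothing) and the $\ge m$ transversal edges from $M$ and the added copies of $z_1z_2$. The remaining edges of $H$ (coming from the $(2k-2)$-tree-connected $T$) are distributed by the $p$-orientation of Corollary~\ref{cor:F_1-F_2}, over which you have no control, so you cannot invoke them in a Nash--Williams count. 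In short, your decomposition provides no structure whatsoever on the $S_2$ side, and the inequality $e_{H_m^+}(\pi)\ge m(r-1)$ fails for partitions that refine $S_2$.

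The paper's proof repairs exactly this by extracting \emph{two} $m$-tree-connected factors, not one. The bipartite one (called $G_2$) is placed in $F_2$ so that its transformed copy sits in $H[S_2]$ and makes $H[S_2]$ itself $m$-tree-connected. A second, not-necessarily-bipartite $m$-tree-connected factor (called $G_1$) is placed in $F_1$; each of its spanning trees $T_i$ is paired with an edge $e_i$ of $M$ (or with a phantom $z_1z_2$ when $M$ runs out), and the freedom in item~(v) of the corollary is used to route the within-$U_j$ edges of $T_i$ toward that root so that $(\mathcal{T}_i+e_i)/S_2$ is connected. This yields $m$ edge-disjoint spanning trees of $H/S_2$, and together with the $m$-tree-connectivity of $H[S_2]$ one gets $m$-tree-connectivity of the whole of $H^+$. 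Your plan is missing both the second $m$-tree-connected factor and this rooting construction; without them the argument cannot be completed. (There is also a secondary issue: to get the $(0,l)$-piece you need $L_i$ to be $(0,\ell_i)$-partition-connected with $\ell_i$ adapted to the bipartition $(U_1,U_2)$ as in the paper, not $(0,l_i)$-partition-connected, since the out-degree must land on the correct $V_i$-copy rather than the $S_i$-copy.)
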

\begin{proof}
{Define $\ell_i(v)=l_1(v)$ for all $v\in U_i$, and define $\ell_i(v)=l_2(v)$ for all $v\in V(G)\setminus U_i$.
Note that for all vertices $v$, $\ell_1(v)+\ell_2(v)=l_1(v)+l_2(v)$.
Thus by applying Lemma~\ref{lem:bipartite-index:decomposition} with setting $m_0=m$, one can conclude that $G$ can be edge-decomposed into 
factors $G_0$, $G_1$, $G_2$, $M$, $L_1$, and $L_2$ such that 
$G_0$ is $(2k-2)$-tree-connected, 
$G_1$ is $m$-tree-connected, 
$G_2$ is an $m$-tree-connected bipartite graph with a bipartition $(U_1,U_2)$, 
$M$ is a factor of $G[U_1]\cup G[U_2]$ consists of $\min \{m,bi(G)\}$ edges, and
$L_i$ is $(0, \ell_i)$-partition-connected. Note that $e_{G}(U_1)+e_{G}(U_2)\ge bi(G)$.
Let $F_1= G_1\cup M\cup L_1$ and $F_2= G_2 \cup L_2$.

Since $G_0=G\setminus E(F_1\cup F_2)$ is $(2k-2)$-tree-connected, by Corollary~\ref{cor:F_1-F_2}, 
every vertex $v$ can be split into two vertices $v_1$ and $v_2$
such that the resulting graph $H$ satisfies the following properties:
(i1)
for all $v\in V(G)$, $d_H({v_1})\stackrel{k}{\equiv}p(v)$, 
(i2)
$H$ is a bipartite graph with the bipartition $(V_1,V_2)$ for which $V_i=\{v_i:v\in V(G)\}$, 
(i3)
$H[S_i]$ contains the transformed edges of $F_i[U_1, U_2]$, where $S_i = \{v_1:v\in U_i\}\cup \{v_2:v\not\in U_i\}$, and
(i4)
the edges of $H[S_1, S_2]$ are the same transformed edges of $G[U_1]\cup G[U_2]$.
In particular, for every edge $xy$ of $F_1\cup F_2$ with both ends in $U_1$ or $U_2$, we can arbitrarily have $x_1y_2 \in E(H)$ or $x_2y_1 \in E(H)$. 

We consider an orientation for $L_i$ such that for each vertex $v$, $d^+_{L_i}(v)\ge \ell_i(v)$.
For every edge $xy$ of $L_i[U_1]\cup L_i[U_2]$ which is directed from $x$ to $y$,
we impose the condition $x_{a}y_{b}\in E(H)$ if $x_{a}\in S_i$ (Note that we have either $x_1\in S_i$ or $x_2\in S_i$).
This can imply that $\mathcal{L}_i$ admits an orientation such that for each $v_t\in S_i$, 
$d^+_{\mathcal{L}_i}(v_t)\ge d^+_{L_i}(v)\ge \ell_i(v)=l_t(v)=l(v_t)$, where $\mathcal{L}_i$ is the factor of $H$ obtained from $L_i$
(the orientation of the every edge of $\mathcal{L}_i[S_i]$ must be induced by the orientation of the corresponding edge in $L_i[U_1,U_2]$).
Consequently, $\mathcal{L}_1\cup \mathcal{L}_2$ is a $(0, l)$-partition-connected factor of $H$.

Let $E(M)=\{e_1,\ldots, e_n\}$. 
Decompose $G_1$ into connected factors $T_1,\ldots, T_m$. We may assume that $T_i$ is a spanning tree. 
We define $e_i$ to be a new copy of the loop $zz$ when $n < i\le m$.
Now, assume that $e_i=rr'$ and $r_a\in S_1$ and we imposed the condition $r_a r'_b\in E(H)$ when $1\le i\le n$.
Let $xy$ be an arbitrary edge in $T_i[U_1]\cup T_i[U_2]$ and assume $x$ lies in the unique path connecting $r$ and $y$ in $T_i$.
For this edge, we impose the condition $x_{a}y_b\in E(H)$ if $y_b\in S_1$.
 According to this construction, $(\mathcal{T}_i+r_{a}r'_b)/S_2$ is connected when $1\le i\le n$, and 
$(\mathcal{T}_i+z_1z_2)/S_2$ is connected when $n< i\le m$, where $\mathcal{T}_i$ is the factor of $H$ obtained from $T_i$.
Let $\mathcal{G}_1$, $\mathcal{G}_2$, and $\mathcal{M}$ be the three factors of $H$ obtained from
 $G_1$, $G_2$, and $M$, respectively.
 Since $\mathcal{G}_2[S_2]$ is $m$-tree-connected and the contracted graph $(\mathcal{G}_1 \cup \mathcal{M})/S_2$ is $m$-tree-connected after adding $m-n$ new copies of $z_1z_2$, the graph $\mathcal{G}_1\cup \mathcal{M} \cup \mathcal{G}_2$ is $m$-tree-connected after adding $m-n$ new copies of $z_1z_2$.
Consequently, $H$ itself must be $(m, l)$-partition-connected after adding $m-n$ new copies of the edge $z_1z_2$,
because $\mathcal{L}_1\cup \mathcal{L}_2$ is $(0, l)$-partition-connected and 
$H=(\mathcal{G}_1\cup \mathcal{M} \cup \mathcal{G}_2)\cup \mathcal{L}_1\cup \mathcal{L}_2$.
This completes the proof.
}\end{proof}
The following corollary plays an important role in this paper.
\begin{cor}\label{cor:split:tree-connected:3m}
{Let $G$ be a graph with $z\in V(G)$, let $k$ be a positive integer, and let $p:V(G)\rightarrow \mathbb{Z}_k$ a mapping with
$|E(G)| \stackrel{k}{\equiv} \sum_{v\in V(G)}p(v)$.
If $G$ is $(2k-2+3m)$-tree-connected, then every vertex $v$ can be split into two vertices $v_1$ and $v_2$
such that the resulting graph $H$ satisfies the following properties:
\begin{enumerate}{
\item [$\bullet$]
For all $v\in V(G)$, $d_H({v_1})\stackrel{k}{\equiv}p(v)$.
\item [$\bullet$]
$H$ is a bipartite graph with the bipartition $(V_1,V_2)$ for which $V_i=\{v_i:v\in V(G)\}$.

\item [$\bullet$]
$d_{H}(X)\ge m$ for all proper subsets $X$ of $V(H)$ satisfying $ \{z_1, z_2\}\subseteq X$.
}\end{enumerate}
}\end{cor}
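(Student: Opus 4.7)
The plan is to derive this corollary directly from Theorem~\ref{thm:split:partition-connected:3m} by specialising the out-degree lower bounds. Taking $l_1\equiv l_2\equiv 0$, the hypothesis that $G$ is $(2k-2+3m)$-tree-connected is precisely the hypothesis that $G$ is $(2k-2+3m, l_1+l_2)$-partition-connected, so Theorem~\ref{thm:split:partition-connected:3m} yields a splitting $v\mapsto (v_1,v_2)$ that already satisfies the first two bullets of the corollary verbatim. With $l\equiv 0$, the conclusion of that theorem becomes: the resulting bipartite graph $H$ is $m$-tree-connected after adding at most $\max\{0,m-bi(G)\}$ new copies of the edge $z_1z_2$. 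Call the augmented graph $H'$.

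It then remains to transfer the $m$-tree-connectivity of $H'$ to the cut condition in the third bullet. First I would invoke the standard consequence that every $m$-tree-connected graph has edge-connectivity at least $m$, so $d_{H'}(X)\ge m$ for every proper nonempty subset $X$ of $V(H')=V(H)$. The key observation that makes the corollary work is: when $X$ contains both $z_1$ and $z_2$, all of the extra copies of $z_1z_2$ that were added to form $H'$ lie \emph{inside} $X$, and therefore contribute nothing to the cut $[X,V(H)\setminus X]$. Hence $d_H(X)=d_{H'}(X)\ge m$, which is exactly the third bullet.

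There is no real obstacle beyond unpacking Theorem~\ref{thm:split:partition-connected:3m}; the main conceptual point is that the particular form of the augmentation in that theorem---adding parallel copies between the two halves $z_1,z_2$ of a single vertex $z$---is tailored so that the added edges are harmless for any cut separating $V(H)\setminus\{z_1,z_2\}$ from a set containing $\{z_1,z_2\}$. Everything else is routine specialisation.
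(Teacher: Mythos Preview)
Your argument is correct and follows essentially the same route as the paper's own proof: apply Theorem~\ref{thm:split:partition-connected:3m} with $l_1=l_2=0$, then observe that the added $z_1z_2$-edges lie inside any set $X$ containing both $z_1$ and $z_2$, so $d_H(X)=d_{H'}(X)\ge m$. You have simply made explicit the standard fact that $m$-tree-connectivity forces edge-connectivity at least $m$, which the paper leaves implicit.
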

\begin{proof}
{It is enough to apply Theorem~\ref{thm:split:partition-connected:3m} with $l_i=0$ and use the fact that
 $d_{H'}(X)=d_{H}(X)$ for all subsets $X$ of $V(H)$ satisfying $ \{z_1, z_2\}\subseteq X$, where $H'$ is a graph obtained from $H$ by adding some copies of the edge $z_1z_2$.
}\end{proof}
%
%
%
%
%
%
%
%
%
\section{Equitable factorizations in highly edge-connected graphs}
\label{sec:Factorizations}
\subsection{Degree-ratio factors: bipartite graphs}
As we already observed bipartite graphs admit equitable factorizations.
In this subsection, we are going to investigate the existence of a special type of such factorizations by giving a sufficient edge-connectivity condition. Our proof is based on the following result due to Folkman and Fulkerson (1970).
\begin{thm}{\rm (\cite{Folkman-Fulkerson-1970})}\label{thm:Folkman:Fulkerson--1970}
{Let $G$ be a bipartite graph with bipartition $(X,Y)$ and let $g$ and $f$ be two integer-valued functions on $V(G)$ with $g\le f$. 
Then $G$ has a $(g,f)$-factor if and only if 
for all subsets $A$ and $B$ of $V(G)$ satisfying either $A\subseteq X$ and $B\subseteq Y$ or $A\subseteq Y$ and $B\subseteq X$,
$$d_G(A,B)\le \sum_{v\in A} f(v)+\sum_{v\in B} (d_{G}(v)-g(v)).$$
}\end{thm}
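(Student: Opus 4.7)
The plan is to prove necessity by a direct edge-count and sufficiency by encoding $(g,f)$-factors as feasible integer flows and then invoking Hoffman's circulation theorem. For necessity, I would fix any $(g,f)$-factor $F$ and consider $A\subseteq X$, $B\subseteq Y$ (the case $A\subseteq Y$, $B\subseteq X$ is symmetric). Each edge of $G[A,B]$ is either in $F$ or not. The $F$-edges of $G[A,B]$ number at most $\sum_{v\in A} d_F(v)\le \sum_{v\in A} f(v)$, since each is incident with a vertex of $A$; each non-$F$ edge of $G[A,B]$ is incident with some $y\in B$, and $y$ is incident with at most $d_G(y)-d_F(y)\le d_G(y)-g(y)$ such edges in total. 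Adding the two bounds gives the desired inequality.

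For sufficiency, I would build a directed network $N$ with source $s$ and sink $t$, an arc $s\to x$ of lower bound $g(x)$ and capacity $f(x)$ for each $x\in X$, an arc $y\to t$ of lower bound $g(y)$ and capacity $f(y)$ for each $y\in Y$, and an arc $x\to y$ of capacity $1$ and lower bound $0$ for each edge $xy\in E(G)$. Integer feasible $s$-to-$t$ flows in $N$ correspond one-to-one with $(g,f)$-factors of $G$ (take the edges on which the flow equals $1$). Adjoining an auxiliary arc $t\to s$ of infinite capacity converts this into a feasible-integral-circulation problem, and by Hoffman's theorem together with the integrality of all the data, a feasible integral circulation exists if and only if
$$\sum_{e:\,S\to\bar S}\ell(e)\ \le\ \sum_{e:\,\bar S\to S}c(e)$$
for every vertex subset $S$ of $N$.

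The substantive step, and the main obstacle, is classifying these cut inequalities. Cuts with $s$ and $t$ on opposite sides are trivially satisfied because of the $t\to s$ arc of infinite capacity, so the nontrivial families are those with $s,t$ both in $\bar S$ and those with $s,t$ both in $S$. For the first family I would set $A=S\cap X$ and $B=S\cap Y$; the left-hand side of Hoffman's inequality becomes $\sum_{y\in B}g(y)$ and the right-hand side becomes $\sum_{x\in A}f(x)+d_G(X\setminus A,B)$. Since $d_G(X\setminus A,B)=\sum_{y\in B}d_G(y)-d_G(A,B)$ in a bipartite graph, this rearranges to
$$d_G(A,B)\ \le\ \sum_{v\in A}f(v)+\sum_{v\in B}(d_G(v)-g(v)),$$
which is exactly the hypothesis with $A\subseteq X$, $B\subseteq Y$. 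The symmetric family reproduces the hypothesis with the roles of $X$ and $Y$ exchanged. A purely combinatorial alternative would blow up each $v$ into $f(v)$ real and $f(v)-g(v)$ dummy copies, replace every edge by a suitable bipartite gadget, and invoke K\"onig/Hall; it reaches the same conclusion but is more cumbersome to set up, so I would prefer the flow-based argument.
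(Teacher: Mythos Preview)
Your argument is correct. The paper does not supply its own proof of this statement; it quotes the Folkman--Fulkerson theorem as a known result and uses it as a black box, so there is no ``paper proof'' to compare against. Your flow-based derivation via Hoffman's circulation theorem is a standard and clean way to obtain the criterion, and your necessity count is fine.

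One small imprecision: when you dismiss the cuts with $s$ and $t$ on opposite sides, the infinite capacity on $t\to s$ only handles the case $s\in S$, $t\in\bar S$ (it contributes $+\infty$ to the right-hand side). In the case $s\in\bar S$, $t\in S$ the arc $t\to s$ lies in $\delta^+(S)$ and contributes only its lower bound $0$; that case is still trivial, but for the reason that every arc with positive lower bound is of the form $s\to x$ or $y\to t$, none of which can leave $S$ when $s\notin S$ and $t\in S$, so $\ell(\delta^+(S))=0$. With that tweak the case analysis is complete, and your identification of the two nontrivial cut families with the two hypothesis families ($A\subseteq X$, $B\subseteq Y$ and $A\subseteq Y$, $B\subseteq X$) is exactly right.
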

Before stating the main result, let us write the following counterpart of Theorem~\ref{thm:Folkman:Fulkerson--1970}.
\begin{cor}\label{cor:Folkman:Fulkerson--1970}
{Let $G$ be a bipartite graph with bipartition $(X,Y)$ and let $g$ and $f$ be two integer-valued functions on $V(G)$ with $g\le f$. 
Let $m$ and $k$ be two positive integers with $m\le k$.
Then $G$ has a $(g,f)$-factor if and only if 
for all subsets $A$ and $B$ of $V(G)$ satisfying either $A\subseteq X$ and $B\subseteq Y$ or 
$A\subseteq Y$ and $B\subseteq X$,
\begin{equation}\label{eq:m per k:bipartite}
0\le \sum_{v\in A} (f(v)-\frac{m}{k}d_G(v))+\sum_{v\in B} ( \frac{m}{k}d_G(v)-g(v) ) 
+\frac{m}{k} d_{G\setminus B}(A)+\frac{k-m}{k} d_{G\setminus A}(B).
\end{equation}
}\end{cor}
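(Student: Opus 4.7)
The plan is to derive this directly from Theorem~\ref{thm:Folkman:Fulkerson--1970} by showing that the inequality \eqref{eq:m per k:bipartite} is just an algebraic rewrite of the classical Folkman--Fulkerson inequality
$$d_G(A,B)\le \sum_{v\in A} f(v)+\sum_{v\in B} (d_{G}(v)-g(v)).$$
The parameters $m$ and $k$ are therefore ``decorative'' inside the condition itself: the claim is that for every choice of $m\le k$, the two inequalities are term-for-term equivalent on every valid pair $(A,B)$. That the reformulation is nevertheless useful is clear from its shape, since $f(v)-\tfrac{m}{k}d_G(v)$ is precisely the quantity measuring deviation of a factor's degree at $v$ from the prescribed fraction $m/k$ of $d_G(v)$, which is what later subsections will want to control.

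First I would exploit the bipartite structure. Assume without loss of generality that $A\subseteq X$ and $B\subseteq Y$; the case $A\subseteq Y$, $B\subseteq X$ is identical by symmetry. Since $G$ has no edge inside $X$ or inside $Y$, both $A$ and $B$ are independent sets; every edge incident with $A$ therefore goes to $Y$, which splits cleanly according to whether the other end lies in $B$ or in $Y\setminus B$. This yields the two identities
$$\sum_{v\in A} d_G(v) \;=\; d_G(A,B) + d_{G\setminus B}(A), \qquad \sum_{v\in B} d_G(v) \;=\; d_G(A,B) + d_{G\setminus A}(B).$$

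Next I would substitute these identities into the right-hand side of \eqref{eq:m per k:bipartite} and collect coefficients. The coefficient of $d_G(A,B)$ is $-\tfrac{m}{k}+\tfrac{m}{k}=0$, the coefficient of $d_{G\setminus B}(A)$ is $-\tfrac{m}{k}+\tfrac{m}{k}=0$, and the coefficient of $d_{G\setminus A}(B)$ is $\tfrac{m}{k}+\tfrac{k-m}{k}=1$. What remains is exactly
$$\sum_{v\in A}f(v)\;-\;\sum_{v\in B}g(v)\;+\;d_{G\setminus A}(B),$$
and using $d_{G\setminus A}(B)=\sum_{v\in B}d_G(v)-d_G(A,B)$ one more time, \eqref{eq:m per k:bipartite} rearranges to $d_G(A,B)\le \sum_{v\in A}f(v)+\sum_{v\in B}(d_G(v)-g(v))$, which is the Folkman--Fulkerson inequality.

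Since the two inequalities hold for the same pairs $(A,B)$, Theorem~\ref{thm:Folkman:Fulkerson--1970} immediately gives the claim in both directions. There is no genuine obstacle; the only risk is bookkeeping, so the main care needed is to verify the cancellation of the $\tfrac{m}{k}$ and $\tfrac{k-m}{k}$ terms carefully, and to note that the argument uses bipartiteness only through the independence of $A$ and $B$.
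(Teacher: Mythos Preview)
Your proof is correct and follows essentially the same approach as the paper: both arguments use the bipartite identities $\sum_{v\in A}d_G(v)=d_G(A,B)+d_{G\setminus B}(A)$ and $\sum_{v\in B}d_G(v)=d_G(A,B)+d_{G\setminus A}(B)$ to show that \eqref{eq:m per k:bipartite} is an algebraic rewrite of the Folkman--Fulkerson inequality. The paper phrases this as substituting a single identity for $d_G(A,B)$ (obtained by taking the $\tfrac{m}{k}$-weighted combination of the two identities above) directly into Theorem~\ref{thm:Folkman:Fulkerson--1970}, but the computation is the same as yours.
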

\begin{proof}
{It is enough to apply Theorem~\ref{thm:Folkman:Fulkerson--1970} with the identity $d_G(A,B)=\sum_{v\in A} \frac{m}{k}d_G(v)-\frac{m}{k}d_{G\setminus B}(A)+\sum_{v\in B} \frac{k-m}{k}d_G(v)-\frac{k-m}{k}d_{G\setminus A}(B)$
}\end{proof}
Now, we are in a position to prove the following theorem which plays an essential role in this paper.
\begin{thm}\label{thm:strongly-equitable:bipartite}
{Let $H$ be a bipartite graph with bipartition $(V_1,V_2)$, let $z_i\in V_i$, and let $\sigma=\sum_{i=1,2} d_{H}(z_i)$.
Assume that for all $X\subsetneq V(G)$ with $\{z_1,z_2\}\subseteq X$, $d_G(X)\ge k-1$.
Then $H$ admits a factorization $H_1,\ldots, H_k$ with the following properties:
\begin{enumerate}{
\item [$\bullet$] 
For any graph $H_i$, $||E(H_i)|-\frac{1}{k} |E(H)||<1$.
\item [$\bullet$] 
For all $v\in V(H_i)\setminus \{z_1,z_2\}$, $|d_{H_i}(v)-\frac{1}{k} d_{H}(v)|<1$.

\item [$\bullet$] 
$|\sum_{j=1,2}d_{H_i}(z_j)-\frac{1}{k}(\sum_{j=1,2} d_{H}(z_j))|<1$.

}\end{enumerate}
if there exists a positive integer $m$ satisfying the following properties:
\begin{enumerate}{

\item [$\bullet$] 
For all $v\in V_1 \setminus \{z_1\}$, $[d_H(v)]_k\le k-m$, 
and for all $v\in V_2\setminus \{z_2\}$, $[d_H(v)]_k\ge m$ or $[d_H(v)]_k= 0$.

\item [$\bullet$] 
One of the following conditions holds:

\begin{enumerate}{
\item [$(i)$] 
$[d_H(z_2)]_k= k- m$, $[\sigma]_k\le k-m$, and
$$[\sigma]_k+\sum_{v\in V_1\setminus \{z_1\}}[d_H(v)]_k+
\sum_{v\in V_2\setminus \{z_2\}}[k-d_H(v)]_k \neq 
k-2m.$$

\item [$(ii)$] 
 $[d_H(z_1)]_k= m$, $[\sigma]_k\ge  m$ or $[\sigma]_k=0$,  and 
$$\sum_{v\in V_1\setminus \{z_1\}}[d_H(v)]_k+
\sum_{v\in V_2\setminus \{z_2\}}[k-d_H(v)]_k +[k-\sigma]_k\neq 
k-2m.$$
}\end{enumerate}
}\end{enumerate}
}\end{thm}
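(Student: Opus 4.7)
The plan is to reduce to de Werra's Theorem~\ref{thm:Werra} by extracting a single bipartite subgraph $H^\star$ of $H$ of ``fraction $m/k$'' using Corollary~\ref{cor:Folkman:Fulkerson--1970}, and then applying Theorem~\ref{thm:Werra} separately to $H^\star$ (splitting it into $m$ equitable pieces) and to $H\setminus E(H^\star)$ (splitting it into $k-m$ equitable pieces). The asymmetry between $V_1$ and $V_2$ in the hypotheses is arranged precisely so that both Werra decompositions yield factors whose degree at every vertex $v\notin\{z_1,z_2\}$ is exactly $\lfloor d_H(v)/k\rfloor$ or $\lceil d_H(v)/k\rceil$, producing a common refinement that is a $k$-equitable factorization.

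Concretely, for $v\in V_1\setminus\{z_1\}$ I would prescribe $d_{H^\star}(v)=m\lfloor d_H(v)/k\rfloor$, and for $v\in V_2\setminus\{z_2\}$ I would prescribe $d_{H^\star}(v)=m\lceil d_H(v)/k\rceil$ (the two prescriptions coincide when $k\mid d_H(v)$). A direct check using the constraints $[d_H(v)]_k\le k-m$ on the $V_1$ side and $[d_H(v)]_k\ge m$ or $[d_H(v)]_k=0$ on the $V_2$ side shows that for every such $v$ both $d_{H^\star}(v)/m$ and $d_{H\setminus H^\star}(v)/(k-m)$ sit in the interval $[\lfloor d_H(v)/k\rfloor,\lceil d_H(v)/k\rceil]$, which is exactly what Theorem~\ref{thm:Werra} needs to realise those two values. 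At $z_1,z_2$ I would leave a short joint range on $(d_{H^\star}(z_1),d_{H^\star}(z_2))$ so that the sum can be set to either $\lfloor m\sigma/k\rfloor$ or $\lceil m\sigma/k\rceil$; the specific modular clauses $[d_H(z_2)]_k=k-m$ in (i) and $[d_H(z_1)]_k=m$ in (ii) are what make this joint range consistent with the total edge-parity of $H^\star$. Combined with the standard bipartite identity $|E|=\sum_{V_1}d(v)$, the $||E(H_i)|-|E(H)|/k|<1$ bound follows once the vertex bounds are established.

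The existence of $H^\star$ with these prescribed degrees is verified through Corollary~\ref{cor:Folkman:Fulkerson--1970} applied with the given $m$. For admissible $A\subseteq V_1$, $B\subseteq V_2$, the slack terms simplify to
\[
\sum_{v\in A}\bigl(f(v)-\tfrac{m}{k}d_H(v)\bigr)=-\tfrac{m}{k}\sum_{v\in A\setminus\{z_1\}}[d_H(v)]_k
\]
up to a $z_1$-correction, and dually for the $B$-sum with $[k-d_H(v)]_k$ plus a $z_2$-correction that brings in $[\sigma]_k$ in case (i) or $[k-\sigma]_k$ in case (ii). The two edge terms combine through the bipartite identity $d_{H\setminus B}(A)+d_{H\setminus A}(B)=d_H(A\cup B)$, so whenever $z_1\in A$ and $z_2\in B$ the set $X=A\cup B$ contains $\{z_1,z_2\}$ and the hypothesis $d_H(X)\ge k-1$ supplies the dominant lower bound $\min(m,k-m)(k-1)$. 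The three remaining sub-cases for how $z_1,z_2$ lie in $A\cup B$ are easier: either a $z$-correction becomes non-negative, or the corresponding slack sum simply vanishes.

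The main obstacle is the boundary configuration of FF in which $d_H(A\cup B)=k-1$ and every modular residue attains its allowed extreme simultaneously. In that corner the multiplied-by-$k$ form of Corollary~\ref{cor:Folkman:Fulkerson--1970} collapses to the numerical identity
\[
[\sigma]_k+\sum_{v\in V_1\setminus\{z_1\}}[d_H(v)]_k+\sum_{v\in V_2\setminus\{z_2\}}[k-d_H(v)]_k=k-2m
\]
under case (i), with the analogous equation in $[k-\sigma]_k$ under case (ii). These are precisely the values excluded by the ``$\neq k-2m$'' clauses, so the hypothesis rules out the single bad configuration. A short case-analysis on the placement of $z_1,z_2$ and on which residues are extremal then extends the FF inequality to all remaining pairs $(A,B)$, after which the extraction of $H^\star$ and the two Werra applications finish the proof.
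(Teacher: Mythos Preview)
Your overall plan---extract an ``$m/k$-fraction'' factor via Corollary~\ref{cor:Folkman:Fulkerson--1970} and then apply de~Werra's Theorem~\ref{thm:Werra} to it and to its complement---is exactly the skeleton of the paper's proof. Two of your implementation choices, however, do not go through.

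\textbf{The exact-degree prescription is infeasible.} Forcing $d_{H^\star}(v)=m\lfloor d_H(v)/k\rfloor$ on $V_1\setminus\{z_1\}$ and $d_{H^\star}(v)=m\lceil d_H(v)/k\rceil$ on $V_2\setminus\{z_2\}$ amounts to setting $g=f$ there, and then the Folkman--Fulkerson direction $A\subseteq V_1$, $B\subseteq V_2$ carries \emph{negative} slack $-\tfrac{m}{k}[d_H(v)]_k$ and $-\tfrac{m}{k}[k-d_H(v)]_k$ at every non-$z$ vertex, not just at $z_1,z_2$. Here is a graph meeting all the theorem's hypotheses on which your $H^\star$ does not exist: take $k=4$, $m=1$, $V_1=\{z_1,u_1,u_2,u_3\}$, $V_2=\{z_2,w\}$, with one edge $u_iw$ and two edges $u_iz_2$ for each $i$, plus one edge $z_1z_2$. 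Then $d_H(u_i)=d_H(w)=3$, $d_H(z_1)=1$, $d_H(z_2)=7$, and one checks that the residue constraints, case~(i), and the edge-connectivity $d_H(X)\ge 3$ all hold; yet your prescription demands $d_{H^\star}(u_i)=0$ for each $i$ while $d_{H^\star}(w)=1$, impossible since every edge at $w$ meets some $u_i$. The paper avoids this by prescribing a genuine \emph{range} $g(v)\le d_F(v)\le f(v)$ with $g(v)\le\tfrac{m}{k}d_H(v)\le f(v)$ for every $v\notin\{z_1,z_2\}$; this forces the non-$z$ slack to be nonnegative in \emph{both} FF directions, so the only obstructions come from $z_1,z_2$, and the case analysis (with the ``$\neq k-2m$'' clause invoked at $A\cup B=V(H)$, not at $d_H(A\cup B)=k-1$) goes through.

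\textbf{The size bound does not follow from $|E|=\sum_{V_1}d(v)$.} Even granting your $H^\star$, for $i\le m$ you would have $d_{H_i}(v)=\lfloor d_H(v)/k\rfloor$ identically on $V_1\setminus\{z_1\}$, so $|E(H_i)|-|E(H)|/k=(d_{H_i}(z_1)-d_H(z_1)/k)-\tfrac{1}{k}\sum_{v\in V_1\setminus\{z_1\}}[d_H(v)]_k$, and the residue sum on the right is unbounded. The paper obtains $||E(H_i)|-|E(H)|/k|<1$ by a separate step after the degree conditions are secured: it splits each vertex into degree-$k$ pieces (merging the two $z_j$-remainders), realises the factorization as a proper $k$-edge-colouring of the split graph, and then equalises the colour-class sizes via Kempe-chain exchanges.
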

\begin{proof}
{Let $Z=\{z_1,z_2\}$.
For all $v\in V(H)\setminus Z$, we define
$g(v)=\max\{0, r(v)-(k-m)\}+\frac{m}{k} (d_H(v)-r(v))$ and $f(v)=\min\{m, r(v)\}+\frac{m}{k} (d_H(v)-r(v))$,
where $r(v)=[d_H(v)]_k$.
It is easy to check that $m\lfloor \frac{1}{k} d_H(v) \rfloor\le g(v)\le \frac{m}{k}d_H(v) \le f(v)\le m\lceil \frac{1}{k} d_H(v) \rceil$. In other words, 
\begin{equation}\label{eq:g-f:v}
\lfloor \frac{1}{k}d_H(v)\rfloor \le \lfloor \frac{1}{m}g(v)\rfloor \le \lceil \frac{1}{m}f(v)\rceil \le \lceil \frac{1}{k}d_H(v)\rceil.
\end{equation}
Likewise,
\begin{equation}\label{eq:g-f:v:complement}
\lfloor \frac{1}{k}d_H(v)\rfloor \le \lfloor \frac{1}{k-m}(d_H(v)-f(v))\rfloor \le \lceil \frac{1}{k-m}(d_H(v)-g(v))\rceil \le \lceil \frac{1}{k}d_H(v)\rceil.
\end{equation}
Let $r=[\sigma]_k$. In case of item (i), 
we define $g(z_1)=\max\{0, r-(k-m)\}+\frac{m}{k} (d_H(z_1)-m-r)+m$, 
$f(z_1)=\min\{m, r\}+\frac{m}{k} (d_H(z_1)-m-r)+m$, 
and $g(z_2)=f(z_2)=\frac{m}{k} (d_H(z_2)-(k-m))$.
Since $d_H(z_1)\stackrel{k}{\equiv} m+r$ and $d_H(z_2)\stackrel{k}{\equiv}  k- m$, both of $f(z_i)$ and $g(z_i)$ are integer.
In case of item (ii), 
we define $g(z_1)=f(z_1)=\frac{m}{k}(d_H(z_1)-m)+m$, $g(z_2)=\max\{0, r-(k-m)\}+\frac{m}{k} (d_H(z_2)-(k-m)-r)$, and 
$f(z_2)=\min\{m, r\}+\frac{m}{k} (d_H(z_2)-(k-m)-r)$.
Since $d_H(z_1)\stackrel{k}{\equiv} m$ and $d_H(z_2)\stackrel{k}{\equiv}  k- m+r$, both of $f(z_i)$ and $g(z_i)$ are integer.
Recall that for item (i), $r\le k-m$, and for item (ii), either $r\ge m$ or $r=0$.
Thus, in both cases, 
we must have $\frac{m}{k}d_H(z_1)\le g(z_1)\le f(z_1)$ and $g(z_2)\le f(z_2)\le \frac{m}{k}d_H(z_2)$.
Moreover, 
\begin{equation}\label{eq:g-f:z}
\lfloor \frac{1}{k}\sum _{i=1,2} d_H(z_i)\rfloor \le 
\sum _{i=1,2} \lfloor \frac{1}{m}g(z_i)\rfloor \le 
\sum _{i=1,2} \lceil \frac{1}{m}f(z_i)\rceil 
\le \lceil \sum _{i=1,2} \frac{1}{k}d_H(z_i)\rceil.
\end{equation}
Likewise, 
\begin{equation}\label{eq:g-f:z:complement}
\lfloor \frac{1}{k}\sum _{i=1,2} d_H(z_i)\rfloor \le 
\sum _{i=1,2}\lfloor \frac{1}{k-m}(d_H(z_i)-g(z_i))\rfloor 
 \le \sum _{i=1,2} \lceil \frac{1}{k-m}(d_H(z_i)-g(z_i))\rceil \le 
\lceil \sum _{i=1,2} \frac{1}{k}d_H(z_i)\rceil.
\end{equation}
We are going to show that $G$ admits a $(g, f)$-factor based on Corollary~\ref{cor:Folkman:Fulkerson--1970}.
Let $A$ and $B$ be two disjoint subsets of $V(G)$ 
for which either $A\subseteq V_1$ and $B\subseteq V_2$ or $A\subseteq V_2$ and $B\subseteq V_1$.
If $z_2\not \in A$, then for all $v\in A$, $f(v)- \frac{m}{k}d_H(v)\ge 0$.
If $z_1\not \in B$, then for all $v\in B$, $\frac{m}{k}d_H(v)-g(v)\ge 0$.
Consequently, Inequation~(\ref{eq:m per k:bipartite}) holds when $z_2 \not \in A$ and $z_1 \not\in B$.
We may therefore assume that $z_2 \in A$ or $z_1 \in B$. 
In particular, $A\subseteq V_2$ and $B\subseteq V_1$.
By the assumption, for all $v\in B\setminus\{z_1\}$, $r(v)=[d_H(v)]_k\le k-m$,
and so 
\begin{equation}\label{eq:g-difference}
\frac{m}{k}d_H(v)-g(v)=\frac{m}{k}r(v)- \max\{0, r(v)-(k-m)\}=\frac{m}{k}[d_H(v)]_k.
\end{equation}
Likewise, for all $v\in A\setminus \{z_2\}$, 
$r(v)\ge m$ or $r(v)=0$, and so 
\begin{equation}\label{eq:f-difference}
f(v)-\frac{m}{k}d_H(v)=\min\{m, r(v)\}-\frac{m}{k}r(v) =\frac{m}{k}[k-d_H(v)]_k.
\end{equation}
Recall that in case of item (i), we have $r\le k-m$. Also, in case of item (ii), we have either $r\ge m$ or $r=0$.
Thus
\begin{equation}\label{eq:g-difference:z1}
\frac{m}{k}d_H(z_1)-g(z_1)=-\frac{m(k-m)}{k}+
\begin{cases}
\frac{m}{k}[\sigma]_k,	&\text{when item (i) holds};\\
0,	&\text{when item (ii) holds},
\end {cases}
\end{equation}
and
\begin{equation}\label{eq:f-difference:z2}
 f(z_2)-\frac{m}{k}d_H(z_2)=-\frac{m(k-m)}{k}+
\begin{cases}
0,	&\text{when item (i) holds};\\
\frac{m}{k}[k-\sigma]_k,	&\text{when item (ii) holds}.
\end {cases}
\end{equation}
For notational simplicity, let us define 
$$\alpha=\sum_{v\in B\setminus \{z_1\}}[d_H(v)]_k+c_{A, B}+
\sum_{v\in A\setminus \{z_2\}}[k-d_H(v)]_k+\lambda_a,$$
where in case of item (i), $c_{A, B}\in \{0,[\sigma]_k\}$ and $c_{A, B}=0$ if and only if $z_1\not \in B$, 
and in case of item (ii), $c_{A, B}\in \{0, [k-\sigma]_k\}$ and $c_{A, B}=0$ if and only if $z_2\not\in A$.
In order to derive Inequation~(\ref{eq:m per k:bipartite}), according to Equations~(\ref{eq:g-difference}),~(\ref{eq:f-difference}),~(\ref{eq:g-difference:z1}), and~(\ref{eq:f-difference:z2}), it is enough to prove
\begin{equation}
\frac{m}{k} \alpha +
\frac{k-m}{k}\lambda_b\ge \begin{cases}
\frac{2m(k-m)}{k},	&\text{when $z_2\in A$ and $z_1\in B$};\\
\frac{m(k-m)}{k},	&\text{when either $z_2\in A$ or $z_1\in B$},
\end {cases}
\end{equation}
where $\lambda_a=d_{H\setminus B}(A)$ and $\lambda_b=d_{H\setminus A}(B)$.
For proving this part, we heavily need the following equations:
\begin{equation}\label{eq:modulo}
\lambda_b\stackrel{k}{\equiv} \sum_{v\in B}[d_H(v)]_k+\sum_{v\in A}[k-d_H(v)]_k +\lambda_a
\stackrel{k}{\equiv} \alpha +
\begin{cases}
2m,	&\text{when $z_2\in A$ and $z_1\in B$};\\
m,	&\text{when either $z_2\in A$ or $z_1\in B$}.
\end {cases}
\end{equation}
The left side of this equation can easily be derived by the identity 
$\sum_{v\in B}d_H(v)-d_{H\setminus A}(B)=d_H(B,A)= \sum_{v\in A}d_H(v)-d_{H\setminus B}(A)$.
Now, we are ready to consider the following three cases:
%
%
\vspace{3mm}
\\
{\bf Case 1. $A\cup B=V(G)$.}

In this case, $\lambda_a=\lambda_b = 0$, $A=V_2$, and $B=V_1$. 
Thus Equation~(\ref{eq:modulo}) implies that
$\alpha
\stackrel{k}{\equiv} k-2m$.
The variable $\alpha$ is obviously nonnegative. 
By the assumption, it is not equal to $k-2m$, and so it must be at least $2k-2m$.
Consequently, Inequation~(\ref{eq:m per k:bipartite}) holds. $\square$
%
%
%
\vspace{3mm}
\\
{\bf Case 2. $z_1 \in B$ and $z_2 \in A$.}

According to Case 1, we may assume that $A\cup B\subsetneq V(G)$.
We deduce that, by the assumption, $\lambda_a+\lambda_b=d_H(A\cup B)\ge k-1$.
On the other hand, by Equation~(\ref{eq:modulo}), we have
$\alpha \stackrel{k}{\equiv} k-2m+\lambda_b$.
Thus if $\lambda_b < m$, then $\alpha \ge \lambda_a \ge k-1-\lambda_b > k-2m+\lambda_b$ which implies that 
$\alpha \ge 2k-2m+\lambda_b\ge 2k-2m$.
Also, if $2m > \lambda_b\ge m$, then $k > k-2m+\lambda_b\ge 0$ which implies that $\alpha \ge k-2m+\lambda_b\ge k-m$.
Therefore, regardless of $ \lambda_b\ge 2m$ or not, one can conclude that
$ \frac{m}{k}\alpha
+\frac{k-m}{k}\lambda_b
\ge
\frac{2m(k-m)}{k}$. 
Consequently, Inequation~(\ref{eq:m per k:bipartite}) holds. $\square$
%
%
\vspace{3mm}
\\
{\bf Case 3. Either $z_1\in B$ and $z_2\not \in A$ or $z_1\not \in B$ and $z_2 \in A$.}

By Equation~(\ref{eq:modulo}), we have
$\alpha \stackrel{k}{\equiv} k-m+\lambda_b$.
Therefore, $\lambda_b\ge m$ or $\alpha\ge k-m+\lambda_b \ge k-m$. In both cases, 
$\frac{m}{k}\alpha+\frac{k-m}{k}\lambda_b\ge \frac{m(k-m)}{k}$.
Consequently, Inequation~(\ref{eq:m per k:bipartite}) again holds. $\square$
Therefore, by Corollary~\ref{cor:Folkman:Fulkerson--1970}, the graph $H$ has a factor $F$
such that for each vertex $v$, $g(v)\le d_{F}(v)\le f(v)$. 
According to Theorem~\ref{thm:Werra}, the graph $F$ has a factorization $H_1, \ldots, H_m$ 
such that for each $v\in V(H_j)$ with $1\le j \le m$,
$|d_{H_j}(v)-d_F(v)/m|<1$.
If $v\in V(H_j)\setminus Z$, then 
by Inequation~(\ref{eq:g-f:v}), we must have
$ \lfloor d_H(v)/k \rfloor \le \lfloor g(v)/m \rfloor \le d_{H_j}(v)\le \lceil f(v)/m \rceil \le \lceil d_H(v)/k \rceil$.
Likewise, 
by Inequation~(\ref{eq:g-f:z}), 
$ \lfloor \sum _{i=1,2} d_H(z_i)/k \rfloor \le \sum _{i=1,2}\lfloor g(z_i)/m \rfloor \le
 \sum _{i=1,2} d_{H_j}(z_i)\le \sum _{i=1,2} \lceil f(z_i)/m \rceil \le \lceil \sum _{i=1,2} d_H(z_i)/k \rceil$.
Let $F_0$ be the complement of $F$ in $H$. For each vertex $v$, we have $d_H(v)-f(v)\le d_{F_0}(v)\le d_H(v)-g(v)$. 
According to Theorem~\ref{thm:Werra}, the graph $F_0$ has a factorization $H_{m+1}, \ldots, H_k$ such that for each vertex $v\in V(H_j)$ with $m< j \le k$,
$|d_{H_j}(v)-d_{F_0}(v)/(k-m)|<1$.
If $v\in V(H_j)\setminus Z$, then again 
by Inequation~(\ref{eq:g-f:v:complement}), we must have
$ \lfloor d_H(v)/k \rfloor \le \lfloor (d_H(v)-f(v))/(k-m) \rfloor 
\le d_{H_j}(v)\le
\lceil (d_H(v)-g(v))/(k-m) \rceil \le \lceil d_H(v)/k \rceil$.
Likewise, 
by Inequation~(\ref{eq:g-f:z:complement}), 
$ \lfloor \sum _{i=1,2} d_H(z_i)/k \rfloor \le\sum _{i=1,2} \lfloor (d_H(z_i)-f(z_i))/(k-m) \rfloor 
\le \sum _{i=1,2} d_{H_j}(z_i)\le
\sum _{i=1,2} \lceil (d_H(z_i)-g(z_i))/(k-m) \rceil \le \lceil \sum _{i=1,2} d_H(z_i)/k \rceil$.
It is not difficult to check that $H_1,\ldots, H_k$ is a factorization satisfying the second and the third desired conditions.

We are going to confirm the remaining part by imposing the first desired condition on sizes.
First consider a $k$-coloring of the edges of $G$ obtained from this factorization.
For every $v\in V(H)\setminus Z$, we split it into vertices $v_0,\ldots, v_n$ such that every vertex $v_i$ with $1\le i\le n$ has degree $k$ and it is incident with all $k$ colors $c_1,\ldots, c_k$. 
Likewise, for the exceptional $v_0$, it has degree at most $k$ and it is not incident with two edges having the same color. 
For the vertex $z_j$, we split it into vertices $z'_j, z_{j,0},\ldots, z_{j,n}$ such that every vertex $z_{j,i}$ with $1\le i\le n$ has degree $k$ and it is incident with all $k$ colors $c_1,\ldots, c_k$. Likewise, for the exceptional $z_{j,0}$, it has degree at most $k$ and it is not incident with two edges having the same color. 
If $d_{H}(z_1)\neq 0$ and $d_{H}(z_2)\neq 0$ (the proof of the cases that $d_{H}(z_1)= 0$ or $d_{H}(z_2)= 0$ are similar), then according to the construction, $d_{H}(z_1)\ge  m$ and $d_{H}(z_2)\ge k-m$ and we can impose that $z'_1$ is incident with all colors $c_1,\ldots, c_m$ and so it has degree $m$, and $z'_2$ is incident with all colors $c_{m+1},\ldots, c_k$ and so it has degree $k-m$. 
Consequently, $z'_1$ and $z'_2$ must not be adjacent. 
Then we shrink them into a new vertex $z$ of degree $k$ which is incident with all colors $c_1,\ldots, c_k$.
Call the resulting loopless graph $H_0$. 
Note that $H_0$ has maximum degree at most $k$, and also it admits a proper edge-coloring with $k$ colors $c_1,\ldots, c_k$. 
Let us consider such a coloring with the minimum $\sum_{1\le i\le k}|m_i-m/k|$, 
where $m_i$ is the number of edges colored with $c_i$ and  $m$ is the size of $H$.
We claim that $|m_i-m/k|<1$ for every $c_i$.
Suppose, to the contrary, that there is a color $c_i$ with $|m_i-m/k|\ge 1$. 
We may assume that $m_i\ge m/k+1$; as the proof of the case $m_i\le m/k-1$ is similar.
Thus there is another color $c_j$ with $m_j<m/k$ so that $m_i> m_j+1$.
Let $H'_0$ be the factor of $H_0$ consisting of the edges that are colored with $c_i$ or $c_j$.
According to the proper edge coloring of $H_0$, the graph $H'_0$ must be the union of some paths and cycles.
 Since all cycles have even size and $m_i> m_j$, it is easy to check that there is a path $P$ 
such that whose edges are colored alternatively by $c_i$ and $c_j$, and also whose end edges are colored with $c_i$. 
Now, it is enough to exchange the colors of the edges of $P$ to find another proper edge-coloring satisfying 
 $m'_i=m_i-1$ and $m'_j=m_j+1$ where $m'_i$ and $m'_j$ are the number of edges having these new colors.
Since $|m'_j-m/k|+|m'_i-m/k|< (|m_j-m/k|+1)+(|m_i-m/k|-1)$, one can easily derive a contradiction.
Finally, we redefine $H_i$ to be the factor of $H$ obtained from all edges colored with $c_i$.
Therefore, $H_1,\ldots, H_k$ are the desired factors that we are looking for.
}\end{proof}
\subsection{Graphs with many vertices whose degrees are not divisible by $k$}
The following lemma establishes a sufficient degree condition for the existence of equitable factorizations in highly edge-connected graphs. 
(This condition is necessary for $k=2,3$, see \cite[Corollary 3.2]{EquitableFactorizations-2022}).
\begin{lem}{\rm (\cite{EquitableFactorizations-2022})}\label{lem:m=0}
{Let $G$ be a graph satisfying $|E(G)|\stackrel{k}{\equiv}\sum_{v\in Z}d_G(v)$ for a vertex subset $Z\subseteq V(G)$.
If $G$ is $(3k-3)$-edge-connected, then it can be edge-decomposed into $k$ factors $G_1,\ldots, G_k$ such that for each factor $G_i$, $||E(G_i)|-|E(G)|/k|\le 1$, and for each $v\in V(G_i)$, $1\le i\le k$, $|d_{G_i}(v)-d_G(v)/k|< 1$.
}\end{lem}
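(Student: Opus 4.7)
Plan: The plan is to reduce $G$ to a bipartite graph via a modular vertex-splitting, apply the bipartite equitable factorization of Theorem~\ref{thm:strongly-equitable:bipartite}, and then lift back to $G$.

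First, I would define $p:V(G)\to\mathbb{Z}_k$ by $p(v)\equiv d_G(v)\pmod k$ for $v\in Z$ and $p(v)=0$ for $v\notin Z$. The hypothesis gives $\sum_{v\in V(G)}p(v)\equiv\sum_{v\in Z}d_G(v)\equiv|E(G)|\pmod k$, so by Theorem~\ref{thm:modulo:2k-2:3k-3} (using the $(3k-3)$-edge-connected case) $G$ admits an orientation with $d_G^+(v)\equiv p(v)\pmod k$ for every $v$. I would split each vertex $v$ into $v_1$ (incident with the out-edges at $v$) and $v_2$ (incident with the in-edges), turning each loop at $v$ into an edge $v_1v_2$. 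The result is a bipartite graph $H$ satisfying $d_H(v_1)=d_G^+(v)\equiv p(v)\pmod k$ and $d_H(v_1)+d_H(v_2)=d_G(v)$; hence $d_H(v_1)$ is divisible by $k$ for every $v\notin Z$, while $d_H(v_2)$ is divisible by $k$ for every $v\in Z$.

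Next, I would choose a vertex $z\in V(G)$ whose residue $[d_G(z)]_k$ arranges the modular non-degeneracy clause (i) or (ii) of Theorem~\ref{thm:strongly-equitable:bipartite}, let $z_1,z_2$ be its two copies in $H$, and apply Theorem~\ref{thm:strongly-equitable:bipartite} with a suitable parameter $m$ to obtain a factorization $H_1,\ldots,H_k$ of $H$ with $\bigl||E(H_i)|-|E(H)|/k\bigr|<1$, with $|d_{H_i}(u)-d_H(u)/k|<1$ for all $u\in V(H)\setminus\{z_1,z_2\}$, and with $\bigl|\sum_{j=1,2}d_{H_i}(z_j)-(d_H(z_1)+d_H(z_2))/k\bigr|<1$. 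Defining $G_i$ as the subgraph of $G$ whose edges are pulled back from $H_i$, I get $|E(G_i)|=|E(H_i)|$ (so the size bound transfers), and $d_{G_i}(v)=d_{H_i}(v_1)+d_{H_i}(v_2)$. Since one of $d_H(v_1),d_H(v_2)$ is divisible by $k$, the corresponding factor-degree equals a $k$-th share exactly and the other lies within $1$ of its share, giving $|d_{G_i}(v)-d_G(v)/k|<1$. The case $v=z$ follows from the combined bound on $d_{H_i}(z_1)+d_{H_i}(z_2)$.

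The main obstacle is verifying the cut hypothesis of Theorem~\ref{thm:strongly-equitable:bipartite}, namely $d_H(X)\ge k-1$ for every proper $X\subseteq V(H)$ with $\{z_1,z_2\}\subseteq X$: edge-cuts of $H$ correspond to asymmetric oriented cuts of $G$ that mix the out- and in-halves of each vertex and are not automatically bounded by the undirected edge-connectivity of $G$. One would either carry out a direct counting argument exploiting the full $(3k-3)$-edge-connectivity to bound such asymmetric cuts, or replace the plain splitting step by the finer construction of Corollary~\ref{cor:split:tree-connected:3m} (with $m=k-1$), which is designed precisely to deliver such a cut guarantee. The non-degeneracy clause is handled by the choice of $z$ and $m$; the degenerate case in which every vertex-degree is already divisible by $k$ (which forces $|E(G)|\equiv 0\pmod k$) is disposed of directly by applying Theorem~\ref{intro:thm:factorization:Z} to secure degree equity and then running a Kempe-chain size-balancing step, as in the closing paragraph of the proof of Theorem~\ref{thm:strongly-equitable:bipartite}, inside the bipartite split $H$.
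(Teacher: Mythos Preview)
The paper does not prove this lemma here; it is quoted from \cite{EquitableFactorizations-2022}. The argument actually used there, and mirrored in this paper in the proofs of Lemma~\ref{lem:equitable-z} and Theorem~\ref{thm:Z0}, is far more direct than your plan: once you have the $p$-orientation from Theorem~\ref{thm:modulo:2k-2:3k-3}, you apply Lemma~\ref{lem:factorization:main:directed:original} to the oriented graph and you are done. Since $[d_G^+(v)]_k\in\{0,[d_G(v)]_k\}$ for every $v$ by the choice of $p$, the first bullet of that lemma gives $\lfloor d_G(v)/k\rfloor\le d_{G_i}(v)\le\lceil d_G(v)/k\rceil$ for all $v$, together with the size bound $||E(G_i)|-|E(G)|/k|<1$. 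No bipartite splitting, no cut hypothesis, no Theorem~\ref{thm:strongly-equitable:bipartite}.

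Your route through Theorem~\ref{thm:strongly-equitable:bipartite} has a genuine gap exactly where you say it does. The cut condition $d_H(X)\ge k-1$ for all proper $X\supseteq\{z_1,z_2\}$ does not follow from $(3k-3)$-edge-connectivity of $G$: for instance $X=V(H)\setminus\{w_1\}$ with $w\notin Z$ gives $d_H(X)=d_G^+(w)$, which is only constrained to be $\equiv 0\pmod k$ and can be $0$. Your proposed repair via Corollary~\ref{cor:split:tree-connected:3m} with $m=k-1$ requires $G$ to be $(2k-2+3(k-1))=(5k-5)$-tree-connected, strictly stronger than the hypothesis; this is precisely why Theorem~\ref{thm:new-sufficient-condition}, which does follow your route, is stated under $5(k-1)$-tree-connectivity rather than $(3k-3)$-edge-connectivity. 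Finally, your fallback for the all-degrees-divisible case invokes Theorem~\ref{intro:thm:factorization:Z}, but that theorem is the very statement you are trying to prove, so the appeal is circular.
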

The following theorem introduces another similar sufficient condition for the existence of equitable factorizations based on a combination of Corollary~\ref{cor:split:tree-connected:3m} and Theorem~\ref{thm:strongly-equitable:bipartite}. At first glance, one might think it covers a small ratio of graphs, but we will surprisingly use this result to characterize degree sequences modulo $k$ without equitable factorizations in the subsequent subsection.
\begin{thm}\label{thm:new-sufficient-condition}
{Let $G$ be a $5(k-1)$-tree-connected graph.
Then $G$ can be edge-decomposed into $k$ factors $G_1,\ldots, G_k$ such that for each factor $G_i$, $||E(G_i)|-|E(G)|/k|< 1$,
 and for each $v\in V(G_i)$, $1\le i\le k$, 
$ | d_{G_i}(v)-d_G(v)/k|< 1$, if there exists a subset $Z\subseteq V(G)$ satisfying the following properties:
\begin{enumerate}{
\item [$(1)$]
$m = [|E(G)|-\sum_{v\in Z}d_G(v)]_k$.

\item [$(2)$]
For all $v\in Z$, $[d_G(v)]_k\le  k-m$, and for all $v\in V(G)\setminus Z$, $[d_G(v)]_k\ge m$.

\item [$(3)$]
 $\sum_{v\in Z}[d_G(v)]_k+\sum_{v\in V(G)\setminus Z}[k-d_G(v)]_k\neq k-2m$.
}\end{enumerate}
}\end{thm}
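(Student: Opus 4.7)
The approach is to reduce the problem to Theorem~\ref{thm:strongly-equitable:bipartite} via the splitting construction of Corollary~\ref{cor:split:tree-connected:3m}. Since $5(k-1)=2k-2+3(k-1)$, the corollary applies with its internal parameter taken to be $k-1$, which produces the bipartite cut condition $d_H(X)\ge k-1$ for every proper $X\supseteq\{z_1,z_2\}$ that Theorem~\ref{thm:strongly-equitable:bipartite} requires. The boundary case $m=0$ reduces directly to Lemma~\ref{lem:m=0} (whose hypothesis becomes exactly condition~(1)), so I assume $m\ge 1$ from now on.

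I would choose the distinguished vertex $z$ and a modulo map $p:V(G)\to\mathbb{Z}_k$ designed so that exactly one of $v_1,v_2$ will carry the residue $[d_G(v)]_k$ and the other will be divisible by $k$. If $Z\ne\emptyset$, pick any $z\in Z$ and set $p(v)=[d_G(v)]_k$ for $v\in Z\setminus\{z\}$, $p(v)=0$ for $v\in V(G)\setminus Z$, and $p(z)=[d_G(z)+m]_k$; condition~(1) then yields $\sum_{v\in V(G)}p(v)\equiv\sum_{v\in Z}d_G(v)+m\equiv |E(G)|\pmod{k}$. If $Z=\emptyset$, pick any $z$ and set $p(v)=0$ for $v\ne z$, $p(z)=m$. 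Now apply Corollary~\ref{cor:split:tree-connected:3m} to produce a bipartite graph $H$ on split vertices $(V_1,V_2)$ with $d_H(v_1)\equiv p(v)\pmod{k}$ and the desired cut condition at $\{z_1,z_2\}$.

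I would then verify the hypotheses of Theorem~\ref{thm:strongly-equitable:bipartite} with this $m$, using case~(i) when $Z\ne\emptyset$ and case~(ii) when $Z=\emptyset$. The pointwise residue conditions are immediate from condition~(2) and the choice of $p$: for $v\in Z\setminus\{z\}$, $[d_H(v_1)]_k=[d_G(v)]_k\le k-m$ and $[d_H(v_2)]_k=0$; for $v\in V(G)\setminus Z$, $[d_H(v_1)]_k=0\le k-m$ and $[d_H(v_2)]_k=[d_G(v)]_k\ge m$. At $z$, in case~(i) we get $[d_H(z_2)]_k\equiv -m\equiv k-m$ and $[\sigma]_k=[d_G(z)]_k\le k-m$; in case~(ii) we get $[d_H(z_1)]_k=m$ and $[\sigma]_k=[d_G(z)]_k\ge m$. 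The crucial step is verifying the $\ne k-2m$ inequality: because $[k-0]_k=0$, all complementary residues on the ``wrong'' side vanish, and the displayed sum telescopes precisely to $\sum_{v\in Z}[d_G(v)]_k+\sum_{v\in V(G)\setminus Z}[k-d_G(v)]_k$, which is not $k-2m$ by condition~(3).

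Theorem~\ref{thm:strongly-equitable:bipartite} now yields a factorization $H_1,\ldots,H_k$ of $H$; reversing the splitting (identifying $v_1$ with $v_2$) turns it into a factorization $G_1,\ldots,G_k$ of $G$ with $|E(G_i)|=|E(H_i)|$, so the first bullet of that theorem gives the size bound $||E(G_i)|-|E(G)|/k|<1$. For the degree bound at a non-distinguished $v$, exactly one of $d_H(v_1),d_H(v_2)$ is a multiple of $k$ (so the corresponding $d_{H_i}$ value equals its $k$-th fraction exactly) while the other has residue $[d_G(v)]_k$; adding the two contributions shows $d_{G_i}(v)\in\{\lfloor d_G(v)/k\rfloor,\lceil d_G(v)/k\rceil\}$, which is within $1$ of $d_G(v)/k$. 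For $v=z$, the third bullet of Theorem~\ref{thm:strongly-equitable:bipartite} gives $|d_{G_i}(z)-d_G(z)/k|<1$ directly, since $\sigma=d_G(z)$ and $d_{G_i}(z)=d_{H_i}(z_1)+d_{H_i}(z_2)$. The main obstacle I anticipate is the bookkeeping in the $\ne k-2m$ check: one must track the residues at $z$ separately in each case and recognize that all terms from the ``trivial'' side of the partition vanish, so that the identity collapses cleanly onto condition~(3).
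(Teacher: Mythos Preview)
Your proposal is correct and follows essentially the same route as the paper's proof: apply Corollary~\ref{cor:split:tree-connected:3m} with the internal parameter $k-1$ (using $5(k-1)=2k-2+3(k-1)$), then feed the resulting bipartite graph $H$ into Theorem~\ref{thm:strongly-equitable:bipartite}, and finally collapse $v_1,v_2$ back to $v$. The only differences are cosmetic: you handle $m=0$ explicitly via Lemma~\ref{lem:m=0} (the paper leaves this implicit, since Theorem~\ref{thm:strongly-equitable:bipartite} requires $m\ge 1$), and you force $z\in Z$ whenever $Z\neq\emptyset$ so that only case~(i) is needed there, whereas the paper picks $z$ arbitrarily and then branches on $z\in Z$ versus $z\in V(G)\setminus Z$ to invoke case~(i) or~(ii) respectively.
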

\begin{proof}
{Choose $z\in V(G)$. 
Let $Z_0=V(G)\setminus Z$. 
For all $v\in V(G)\setminus \{z\}$, we define $p(v)=d_G(v)$ when $v\in Z$, 
and we define $p(v)=0$ when $v\in Z_0$.
Likewise, we define $p(z)=d_G(z)+m$ when $z\in Z$,
and we define $p(z)=m$ when $z\in Z_0$.
According to the first item, 
$|E(G)| \stackrel{k}{\equiv}
 \sum_{v\in V(G)}p(v)$.
Since $G$ is $(2k-2+3\lambda)$-tree-connected for which $\lambda = k-1$, by applying Corollary~\ref{cor:split:tree-connected:3m} together the function $p$, 
every vertex $v$ can be split into two vertices $v_1$ and $v_2$
such that the resulting graph $H$ satisfies the following properties:
(i1)
for all $v\in V(G)$, $d_H({v_1})\stackrel{k}{\equiv}p(v)$,
(i2)
$H$ is a bipartite graph with the bipartition $(V_1,V_2)$ for which $V_i=\{v_i:v\in V(G)\}$, and
(i3)
for all $X\subsetneq V(H)$ with $\{z_1,z_2\}\subseteq X$, $d_H(X)\ge \lambda $.
For all $v\in Z\setminus \{z\}$, $[d_H(v_1)]_k=[d_G(v)]_k$ and $[d_H(v_2)]_k=0$.
Note that for all $v\in Z_0\setminus \{z\}$, $[d_H(v_1)]_k=0$ and $[d_H(v_2)]_k=[d_G(v)]_k$.
If $z\in Z$, then 
$[d_H({z_2})]_k= k-m$, $[d_H(z_1)+d_H(z_2)]_k=[d_G(z)]_k\le k-m$, and  
$$[d_H(z_1)+d_H(z_2)]_k+\sum_{v\in V_1\setminus \{z_1\}}[d_H(v_1)]_k+\sum_{v\in V_2\setminus \{z_2\}}[k-d_H(v_2)]_k
=
\sum_{v\in Z}[d_G(v)]_k+\sum_{v\in Z_0}[k-d_G(v)]_k\neq k-2m.$$
If $z\in Z_0$, then  $[d_H({z_1})]_k=m$,  $[d_H(z_1)+d_H(z_2)]_k=[d_G(z)]_k\ge m$,  and 
$$\sum_{v\in V_1\setminus \{z_1\}}[d_H(v_1)]_k+\sum_{v\in V_2\setminus \{z_2\}}[k-d_H(v_2)]_k + [k-d_H(z_1)-d_H(z_2)]_k
=
\sum_{v\in Z}[d_G(v)]_k+\sum_{v\in Z_0}[k-d_G(v)]_k\neq k-2m.$$
Thus by Theorem~\ref{thm:strongly-equitable:bipartite}, the graph $H$ admits a factorization $H_1,\ldots, H_k$ such that 
for each graph $H_i$, $||E(H_i)|-|E(H)|/k|< 1$,
for all $v\in V(H_i)\setminus \{z_1,z_2\}$, 
$|d_{H_i}(v)-d_{H}(v)/k|<1$, and 
$|d_{H_i}(z_1)+d_{H_i}(z_2)-(d_{H}(z_1)+d_{H}(z_2))/k|<1$.
Let $G_i$ be the factor of $G$ obtained from $H$. Obviously, $||E(G_i)|-|E(G)|/k|< 1$ and $|d_{G_i}(z)-d_{G}(z)/k|<1$.
For all $v\in V(G)\setminus \{z\}$, $d_H(v_1)$ or $d_H(v_2)$ is divisible by $k$. This implies that $d_H(v_1)=d_H(v_1)/k$ or 
$d_H(v_2)=d_H(v_2)/k$. Since $d_{G_i}({v})=d_{H_i}({v_1})+d_{H_i}({v_2})$, we must therefore have
$|d_{G_i}(v)-d_{G}(v)/k|<1$.
Hence the proof is completed.
}\end{proof}
For working with Theorem~\ref{thm:new-sufficient-condition}, we need the following lemma to find fruitful vertex sets.
\begin{lem}\label{lem:Z:equitable:minimum}
{Let $G$ be a graph. If $Z$ is a subset of $V(G)$ with the minimum $m = [|E(G)|-\sum_{v\in Z}d_G(v)]_k$ including all vertices $v$ with $[d_G(v)]_k= 0$,
then the following properties hold:
\begin{enumerate}{
\item [$(1)$]
$m\le k/2$.

\item [$(2)$]
For all $v\in Z$, $[d_G(v)]_k\le k-2m$, and for all $v\in V(G)\setminus Z$, $[d_G(v)]_k\ge 2m$.
}\end{enumerate}
}\end{lem}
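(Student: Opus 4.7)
The plan is to exploit the minimality of $m$ under three natural alterations of $Z$. Let $Z_0=\{v\in V(G):[d_G(v)]_k=0\}$ and $S=V(G)\setminus Z_0$; by hypothesis $Z_0\subseteq Z$, so I can write $Z=Z_0\cup W$ for some $W\subseteq S$. Since each $v\in Z_0$ contributes $0$ modulo $k$ to any degree sum, the quantity $\phi(W'):=[\,|E(G)|-\sum_{v\in Z_0\cup W'}d_G(v)\,]_k$ depends only on $W'\subseteq S$, and the minimality of $m$ reads $\phi(W')\ge m$ for every $W'\subseteq S$. I will also use repeatedly that $\sum_{v\in S}d_G(v)\stackrel{k}{\equiv}2|E(G)|$, which follows from the handshake identity together with $\sum_{v\in Z_0}d_G(v)\stackrel{k}{\equiv}0$.

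For (1), I would test minimality with the complementary choice $W'=S\setminus W$. Combining the two congruences above with $\sum_{v\in W}d_G(v)\stackrel{k}{\equiv}|E(G)|-m$ yields $\phi(S\setminus W)=[-m]_k$, which equals $k-m$ when $m\ge 1$ and $0$ when $m=0$. The inequality $m\le [-m]_k$ then forces $m\le k-m$, i.e., $m\le k/2$.

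For (2), I would run two single-vertex swaps in tandem. Take $v\in Z$; the case $v\in Z_0$ is trivial since $[d_G(v)]_k=0\le k-2m$, while for $v\in W$ the set $W\setminus\{v\}$ gives $\phi(W\setminus\{v\})=[m+[d_G(v)]_k]_k\ge m$, which (since $[d_G(v)]_k<k$) forces the preliminary bound $m+[d_G(v)]_k\le k-1$. Inserting $v$ into the complement then gives $\phi((S\setminus W)\cup\{v\})=[-m-[d_G(v)]_k]_k$, and the preliminary bound guarantees this residue equals $k-m-[d_G(v)]_k$ without wrap-around, so minimality yields $[d_G(v)]_k\le k-2m$. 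For $v\notin Z$ (so $v\in S\setminus W$) the argument is symmetric: adding $v$ to $W$ gives $\phi(W\cup\{v\})=[m-[d_G(v)]_k]_k\ge m$, which combined with $[d_G(v)]_k\ge 1$ forces $[d_G(v)]_k>m$; removing $v$ from $S\setminus W$ then gives $\phi((S\setminus W)\setminus\{v\})=[d_G(v)]_k-m$ (again in the non-wrap regime thanks to $[d_G(v)]_k>m$), and minimality delivers $[d_G(v)]_k\ge 2m$.

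The only subtle point, rather than any real obstacle, is checking in each swap that the relevant residue lies in $\{0,\ldots,k-1\}$ without wrap-around; this is precisely the role of the preliminary bound produced by the first swap of each pair, and it is what allows the sharper inequality in $2m$ (instead of the weaker one in $m$) to come out of the second swap.
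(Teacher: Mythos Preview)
Your argument is correct and is essentially the same as the paper's: both proofs test the minimality of $m$ against the complement of $Z$ for part~(1), and against the four single-vertex modifications $Z\setminus\{v\}$, $Z\cup\{v\}$, $(V(G)\setminus Z)\cup\{v\}$, $(V(G)\setminus Z)\setminus\{v\}$ for part~(2). The only cosmetic difference is that the paper splits into cases on the size of $[d_G(v)]_k$ and picks the one swap that immediately contradicts minimality, whereas you run two swaps in sequence, using the first to pin down the ``no wrap-around'' regime and the second to extract the sharp bound in $2m$; the underlying comparisons are identical.
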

\begin{proof}
{Let $Z_0=V(G)\setminus Z$. We may assume that $m\neq 0$. 
First, we claim that $m\le k/2$. Otherwise, 
$[|E(G)|-\sum_{v\in Z_0}d_G(v)]_k=[\sum_{v\in Z}d_G(v)-|E(G)|]_k=k-m< m$ which is a contradiction.
Moreover, for all $u\in Z$, we must have $[d_G(u)]_k\le k-2m$.
Otherwise, if $k-2m < [d_G(u)]_k \le k-m$, then 
$[|E(G)|-\sum_{v\in Z_0\cup \{u\}}d_G(v)]_k=[k-m-[d_G(u)]_k]_k< m$, and if $ [d_G(u)]_k\ge k-m$, then
$[|E(G)|-\sum_{v\in Z\setminus \{u\}}d_G(v)]_k=[m+[d_G(u)]_k]_k< m$. 
A contradiction.
Furthermore, for all $v\in Z_0$ with $[d_G(v)]_k\neq 0$, we must have $[d_G(v)]_k\ge 2m$.
Otherwise, if $m \le [d_G(v)]_k< 2m$, then 
$[|E(G)|-\sum_{v\in Z_0\setminus  \{u\}}d_G(v)]_k=[k-m+[d_G(u)]_k]_k< m$, 
and if $0< [d_G(u)]_k\le  m$, then $[|E(G)|-\sum_{v\in Z\cup  \{u\}}d_G(v)]_k=[m-[d_G(u)]_k]_k< m$. 
This is again a contradiction. Hence the lemma holds.
}\end{proof}
The following corollary shows that every highly edge-connected graph having at least $k-1$ vertices with degree not divisible by $k$ admits a $k$-equitable factorization. By a computer search, we observed that for all $k\le 7$, the degree conditions in Corollary~\ref{cor:sum-min-x-k-x} and Theorem~\ref{intro:thm:factorization:Z} are not only sufficient but also necessary for the existence of $k$-equitable factorizations in highly edge-connected graphs, see Table 1.
\begin{cor}\label{cor:sum-min-x-k-x}
{If $G$ is a $5(k-1)$-tree-connected graph satisfying $$\sum_{v\in V(G)}\min \{[d_G(v)]_k, [k-d_G(v)]_k\}\ge k-1,$$
then it can be edge-decomposed into $k$ factors $G_1,\ldots, G_k$ such that for each factor $G_i$, $||E(G_i)|-|E(G)|/k|\le 1$, and for each $v\in V(G_i)$, $1\le i\le k$, $ | d_{G_i}(v)-d_G(v)/k|< 1$. 
}\end{cor}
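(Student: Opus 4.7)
The plan is to combine Lemma~\ref{lem:Z:equitable:minimum}, which supplies a well-behaved vertex set $Z$, with Theorem~\ref{thm:new-sufficient-condition}, the structural criterion for equitable factorizations just established, and to dispose of the easy case through Lemma~\ref{lem:m=0}.

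First I would invoke Lemma~\ref{lem:Z:equitable:minimum} to pick a set $Z \subseteq V(G)$ that contains every vertex $v$ with $[d_G(v)]_k = 0$ and minimizes $m := [|E(G)| - \sum_{v \in Z} d_G(v)]_k$. If $m = 0$, then $|E(G)| \stackrel{k}{\equiv} \sum_{v \in Z} d_G(v)$, and since $5(k-1)$-tree-connectivity implies $5(k-1)$-edge-connectivity, which is at least $(3k-3)$-edge-connectivity, Lemma~\ref{lem:m=0} already delivers the desired factorization. I may therefore assume $m \ge 1$ and aim to apply Theorem~\ref{thm:new-sufficient-condition} to this same $Z$.

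Hypotheses (1) and (2) of Theorem~\ref{thm:new-sufficient-condition} are then essentially free: (1) is the definition of $m$, while (2) follows from the stronger bounds $[d_G(v)]_k \le k - 2m$ for $v \in Z$ and $[d_G(v)]_k \ge 2m$ for $v \notin Z$ guaranteed by Lemma~\ref{lem:Z:equitable:minimum}, because $k - 2m \le k - m$ and $2m \ge m$. The crucial step is verifying hypothesis (3). For every vertex $v$, both $[d_G(v)]_k$ and $[k - d_G(v)]_k$ dominate $\min\{[d_G(v)]_k,[k-d_G(v)]_k\}$, so
$$\sum_{v \in Z}[d_G(v)]_k + \sum_{v \in V(G) \setminus Z}[k - d_G(v)]_k \;\ge\; \sum_{v \in V(G)} \min\{[d_G(v)]_k,[k - d_G(v)]_k\} \;\ge\; k - 1.$$
Since $m \ge 1$ forces $k - 2m \le k - 2 < k - 1$, this sum strictly exceeds, and in particular differs from, $k - 2m$. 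Theorem~\ref{thm:new-sufficient-condition} then produces a factorization $G_1,\ldots,G_k$ satisfying $||E(G_i)|-|E(G)|/k|<1$ and $|d_{G_i}(v)-d_G(v)/k|<1$, which is exactly the conclusion sought.

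The only conceptual step is the final display: the minimum-$m$ choice of $Z$ arranges the residues so that for every $v$, the quantity $\min\{[d_G(v)]_k,[k-d_G(v)]_k\}$ is dominated by whichever of $[d_G(v)]_k$ or $[k-d_G(v)]_k$ actually appears in hypothesis (3), and the degree hypothesis of the corollary then immediately converts the global bound $k-1$ into the required nonequality. Beyond this observation I do not foresee any real obstacle, since both Lemma~\ref{lem:Z:equitable:minimum} and Theorem~\ref{thm:new-sufficient-condition} already do the heavy lifting.
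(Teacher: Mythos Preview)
Your proof is correct and follows essentially the same route as the paper: choose $Z$ minimizing $m$, dispose of $m=0$ via Lemma~\ref{lem:m=0}, and for $m\ge 1$ feed $Z$ into Theorem~\ref{thm:new-sufficient-condition} after checking condition~(3) via the chain $\sum_{v\in Z}[d_G(v)]_k+\sum_{v\notin Z}[k-d_G(v)]_k\ge \sum_v\min\{[d_G(v)]_k,[k-d_G(v)]_k\}\ge k-1>k-2m$. One cosmetic point: in the $m=0$ branch Lemma~\ref{lem:m=0} only guarantees $||E(G_i)|-|E(G)|/k|\le 1$ rather than the strict inequality you wrote in your final sentence, but this already matches the corollary's stated bound.
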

\begin{proof}
{Let $Z\subseteq V(G)$ with the minimum
$m = [|E(G)|-\sum_{v\in Z}d_G(v)]_k$.
If $m=0$, then the assertion follows from Lemma~\ref{lem:m=0}.
We may assume that $m\neq 0$ and $Z$ contains all vertices $v$ with $[d_G(v)]_k=0$.
By Lemma~\ref{lem:Z:equitable:minimum}, for all $v\in Z$, $[d_G(v)]_k\le k-2m < k-m$, 
and for all $v\in V(G)\setminus Z$, $[d_G(v)]_k\ge 2m$.
Moreover, by the assumption, 
 $$\sum_{v\in Z}[d_G(v)]_k+\sum_{v\in V(G)\setminus Z}[k-d_G(v)]_k\ge 
\sum_{v\in V(G)}\min \{[d_G(v)]_k, [k-d_G(v)]_k\}\ge k-1 > k-2m.$$
Therefore, the assertion follows from Theorem~\ref{thm:new-sufficient-condition}.
}\end{proof}
\begin{table}\center
{\small \label{Table}
\begin{tabular}[ht]{|l| l | l | l | l | l l| l l l |}\hline
&$k=2$& $k=3$ 	& $k=4$ 		& $k=5$		& $k=6$		& 				& $k=7$ 		& 			& 		\\\hline
1&$(0)^*$& $(1)$	& $(0)^*$ 	& $(1)$		& $(0)^*$	& $(4,4)^*$ 		& $(1)$		& $(4,6)$	& $(5,5,6)$	\\
2&& $(2)$		& $(2)^*$ 	& $(2)$		& $(2)^*$	& $(5,5)^*$		& $(2)$		& $(5,6)$ 	& $(6,6,6)$	\\
3&& 			& $(2)$ 		& $(3)$		& $(2)$		& $(1,1,2)^*$	& $(3)$		& $(1,1,1)$	& $(1,1,1,2)$	\\
4&& 			& $(1,1)^*$	& $(4)$		& $(4)^*$	& $(1,1,4)$		& $(4)$		& $(1,1,3)$	& $(1,1,1,5)$	\\
5&& 			& $(1,3)$	& $(1,2)$	& $(4)$		& $(1,2,5)$		& $(5)$		& $(1,1,4)$	& $(1,1,2,6)$	\\
6&& 			& $(3,3)^*$ 	& $(1,3)$	& $(1,1)^*$	& $(1,4,5)^*$	& $(6)$		& $(1,1,6)$	& $(1,1,5,6)$	 \\
7&& 			& 			& $(2,4)$	& $(1,3)^*$	& $(2,5,5)^*$	& $(1,2)$	& $(1,2,2)$	& $(1,2,6,6)$ \\
8&& 			& 			& $(3,4)$	& $(1,3)$	& $(4,5, 5)$		& $(1,3)$	& $(1,2,5)$	& $(1,5,6,6)$	\\
9&& 			& 			& $(1,1,1)$	& $(1,5)$	& $(1,1, 1,1)^*$	& $(1,4)$	& $(1,3,6)$	& $(2,6,6,6)$ \\
10&& 			& 			& $(1,1,4)$	& $(2,2)^*$	& $(1,1, 1,5)$		& $(1,5)$	& $(1,4,6)$ 	& $(5,6,6,6)$	\\
11&& 			& 			& $(1,4,4)$	& $(2,4)$	& $(1,1, 5,5)^*$	& $(2,3)$	& $(1,5,5)$	& $(1,1,1,1,1)$\\
12&& 			& 			& $(4,4,4)$	& $(3,5)^*$	& $(1,5, 5,5)$		& $(2,4)$ 	& $(1,6,6)$	& $(1,1,1,1,6)$ \\
13&& 			& 			&			& $(3,5)$	& $(5,5, 5,5)^*$	& $(2,6)$	& $(2,2,6)$	& $(1,1,1,6,6)$ \\
14&& 			& 			&			&			&				& $(3,5)$ 	& $(2,5,6)$	& $(1,1,6,6,6)$\\
15&& 			& 			&			&			&				& $(3,6)$ 	& $(3,6,6)$	& $(1,6,6,6,6)$\\
16&& 			& 			&			&			&				& $(4,5)$ 	& $(4,6,6)$ & $(6,6,6,6,6)$\\
 \hline
\multicolumn{10}{|c|}{$(x_1,\ldots, x_n)^*$ means that $|E(G)|\stackrel{k}{\equiv}\sum_{1\le i\le n}x_i+k/2$.}
 \\\hline
\end{tabular}
}
\caption{All degree sequences modulo $k$, $k\le 7$, for which there is no $k$-equitable factorizations in highly edge-connected graphs $G$ (by ignoring zero elements).}
\end{table}
%
%
%
%
%
%
%
\subsection{A complete characterizations: binary sequences $x_i(v)$ and vertex sets $Z$}
\label{sec:characterizations}
The following lemma shows that if a vertex $Z$ obtained in Lemma~\ref{lem:Z:equitable:minimum} does not meet the third assumption of Theorem~\ref{thm:new-sufficient-condition}, then it satisfies a simple but important parity property and consequently this vertex set is unique.
\begin{lem}\label{lem:unique:Z}
{Let $G$ be a graph and let $Z\subseteq V(G)$ including all vertices $v$ with $[d_G(v)]_k=0$. 
Assume that $\sum_{v\in Z}[d_G(v)]_k+\sum_{v\in V(G)\setminus Z}[k-d_G(v)]_k= k-2m$.
Then $m=[|E(G)|-\sum_{v\in Z}d_G(v)]_k$ if and only if
$$|V(G)\setminus Z| \not \stackrel{2}{\equiv} \sum_{v\in V(G)}\lfloor d_G(v)/k\rfloor.$$
Consequently, there is at most one vertex set $Z_0$ including all vertices $v$ with $[d_G(v)]_k=0$ satisfying 
 $m_0=[|E(G)|-\sum_{v\in Z_0}d_G(v)]_k>0$ and 
$\sum_{v\in Z_0}[d_G(v)]_k+\sum_{v\in V(G)\setminus Z_0}[k-d_G(v)]_k= k-2m_0$.
}\end{lem}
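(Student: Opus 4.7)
The plan is to evaluate $|E(G)| - \sum_{v\in Z}d_G(v) - m$ modulo $k$ via the handshake lemma and show that divisibility by $k$ reduces to a parity condition on $n := |V(G)\setminus Z|$ and $L := \sum_{v\in V(G)}\lfloor d_G(v)/k\rfloor$. I will write $Y = V(G)\setminus Z$, $S_Z = \sum_{v\in Z}[d_G(v)]_k$, and $S_Y = \sum_{v\in Y}[d_G(v)]_k$, and use that each $v\in Y$ has $[d_G(v)]_k\neq 0$ (since $Z$ by assumption contains every vertex of residue $0$) to rewrite $[k-d_G(v)]_k$ as $k-[d_G(v)]_k$. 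The hypothesis then becomes $S_Z + kn - S_Y = k-2m$, equivalently
\[ S_Y - S_Z = k(n-1) + 2m.\]

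Next, combining $2|E(G)| = kL + S_Z + S_Y$ (handshake) with $\sum_{v\in Z}d_G(v) = kL_Z + S_Z$, where $L_Z = \sum_{v\in Z}\lfloor d_G(v)/k\rfloor$, a short computation gives
\[ |E(G)| - \sum_{v\in Z}d_G(v) - m \;=\; \frac{k\,(L - 2L_Z + n - 1)}{2}.\]
Since the left side is an integer and $-2L_Z$ contributes only a multiple of $k$ (and is even), this quantity lies in $k\mathbb{Z}$ exactly when $L+n-1$ is even, that is, when $n$ and $L$ have different parities. This settles the "if and only if" part.

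For the uniqueness claim, I will take two sets $Z_1, Z_2$ fulfilling the assumptions with $m_1, m_2 \ge 1$ and add the two hypothesis identities termwise. Vertices in $Z_1\cap Z_2$ and in $Y_1\cap Y_2$ contribute $2[d_G(v)]_k\ge 0$ and $2(k-[d_G(v)]_k)\ge 0$ respectively, while each $v\in Z_1\triangle Z_2$ lies in exactly one $Y_i$ (so $[d_G(v)]_k\neq 0$) and contributes $[d_G(v)]_k + (k-[d_G(v)]_k) = k$. Hence
\[ k\,|Z_1\triangle Z_2| \;\le\; 2k - 2(m_1+m_2) \;\le\; 2k-4, \]
forcing $|Z_1\triangle Z_2|\le 1$. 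The case $|Z_1\triangle Z_2|=1$ would make $|Y_1|$ and $|Y_2|$ have opposite parities, contradicting that by the already-proved first part both must satisfy $|Y_i|\not\equiv L\pmod 2$. Therefore $Z_1 = Z_2$.

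The main obstacle I expect is verifying that the $-2L_Z$ correction in the key identity truly washes out modulo $k$ without introducing an off-by-one parity issue; the resolution is simply that $-2L_Z\cdot k/2 = -kL_Z$ is visibly a multiple of $k$, so only the parity of $L+n-1$ matters. The remaining bookkeeping is routine.
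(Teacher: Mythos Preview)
Your proposal is correct and follows essentially the same route as the paper: both arguments use the handshake identity $2|E(G)|=kL+S_Z+S_Y$ together with the hypothesis (rewritten via $[k-d_G(v)]_k=k-[d_G(v)]_k$ for $v\in Y$) to reduce the congruence $m\equiv |E(G)|-\sum_{v\in Z}d_G(v)\pmod{k}$ to the parity of $L+n$, and both handle uniqueness by bounding the summed identities below by $k\,|Z_1\triangle Z_2|$ and then eliminating $|Z_1\triangle Z_2|=1$ by the parity constraint from the first part. The only cosmetic difference is that the paper first uses parity to force $|Z\triangle Z_0|$ even (hence $\ge 2$) and then derives a contradiction, whereas you first obtain $|Z_1\triangle Z_2|\le 1$ and then rule out $1$; the content is identical.
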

\begin{proof}
{Let $s= \sum_{v\in V(G)}\lfloor d_G(v)/k\rfloor + |V(G)\setminus Z| $.
Since $Z$ includes all vertices $v$ with $[d_G(v)]_k=0$, it is not difficult to check that
$2|E(G)|=\sum_{v\in V(G)} d_G(v)=
\sum_{v\in V(G)}\big(k \lfloor \frac{d_G(v)}{k}\rfloor +[d_G(v)]_k\big)=
ks+
\sum_{v\in Z}[d_G(v)]_k-\sum_{v\in V(G)\setminus Z}[k-d_G(v)]_k$.
This implies that 
$|E(G)|\stackrel{k}{\equiv} \frac{k}{2}[s]_2+
\frac{1}{2}\big(\sum_{v\in Z}[d_G(v)]_k-\sum_{v\in V(G)\setminus Z}[k-d_G(v)]_k\big)$.
Since $\sum_{v\in Z}[d_G(v)]_k+\sum_{v\in V(G)\setminus Z}[k-d_G(v)]_k= k-2m$, one can deduce that
$|E(G)|+\frac{k}{2}-m\stackrel{k}{\equiv} \frac{k}{2}[s]_2 +\sum_{v\in Z}[d_G(v)]_k$.
This means that $m\stackrel{k}{\equiv}|E(G)|-\sum_{v\in Z}d_G(v)$ 
 if and only if $s$ is odd if and only if 
$|V(G)\setminus Z| \not \stackrel{2}{\equiv} \sum_{v\in V(G)}\lfloor d_G(v)/k\rfloor$.
Hence the first claim is proved.
Suppose, to the contrary, that there is another vertex set $Z_0$ including all vertices $v$ with $[d_G(v)]_k=0$ satisfying $\sum_{v\in Z_0}[d_G(v)]_k+\sum_{v\in V(G)\setminus Z_0}[k-d_G(v)]_k< k$ and $|V(G)\setminus Z_0| \not \stackrel{2}{\equiv} \sum_{v\in V(G)}\lfloor d_G(v)/k\rfloor$.
Since
$Z\neq Z_0$ and $|Z|+| Z_0|$ is even, we clearly have $|Z\Delta Z_0|\ge 2$, where $Z\Delta Z_0$ denotes the symmetric difference of $Z$ and $Z_0$. 
Therefore, $2k-2m>\sum_{v\in Z}[d_G(v)]_k+\sum_{v\in V(G)\setminus Z}[k-d_G(v)]_k+\sum_{v\in Z_0}[d_G(v)]_k+\sum_{v\in V(G)\setminus Z_0}[k-d_G(v)]_k\ge \sum_{v\in Z\Delta Z_0} ([d_G(v)]_k+[k-d_G(v)]_k)\ge k|Z\Delta Z_0|\ge 2k$, which is a contradiction.
Hence the proof is completed.
}\end{proof}
In the following theorem, we give a complete characterization of degree sequences modulo $k$ 
 for the existence of $k$-equitable factorizations in highly edge-connected graphs.
According to the following result, highly edge-connected graphs $G$ satisfying $|E(G)|\stackrel{8}{\equiv}3$
with degree sequence $(2,2,2)$ modulo $8$ admits a $8$-equitable factorization
 while neither of Theorem~\ref{intro:thm:factorization:Z} and Corollary~\ref{cor:sum-min-x-k-x} covers them.
\begin{thm}\label{thm:characterization}
{Let $G$ be a $5(k-1)$-tree-connected graph. Then $G$ admits a $k$-equitable factorization if and only if 
 for all $i\in \{1,\ldots, k\}$ and $v\in V(G)$, there are integers $x_{i}(v)\in \{0, 1\}$ satisfying the following properties:
\begin{enumerate}{
\item [$\bullet$] 
For all $v\in V(G)$, $\sum_{1\le i\le k}x_{i}(v)=[d_G(v)]_k$.
\item [$\bullet$] 
For all $i\in \{1,\ldots,k\}$, $\sum_{v\in V(G)}x_{i}(v)\stackrel{2}{\equiv} \sum_{v\in V(G)}\lfloor d_G(v)/k \rfloor $,
}\end{enumerate}
if and only if 
 for all $i\in \{1,\ldots, k\}$ and $v\in V(G)$, there are integers $y_{i}(v)\in \{0, 1\}$ satisfying the following properties:
\begin{enumerate}{
\item [$\bullet$] 
For all $v\in V(G)$, $\sum_{1\le i\le k}y_{i}(v)=[k-d_G(v)]_k$
\item [$\bullet$] 
For all $i\in \{1,\ldots,k\}$, $\sum_{v\in V(G)}y_{i}(v)\stackrel{2}{\equiv} \sum_{v\in V(G)}\lceil d_G(v)/k \rceil $.
}\end{enumerate}
}\end{thm}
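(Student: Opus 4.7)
The forward direction, deriving the binary sequences from a given $k$-equitable factorization $G_1,\ldots,G_k$, is immediate: set $x_i(v) = d_{G_i}(v) - \lfloor d_G(v)/k\rfloor$ and $y_i(v) = \lceil d_G(v)/k\rceil - d_{G_i}(v)$. Equitability forces both into $\{0,1\}$, the column-sum identities reduce to $\sum_i d_{G_i}(v) = d_G(v)$, and the row-parity conditions follow because $\sum_v d_{G_i}(v) = 2|E(G_i)|$ is even. The two binary characterizations ($x$ and $y$) are interchangeable via the bijection $y_i(v) = 1 - x_i(v)$ when $[d_G(v)]_k > 0$ (and $y_i(v)=0$ otherwise), so the heavy lifting reduces to deducing a $k$-equitable factorization from valid $x_i(v)$.

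For that backward implication, I would choose $Z^* \subseteq V(G)$ minimizing $m^* = [|E(G)| - \sum_{v \in Z^*} d_G(v)]_k$ subject to containing every vertex $v$ with $[d_G(v)]_k = 0$. If $m^* = 0$ then Lemma~\ref{lem:m=0} produces the factorization directly. Otherwise, Lemma~\ref{lem:Z:equitable:minimum} guarantees that $Z^*$ automatically satisfies conditions (1) and (2) of Theorem~\ref{thm:new-sufficient-condition}; if condition (3) also holds, Theorem~\ref{thm:new-sufficient-condition} supplies the factorization.

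The main obstacle is ruling out the remaining possibility that condition (3) fails at $Z^*$. Assuming, toward a contradiction, that $\sum_{v \in Z^*}[d_G(v)]_k + \sum_{v \notin Z^*}[k - d_G(v)]_k = k - 2m^*$, Lemma~\ref{lem:unique:Z} forces the quantities $|V(G) \setminus Z^*|$ and $T := \sum_v \lfloor d_G(v)/k\rfloor$ to have opposite parity. I would then consider the twisted matrix defined by $\tilde x_i(v) := x_i(v)$ for $v \in Z^*$ and $\tilde x_i(v) := 1 - x_i(v)$ for $v \notin Z^*$. Its column sums are $[d_G(v)]_k$ on $Z^*$ and $[k - d_G(v)]_k$ on $V(G) \setminus Z^*$ (using $[d_G(v)]_k > 0$ there, guaranteed by Lemma~\ref{lem:Z:equitable:minimum}), totalling exactly $k - 2m^*$ by the failure of (3). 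Each row sum of $\tilde x$ is congruent to $T + |V(G) \setminus Z^*| \equiv 1 \pmod 2$ by the parity hypothesis on the $x_i$, hence is a nonnegative odd integer and therefore at least $1$. Summing across the $k$ rows yields a total of at least $k$, contradicting the column-sum total $k - 2m^* < k$. Hence condition~(3) must hold, which closes the argument.
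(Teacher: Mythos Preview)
Your proposal is correct and follows essentially the same route as the paper: the forward direction and the $x\leftrightarrow y$ bijection match verbatim, and the backward direction proceeds identically through Lemma~\ref{lem:m=0}, Lemma~\ref{lem:Z:equitable:minimum}, and Theorem~\ref{thm:new-sufficient-condition}, with Lemma~\ref{lem:unique:Z} supplying the parity obstruction. The only cosmetic difference is the order of the final contradiction: the paper observes that the total $\sum_i\big(\sum_{v\in Z^*}x_i(v)+\sum_{v\notin Z^*}(1-x_i(v))\big)=k-2m^*<k$ forces some row to vanish, computes that this zero row has parity $T+|V(G)\setminus Z^*|$, and then invokes Lemma~\ref{lem:unique:Z}; you instead invoke Lemma~\ref{lem:unique:Z} first to make every row odd and hence at least~$1$, forcing the total to be at least $k$ --- the same pigeonhole read in reverse.
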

\begin{proof}
{We only prove the first part, since for checking the second part, it is enough to set $y_i(v)=1-x_i(v)$ when $[d_G(v)]_k\neq 0$, and set $y_i(v)=0$ when $[d_G(v)]_k= 0$.
If $G$ admits a $k$-equitable factorization $G_1,\ldots, G_k$, then we define $x_i(v)=d_{G_i}(v)-\lfloor d_G(v)/k \rfloor$.
Since $\sum_{v\in V(G)}d_{G_i}(v)$ is even, we must have 
$\sum_{v\in V(G)}x_{i}(v)= \sum_{v\in V(G)} d_{G_i}(v) -\sum_{v\in V(G)}\lfloor d_G(v)/k \rfloor
 \stackrel{2}{\equiv} \sum_{v\in V(G)}\lfloor d_G(v)/k \rfloor $. This completes the proof of the necessity.
Conversely, assume that there are integers $x_{i}(v)\in \{0, 1\}$ satisfying the theorem.
Suppose, to the contrary, that $G$ does not have $k$-equitable factorizations. 
Let $Z\subseteq V(G)$ and let
$m = [|E(G)|-\sum_{v\in Z}d_G(v)]_k$.
Assume that $Z$ has the minimum $m$. 
According to Lemma~\ref{lem:m=0}, 
we may assume that $m\neq 0$ and $Z$ contains all vertices $v$ with $[d_G(v)]_k=0$.
By Lemma~\ref{lem:Z:equitable:minimum}, we must have $m\le k/2$,
for all $v\in Z$, $[d_G(v)]_k\le k-2m < k-m$, and for all $v\in V(G)\setminus Z$, $[d_G(v)]_k\ge 2m>m$.
Thus by Theorem~\ref{thm:new-sufficient-condition}, we must have 
$\sum_{v\in Z}[d_G(v)]_k+\sum_{v\in V(G)\setminus Z}[k-d_G(v)]_k= k-2m,$
and so
 $\sum_{v\in Z}\sum_{1\le i\le k}x_{i}(v)+\sum_{v\in V(G)\setminus Z}\sum_{1\le i\le k}(1-x_{i}(v))= k-2m$.
In other words,
 $\sum_{1\le i\le k}\big(\sum_{v\in Z}x_{i}(v)+\sum_{v\in V(G)\setminus Z}(1-x_{i}(v))\big)= k-2m$.
Thus for at least one index $i$, 
$0= \sum_{v\in Z}x_{i}(v)+\sum_{v\in V(G)\setminus Z}(1-x_{i}(v))\stackrel{2}{\equiv}
\sum_{v\in V(G)}\lfloor d_G(v)/k \rfloor+ |V(G)\setminus Z|.$
Therefore, by Lemma~\ref{lem:unique:Z}, we derive a contradiction, as desired.
}\end{proof}
It seems that the tree-connectivity needed in Theorem~\ref{thm:characterization} is not yet sharp and it can be reduced by a constant factor and so we propose the following conjecture for further investigation.
As far as we know, this upper bound cannot be reduced to $k-1$ by considering the graph obtained from $\lceil \frac{2}{3}k\rceil-1$ copies of a triangle provided that $k\stackrel{3}{\equiv}1$ and $k\ge 4$. 
However, this graph is $(k-1)$-tree-connected, it is not $k$-tree-connected 
(more generally, it is not the union of $k$ edge-disjoint factors with size at least two).
\begin{conj}\label{conj:main}
{Let $G$ be a $k$-tree-connected graph. Then $G$ admits a $k$-equitable factorization if and only if 
 for all $i\in \{1,\ldots, k\}$ and $v\in V(G)$, there are integers $x_{i}(v)\in \{0, 1\}$ satisfying the following properties:
\begin{enumerate}{
\item [$\bullet$] 
For all $v\in V(G)$, $\sum_{1\le i\le k}x_{i}(v)=[d_G(v)]_k$
\item [$\bullet$] 
For all $i\in \{1,\ldots,k\}$, $\sum_{v\in V(G)}x_{i}(v)\stackrel{2}{\equiv} \sum_{v\in V(G)}\lfloor d_G(v)/k \rfloor $.
}\end{enumerate}
}\end{conj}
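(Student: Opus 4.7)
The plan is to follow the strategy of Theorem~\ref{thm:characterization} while sharpening every auxiliary tool it uses. The necessity direction transfers verbatim: given a $k$-equitable factorization $G_1,\ldots,G_k$, the binary functions $x_i(v):=d_{G_i}(v)-\lfloor d_G(v)/k\rfloor$ satisfy $\sum_i x_i(v)=[d_G(v)]_k$ automatically, and the parity condition follows because each $\sum_v d_{G_i}(v)$ is even. So all of the content of the conjecture lies in the sufficiency direction under the weakened hypothesis of mere $k$-tree-connectivity.

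For sufficiency I would argue exactly as in the proof of Theorem~\ref{thm:characterization}. Suppose binary $x_i$ exist but $G$ has no $k$-equitable factorization. Choose $Z\subseteq V(G)$ containing every vertex with $[d_G(v)]_k=0$ and minimizing $m:=[|E(G)|-\sum_{v\in Z}d_G(v)]_k$; the case $m=0$ would be covered by a $k$-tree-connected strengthening of Lemma~\ref{lem:m=0}, so assume $m\ge 1$. Lemma~\ref{lem:Z:equitable:minimum} yields the usual constraints on $Z$, and a $k$-tree-connected analogue of Theorem~\ref{thm:new-sufficient-condition} would then force the equality $\sum_{v\in Z}[d_G(v)]_k+\sum_{v\notin Z}[k-d_G(v)]_k=k-2m$. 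By Lemma~\ref{lem:unique:Z}, $|V(G)\setminus Z|$ has the opposite parity to $\sum_v\lfloor d_G(v)/k\rfloor$. Substituting $\sum_i x_i(v)=[d_G(v)]_k$ on $Z$ and $\sum_i(1-x_i(v))=[k-d_G(v)]_k$ off $Z$ into this equality and swapping the order of summation gives $\sum_{i=1}^{k}N_i=k-2m<k$, where $N_i:=\sum_{v\in Z}x_i(v)+\sum_{v\notin Z}(1-x_i(v))\ge 0$. Some $N_i$ therefore vanishes, yet a direct parity check shows $N_i\equiv|V(G)\setminus Z|+\sum_v\lfloor d_G(v)/k\rfloor\pmod 2$, which is odd --- a contradiction.

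The whole problem thus reduces to proving $k$-tree-connected analogues of Theorem~\ref{thm:new-sufficient-condition} and Lemma~\ref{lem:m=0}. Their current proofs lose roughly a factor of five in connectivity because they route through Corollary~\ref{cor:split:tree-connected:3m} with $m=k-1$, which in turn relies on the Lov\'asz--Thomassen--Wu--Zhang modulo orientation theorem at cost $2k-2$. A natural first attempt is to redesign the splitting machinery so that, from a $k$-tree-connected $G$, the bipartite graph $H$ it produces still carries both the required modular out-degree condition and the cut lower bound $d_H(X)\ge k-1$ across every proper $X\supseteq\{z_1,z_2\}$. This looks infeasible without a genuinely new ingredient, since the underlying modulo orientation theorem is itself already at cost $2k-2$.

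The main obstacle, then, is to build the factorization at precisely the sharp threshold $k$. The triangle construction mentioned immediately after the conjecture shows that $k-1$ tree-connectivity does not suffice, so no slack is available. I expect that any successful proof will have to abandon the splitting-plus-orientation route and build the factorization directly: start from an initial factorization satisfying $\lfloor d_G(v)/k\rfloor\le d_{G_i}(v)\le\lceil d_G(v)/k\rceil$ coming from a single modulo $k$ orientation, then use the $x_i(v)$ as a combinatorial target and rebalance vertex parities via alternating-cycle exchanges between the factors. Guaranteeing that such exchanges remain available at exactly $k$ tree-connectivity --- the step at which one is forced to use the global parity hypothesis on the $x_i$ --- is where I expect substantial new combinatorial ideas to be required.
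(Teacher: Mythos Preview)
The statement you were given is Conjecture~\ref{conj:main}, and the paper does \emph{not} prove it: it is explicitly posed as an open problem immediately after Theorem~\ref{thm:characterization}, with the remark that the tree-connectivity threshold cannot be pushed below $k$ because of the triangle construction. So there is no ``paper's own proof'' to compare against.

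Your write-up is an honest diagnosis rather than a proof, and as such it is accurate. The necessity argument you give is exactly the one in Theorem~\ref{thm:characterization}. For sufficiency you correctly trace the existing proof architecture (Lemma~\ref{lem:m=0}, Lemma~\ref{lem:Z:equitable:minimum}, Theorem~\ref{thm:new-sufficient-condition}, Lemma~\ref{lem:unique:Z}) and identify precisely where the $5(k-1)$ cost enters: the splitting machinery of Corollary~\ref{cor:split:tree-connected:3m} sits on top of the $(2k-2)$ modulo-orientation bound of Theorem~\ref{thm:modulo:2k-2:3k-3}, and the extra $3(k-1)$ pays for the cut condition on the bipartite graph $H$. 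Your conclusion that lowering this to $k$ would require either a sharp modulo-orientation result or an entirely different mechanism is exactly why the paper leaves the statement as a conjecture.

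The one place to be careful is your phrase ``a $k$-tree-connected strengthening of Lemma~\ref{lem:m=0}'': that lemma already works under $(2k-2)$-tree-connectivity, and it is not known whether even \emph{that} special case (the $m=0$ case, i.e.\ Theorem~\ref{intro:thm:factorization:Z}) holds at $k$-tree-connectivity. So the obstacle is not merely in Theorem~\ref{thm:new-sufficient-condition}; the $m=0$ case is already open at the conjectured threshold. Your proposed alternating-cycle rebalancing idea is reasonable as a heuristic, but as you yourself say, it is not a proof.
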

We shall below introduce another method to produce either a two-dimensional sequence $x_i(v)$ satisfying Theorem~\ref{thm:characterization} or a special vertex set $Z$ (satisfying Theorem~\ref{thm:characterization:Z}).
\begin{thm}\label{thm:minimal-obstacle-sequence}
Let $G$ be a graph. 
For any $i\in \{1,\ldots, k\}$ and $v\in V(G)$, let $x_{i}(v)\in \{0, 1\}$ satisfying 
$ \sum_{1\le i\le k}x_{i}(v)=[d_G(v)]_k$ for all $v\in V(G)$.
Let $$I=\big\{i\in \{1,\ldots, k\}:\sum_{v\in V(G)}x_{i}(v)\not \stackrel{2}{\equiv} \sum_{v\in V(G)}\lfloor d_G(v)/k \rfloor\big\}.$$
If $I$ is empty, then there is not a vertex set $Z$ satisfying $m=[|E(G)|-\sum_{v\in Z}d_G(v)]_k> 0$ and 
$\sum_{v\in Z}[d_G(v)]_k+\sum_{v\in V(G)\setminus Z}[k-d_G(v)]_k= k-2m$.
Otherwise, if $I$ is nonempty and the two-dimensional sequence $x_i(v)$ produces the minimum size of $I$, then there exists a unique vertex $Z$ including all vertices $v$ with $[d_G(v)]_k=0$ satisfying the following properties:
\begin{enumerate}{
\item [$\bullet$] 
$|I|=2m$, $m=[|E(G)|-\sum_{v\in Z}d_G(v)]_k$, and $\sum_{v\in Z}[d_G(v)]_k+\sum_{v\in V(G)\setminus Z}[k-d_G(v)]_k=k-2m$.

\item [$\bullet$] 
 $Z=\{v\in V(G): x_i(v)=0\}$ for any $i\in I$, and 
$\big| \{v\in Z: x_i(v)= 1\} \cup \{v\in V(G)\setminus Z: x_i(v)= 0\}\big|= 1$ for any $i\in \{1,\ldots, k\}\setminus I$.
}\end{enumerate}
\end{thm}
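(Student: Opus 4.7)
The central tool will be a \emph{column-swap} operation: if at some vertex $v$ two indices $i \neq j$ satisfy $x_i(v) \neq x_j(v)$, exchanging these two values preserves the constraint $\sum_t x_t(v) = [d_G(v)]_k$ while flipping the parities of both $\sum_v x_i(v)$ and $\sum_v x_j(v)$. Equivalently, $I$ changes by the symmetric difference $\{i,j\}$, so any $|I|$-minimizing configuration forces $x_i = x_j$ for all $i, j \in I$. Throughout I will write $\sigma := \sum_v \lfloor d_G(v)/k\rfloor \bmod 2$, and given any candidate $Z \supseteq \{v : [d_G(v)]_k = 0\}$ I use the shorthand $a_i := |\{v \in Z : x_i(v) = 1\}|$ and $b_i := |\{v \notin Z : x_i(v) = 0\}|$.

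For the empty-$I$ clause I argue by contradiction: suppose such a $Z$ exists with $m > 0$ and $\sum_{v \in Z}[d_G(v)]_k + \sum_{v \notin Z}[k - d_G(v)]_k = k - 2m$. Writing $[d_G(v)]_k = \sum_i x_i(v)$ on $Z$ and $[k - d_G(v)]_k = \sum_i (1 - x_i(v))$ off $Z$ (legitimate since $[d_G(v)]_k > 0$ there), the left side equals $\sum_i (a_i + b_i)$. If some $a_i + b_i = 0$ then $x_i$ is the indicator of $V(G) \setminus Z$, so $\sum_v x_i(v) = |V(G) \setminus Z|$; Lemma~\ref{lem:unique:Z} yields $|V(G) \setminus Z| \not\equiv \sigma \pmod{2}$, placing $i$ in $I$, a contradiction. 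Hence every summand is at least $1$, forcing $k - 2m \ge k$, which is impossible.

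For the minimum-nonempty-$I$ clause I first observe $|I|$ is always even, since $\sum_i f_i \equiv \sum_v [d_G(v)]_k \equiv k\sigma \pmod{2}$; thus $|I| \ge 2$. The swap lemma then produces a common column $\chi := x_i$ (any $i \in I$); set $Z := \{v : \chi(v) = 0\}$, which manifestly contains every $v$ with $[d_G(v)]_k = 0$. The crux is the bound $a_i \le 1$ and $b_i \le 1$ for every $i \notin I$. Suppose $v, w \in Z$ both satisfy $x_i(v) = x_i(w) = 1$; choose any $j \in I$ (so $x_j(v) = x_j(w) = 0$) and perform two swaps at $v$ and $w$ on columns $i, j$. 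Two parity flips each leave $I$ unchanged, so the new configuration is still $|I|$-minimum; its column $j$ now disagrees with $\chi$ at $v, w$, while the remaining columns in $I$ (nonempty since $|I| \ge 2$) still equal $\chi$, violating the common-column conclusion applied to the new configuration — contradiction. The $V(G) \setminus Z$ case is symmetric. Finally, since $\sum_v \chi(v) = |V(G) \setminus Z| \not\equiv \sigma \pmod 2$ (apply Lemma~\ref{lem:unique:Z}'s parity condition to any $j \in I$), the identity $\sum_v x_i(v) = |V(G) \setminus Z| + a_i - b_i$ forces $a_i + b_i$ to be odd, hence exactly $1$.

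Summing $a_i + b_i = 1$ over $i \notin I$ yields $\sum_{v \in Z}[d_G(v)]_k + \sum_{v \notin Z}[k - d_G(v)]_k = k - |I|$; setting $|I| = 2m'$, the parity $|V(G) \setminus Z| \not\equiv \sigma \pmod 2$ together with the first assertion of Lemma~\ref{lem:unique:Z} identifies $m' = [|E(G)| - \sum_{v \in Z} d_G(v)]_k$, so $|I| = 2m$. Uniqueness of $Z$ is then the second assertion of the same lemma, and the bullet-point descriptions ($Z = \{v : x_i(v) = 0\}$ for $i \in I$, and a single exceptional vertex per $i \notin I$) are immediate from the definitions of $\chi$, $a_i$, $b_i$. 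The main obstacle is the $a_i, b_i \le 1$ step: one must recognize that the double swap produces \emph{another} $|I|$-minimum configuration (distinct from the original) and then re-invoke the common-column property \emph{inside that new configuration} to extract the contradiction; the parity trick showing $a_i + b_i$ odd then elegantly sidesteps a separate case analysis for ruling out $a_i = b_i = 1$.
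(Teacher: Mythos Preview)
Your proof is correct and follows the same overall architecture as the paper's: establish the common-column property on $I$ via single swaps, bound the disagreement set for each $i\notin I$, sum to obtain $k-|I|$, and then invoke Lemma~\ref{lem:unique:Z} for the identification $|I|=2m$ and uniqueness. The empty-$I$ case is handled identically.

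The one substantive technical difference is in how you bound the disagreement set for $i\notin I$. The paper uses a \emph{three-column} swap: given two disagreement vertices $v,w$ it picks \emph{distinct} $j,j_0\in I$ and flips $x_i(v),x_j(v)$ at $v$ and $x_i(w),x_{j_0}(w)$ at $w$, which drops both $j$ and $j_0$ from $I$ and directly contradicts $|I|$-minimality. You instead perform a \emph{two-column} double swap on $i,j$ alone, observe that $|I|$ is preserved so the new configuration is again $|I|$-minimum, and then re-invoke the common-column property inside that new configuration to get the contradiction. Both work; the paper's trick is one-shot but needs two distinct indices in $I$ up front, while yours recycles the common-column lemma and only needs $|I|\ge 2$ at the very end. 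Your explicit parity argument that $a_i+b_i$ is odd (hence exactly~$1$) is cleaner than the paper's, which simply asserts equality with~$1$ after showing the bound. One small over-citation: the fact that $|V(G)\setminus Z|\not\equiv\sigma\pmod 2$ follows directly from $j\in I$ and the definition of $I$, not from Lemma~\ref{lem:unique:Z}; you only need the lemma afterward to convert this parity into $m'=[|E(G)|-\sum_{v\in Z}d_G(v)]_k$. Also, in the empty-$I$ clause you should note (as the paper does elsewhere) that one may assume $Z$ contains all vertices with $[d_G(v)]_k=0$, since moving such vertices in or out of $Z$ changes neither $m$ nor the sum condition.
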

\begin{proof}
{If $I$ is empty, then by the proof of Theorem~\ref{thm:characterization}, one can conclude that 
$|V(G)\setminus Z| \stackrel{2}{\equiv} \sum_{v\in V(G)}\lfloor d_G(v)/k\rfloor$. Thus by Lemma~\ref{lem:unique:Z}, there is not a vertex set $Z$ satisfying $m=[|E(G)|-\sum_{v\in Z}d_G(v)]_k> 0$ and 
$\sum_{v\in Z}[d_G(v)]_k+\sum_{v\in V(G)\setminus Z}[k-d_G(v)]_k=k-2m$.
If $I$ is not empty, then there is an index $j$ satisfying 
$\sum_{v\in V(G)}x_{j}(v)\not \stackrel{2}{\equiv} \sum_{v\in V(G)}\lfloor d_G(v)/k \rfloor $.
Let $Z=\{v\in V(G):x_j(v)=0\}$. Note that $Z$ includes all vertices $v$ with $[d_G(v)]_k=0$.
If for an index $i\in I$, there is a vertex $v$ with $x_i(v)\neq x_j(v)$, then we can replace 
$x_i(v)$ and $x_j(v)$ by $1-x_i(v)$ and $1-x_j(v)$ to obtain a new sequence with smaller set $I$ which is a contradiction. 
Note that $x_i(v)+x_j(v)=1-x_i(v)+1-x_j(v)$.
Obviously, for all vertices $v$, $k\lfloor d_{G}(v)/k\rfloor+[d_{G}(v)]_k$. Since $\sum_{v\in V(G)}d_G(v)$ is even, one can conclude that 
$\sum_{1\le i\le k} \sum_{v\in V(G)}\lfloor d_{G}(v)/k\rfloor
=
 \sum_{v\in V(G)}k \lfloor d_{G}(v)/k\rfloor
\stackrel{2}{\equiv}
 \sum_{v\in V(G)}[d_G(v)]_k
\stackrel{2}{\equiv} \sum_{v\in V(G)}\sum_{1\le i\le k} x_i(v)= \sum_{1\le i\le k}\sum_{v\in V(G)} x_i(v)$. 
This implies that $|I|$ is even and so $|I|=2m\ge 2$.
Let $j_0\in I\setminus \{j\}$.
If for an index $i\in \{1,\ldots, k\}\setminus I$, there are two vertices $v$ and $w$ satisfying $x_i(v)\neq x_j(v)$ and $x_i(w)\neq x_j(w)$, then we can replace $x_i(v)$, $x_i(w)$, $x_j(v)$, and $x_{j_0}(w)$, by $1-x_i(v)$, $1-x_i(w)$, $1-x_j(v)$, and $1-x_{j_0}(w)$
 to obtain a new sequence with smaller set $I$ which is a contradiction. This implies that 
$\big| \{v\in Z: x_i(v)= 1\} \cup \{v\in V(G)\setminus Z: x_i(v)= 0\}\big|= 1$.
Let $I_0$ be the set of all indices $i\not \in I$ with $| \{v\in Z: x_i(v)= 1\}|=1$ and let $I_1$ be the set of all indices $i\not \in I$ with 
$|\{v\in V(G)\setminus Z: x_i(v)= 0\}|=1$. 
Therefore $|I_1| =\sum_{v\in Z} \sum_{1\le i \le k} x_i(v)= \sum_{v\in Z}[d_G(v)]_k$ and 
$|I_0|=\sum_{v\in V(G)\setminus Z} \sum_{1\le i \le k} (1-x_i(v)) =\sum_{v\in V(G)\setminus Z}[k-d_G(v)]_k$.
Thus $\sum_{v\in Z}[d_G(v)]_k+\sum_{v\in V(G)\setminus Z}[k-d_G(v)]_k=|I_1|+|I_0|=k-|I|=k-2m$.
By Lemma~\ref{lem:unique:Z}, the vertex set $Z$ must be unique.
This completes the proof.
}\end{proof}
%
%
%
%
%
%
%
%
%
\section{Graphs without equitable factorizations}
When a graph does not admit $k$-equitable factorizations, one may ask how may edge-disjoint factors we can find whose degrees are close enough $1/k$ of the corresponding degrees in the main graph. In this section, we answer this question for highly edge-connected graphs (see Corollary~\ref{cor:how-many}).  We shall begin with the following theorem which gives another characterization in terms of vertex sets $Z$ for recognizing highly-edge-connected graphs without equitable factorizations.
\begin{thm}\label{thm:characterization:Z}
{Let $G$ be a $5(k-1)$-tree-connected graph. Then $G$ does not admit a $k$-equitable factorization if and only if 
there is a vertex set $Z$ satisfying the following properties:
\begin{enumerate}{
\item [$\bullet$]
$m = [|E(G)|-\sum_{v\in Z}d_G(v)]_k > 0$.

\item [$\bullet$]
 $\sum_{v\in Z}[d_G(v)]_k+\sum_{v\in V(G)\setminus Z}[k-d_G(v)]_k= k-2m$.
}\end{enumerate}
Moreover, the union of $Z$ and $\{v\in V(G): d_G(v)]_k \stackrel{k}{\equiv} 0\}$ is unique.
}\end{thm}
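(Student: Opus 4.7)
The plan is to derive Theorem~\ref{thm:characterization:Z} as a direct consequence of Theorem~\ref{thm:characterization} together with Theorem~\ref{thm:minimal-obstacle-sequence}, and to obtain the uniqueness clause from Lemma~\ref{lem:unique:Z}. The key observation is that those earlier results already bridge the two descriptions: Theorem~\ref{thm:characterization} identifies the existence of a $k$-equitable factorization with the existence of a binary sequence $x_i(v)$ meeting two conditions, and Theorem~\ref{thm:minimal-obstacle-sequence} translates the failure of such a sequence into the existence of an obstructing vertex set $Z$. So the current theorem is essentially the contrapositive assembly of those two statements.

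For the forward direction, I would begin with an arbitrary assignment $x_i(v) \in \{0,1\}$ satisfying $\sum_{1 \le i \le k} x_i(v) = [d_G(v)]_k$ for every $v$, chosen to minimise the size of the exceptional index set $I$ introduced in Theorem~\ref{thm:minimal-obstacle-sequence}. Theorem~\ref{thm:characterization} ensures $I \ne \emptyset$ at the minimum, for otherwise $G$ would admit a $k$-equitable factorization, contradicting the hypothesis. Applying the second clause of Theorem~\ref{thm:minimal-obstacle-sequence} to this minimising sequence then yields a vertex set $Z \supseteq \{v : [d_G(v)]_k = 0\}$ satisfying $|I| = 2m$, $m = [|E(G)| - \sum_{v \in Z} d_G(v)]_k$, and the required identity $\sum_{v \in Z}[d_G(v)]_k + \sum_{v \in V(G)\setminus Z}[k - d_G(v)]_k = k - 2m$; since $I$ is non-empty, $m \ge 1 > 0$.

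For the reverse direction, I would argue by contraposition. If $G$ does admit a $k$-equitable factorization, then by Theorem~\ref{thm:characterization} there exists a sequence $x_i(v)$ with the required parity property, so $I = \emptyset$; the first clause of Theorem~\ref{thm:minimal-obstacle-sequence} then forbids the existence of any $Z$ with $m > 0$ satisfying the claimed sum identity, contradicting the hypothesis that such a $Z$ exists.

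For the uniqueness of $Z \cup \{v \in V(G) : [d_G(v)]_k = 0\}$, I would observe that adjoining to $Z$ any vertex $v$ with $[d_G(v)]_k = 0$ alters neither the residue $m$ (because $d_G(v)$ is divisible by $k$) nor either of the two sums appearing in the hypotheses (because both $[d_G(v)]_k$ and $[k-d_G(v)]_k$ vanish), so Lemma~\ref{lem:unique:Z} applies to the enlarged set and forces it to be uniquely determined. I expect the technical argument itself to be routine; the only point requiring care, and the closest thing to an obstacle, is verifying that this enlargement is truly invariant, so that the hypotheses in Theorem~\ref{thm:characterization:Z} line up cleanly with those of Lemma~\ref{lem:unique:Z}.
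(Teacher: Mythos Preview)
Your proposal is correct. The reverse direction and the uniqueness clause match the paper's argument essentially verbatim: the paper also invokes Theorem~\ref{thm:characterization} to obtain a sequence $x_i(v)$ with empty $I$, then appeals to Lemma~\ref{lem:unique:Z} (via the parity computation in the proof of Theorem~\ref{thm:characterization}) to rule out any obstructing~$Z$.

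For the forward direction you take a slightly different route. The paper works directly with vertex sets: it picks $Z$ minimizing $m=[|E(G)|-\sum_{v\in Z}d_G(v)]_k$ (Lemma~\ref{lem:Z:equitable:minimum}), uses Lemma~\ref{lem:m=0} to force $m>0$, and then Theorem~\ref{thm:new-sufficient-condition} to pin down the sum as $k-2m$. You instead go through the binary-sequence side, minimizing $|I|$ and invoking Theorem~\ref{thm:minimal-obstacle-sequence} to produce~$Z$. Both are valid; your route is cleaner in that it reuses Theorem~\ref{thm:minimal-obstacle-sequence} as a black box rather than re-running the minimization-over-$Z$ argument, at the cost of depending on a somewhat heavier packaged result. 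The paper's route is marginally more self-contained for this direction, since it does not need the $x_i(v)$ machinery at all to exhibit~$Z$.
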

\begin{proof}
{First assume that $G$ does not admit a $k$-equitable factorization. Then, there is a vertex set $Z$ satisfying Lemma~\ref{lem:Z:equitable:minimum}. 
By Lemma~\ref{lem:m=0}, we must have $m\neq 0$. Thus by Theorem~\ref{thm:new-sufficient-condition}, $\sum_{v\in Z}[d_G(v)]_k+\sum_{v\in V(G)\setminus Z}[k-d_G(v)]_k= k-2m$.  
Note that $Z\cup \{v\in V(G): d_G(v)]_k \stackrel{k}{\equiv} 0\}$ is unique according to Lemma~\ref{lem:unique:Z}.
Conversely, assume that $G$ admit a $k$-equitable factorization. 
Thus there is a two-dimensional sequence $x_i(v)$ satisfying Theorem~\ref{thm:characterization}. 
If there is a vertex set $Z$ including all vertices $v$ with $[d_G(v)]_k=0$ satisfying $m=[|E(G)|-\sum_{v\in Z}d_G(v)]_k> 0$ and 
$\sum_{v\in Z}[d_G(v)]_k+\sum_{v\in V(G)\setminus Z}[k-d_G(v)]_k=k-2m$,
then by the proof of Theorem~\ref{thm:characterization}, one can conclude that 
$|V(G)\setminus Z| \stackrel{2}{\equiv} \sum_{v\in V(G)}\lfloor d_G(v)/k\rfloor$. 
Hence by Lemma~\ref{lem:unique:Z}, we derive a contradiction.
This completes the proof.
}\end{proof}
This vertex $Z$ produces a good structure for finding almost equitable factorizations.
For this purpose, we only need the following lemma that provides a relationship between orientations and equitable factorizations of graphs.
\begin{lem}{\rm (\cite{EquitableFactorizations-2022})}\label{lem:factorization:main:directed:original}
{Every directed graph $G$ can be edge-decomposed into $k$ factors $G_1,\ldots, G_k$
 such that for each $G_i$, $||E(G_i)|-|E(G)|/k|<1$ and for each $v\in V(G_i)$, there is a tripartition $I_0(v), I_1(v), I_2(v)$ of $\{1,\ldots,k\}$
in which for all $i\in I_p(v)$, 
\begin{enumerate}{
\item [$\bullet$] 
 If $[d^+_G(v)]_k \in \{0, [d_G(v)]_k\}$,
then $\lfloor d_G(v)/k \rfloor \le d_{G_i}(v) \le \lceil d_G(v)/k \rceil$.

\item [$\bullet$]
 If $[d^+_G(v)]_k \le [d_G(v)]_k$, then $d_{G_i}(v)=\lfloor d_G(v)/k \rfloor+p$ and $|I_2(v)|= |I_0(v)|+[d_G(v)]_k-k$. 
\item [$\bullet$] 
 If $[d^+_G(v)]_k > [d_G(v)]_k$, then $d_{G_i}(v)=\lfloor d_G(v)/k \rfloor-1+p$ and $|I_2(v)|= |I_0(v)|+[d_G(v)]_k$.

}\end{enumerate}
 In particular, 
$|I_0(v)|\le \min\{k- [d^+_G(v)]_k, k-[d^-_G(v)]_k\}$, 
$|I_1(v)|\ge  | [d^+_G(v)]_k- [d^-_G(v)]_k|$, and
$|I_2(v)|\le \min\{ [d^+_G(v)]_k, [d^-_G(v)]_k\}$.
}\end{lem}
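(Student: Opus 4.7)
\emph{Proof proposal.} The plan is to reduce to de Werra's theorem by exploiting the orientation. Given the directed graph $G$, construct the bipartite graph $H = G_D$ (as defined at the beginning of Section~2) by splitting every vertex $v$ into $v^+$ and $v^-$ so that every arc out of $v$ becomes incident with $v^+$ and every arc into $v$ becomes incident with $v^-$, while every loop at $v$ is turned into an edge $v^+v^-$. Then $d_H(v^+) = d^+_G(v)$, $d_H(v^-) = d^-_G(v)$, and $|E(H)| = |E(G)|$. Applying Theorem~\ref{thm:Werra} yields an equitable factorization $H_1,\ldots,H_k$ of $H$, which I would strengthen so that $||E(H_i)| - |E(H)|/k| < 1$ also holds; this strengthening is obtained by an alternating-path recoloring argument along maximal bichromatic subgraphs, exactly analogous to the one used at the end of the proof of Theorem~\ref{thm:strongly-equitable:bipartite}. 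Let $G_i$ be the factor of $G$ obtained by identifying $v^+$ with $v^-$ in $H_i$; then $|E(G_i)| = |E(H_i)|$, giving the size bound directly.

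The equitable property at each split vertex forces $d_{H_i}(v^+) \in \{\lfloor d^+_G(v)/k\rfloor, \lceil d^+_G(v)/k\rceil\}$ and $d_{H_i}(v^-) \in \{\lfloor d^-_G(v)/k\rfloor, \lceil d^-_G(v)/k\rceil\}$ for every $i$. Define the tripartition $I_0(v), I_1(v), I_2(v)$ of $\{1,\ldots,k\}$ by placing $i$ in $I_2(v)$ when both $d_{H_i}(v^+)$ and $d_{H_i}(v^-)$ are ceilings, in $I_0(v)$ when both are floors, and in $I_1(v)$ in the two mixed sub-cases. Since $d_{G_i}(v) = d_{H_i}(v^+) + d_{H_i}(v^-)$, the value of $d_{G_i}(v)$ is determined by which part of the tripartition contains $i$, together with the residues $[d^\pm_G(v)]_k$.

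The main computation is the case split on whether $s(v) := [d^+_G(v)]_k + [d^-_G(v)]_k$ is less than or at least $k$, since this controls whether $[d_G(v)]_k = s(v)$ or $s(v)-k$. In the former case $[d^+_G(v)]_k \le [d_G(v)]_k$, and summing the floor/ceiling values shows the three candidate degrees are $\lfloor d_G(v)/k\rfloor + p$ for $p = 0,1,2$, matching the second bullet; in the latter case $[d^+_G(v)]_k > [d_G(v)]_k$ and the three candidates shift by one to $\lfloor d_G(v)/k\rfloor - 1 + p$, matching the third bullet. The degenerate situation $[d^+_G(v)]_k \in \{0, [d_G(v)]_k\}$ means $d^+_G(v)$ or $d^-_G(v)$ is divisible by $k$, which forces one of $I_0(v), I_2(v)$ to be empty and collapses the range of $d_{G_i}(v)$ to $\{\lfloor d_G(v)/k\rfloor, \lceil d_G(v)/k\rceil\}$, matching the first bullet.

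For the closing inequalities, the equitable property produces exactly $[d^+_G(v)]_k$ indices $i$ with $d_{H_i}(v^+)$ at the ceiling and $k - [d^+_G(v)]_k$ at the floor, and similarly for $v^-$. Writing $a = |I_2(v)|$, $d = |I_0(v)|$, and $b,c$ for the two mixed counts yields $a+b = [d^+_G(v)]_k$, $a+c = [d^-_G(v)]_k$, $a+b+c+d = k$, and $0 \le a \le \min\{[d^+_G(v)]_k,[d^-_G(v)]_k\}$. Direct elimination then gives $|I_2(v)| \le \min\{[d^+_G(v)]_k,[d^-_G(v)]_k\}$, $|I_0(v)| \le \min\{k-[d^+_G(v)]_k,k-[d^-_G(v)]_k\}$, and $|I_1(v)| = [d^+_G(v)]_k + [d^-_G(v)]_k - 2a \ge |[d^+_G(v)]_k - [d^-_G(v)]_k|$, and also $|I_2(v)| - |I_0(v)| = s(v) - k$, which is $[d_G(v)]_k - k$ or $[d_G(v)]_k$ according to the case split, recovering the explicit identities in the second and third bullets. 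The main obstacle I expect is the size-balanced strengthening of de Werra's theorem, but as noted this is a standard bichromatic-path argument already executed in the present paper.
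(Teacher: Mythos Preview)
Your proposal is correct and follows essentially the same route as the paper. The paper invokes Theorem~2.1(i) of \cite{EquitableFactorizations-2022} as a black box to obtain a factorization with $||E(G_i)|-|E(G)|/k|<1$ and $|d^\pm_{G_i}(v)-d^\pm_G(v)/k|<1$, whereas you unpack that step explicitly via the bipartite split $G_D$, de Werra's theorem, and an alternating-path size-balancing; the ensuing definition of $I_0,I_1,I_2$ and the arithmetic verification of the three bullets and the closing inequalities match the paper's proof line for line.
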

\begin{proof}
{By Theorem 2.1 (i) in \cite{EquitableFactorizations-2022}, the graph $G$ can be edge-decomposed into $k$ factors $G_1,\ldots, G_k$
 such that for each $G_i$, $||E(G_i)|-|E(G)|/k|<1$, and for each $v\in V(G_i)$, 
$|d^+_{G_i}(v)-d^+_{G}(v)/k|<1$ and $|d^-_{G_i}(v)-d^-_{G}(v)/k|<1$.
We define
$I_0(v)=\{i: d^+_{G_i}(v)=\lfloor d^+_{G}(v)/k\rfloor \text{ and } d^-_{G_i}(v)=\lfloor d^-_{G}(v)/k\rfloor \}$ and 
$I_2(v)=\{i: d^+_{G_i}(v)=\lfloor d^+_{G}(v)/k\rfloor+1\text{ and } d^-_{G_i}(v)=\lfloor d^-_{G}(v)/k\rfloor +1 \}$.
Since $\sum_{1\le i\le k}(d^+_{G_i}(v)-\lfloor d^+_{G}(v)/k\rfloor) = [d^+_G(v)]_k$, we must have $|I_0(v)|\le k- [d^+_G(v)]_k$.
Similarly, $|I_0(v)|\le k- [d^-_G(v)]_k$.
Since $\sum_{1\le i\le k}(\lfloor d^+_{G}(v)/k\rfloor+1- d^+_{G_i}(v)) = k-[d^+_G(v)]_k$, we must have $|I_2(v)|\le [d^+_G(v)]_k$.
Similarly, $|I_2(v)|\le [d^-_G(v)]_k$.
Therefore, $|I_1(v)|=k-|I_0(v)|-|I_2(v)|\ge  | [d^+_G(v)]_k- [d^-_G(v)]_k|$.
It is easy to check that 
$d_G(v)-k (\lfloor d^+_{G}(v)/k\rfloor+\lfloor d^-_{G}(v)/k\rfloor)=
\sum_{1\le i\le k}(d^+_{G_i}(v)-\lfloor d^+_{G}(v)/k\rfloor+d^-_{G_i}(v)-\lfloor d^-_{G}(v)/k\rfloor)=
|I_1(v)|+2|I_2(v)|=k+|I_2(v)|-|I_0(v)|$.
If $[d^+_G(v)]_k \le [d_G(v)]_k$, then
$ \lfloor d_{G}(v)/k\rfloor=\lfloor d^+_{G}(v)/k\rfloor+\lfloor d^-_{G}(v)/k\rfloor$ and so $|I_2(v)|= |I_0(v)|+[d_G(v)]_k-k$. 
If $[d^+_G(v)]_k > [d_G(v)]_k$, then 
$\lfloor d_{G}(v)/k\rfloor-1=\lfloor d^+_{G}(v)/k\rfloor+\lfloor d^-_{G}(v)/k\rfloor$ and so $|I_2(v)|= |I_0(v)|+[d_G(v)]_k$.
These complete the proof.
}\end{proof}
Now, we are ready to prove the main result of this section motivated by Theorem~\ref{thm:characterization:Z}. We say below that a graph $G$ is {\it $u$-almost $f$-factor} if for all vertices $v\in V(G)\setminus \{u\}$, $d_{G_i}(v)=f(v)$, and $d_{G_i}(u)=f(u)\pm 1$.
\begin{thm}\label{thm:not-equitable}
{Let $G$ be a $(3k-3)$-edge-connected or $(2k-2)$-tree-connected graph. If there exists a set $Z\subseteq V(G)$
satisfying $m = [|E(G)|-\sum_{v\in Z}d_G(v)]_k > 0$ and $\sum_{v\in Z}[d_G(v)]_k+\sum_{v\in V(G)\setminus Z}[k-d_G(v)]_k= k-2m$, then $G$ can be edge-decomposed into almost $f$-factors $G_1,\ldots, G_k$, where for all vertices $v$,
$$f(v)=\begin{cases}
\lfloor \frac{1}{k}d_G(v) \rfloor,	&\text{when $v\in Z$};\\
\lceil \frac{1}{k}d_G(v)\rceil ,	&\text{when $v\in V(G)\setminus Z$}.
\end {cases}$$
More precisely, $G$ can be edge-decomposed into $k$ factors $G_1,\ldots, G_k$ such that exactly $s(v)$ of them are $v$-almost $f$-factor for all vertices $v$ if and only if for each $v\in Z$, $s(v)-[d_G(v)]_k $ is a nonnegative even integer, for each $v\in V(G)\setminus Z$, $s(v)-[k-d_G(v)]_k $ is a nonnegative even integer, and $\sum_{v\in V(G)}s(v)=k$.
}\end{thm}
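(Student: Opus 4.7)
I plan to establish both directions of the ``more precisely'' characterization; the bare existence assertion will then follow by exhibiting any admissible function $s$.

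\emph{Necessity.} Suppose $G_1,\ldots,G_k$ is the desired decomposition, with $G_i$ a $u_i$-almost $f$-factor, and set $s(v)=|\{i:u_i=v\}|$, so $\sum_v s(v)=k$. For $v\in Z$, let $a(v)$ and $b(v)$ count factors in which $v$ is exceptional with degree $f(v)+1$ and $f(v)-1$ respectively. The identity $\sum_i d_{G_i}(v)=d_G(v)=kf(v)+[d_G(v)]_k$ forces $a(v)-b(v)=[d_G(v)]_k$, and together with $a(v)+b(v)=s(v)$ and $a(v),b(v)\ge 0$ this makes $s(v)-[d_G(v)]_k$ a nonnegative even integer. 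The case $v\in V(G)\setminus Z$ is symmetric, using $f(v)=\lceil d_G(v)/k\rceil$ and $[k-d_G(v)]_k$ in place of $[d_G(v)]_k$.

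\emph{Sufficiency.} Given admissible $s$, define $p\colon V(G)\to\mathbb{Z}_k$ by $p(v)=(s(v)+[d_G(v)]_k)/2$ for $v\in Z$ and $p(v)=(s(v)-[k-d_G(v)]_k)/2$ for $v\in V(G)\setminus Z$; these are nonnegative integers less than $k$ since $s(v)\le k$. A short modular computation combining $\sum_v s(v)=k$, the congruence $|E(G)|\stackrel{k}{\equiv}\sum_{v\in Z}d_G(v)+m$, and the hypothesis $\sum_{v\in Z}[d_G(v)]_k+\sum_{v\in V(G)\setminus Z}[k-d_G(v)]_k=k-2m$ yields $\sum_v p(v)\stackrel{k}{\equiv}|E(G)|$, so the edge-connectivity hypothesis enables Theorem~\ref{thm:modulo:2k-2:3k-3} to produce a $p$-orientation of $G$. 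Applying Lemma~\ref{lem:factorization:main:directed:original} to this orientation gives factors $G_1,\ldots,G_k$ with tripartitions $(I_0(v),I_1(v),I_2(v))$. Case-splitting on whether $v\in Z$ with $s(v)=[d_G(v)]_k$ (first bullet of the lemma), $v\in Z$ with $s(v)>[d_G(v)]_k$ (third bullet), or $v\in V(G)\setminus Z$ (first or second bullet, depending on $[d_G(v)]_k$), I would verify that in every case $\min\{[d^+_G(v)]_k,[d^-_G(v)]_k\}=p(v)$ and that the desired outcome $|I_0(v)|+|I_2(v)|=s(v)$ holds exactly when $|I_2(v)|$ attains the bound $p(v)$.

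The technical crux has two coupled parts: (i) forcing $|I_2(v)|=p(v)$ at every vertex simultaneously, and (ii) arranging the sets $I_0(v)\cup I_2(v)$, as $v$ ranges over $V(G)$, to partition $\{1,\ldots,k\}$, so that each $G_i$ has exactly one exceptional vertex. Both reduce, after splitting $G$ along the orientation into a bipartite graph in the manner of Corollary~\ref{cor:F_1-F_2}, to choosing a bipartite equitable decomposition with the extra structure that at each $v$ the indices with $d^+_{G_i}(v)=\lceil d^+_G(v)/k\rceil$ are nested within those with $d^-_{G_i}(v)=\lceil d^-_G(v)/k\rceil$ (or conversely), and that across different vertices these exceptional index sets interleave to form a full partition. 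I anticipate this simultaneous coordination to be the most delicate step; my plan is to build such a decomposition directly on the bipartite split graph along the lines used in the proof of Theorem~\ref{thm:strongly-equitable:bipartite}, rather than invoke Theorem~\ref{thm:Werra} as a black box.

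Finally, for the first (existence) assertion, I take $s(v)=[d_G(v)]_k$ on $Z$ and $s(v)=[k-d_G(v)]_k$ on $V(G)\setminus Z$, which sum to $k-2m$ by hypothesis, and then add $2m$ additional units distributed as pairs among vertices (preserving the required parities) to raise $\sum_v s(v)$ to $k$; the sufficiency direction then produces the desired decomposition into $k$ almost $f$-factors.
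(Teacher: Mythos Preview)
Your necessity argument and your choice of orientation are correct and agree with the paper's: your $p(v)$ is exactly $[d^+_G(v)]_k$ in the paper's orientation. The gap is in what you call the ``technical crux''. You propose to engineer the bipartite split so that the exceptional index sets $I_0(v)\cup I_2(v)$ partition $\{1,\ldots,k\}$ by hand, but no such construction is needed: once you have \emph{any} factorization from Lemma~\ref{lem:factorization:main:directed:original}, a short parity-plus-counting argument forces the partition automatically.

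The observation you are missing is that $\sum_{v}f(v)$ is odd. This is the content of Lemma~\ref{lem:unique:Z}: the hypotheses $m=[|E(G)|-\sum_{v\in Z}d_G(v)]_k>0$ and $\sum_{v\in Z}[d_G(v)]_k+\sum_{v\notin Z}[k-d_G(v)]_k=k-2m$ imply $|V(G)\setminus Z|\not\equiv\sum_v\lfloor d_G(v)/k\rfloor\pmod 2$, and since (after the harmless normalisation that $Z$ contains all vertices of degree divisible by $k$) one has $\sum_v f(v)=\sum_v\lfloor d_G(v)/k\rfloor+|V(G)\setminus Z|$, this sum is odd. Because $\sum_v d_{G_i}(v)$ is even for each $i$, every factor $G_i$ must have at least one vertex with $d_{G_i}(v)\ne f(v)$. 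On the other hand, the inequalities already stated in Lemma~\ref{lem:factorization:main:directed:original} (namely $|I_0(v)|\le\min\{k-[d^+_G(v)]_k,k-[d^-_G(v)]_k\}$ and $|I_2(v)|\le\min\{[d^+_G(v)]_k,[d^-_G(v)]_k\}$, together with the identity $|I_2(v)|=|I_0(v)|+[d_G(v)]_k$ or $|I_2(v)|=|I_0(v)|+[d_G(v)]_k-k$) give that the number of factors in which $v$ is exceptional is at most $s(v)$. Summing over $v$ and over $i$ yields $k\le\sum_v s(v)=k$, so equality holds throughout: each $G_i$ has exactly one exceptional vertex, and $v$ is exceptional in exactly $s(v)$ of the factors. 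There is nothing further to coordinate, and no appeal to Corollary~\ref{cor:F_1-F_2} or to the machinery of Theorem~\ref{thm:strongly-equitable:bipartite} is needed.
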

\begin{proof}
{First assume that there is a factorization $G_1,\ldots, G_k$ satisfying the theorem.
For all vertices $v$, we set $I_q(v)=\{i:d_{G_i}(v)=f(v)+q\}$, where $q\in \{-1, 1\}$.
If $v\in Z$, then $s(v)=|I_1(v)|+|I_{-1}(v)|\ge |I_1(v)|-|I_{-1}(v)|=\sum_{1\le i\le k} (d_{G_i}(v)-f(v))=[d_G(v)]_k$ which implies that $s(v)\stackrel{2}{\equiv}[d_G(v)]_k$ as well.
If $v\in V(G)\setminus Z$, then $s(v)=|I_1(v)|+|I_{-1}(v)|\ge -|I_1(v)|+|I_{-1}(v)|=\sum_{1\le i\le k} (f(v)-d_{G_i}(v))=[k-d_G(v)]_k$ which implies that $s(v)\stackrel{2}{\equiv}[k-d_G(v)]_k$ as well.

Conversely, assume that for each $v\in Z$, $s(v)-[d_G(v)]_k $ is a nonnegative even integer and for each $v\in V(G)\setminus Z$, $s(v)-[k-d_G(v)]_k $ is a nonnegative even integer.
We may assume that $Z$ contains all vertices with degree divisible by $k$. 
For each $v\in Z$, we define $\eta(v)=\frac{1}{2}(s(v)-[d_G(v)]_k)$, and 
for each $v\in V(G)\setminus Z$, we define $\eta(v)=\frac{1}{2}(s(v)-[k-d_G(v)]_k)$.
By the assumption, $\sum_{v\in V(G)}\eta(v)=m$.
According to Theorem~\ref{thm:modulo:2k-2:3k-3}, the graph $G$ admits an orientation such that
for each $v\in Z$, $d_G^+(v)\stackrel{k}{\equiv}d_G(v)+\eta(v)$ 
and for each $v\in V(G)\setminus Z$, $d_G^+(v)\stackrel{k}{\equiv}\eta(v)$. 
If $v\in Z$, then $[d_G(v)]_k+\eta(v)\le k-2m+m<k$ and so $[d^+_G(v)]_k\ge [d_G(v)]_k$.
If $v\in V(G)\setminus Z$, then $[d_G(v)]_k \ge 2m$ and so $0\le [d^+_G(v)]_k\le m < [d_G(v)]_k$.

By Lemma~\ref{lem:factorization:main:directed:original}, the graph has a factorization $G_1,\ldots, G_k$
such that for each vertex $v$, $d_{G_i}(v)-f(v)\in \{-1,0,1\}$.
In particular, for each $v\in Z$, the number of factors $G_i$ with $d_{G_i}(v)-f(v)\in \{-1,1\}$ is at most $[d_G(v)]_k+2\eta (v)$, and for each $v\in V(G)\setminus Z$, the number of factors $G_i$ with $d_{G_i}(v)-f(v)\in \{-1,1\}$ is at most $[k-d_G(v)]_k+2\eta (v)$. 
On the other hand, by Lemma~\ref{lem:unique:Z}, $\sum_{v\in V(G)}f(v)$ must be odd.
Thus for any factor $G_i$, there is a vertex $v$ with $d_{G_i}(v)-f(v)\in \{-1,1\}$.
 Since $\sum_{v\in Z}([d_G(v)]_k+2\eta (v))+\sum_{v\in V(G)\setminus Z}([k-d_G(v)]_k+2\eta (v))= k$, one can deduce that
for each $v\in Z$, the number of factors $G_i$ with $d_{G_i}(v)-f(v)\in \{-1,1\}$ is precisely $[d_G(v)]_k+2\eta(v)$, for each $v\in V(G)\setminus Z$, the number of factors $G_i$ with $d_{G_i}(v)-f(v)\in \{-1,1\}$ is precisely $[k-d_G(v)]_k+2\eta(v)$, and for any factor $G_i$, there is exactly one vertex $v$ with $d_{G_i}(v)-f(v)\in \{-1,1\}$. Hence the assertion is proved.
}\end{proof}
In the following corollary, we consider a family of graphs satisfying $k-2m=0$ which develops Theorem 2 in \cite{Thomassen-2020} to regular graphs of odd order.
\begin{cor}\label{cor:z1-zk}
{Let $G$ be a $(3k-3)$-edge-connected  graph  with degrees divisible by $k$.
If $\sum_{v\in V(G)}d_G(v)/k$ is odd, then $G$ can be edge-decomposed into factors $G_1,\ldots, G_k$ such that for each $v\in V(G_i)\setminus \{z_i\}$, $d_{G_i}(v)=d_G(v)/k$, and 
$d_{G_i}(z_i)\in \{d_G(v)/k-1,d_G(v)/k+1\}$, where  $z_1,\ldots, z_k\in V(G)$ and  $z_{2i-1}=z_{2i}$ (repetition of vertices is permitted). 
}\end{cor}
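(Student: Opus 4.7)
The plan is to invoke Theorem~\ref{thm:not-equitable} with the choice $Z = V(G)$. First I verify the hypotheses. Because every $d_G(v)$ is divisible by $k$, we have $[d_G(v)]_k = 0$ for all $v$, and $2|E(G)| = \sum_v d_G(v)$ is divisible by $k$. The assumption that $\sum_v d_G(v)/k = 2|E(G)|/k$ is odd therefore forces $k$ to be even; writing $2|E(G)|/k = 2t+1$ yields $|E(G)| \equiv k/2 \pmod{k}$. Taking $Z = V(G)$ gives
$$m = [|E(G)| - \textstyle\sum_{v \in Z} d_G(v)]_k = [-|E(G)|]_k = k/2 > 0,$$
and $\sum_{v\in Z}[d_G(v)]_k + \sum_{v \notin Z}[k-d_G(v)]_k = 0 = k - 2m$. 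So the structural hypotheses of Theorem~\ref{thm:not-equitable} hold, with the function $f$ produced by the theorem satisfying $f(v) = \lfloor d_G(v)/k \rfloor = d_G(v)/k$ for every $v$.

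Next I design the multiplicity function $s$ that records how many factors will have each vertex as their exceptional vertex. Any assignment with $s(v)$ a nonnegative even integer (which is what $s(v) - [d_G(v)]_k = s(v)$ being a nonnegative even integer reduces to here) and $\sum_v s(v) = k$ is permissible by Theorem~\ref{thm:not-equitable}. Concretely, I pick $k/2$ vertices $w_1, \ldots, w_{k/2} \in V(G)$, with repetition allowed (for instance all equal to a single chosen vertex), and set $s(w) = 2\,|\{\,j : w_j = w\,\}|$. Theorem~\ref{thm:not-equitable} then produces an edge-decomposition $G_1, \ldots, G_k$ in which exactly $s(v)$ of the factors are $v$-almost $f$-factor; moreover, the final sentence of that theorem guarantees that each factor has exactly one exceptional vertex, which I take to be $z_i$ for the factor $G_i$.

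Finally I reorder the factors to achieve $z_{2i-1} = z_{2i}$. For each vertex $v \in V(G)$, the number of indices $i$ with $z_i = v$ is exactly $s(v)$, which is even by construction. Hence the $k$ factors partition into pairs sharing a common exceptional vertex; listing these pairs consecutively as $(G_1, G_2), (G_3, G_4), \ldots, (G_{k-1}, G_k)$ produces an ordering in which $z_{2i-1} = z_{2i}$ for every $i$, and the claimed decomposition is immediate. The only mildly subtle step is the parity computation at the start that forces $k$ to be even and pins down $m = k/2$; everything after that is bookkeeping made available by the flexibility in choosing the $w_j$'s.
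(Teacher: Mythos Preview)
Your proof is correct and follows essentially the same route as the paper, which also reduces the corollary to a single application of Theorem~\ref{thm:not-equitable}. The only cosmetic difference is that the paper sets $Z=\emptyset$ (so $f(v)=\lceil d_G(v)/k\rceil$) while you set $Z=V(G)$ (so $f(v)=\lfloor d_G(v)/k\rfloor$); since every degree is divisible by $k$, both choices yield the same $f$, the same $m=k/2$, and the same evenness constraint on $s$. One small remark: the corollary is intended for \emph{prescribed} $z_1,\ldots,z_k$ with $z_{2i-1}=z_{2i}$, and the paper accordingly sets $s(v)=|\{i:z_i=v\}|$; your argument covers this too, since you observe that any even nonnegative $s$ with $\sum_v s(v)=k$ is admissible and your $w_j$'s may be taken to be the given $z_{2j-1}=z_{2j}$.
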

\begin{proof}
{Apply Theorem~\ref{thm:not-equitable} with setting $Z=\emptyset$, $m=k/2$, $f(v)=d_G(v)/k$, and $s(v)=|\{i:z_i=v\}|$.
}\end{proof}
\begin{cor}\label{cor:how-many}
{Let $G$ be a $(3k-3)$-edge-connected graph. If there exists a set $Z\subseteq V(G)$
satisfying $m = [|E(G)|-\sum_{v\in Z}d_G(v)]_k >0 $ and $\sum_{v\in Z}[d_G(v)]_k+\sum_{v\in V(G)\setminus Z}[k-d_G(v)]_k= k-2m > 0$, then $G$ contains $k-m$ edge-disjoint factors $G_1,\ldots, G_{k-m}$ satisfying  $|d_{G_i}(v)-d_G(v)/k|<1$ for all $v\in V(G_i)$.
}\end{cor}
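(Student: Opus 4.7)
The plan is to apply Theorem~\ref{thm:not-equitable} with a profile $s$ that concentrates all the slack at a single vertex, and then read off, from the underlying Lemma~\ref{lem:factorization:main:directed:original}, how many of the resulting factors lie on the ``good side'' at their unique anomalous vertex. Since $k - 2m = \sum_{v \in Z}[d_G(v)]_k + \sum_{v \in V(G)\setminus Z}[k - d_G(v)]_k > 0$, there exists some $v_0 \in V(G)$ with $[d_G(v_0)]_k > 0$. I would set $s(v_0)$ to its minimum admissible value plus $2m$, and $s(v)$ to its minimum admissible value at every other vertex; then $\sum_v s(v) = k$ and the parity conditions hold (the shift $2m$ is even), so Theorem~\ref{thm:not-equitable} delivers an edge-decomposition $G = G_1 \cup \cdots \cup G_k$ in which each $G_i$ is a $v$-almost $f$-factor for exactly one vertex $v$. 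In the notation of the proof of Theorem~\ref{thm:not-equitable}, this corresponds to $\eta(v_0) = m$ and $\eta(v) = 0$ for $v \neq v_0$.

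Next I would count equitable factors. Every factor $G_i$ is automatically equitable at every non-anomalous vertex $u$, since there $d_{G_i}(u) = f(u)$ and the definition of $f$ gives $|f(u) - d_G(u)/k| < 1$. At the anomalous vertex $v$, being equitable means $d_{G_i}(v) = f(v) + 1 = \lceil d_G(v)/k \rceil$ when $v \in Z$ with $[d_G(v)]_k > 0$, or $d_{G_i}(v) = f(v) - 1 = \lfloor d_G(v)/k \rfloor$ when $v \in V(G)\setminus Z$ with $[d_G(v)]_k > 0$. Unpacking Lemma~\ref{lem:factorization:main:directed:original} via the sets $I_0(v), I_2(v)$ — together with the identity $|I_0(v)| + |I_2(v)| = s(v)$ coming from the proof of Theorem~\ref{thm:not-equitable} — shows that the number of factors on this good side at $v$ is $[d_G(v)]_k + \eta(v)$ (for $v \in Z$) or $[k - d_G(v)]_k + \eta(v)$ (for $v \in V(G)\setminus Z$). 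Summing over all $v$ with $[d_G(v)]_k > 0$ yields $(k - 2m) + \sum_v \eta(v) = (k-2m) + m = k-m$ equitable factors, which are pairwise edge-disjoint by construction.

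The one delicate point — essentially the reason for concentrating all slack at a single $v_0$ — is that slack placed at a vertex $v$ with $[d_G(v)]_k = 0$ would be wasted: there $f(v) = d_G(v)/k$ exactly, and Lemma~\ref{lem:factorization:main:directed:original} would force $|I_0(v)| = |I_2(v)| = \eta(v)$ anomalous factors, all of which fail the strict inequality $|d_{G_i}(v) - d_G(v)/k| < 1$. Pushing the entire $\eta$-budget onto a vertex with nonzero residue — which exists precisely because $k - 2m > 0$ — avoids this loss and achieves the target of $k - m$ equitable factors.
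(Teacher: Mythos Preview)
Your proof is correct and follows essentially the same route as the paper's: both apply Theorem~\ref{thm:not-equitable} with the profile $s$ that adds the entire surplus $2m$ to a single vertex $z=v_0$ of nonzero residue, and then count the anomalous factors that land on the ``good'' side (equivalently, the paper identifies the $m$ bad factors as those with $d_{G_i}(z)=f(z)\mp1$ on the wrong side at $z$). Your closing paragraph explaining why the slack must go to a vertex with $[d_G(v_0)]_k>0$ is a nice clarification that the paper leaves implicit.
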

\begin{proof}
{Since $k-2m>0$, the graph $G$ contains a vertex $z$ with degree not divisible by $k$. 
For all $v\in Z$, we define $s(v)=[d_G(v)]_k$, and for all  $v\in V(G)\setminus Z$, we define $s(z)=[k-d_G(v)]_k$.
Finally, we refine $s(z)$ to be $s(z)+2m$.
Thus there is a factorization $G_1,\ldots, G_{k}$ of $G$ satisfying Theorem~\ref{thm:not-equitable}.
Choose $u\in V(G)$. 
Let $I^+_u$ (resp. $I^-_u$) be the set of all indices $i$ that $G_i$ is $u$-almost $f$-factor and $d_{G_i}(u)=f(u)+1$
 (resp. $d_{G_i}(u)=f(u)-1$).
If $u\in Z$ and $u\neq z$, then $|I^-_u|-|I^+_u| =\sum_{1\le i\le k}(d_{G_i}(u)-f(u))=[d_G(u)]_k=s(u)= |I^-_u\cup I^+_u|$ 
which implies that $|I^+_u|=\emptyset$ and $|I^-_u|=[d_G(u)]_k$ and so $|d_{G_i}(u)-d_G(u)/k|<1$ for all $i\in I^-_u\cup I^+_u$, 
regardless of $d_G(v)$ is divisible by $k$ or not.
If $u\in V(G)\setminus Z$  and $u\neq z$, then $|I^+_u|-|I^-_u| =\sum_{1\le i\le k}(f(u)- d_{G_i}(u))=[k-d_G(u)]_k=s(u)= |I^+_u\cup I^-_u|$ which implies that $|I^-_u|=\emptyset$ and $|I^+_u|=[k-d_G(u)]_k$ and so $|d_{G_i}(u)-d_G(u)/k|<1$ for all $i\in I^+_u\cup I^-_u$, 
regardless of $d_G(v)$ is divisible by $k$ or not.
If $u=z\in Z$, then similarly we have $|I^+_u|=m$ and $|I^-_u|=m+[d_G(u)]_k$.  In this case, we set $I=I^+_u$.
If $u=z\in V(G)\setminus Z$, then $|I^-_u|=m$ and $|I^-_u|=m+[k-d_G(u)]_k$. In this case, we set $I=I^-_u$.
Since $d_G(z)$ is not divisible by $k$,  it is not difficult to check that for all vertices $v$, $|d_{G_i}(v)-d_G(v)/k|<1$ provided that $i\in \{1,\ldots, k\}\setminus I$. Now, by permuting indices the proof can be completed.
}\end{proof}
When all $z_i$ are equal, Corollary~\ref{cor:z1-zk} is also a consequence of Theorem 3.2 in~\cite{EquitableFactorizations-2022}.
Let us recall this theorem by generalizing it to the following useful version.
\begin{thm}\label{thm:Z0}
{Let $G$ be a graph and let $V_0$ be a proper subset of $ V(G)$.
If $G$ is $V_0$-partially $(3k-3)$-edge-connected or $(2k-2)$-tree-connected, then $G$ can be edge-decomposed into $k$ factors $G_1,\ldots, G_k$ such that for each $G_i$, $||E(G_i)|-|E(G)|/k|<1$, for all $v\in V_0$, $|d_{G_i}(v)- d_G(v)/k|<1$, and for all $v\in V(G_i)\setminus V_0$,
$$\lfloor \frac{1}{k}(d_{G}(v)+1)\rfloor-1 \le d_{G_i}(v) \le \lceil \frac{1}{k}(d_{G}(v)-1)\rceil+1.$$
}\end{thm}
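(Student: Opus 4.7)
The plan is to combine Theorem~\ref{thm:modulo:2k-2:3k-3} (modulo orientations) with Lemma~\ref{lem:factorization:main:directed:original} (factorizations of oriented graphs). First, I would contract $V(G)\setminus V_0$ to a single vertex $z$, obtaining $G' := G/(V(G)\setminus V_0)$; by the $V_0$-partial hypothesis, $G'$ is $(3k-3)$-edge-connected or $(2k-2)$-tree-connected, which puts Theorem~\ref{thm:modulo:2k-2:3k-3} at our disposal.

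Next, I would define $p:V(G')\to\mathbb{Z}_k$ by $p(v)=0$ for $v\in V_0$ and $p(z)=[|E(G')|]_k$, so that $\sum_{v\in V(G')} p(v)\stackrel{k}{\equiv}|E(G')|$. Theorem~\ref{thm:modulo:2k-2:3k-3} then furnishes an orientation of $G'$ with $d^+_{G'}(v)\stackrel{k}{\equiv}0$ for every $v\in V_0$. I would extend this to an orientation of $G$ by orienting the edges of $G[V(G)\setminus V_0]$ (which do not appear in $G'$) arbitrarily. Since no such edge is incident to $V_0$, we retain $d^+_G(v)=d^+_{G'}(v)\stackrel{k}{\equiv}0$ for all $v\in V_0$. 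Applying Lemma~\ref{lem:factorization:main:directed:original} to the oriented graph $G$ yields an edge-decomposition $G_1,\ldots,G_k$ with $\bigl||E(G_i)|-|E(G)|/k\bigr|<1$; and for every $v\in V_0$ the condition $[d^+_G(v)]_k=0\in\{0,[d_G(v)]_k\}$ triggers the first bullet of the lemma, giving the tight bound $|d_{G_i}(v)-d_G(v)/k|<1$.

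The remaining step, and the one I expect to be the most delicate, is to verify the weaker bound for $v\in V(G_i)\setminus V_0$, where $[d^+_G(v)]_k$ is unconstrained and any of the three bullets of Lemma~\ref{lem:factorization:main:directed:original} may apply. A short residue-by-residue check will do: when $[d_G(v)]_k\notin\{1,k-1\}$, the outputs of all three bullets lie in $[\lfloor d_G(v)/k\rfloor-1,\lceil d_G(v)/k\rceil+1]$, which already coincides with $[\lfloor(d_G(v)+1)/k\rfloor-1,\lceil(d_G(v)-1)/k\rceil+1]$. The edge cases require a slightly tighter inspection. If $[d_G(v)]_k=1$, the second bullet cannot occur since it needs $0<[d^+_G(v)]_k<[d_G(v)]_k=1$, and the first and third bullets both cap $d_{G_i}(v)$ at $\lceil d_G(v)/k\rceil=\lceil(d_G(v)-1)/k\rceil+1$. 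If $[d_G(v)]_k=k-1$, symmetric reasoning rules out the third bullet and bounds $d_{G_i}(v)$ from below by $\lfloor d_G(v)/k\rfloor=\lfloor(d_G(v)+1)/k\rfloor-1$. These precisely match the interval claimed by the theorem, completing the plan.
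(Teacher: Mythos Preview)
Your proposal is correct and follows essentially the same approach as the paper's proof: contract $V(G)\setminus V_0$, apply Theorem~\ref{thm:modulo:2k-2:3k-3} to get out-degrees divisible by $k$ on $V_0$, extend arbitrarily, and invoke Lemma~\ref{lem:factorization:main:directed:original}. The paper handles the $v\notin V_0$ case with a one-line observation that $[d_G(v)]_k=k-1$ forces $[d^+_G(v)]_k\le[d_G(v)]_k$ (trivially), and $[d_G(v)]_k=1$ forces $[d^+_G(v)]_k\in\{0\}\cup[\,[d_G(v)]_k,k-1\,]$ (also trivially); your residue-by-residue check is just a slightly more explicit version of this.
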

\begin{proof}
{By applying Theorem~\ref{thm:modulo:2k-2:3k-3} to the contracted graph $G/V_1$ with $V_1=V(G)\setminus V_0$, this graph admits an orientation such that 
 the out-degree of the vertex corresponding to $V_1$ is congruent to the size of $G/V_1$ (modulo $k$) and out-degrees of all other vertices are divisible by $k$. By orienting the remaining edges of $G$ arbitrarily, one can induce an orientation for $G$ such that
for all $v\in V_0$, $d_G^+(v)\stackrel{k}{\equiv}0$.
Note that if $d_G(v)\stackrel{k}{\equiv}k-1$, then $[d^+_G(v)]_k \le [d_G(v)]_k$,
and if $d_G(v)\stackrel{k}{\equiv}1$, then $[d^+_G(v)]_k =0$ or $[d^+_G(v)]_k\ge [d_G(v)]_k$.
Now, it is enough to apply Lemma~\ref{lem:factorization:main:directed:original}. 
}\end{proof}
\begin{cor}{\rm (\cite{Liu-2015:factorizations})}
{Every graph $G$ can be edge-decomposed into $k$ factors $G_1,\ldots, G_k$ such for each $v\in V(G_i)$ with $1\le i\le k$, 
$\lfloor \frac{1}{k}(d_{G}(v)+1)\rfloor-1 \le d_{G_i}(v) \le \lceil \frac{1}{k}(d_{G}(v)-1)\rceil+1$.
}\end{cor}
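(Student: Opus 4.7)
The plan is to derive this corollary as an immediate specialization of Theorem~\ref{thm:Z0} with $V_0 = \emptyset$. The first thing to verify is that the hypothesis of Theorem~\ref{thm:Z0} becomes vacuous in this case: the $V_0$-partial $(3k-3)$-edge-connectivity condition requires $d_G(A) \ge 3k-3$ for every nonempty $A \subseteq V_0$ with $A \ne V(G)$, and there are no such subsets when $V_0$ is empty. Equivalently, $G/(V(G)\setminus V_0)$ collapses to a single looped vertex, which is trivially $(3k-3)$-edge-connected. Hence Theorem~\ref{thm:Z0} applies to every graph $G$.

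With this setup, the conclusion of Theorem~\ref{thm:Z0} yields a factorization $G_1,\ldots,G_k$ such that for all $v \in V(G_i) \setminus V_0$,
\[
\lfloor (d_G(v)+1)/k \rfloor - 1 \;\le\; d_{G_i}(v) \;\le\; \lceil (d_G(v)-1)/k \rceil + 1.
\]
Since $V_0 = \emptyset$, the inequality holds at every vertex, which is exactly the statement of the corollary. There is no genuine obstacle here; the corollary is the residue of Theorem~\ref{thm:Z0} once its connectivity hypothesis disappears by vacuity. As a brief sanity check, by Lemma~\ref{lem:factorization:main:directed:original} any orientation of $G$ confines $d_{G_i}(v)$ within a window of width at most three around $d_G(v)/k$, and this window is always contained in $[\lfloor (d_G(v)+1)/k \rfloor - 1,\, \lceil (d_G(v)-1)/k \rceil + 1]$ regardless of the residue $[d_G(v)]_k$, so no constraint from the lemma is lost in passing to the corollary's uniform bound.
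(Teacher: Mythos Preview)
Your proof is correct and follows exactly the same approach as the paper: apply Theorem~\ref{thm:Z0} with $V_0=\emptyset$, noting that the partial edge-connectivity hypothesis is vacuous. The paper's own proof is the single line ``Apply Theorem~\ref{thm:Z0} with $V_0=\emptyset$''; your additional verification of vacuity and the sanity check are fine elaborations but not required.
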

\begin{proof}
{Apply Theorem~\ref{thm:Z0} with $V_0=\emptyset$.
}\end{proof}
\section{Nearly equitable partial parity factorizations with restricted degrees}
\label{sec:nearly-equitable-factorizations}
In the following theorem, we determine the size of $|I_q(v)|$ in Lemma~\ref{lem:factorization:main:directed:original} but by losing the equitable restriction on sizes.
This result is a strengthened version of Theorem 2.1 (ii) in \cite{EquitableFactorizations-2022}.
\begin{thm}\label{thm:factorization:main:directed}
{Every directed graph $G$ can be edge-decomposed into $k$ factors $G_1,\ldots, G_k$
 such that for each $v\in V(G)$, there is a tripartition $I_0(v), I_1(v), I_2(v)$ of $\{1,\ldots,k\}$
in which for all $i\in I_p(v)$, 
\begin{enumerate}{
\item [$\bullet$] 
 If $[d^+_G(v)]_k \in \{0, [d_G(v)]_k\}$,
then $\lfloor d_G(v)/k \rfloor \le d_{G_i}(v) \le \lceil d_G(v)/k \rceil$.

\item [$\bullet$]
 If $[d^+_G(v)]_k \le [d_G(v)]_k$, then $d_{G_i}(v)=\lfloor d_G(v)/k \rfloor+p$. 
\item [$\bullet$] 
 If $[d^+_G(v)]_k > [d_G(v)]_k$, then $d_{G_i}(v)=\lfloor d_G(v)/k \rfloor+p-1$.

}\end{enumerate}
 In particular, 
$|I_0(v)|= \min\{k- [d^+_G(v)]_k, k-[d^-_G(v)]_k\}$, $|I_1(v)|= | [d^+_G(v)]_k- [d^-_G(v)]_k|$,
and $|I_2(v)|= \min\{ [d^+_G(v)]_k, [d^-_G(v)]_k\}$.
}\end{thm}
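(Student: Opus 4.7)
My plan is to carry out a more careful version of the bipartite-graph construction underlying Lemma~\ref{lem:factorization:main:directed:original}: rather than only bounding $|I_2(v)|$ from above, I will engineer a decomposition where, at every vertex $v$, the sets of ``ceiling-achieving'' colors at $v^+$ and at $v^-$ are forced to be nested. Since $|I_0(v)|+|I_1(v)|+|I_2(v)|=k$ and the three claimed sizes already sum to $k$, it suffices to arrange that $|I_2(v)|$ attains its maximum value $\min\{[d_G^+(v)]_k,[d_G^-(v)]_k\}$ at every $v$; the sizes of $I_0(v)$ and $I_1(v)$ are then automatic.

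Form the bipartite graph $G_D$ with sides $\{v^+:v\in V(G)\}$ and $\{v^-:v\in V(G)\}$, where each directed edge $u\to w$ of $G$ contributes an edge $u^+w^-$. Then $d_{G_D}(v^\pm)=d_G^\pm(v)$, and any $k$-edge-decomposition of $G_D$ pulls back to a $k$-factorization of $G$ with the matching out- and in-degrees. Write $a(v)=[d_G^+(v)]_k$ and $b(v)=[d_G^-(v)]_k$. I would refine $G_D$ by splitting $v^+$ into $\lfloor d_G^+(v)/k\rfloor$ ``full'' copies of degree $k$ together with, when $a(v)>0$, a single ``partial'' copy $v^+_{\ast}$ of degree $a(v)$, and analogously splitting $v^-$. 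The edge-to-copy assignments at the two endpoints of each edge can be chosen independently, so this step is always feasible.

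At every vertex $v$ with $a(v),b(v)>0$ I then insert local auxiliary parallel edges to prepare for K\"onig's edge-coloring theorem. Assuming $a(v)\geq b(v)$ (the other case is symmetric), I add exactly $k-a(v)$ parallel edges between $v^+_{\ast}$ and $v^-_{\ast}$; this makes $v^+_{\ast}$ have degree exactly $k$, while $v^-_{\ast}$ gets degree $b(v)+k-a(v)\leq k$. The augmented bipartite graph $B^{\ast}$ has maximum degree at most $k$, so by K\"onig's theorem it admits a proper $k$-edge coloring. Discarding the auxiliary edges pulls this coloring back to a $k$-factorization $G_1,\ldots,G_k$ of $G$, in which each full copy of $v^\pm$ contributes exactly one edge of each color (being of degree $k$ under a proper $k$-coloring), and the partial copy $v^+_{\ast}$ uses all $k$ colors, of which exactly $a(v)$ sit on real edges, forming $C^+(v):=\{i:d^+_{G_i}(v)=\lceil d_G^+(v)/k\rceil\}$. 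The key observation: the $k-a(v)$ colors used at $v^+_{\ast}$ on auxiliary edges reappear on those same edges at $v^-_{\ast}$, and by properness the $b(v)$ real colors at $v^-_{\ast}$ must be disjoint from those auxiliary ones. Hence those $b(v)$ colors all lie in $C^+(v)$, giving $C^-(v)\subseteq C^+(v)$ and so $|I_2(v)|=|C^-(v)|=b(v)=\min\{a(v),b(v)\}$, as required.

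The three bulleted degree identities then drop out of $d_{G_i}^\pm(v)=\lfloor d_G^\pm(v)/k\rfloor+[i\in C^\pm(v)]$ by a case split on whether $a(v)+b(v)<k$, in which case $\lfloor d_G(v)/k\rfloor=\lfloor d_G^+(v)/k\rfloor+\lfloor d_G^-(v)/k\rfloor$, or $a(v)+b(v)\geq k$, in which case $\lfloor d_G(v)/k\rfloor=\lfloor d_G^+(v)/k\rfloor+\lfloor d_G^-(v)/k\rfloor+1$. The boundary cases $a(v)=0$ or $b(v)=0$ require no auxiliary edges at all, since the nesting $\emptyset\subseteq C^\pm(v)$ is automatic. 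The step I expect to take the most care to write precisely is verifying that the auxiliary-edge trick does force $C^-(v)\subseteq C^+(v)$ (or the symmetric inclusion) at every vertex; fortunately the augmentation is entirely local to each vertex $v$ of $G$ --- only ever connecting its own two partial copies --- so there is no global compatibility issue between augmentations at different vertices and the construction goes through uniformly.
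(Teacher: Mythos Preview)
Your proposal is correct and essentially identical to the paper's own proof: both split $G$ into the bipartite graph $G_D$, further split each $v^\pm$ into full degree-$k$ copies plus one partial copy, add $k-\max\{a(v),b(v)\}$ artificial parallel edges between the two partial copies at each $v$ so that the larger one reaches degree exactly $k$, and then invoke K\"onig's theorem to force the nesting $C^-(v)\subseteq C^+(v)$ (or the reverse). The only differences are notational.
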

\begin{proof}
{First we split every vertex $v$ into two vertices $v^+$ and $v^-$ such that
 the incident edges of $v^+$ are directed away from $v$ in $G$ and 
the incident edges of $v^-$ are directed toward $v$ in $G$.
In this construction, every loop in $G$ incident with $v$ is transformed into an edge between $v^+$ and $v^-$.
Next, we split every vertex in the new graph into vertices with degrees divisible by $k$, 
except one vertex with degree less than $k$. We denote these exceptional vertices by $v^+_0$ and $v^-_0$ 
(they may have degree zero). 
Call the resulting (loopless) bipartite graph $G_0$.
Let $G'_0$ be the graph obtained from $G_0$ by inserting $k-\max\{d_{G_0}(v^+_0),d_{G_0}(v^-_0)\}$ new artificial edges 
between $v^+_0$ and $v^-_0$ provided that $d_{G_0}(v^+_0)$ and $d_{G_0}(v^-_0)$ are not zero.
Since $G'_0$ has maximum degree at most $k$,
 it admits a proper edge-coloring with 
$k$ colors $c_1\ldots, c_k$
(to prove this, it is enough to consider a $k$-regular supergraph of it and apply K{\"o}nig's Theorem~\cite{Konig}).
Let $G_i$ be the factor of $G$ consisting of the edges of $G$ corresponding to the edges with color $c_i$ in $G_0$.
Note that for every vertex $v$, at least $\lfloor d^+_G(v)/k\rfloor$ (resp. $\lfloor d^-_G(v)/k\rfloor$) edges with color $c_i$ are incident with the vertex $v^+$ (resp. $v^-$) in $G_0$. 
Moreover, at most $\lceil d^+_G(v)/k\rceil$ (resp. $\lceil d^-_G(v)/k\rceil$ edges with color $c_i$ are incident with the vertex $v^+$ (resp. $v^-$) in $G_0$. 
According to the construction, if 
the degree of $v^+_0$ and $v^-_0$ are not zero in $G_0$, then 
 the degree of $v^+_0$ or $v^-_0$ is $k$ in $G'_0$, and so one of these vertices is incident with all colors.
This implies that there are $\min\{d_{G_0}(v^+_0),d_{G_0}(v^-_0)\}$ 
colors $c_i$ incident with both of $v^+_0$ and $v^-_0$ appearing on edges of $G_i$ (not artificial edges). 
Therefore, $d_{G_i}(v)=\lfloor d^+_G(v)/k\rfloor+ \lfloor d^-_G(v)/k\rfloor+2$ 
when the color $c_i$ is incident with both of $v^+$ and $v^-$ in $G_0$ (we denote by $I_2(v)$ the set of these indices $i$),
and $d_{G_i}(v)=\lfloor d^+_G(v)/k\rfloor+ \lfloor d^-_G(v)/k\rfloor$ 
when the color $c_i$ appears on an artificial edge between $v^+_0$ and $v^-_0$ (we denote by $I_0(v)$ the set of these indices $i$), 
and also $d_{G_i}(v)=\lfloor d^+_G(v)/k\rfloor+ \lfloor d^-_G(v)/k\rfloor+1$ otherwise. Note that 
$\lfloor d^+_G(v)/k\rfloor+ \lfloor d^-_G(v)/k\rfloor=\lfloor (d^+_G(v)+ d^-_G(v))/k\rfloor$ if and only if 
$[d^+_G(v)]_k\le [d_G(v)]_k$.
Thus it is not difficult to check that $G_1,\ldots, G_k$ are the desired factors that we are looking for.
}\end{proof}
\begin{cor}\label{cor:parity-factorizations:odd-k}
{Let $k$ be an odd positive integer. Let $G$ be a $(3k-3)$-edge-connected or $(2k-2)$-tree-connected graph with $V_0\subseteq V(G)$.
If there exists a subset $Z$ of $V(G)\setminus V_0$ satisfying $|E(G)|\stackrel{k}{\equiv}
\sum_{v\in Z}d_G(v)+
\sum_{v\in V_0}\frac{d_G(v)}{2} 
+\frac{k}{2}[\sum_{v\in V_0}d_G(v)]_2$, then $G$ can be edge-decomposed into $k$ factors $G_1,\ldots, G_k$ such that for every $i$ with $1\le i\le k$,
\begin{enumerate}{
\item [$\bullet$] For all $v\in V(G)\setminus V_0$, $|d_{G_i}(v)- d_G(v)/k|<1$.
\item [$\bullet$] For all $v\in V_0$, $|d_{G_i}(v)-d_G(v)/k|<2$, and $ d_{G_i}(v)\stackrel{2}{\equiv} d_G(v)$.
}\end{enumerate}
}\end{cor}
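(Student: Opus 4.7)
Since $k$ is odd, $2$ is invertible in $\mathbb{Z}_k$; let $2^{-1}$ denote its inverse. My plan is to define $p\colon V(G)\to\mathbb{Z}_k$ by
\[
p(v)=\begin{cases} d_G(v), & v\in Z,\\ 0, & v\in V(G)\setminus(Z\cup V_0),\\ d_G(v)\cdot 2^{-1}, & v\in V_0, \end{cases}
\]
apply Theorem~\ref{thm:modulo:2k-2:3k-3} to obtain a $p$-orientation, and then feed this oriented graph into Theorem~\ref{thm:factorization:main:directed} to produce the factors $G_1,\ldots,G_k$. The first step is to verify $\sum_{v\in V(G)}p(v)\stackrel{k}{\equiv}|E(G)|$. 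Multiplying both sides of this congruence by $2$ (a unit modulo $k$) and using $2\cdot\tfrac{d_G(v)}{2}=d_G(v)$ together with $k\cdot[\sum_{v\in V_0}d_G(v)]_2\stackrel{k}{\equiv}0$, one finds it equivalent to $2\sum_{v\in Z}d_G(v)+\sum_{v\in V_0}d_G(v)\stackrel{k}{\equiv}2|E(G)|$, which is exactly twice the hypothesis of the corollary. Hence the orientation exists.

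Given this orientation, Theorem~\ref{thm:factorization:main:directed} supplies the desired factorization. For $v\in V(G)\setminus V_0$ one has $[d^+_G(v)]_k\in\{0,[d_G(v)]_k\}$ directly from the definition of $p$, so the first bullet of Theorem~\ref{thm:factorization:main:directed} immediately gives $\lfloor d_G(v)/k\rfloor\le d_{G_i}(v)\le\lceil d_G(v)/k\rceil$, which is the first required conclusion. For $v\in V_0$, the relations $d^+_G(v)+d^-_G(v)=d_G(v)$ and $2d^+_G(v)\stackrel{k}{\equiv}d_G(v)$ force $[d^+_G(v)]_k=[d^-_G(v)]_k$, and hence $I_1(v)=\emptyset$. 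Writing $d_G(v)=qk+r$ with $0\le r<k$, one computes $[d^+_G(v)]_k=r/2$ when $r$ is even (an integer, putting us in ``case 1'' of Theorem~\ref{thm:factorization:main:directed} with $d_{G_i}(v)\in\{q,q+2\}$) and $[d^+_G(v)]_k=(r+k)/2$ when $r$ is odd (an integer since $r+k$ is even, putting us in ``case 2'' with $d_{G_i}(v)\in\{q-1,q+1\}$). In either subcase the two possible values share the common parity of $d_G(v)=qk+r$ (using that $k$ is odd) and each lies strictly within distance $2$ of $d_G(v)/k=q+r/k$, giving the second required condition.

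The only real obstacle is the bookkeeping in the first paragraph: translating the slightly unusual hypothesis — which contains the half-integer summand $\sum_{v\in V_0}\tfrac{d_G(v)}{2}$ counterbalanced by the correction term $\tfrac{k}{2}[\sum_{v\in V_0}d_G(v)]_2$ — into the integer congruence required by Theorem~\ref{thm:modulo:2k-2:3k-3}. Once the $p$-orientation is in hand, the remainder is a mechanical case analysis inside Theorem~\ref{thm:factorization:main:directed}, with invertibility of $2$ modulo odd $k$ doing all the heavy lifting to simultaneously achieve the equitable and parity requirements.
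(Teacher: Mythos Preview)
Your proof is correct and follows essentially the same approach as the paper's: define the same $p$-orientation (the paper writes $p(v)=\tfrac{d_G(v)}{2}+\tfrac{k}{2}[d_G(v)]_2$ for $v\in V_0$, which equals your $d_G(v)\cdot 2^{-1}$ modulo $k$), invoke Theorem~\ref{thm:modulo:2k-2:3k-3}, and then apply Theorem~\ref{thm:factorization:main:directed}. Your write-up is in fact more detailed than the paper's, which omits both the verification of the congruence $\sum_v p(v)\stackrel{k}{\equiv}|E(G)|$ and the case analysis at the end.
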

\begin{proof}
{According to Theorem~\ref{thm:modulo:2k-2:3k-3}, the graph $G$ admits an orientation such that
for each $v\in V_0$, $d_G^+(v)\stackrel{k}{\equiv}\frac{d_G(v)}{2}+\frac{k}{2}[d_G(v)]_2\stackrel{k}{\equiv} d_G^-(v)$, for each $v\in Z$, $d_G^+(v)\stackrel{k}{\equiv}d_G(v)$, 
and for each $v\in V(G)\setminus (V_0\cup Z)$, $d_G^+(v)\stackrel{k}{\equiv}0$.
Hence it is enough to apply Theorem~\ref{thm:factorization:main:directed}.
}\end{proof}

\begin{cor}\label{cor:parity-factorizations:even-k}
{Let $k$ be an even positive integer. 
Let $G$ be a $(3k-3)$-edge-connected or $(2k-2)$-tree-connected graph and let $V_0$ be a set of some vertices with even degrees.
Let $f:V_0\rightarrow \mathbb{Z}_2$ be a mapping.
If there exists a subset $Z$ of $V(G)\setminus V_0$ satisfying $|E(G)|\stackrel{k}{\equiv}\sum_{v\in Z}d_G(v)+
\sum_{v\in V_0}\frac{d_G(v)}{2}+\sum_{v\in V_0}\frac{k}{2}f(v)$,
then $G$ can be edge-decomposed into $k$ factors $G_1,\ldots, G_k$ such that for every $i$ with $1\le i\le k$,
\begin{enumerate}{
\item [$\bullet$] For all $v\in V(G)\setminus V_0$, $|d_{G_i}(v)- d_G(v)/k|<1$.
\item [$\bullet$] For all $v\in V_0$, $|d_{G_i}(v)-d_G(v)/k|<2$, and $ d_{G_i}(v)\stackrel{2}{\equiv} f(v)$.
}\end{enumerate}
}\end{cor}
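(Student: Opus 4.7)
The plan is to mirror the proof of Corollary~\ref{cor:parity-factorizations:odd-k}: first build a modulo orientation of $G$ via Theorem~\ref{thm:modulo:2k-2:3k-3}, then derive the desired factorization through Theorem~\ref{thm:factorization:main:directed}. Concretely, I would define $p:V(G)\to\mathbb{Z}_k$ by
\[
p(v)=\begin{cases} d_G(v), & v\in Z,\\[2pt] \dfrac{d_G(v)}{2}+\dfrac{k}{2}f(v), & v\in V_0,\\[2pt] 0, & v\in V(G)\setminus(V_0\cup Z),\end{cases}
\]
and observe that the hypothesis of the corollary is exactly the congruence $|E(G)|\stackrel{k}{\equiv}\sum_{v\in V(G)}p(v)$. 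The stated $(3k-3)$-edge-connectivity or $(2k-2)$-tree-connectivity then triggers Theorem~\ref{thm:modulo:2k-2:3k-3} to produce an orientation with $d_G^+(v)\stackrel{k}{\equiv}p(v)$ for every vertex $v$. Applying Theorem~\ref{thm:factorization:main:directed} to this directed graph yields a decomposition $G_1,\ldots,G_k$, and the remaining work is just to verify the two asserted bounds on $d_{G_i}(v)$.

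For every $v\in Z$ we have $[d_G^+(v)]_k=[d_G(v)]_k$, and for every $v\in V(G)\setminus(V_0\cup Z)$ we have $[d_G^+(v)]_k=0$. In both situations $[d_G^+(v)]_k\in\{0,[d_G(v)]_k\}$, so the first bullet of Theorem~\ref{thm:factorization:main:directed} forces $\lfloor d_G(v)/k\rfloor\le d_{G_i}(v)\le\lceil d_G(v)/k\rceil$, which is exactly $|d_{G_i}(v)-d_G(v)/k|<1$. This settles the first conclusion.

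The interesting case is $v\in V_0$. Since $d_G(v)$ is even, $p(v)$ is an integer, and because $k\cdot f(v)\equiv 0\pmod k$ one gets
\[
d_G^-(v)\equiv d_G(v)-p(v)=\tfrac{d_G(v)}{2}-\tfrac{k}{2}f(v)\equiv p(v)\equiv d_G^+(v)\pmod{k},
\]
so $[d_G^+(v)]_k=[d_G^-(v)]_k$. Hence $|I_1(v)|=0$ in Theorem~\ref{thm:factorization:main:directed}. Writing $d_G(v)=qk+r$ with $r=[d_G(v)]_k$ even and checking the two subcases $[d_G^+(v)]_k\le r$ versus $[d_G^+(v)]_k>r$, one finds $d_{G_i}(v)\in\{q,q+2\}$ in the first and $d_{G_i}(v)\in\{q-1,q+1\}$ in the second. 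A short case analysis on the parity of $q$ together with the value of $f(v)$ shows that the first alternative occurs precisely when $q\equiv f(v)\pmod 2$, so in both alternatives $d_{G_i}(v)\stackrel{2}{\equiv}f(v)$; moreover, since $0\le r<k$, each possible value satisfies $|d_{G_i}(v)-d_G(v)/k|<2$.

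No new machinery is required beyond Theorems~\ref{thm:modulo:2k-2:3k-3} and~\ref{thm:factorization:main:directed}. The only genuine obstacle is the bookkeeping at vertices of $V_0$: one must verify that the shift $(k/2)f(v)$ in $p(v)$ correctly toggles the parity of $d_{G_i}(v)$ for every $i$, regardless of the parity of $\lfloor d_G(v)/k\rfloor$. This is precisely why the congruence hypothesis carries the extra term $\sum_{v\in V_0}\tfrac{k}{2}f(v)$, and it is also the reason the even-$k$ version needs $f(v)$ as an explicit parameter, whereas in the odd-$k$ version of Corollary~\ref{cor:parity-factorizations:odd-k} the analogous role is played automatically by $[d_G(v)]_2$.
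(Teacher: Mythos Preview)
Your proposal is correct and follows exactly the paper's own argument: define $p(v)$ as you do, invoke Theorem~\ref{thm:modulo:2k-2:3k-3} to get the $p$-orientation, and then apply Theorem~\ref{thm:factorization:main:directed}. The paper's proof is a two-line version of yours that omits the verification at vertices of $V_0$; your case analysis (showing $[d_G^+(v)]_k=[d_G^-(v)]_k$ and that $[d_G^+(v)]_k\le [d_G(v)]_k$ precisely when $q\equiv f(v)\pmod 2$) fills in those details correctly.
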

\begin{proof}
{According to Theorem~\ref{thm:modulo:2k-2:3k-3}, the graph $G$ admits an orientation such that
for each $v\in V_0$, $d_G^+(v)\stackrel{k}{\equiv}d_G(v)/2+f(v)k/2$, and for each $v\in Z$, $d_G^+(v)\stackrel{k}{\equiv}d_G(v)$, 
and for each $v\in V(G)\setminus (V_0\cup Z)$, $d_G^+(v)\stackrel{k}{\equiv}0$.
Hence it is enough to apply Theorem~\ref{thm:factorization:main:directed}.
}\end{proof}
\begin{remark}
{Note that Corollaries~\ref{cor:parity-factorizations:odd-k} and~\ref{cor:parity-factorizations:even-k} 
can be developed to graphs having at least $k-1$ vertices with degree not divisible by $k$ in $V(G)\setminus V_0$ using a similar idea of the proofs of Theorems~\ref{thm:new-sufficient-condition} and~\ref{thm:factorization:main:directed}.
We will present details in a forthcoming paper.
}\end{remark}
The following corollary implies Theorem 8 in \cite{Hilton1982} and improves Theorem 3.1 in~\cite{Anstee-1996}.
\begin{cor}\label{cor:Hilton:generalized}
{Every graph $G$ can be edge-decomposed into $k$ factors $G_1,\ldots, G_k$ satisfying the following properties:
\begin{enumerate}{
\item [$\bullet$] For all $v\in V(G_i)$, $| d_{G_i}(v) -d_{G}(v)/k|< 2$.
\item [$\bullet$] For all $v\in V(G_i)$ with $d_G(v)$ even, $d_{G_i}(v)$ is even.
\item [$\bullet$] For each $v\in V(G)$ with $d_G(v)$ odd, there is an index $j_v$ with $d_{G_{j_v}}(v)$ odd such that 
 $d_{G_i}(v)=d_{G_{j_v}}(v)\pm 1$ for all $i\in \{1,\ldots, k\}\setminus \{j_v\}$.
}\end{enumerate}
In addition, $\lfloor d_{G}(v) /k\rfloor \le d_{G_i}(v) \le \lceil d_{G}(v) /k\rceil$ when $d_G(v)\stackrel{2k}{\equiv}\pm 1$ or $d_G(v)\stackrel{2k}{\equiv} 0$.
}\end{cor}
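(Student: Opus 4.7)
My plan is to reduce the corollary directly to Theorem~\ref{thm:factorization:main:directed} by first equipping $G$ with a \emph{balanced orientation}, meaning an orientation $D$ with $|d^+_D(v) - d^-_D(v)| \le 1$ for every $v$. This is a classical construction: introduce an auxiliary vertex $w$ joined by a single new edge to each odd-degree vertex of $G$, orient each component of the resulting Eulerian graph along an Euler tour, and delete $w$. The remaining orientation satisfies $d^+(v) = d^-(v) = d_G(v)/2$ at every even-degree vertex and $|d^+(v) - d^-(v)| = 1$ at every odd-degree vertex.

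I would then apply Theorem~\ref{thm:factorization:main:directed} to $(G,D)$ to produce a factorization $G_1,\ldots,G_k$ with tripartitions $I_0(v), I_1(v), I_2(v)$. The explicit formulas $|I_1(v)| = |[d^+(v)]_k - [d^-(v)]_k|$ and $|I_2(v)| = \min\{[d^+(v)]_k, [d^-(v)]_k\}$, together with the value $d_{G_i}(v) = \lfloor d_G(v)/k \rfloor + p - \epsilon$ for $i \in I_p(v)$ (with $\epsilon \in \{0,1\}$ according to whether $[d^+(v)]_k \le [d_G(v)]_k$ or not), are enough to read off the three bullets after splitting on the parity of $d_G(v)$.

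The even-degree case is immediate: balancedness forces $|I_1(v)| = 0$, so each $d_{G_i}(v)$ lies either in $\{\lfloor d_G(v)/k\rfloor, \lfloor d_G(v)/k\rfloor + 2\}$ or in $\{\lfloor d_G(v)/k\rfloor \pm 1\}$, and $d_G(v) = 2d^+(v)$ shows both sets consist of even numbers strictly within $2$ of $d_G(v)/k$. The odd-degree case uses $[d^+(v)]_k - [d^-(v)]_k \equiv \pm 1 \pmod k$, which forces $|I_1(v)| \in \{1, k-1\}$; in each of the four subcases determined by $|I_1(v)|$ and the parity of $\lfloor d_G(v)/k \rfloor$, the degrees $d_{G_i}(v)$ split into a single odd value and $k-1$ values differing from it by $\pm 1$, whence $j_v$ can be taken to be the index producing the odd value. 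The ``in addition'' clause falls out of the same formulas: when $d_G(v) \equiv 0, \pm 1 \pmod{2k}$ one of $[d^+(v)]_k, [d^-(v)]_k$ vanishes, so $[d^+(v)]_k \in \{0, [d_G(v)]_k\}$, and the first bullet of Theorem~\ref{thm:factorization:main:directed} confines $d_{G_i}(v)$ to $\{\lfloor d_G(v)/k\rfloor, \lceil d_G(v)/k\rceil\}$.

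The main obstacle is the parity bookkeeping at odd-degree vertices: I have to verify that across the two bullet subcases of Theorem~\ref{thm:factorization:main:directed} and the two parities of $\lfloor d_G(v)/k \rfloor$, the odd factor-degree occurs with multiplicity exactly one (and not $|I_1(v)|$ or $k - |I_1(v)|$ in an uncontrolled way). This becomes routine once one combines the congruence $[d^+(v)]_k - [d^-(v)]_k \equiv \pm 1 \pmod k$ with the global parity identity $\sum_i d_{G_i}(v) \equiv d_G(v) \pmod 2$, which together pin down which of $\{1, k-1\}$ the value $|I_1(v)|$ must be in each subcase.
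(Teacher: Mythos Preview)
Your approach is essentially the same as the paper's: take a balanced orientation and apply Theorem~\ref{thm:factorization:main:directed}. The paper's proof is slightly more economical in the odd-degree analysis: it first disposes of the cases $d_G(v)\equiv 0,\pm 1\pmod{2k}$ (where bullet~1 of Theorem~\ref{thm:factorization:main:directed} applies), and then observes directly that in all remaining cases one has $|I_1(v)|=1$ for odd $d_G(v)$ and $|I_1(v)|=0$ for even $d_G(v)$, together with the equivalence ``$[d^+_G(v)]_k\le [d_G(v)]_k$ iff $\lfloor d_G(v)/k\rfloor$ is even.'' This avoids your two-way split $|I_1(v)|\in\{1,k-1\}$: the value $k-1$ arises only in the already-excluded case $d_G(v)\equiv -1\pmod{2k}$. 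Your case-split is still valid, but be aware that your ``four subcases'' framing conflates two different parametrizations (the value of $|I_1(v)|$ versus which bullet of Theorem~\ref{thm:factorization:main:directed} applies), and one of the four combinations is actually vacuous for $k\ge 3$.
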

\begin{proof}
{Consider an orientation for $G$ such that for each vertex $v$, 
 $|d^+_G(v)-d^-_G(v)|\le 1$. Let $G_1,\ldots, G_k$ be a factorization of $G$ satisfying Theorem~\ref{thm:factorization:main:directed}. Thus for all $v\in V(G_i)$, $| d_{G_i}(v) -d_{G}(v)/k|< 2$.
Note that if $d_G(v)\stackrel{2k}{\equiv}\pm 1$ or $d_G(v)\stackrel{2k}{\equiv} 0$, then $[d^+_G(v)]_k\in \{0, [d_G(v)]_k\}$ 
and so $| d_{G_i}(v) -d_{G}(v)/k|< 1$.
Assume that $d_G(v)\not \stackrel{2k}{\equiv}\pm 1$ and $d_G(v)\not\stackrel{2k}{\equiv} 0$.
Since $d_G(v)=k\lfloor d_G(v)/k\rfloor+[d_G(v)]_k $, we must have 
$[d^+_G(v)]_k\le [d_G(v)]_k$ if and only if $\lfloor d_G(v)/k\rfloor$ is even.
If $d_G(v)$ is odd, then $|I_1(v)|=1$ and so we can set $j_v$ to be the unique integer in $I_1(v)$. 
In this case, $d_{G_{j_v}}(v)$ must be odd.
If $d_G(v)$ is even, then $I_1(v)=\emptyset$ and so all $d_{G_i}(v)$ must be even.
}\end{proof}
\begin{cor}{\rm (\cite{Kano-1985})}\label{cor:[2a,2b]-factorization}
{Let $a$ and $b$ be two integer with $0\le a \le b$. A graph $G$ satisfies $2ka\le \delta(G) \le \Delta(G)\le 2kb$ if and only if it can be edge-decomposed into $k$ factors $G_1,\ldots, G_k$ 
satisfying $2a\le \delta(G_i) \le \Delta(G_i)\le 2b$.
}\end{cor}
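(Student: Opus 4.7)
The plan is to dispatch the forward direction in one line and derive the backward direction directly from Corollary~\ref{cor:Hilton:generalized}, doing only a small arithmetic check to ensure that its ``$<2$'' bound together with the stated parity structure actually squeezes $d_{G_i}(v)$ into the interval $[2a, 2b]$.

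For the forward implication, if $G=G_1\cup \cdots \cup G_k$ with $2a\le \delta(G_i)\le \Delta(G_i)\le 2b$, then for every vertex $v$, $d_G(v)=\sum_{i=1}^{k} d_{G_i}(v)$ lies between $2ka$ and $2kb$, so $2ka\le \delta(G)\le \Delta(G)\le 2kb$.

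For the backward implication, apply Corollary~\ref{cor:Hilton:generalized} to obtain a decomposition $G_1,\ldots,G_k$ satisfying (i) $|d_{G_i}(v)-d_G(v)/k|<2$ for all $i,v$, (ii) $d_{G_i}(v)$ is even whenever $d_G(v)$ is even, and (iii) whenever $d_G(v)$ is odd there is a unique index $j_v$ with $d_{G_{j_v}}(v)$ odd and $d_{G_i}(v)=d_{G_{j_v}}(v)\pm 1$ for every $i\neq j_v$. Fix a vertex $v$; I want to show $2a\le d_{G_i}(v)\le 2b$ for every $i$. If $d_G(v)$ is even, then by (i) $d_{G_i}(v)>d_G(v)/k-2\ge 2a-2$, and by (ii) $d_{G_i}(v)$ is even, hence $d_{G_i}(v)\ge 2a$; the upper bound $d_{G_i}(v)\le 2b$ follows symmetrically.

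The only delicate case is $d_G(v)$ odd. For $i\neq j_v$ the value $d_{G_i}(v)$ is even (being $d_{G_{j_v}}(v)\pm 1$), and the very same argument of the previous paragraph gives $2a\le d_{G_i}(v)\le 2b$. For $i=j_v$ the value $d_{G_{j_v}}(v)$ is odd, so a priori (i) only forces $d_{G_{j_v}}(v)\ge 2a-1$. To upgrade this to $d_{G_{j_v}}(v)\ge 2a$ I will use the trivial but decisive sum identity: since $d_{G_i}(v)\le d_{G_{j_v}}(v)+1$ for every $i\neq j_v$, we have
\[
d_G(v)=\sum_{i=1}^{k} d_{G_i}(v)\le d_{G_{j_v}}(v)+(k-1)(d_{G_{j_v}}(v)+1)=k\,d_{G_{j_v}}(v)+k-1.
\]
If $d_{G_{j_v}}(v)=2a-1$, this yields $d_G(v)\le 2ka-1$, contradicting $d_G(v)\ge 2ka$. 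Hence $d_{G_{j_v}}(v)\ge 2a+1$, so certainly $d_{G_{j_v}}(v)\ge 2a$. The symmetric sum identity $d_G(v)\ge k\,d_{G_{j_v}}(v)-(k-1)$ rules out $d_{G_{j_v}}(v)=2b+1$ in the same way, giving $d_{G_{j_v}}(v)\le 2b$.

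The main (and only) obstacle is the odd-degree vertex case; there the strict inequality from Corollary~\ref{cor:Hilton:generalized} is not by itself enough, and one must exploit the $\pm 1$ structure of property (iii) together with the global sum $\sum_i d_{G_i}(v)=d_G(v)$ to exclude the borderline odd values $2a-1$ and $2b+1$ at $j_v$. Once this is done, the assertion $2a\le \delta(G_i)\le \Delta(G_i)\le 2b$ follows for every $i$, completing the proof.
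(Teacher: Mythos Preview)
Your proof is correct and follows essentially the same approach as the paper's: both invoke Corollary~\ref{cor:Hilton:generalized}, use parity to handle all even values $d_{G_i}(v)$, and then use the $\pm 1$ structure together with the global sum $\sum_i d_{G_i}(v)=d_G(v)\in[2ka,2kb]$ to exclude the borderline odd values $2a-1$ and $2b+1$ at the exceptional index $j_v$. Your write-up is merely a more explicit unpacking of the paper's one-line sum argument.
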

\begin{proof}
{ Let $G_1,\ldots, G_k$ be a factorization of $G$ satisfying Corollary~\ref{cor:Hilton:generalized} so that for all vertices $v$, $2a-1 \le d_{G_i}(v) \le 2b+1$ .
If $d_G(v)$ is even, then $d_{G_i}(v) $ is even which implies that $2a \le d_{G_i}(v) \le 2b$.
If $d_G(v)$ is odd, then $d_{G_i}(v) $ is even for all indices $i$, except exactly one index $j_v$, and again $2a \le d_{G_i}(v) \le 2b$.
Since $2ka \le \sum_{1\le i\le k} d_{G_i}(v) \le 2kb$ and $d_{G_i}(v)=d_{G_{j_v}}(v)\pm 1$, we must also have $ d_{G_{j_v}}(v) \not\in \{2a-1, 2b+1\}$.
}\end{proof}
%
%
%
%
%
%
%
%
\section{Factorizations with bounded minimum or maximum degrees}
\label{sec:lower-upper-bound-factorizations}

Our aim in this section is to present simpler criterion for existence of factorizations with given lower or upper bounds on vertex degrees. 
Moreover, we improve the edge-connectivity needed in Theorems~\ref{intro:thm:delta} and~\ref{intro:thm:Delta} 
when $\delta_i$ and $\Delta_i$ are larger.
%
%
%
%
%
%
%
%
%
\subsection{Factorizations with given lower or upper bounds on degrees}
The following lemma refines Theorem 3.2 in~\cite{EquitableFactorizations-2022} by giving a restriction on the degrees of~$z$.
\begin{lem}\label{lem:equitable-z}
{Let $G$ be a graph with $Z\subseteq V(G)$ and let $z\in V(G)\setminus Z$. 
Let $m=[|E(G)|-\sum_{v\in Z}d_G(v)]_k$.
If $G$ is $(3k-3)$-edge-connected or $(2k-2)$-tree-connected, then it can be edge-decomposed into $k$ factors $G_1,\ldots, G_k$ such that for each $G_i$, $||E(G_i)|-|E(G)|/k|<1$, for all $v\in V(G_i)\setminus \{z\}$, $|d_{G_i}(v)- d_G(v)/k|<1$, and $|d_{G_i}(z)- d_G(z)/k|<2$. In particular, 
$$ d_{G_i}(z)\neq 
\begin{cases} 
\lceil \frac{1}{k} d_G(z)\rceil +1,	&\text{if $m\le  [d_G(z)]_k\neq 0$};\\
\lfloor \frac{1}{k}d_G(z)\rfloor-1,	&\text{if $ m \ge [d_G(z)]_k\neq 0$}.
\end {cases}$$
}\end{lem}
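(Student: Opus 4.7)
The plan is to combine a modulo orientation via Theorem~\ref{thm:modulo:2k-2:3k-3} with the directed-factorization result of Lemma~\ref{lem:factorization:main:directed:original}. First I would define $p:V(G)\to\mathbb{Z}_k$ by $p(v)=[d_G(v)]_k$ for $v\in Z$, $p(v)=0$ for $v\in V(G)\setminus(Z\cup\{z\})$, and $p(z)=m$. The very definition of $m=[|E(G)|-\sum_{v\in Z}d_G(v)]_k$ forces $\sum_v p(v)\equiv|E(G)|\pmod{k}$, so either of the hypotheses on $G$ lets Theorem~\ref{thm:modulo:2k-2:3k-3} produce a $p$-orientation $D$ of $G$.

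Applying Lemma~\ref{lem:factorization:main:directed:original} to $D$ then furnishes the factors $G_1,\ldots,G_k$ with $||E(G_i)|-|E(G)|/k|<1$, together with tripartitions $I_0(v),I_1(v),I_2(v)$ of $\{1,\ldots,k\}$ at each vertex. For every $v\neq z$ one has $[d_G^+(v)]_k\in\{0,[d_G(v)]_k\}$, so the first bullet of that lemma immediately yields $\lfloor d_G(v)/k\rfloor\le d_{G_i}(v)\le\lceil d_G(v)/k\rceil$, i.e.\ $|d_{G_i}(v)-d_G(v)/k|<1$. At $z$ we have $[d_G^+(z)]_k=m$, and the second bullet (when $m\le[d_G(z)]_k$) or the third (when $m>[d_G(z)]_k$) places $d_{G_i}(z)$ in a window of three consecutive integers straddling $d_G(z)/k$, producing $|d_{G_i}(z)-d_G(z)/k|<2$.

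The ``In particular'' clause asks to exclude the extreme values of this window: $\lceil d_G(z)/k\rceil+1=\lfloor d_G(z)/k\rfloor+2$ is attained only on $I_2(z)$ in the first case, and $\lfloor d_G(z)/k\rfloor-1$ only on $I_0(z)$ in the second. The counting bounds of Lemma~\ref{lem:factorization:main:directed:original} give only $|I_2(z)|\le\min\{m,[d_G(z)]_k-m\}$ and $|I_0(z)|\le\min\{k-m,m-[d_G(z)]_k\}$, both of which can be positive. Hence the main obstacle is to refine the factorization so that $|I_2(z)|=0$ in the first case and $|I_0(z)|=0$ in the second. The combinatorial observation that makes this possible is that the two sets of ``extra'' colors at $z^+$ and $z^-$ have total size $[d_G(z)]_k<k$ in the first case, while the two sets of ``non-extra'' colors have total size $k-[d_G(z)]_k<k$ in the second, leaving room to arrange them disjoint. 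I would realise this disjointness either at the level of the bipartite edge-colouring underlying Lemma~\ref{lem:factorization:main:directed:original} (by pre-assigning colors to the exceptional split pieces $z_0^{+}$ and $z_0^{-}$ to be disjoint before extending to the rest by K\"onig's theorem), or, equivalently, by iteratively exchanging edges between a pair $G_i,G_j$ with $i\in I_2(z)$ and $j\in I_0(z)$ along an alternating $z^+$-to-$z^-$ trail in $G_i\cup G_j$: each such swap drops $|I_2(z)|$ and $|I_0(z)|$ by one while preserving every other vertex's out/in-degree and the size condition $||E(G_i)|-|E(G)|/k|<1$. Guaranteeing that such trails always exist (equivalently, that the combinatorial slack just identified can always be realised by the colouring) is the delicate technical step.
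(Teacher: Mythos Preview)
Your plan through the application of Lemma~\ref{lem:factorization:main:directed:original} is exactly the paper's proof: define $p(v)=d_G(v)$ on $Z$, $p(v)=0$ on $V(G)\setminus(Z\cup\{z\})$, $p(z)=m$, orient via Theorem~\ref{thm:modulo:2k-2:3k-3}, and invoke Lemma~\ref{lem:factorization:main:directed:original}. The paper's proof ends there, in one line.

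The divergence is in how you read the ``In particular'' clause. You take it at face value and correctly observe that, with $[d_G^+(z)]_k=m\le [d_G(z)]_k$, the window from Lemma~\ref{lem:factorization:main:directed:original} is $\{\lfloor d_G(z)/k\rfloor,\lfloor d_G(z)/k\rfloor+1,\lfloor d_G(z)/k\rfloor+2\}$, so excluding $\lceil d_G(z)/k\rceil+1=\lfloor d_G(z)/k\rfloor+2$ would indeed require forcing $I_2(z)=\emptyset$, and you then embark on alternating-trail surgery. But that is not what the lemma is intended to say, nor how it is used. The two cases in the display are transposed in the paper: what actually follows from the chosen orientation, and what Theorem~\ref{thm:factorization:main} actually uses, is
\[
d_{G_i}(z)\neq
\begin{cases}
\lfloor d_G(z)/k\rfloor-1, & \text{if } m\le [d_G(z)]_k\neq 0;\\
\lceil d_G(z)/k\rceil+1, & \text{if } m\ge [d_G(z)]_k\neq 0.
\end{cases}
\]
With this reading the exclusion is immediate: the window of three consecutive integers given by Lemma~\ref{lem:factorization:main:directed:original} already omits the fourth value. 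You can verify this is the intended reading by checking the proof of Theorem~\ref{thm:factorization:main}, where one takes $z\in V_0$ with $m\le[d_G(z)]_k$ and needs precisely $d_{G_i}(z)\neq\lfloor d_G(z)/k\rfloor-1$.

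So your extra step is unnecessary. It is also unlikely to be attainable in general: Theorem~\ref{thm:factorization:main:directed} shows that (at the cost of the size condition) one has exactly $|I_2(z)|=\min\{m,[d_G(z)]_k-m\}$, which is positive whenever $0<m<[d_G(z)]_k$; this indicates there is no slack to push $|I_2(z)|$ down to zero while retaining the equitable out/in-degree structure at $z$.
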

\begin{proof}
{According to Theorem~\ref{thm:modulo:2k-2:3k-3}, the graph $G$ admits an orientation such that
for each $v\in V(G)\setminus (Z\cup \{z\})$, $d_G^+(v)\stackrel{k}{\equiv}0$, 
for each $v\in Z$, $d_G^+(v)\stackrel{k}{\equiv}d_G(v)$, 
and $d_G^+(z)\stackrel{k}{\equiv}m$.
Hence it is enough to apply Lemma~\ref{lem:factorization:main:directed:original}. 
}\end{proof}
The following theorem provides a necessary and sufficient condition for the existence of factorizations with given lower or upper bounds on vertex degrees. 
\begin{thm}\label{thm:factorization:main}
{Let $G$ be a $(3k-3)$-edge-connected or $(2k-2)$-tree-connected graph and let $V_0\subseteq V(G)$.
Then $G$ can be edge-decomposed into $k$ factors $G_1,\ldots, G_k$ such that 
for each $v\in V(G_i)$ with $1\le i\le k$, 
$$d_{G_i}(v) \in\big\{\lfloor \frac{1}{k}d_G(v) \rfloor, \lceil \frac{1}{k} d_G(v)\rceil \big\}\cup \begin{cases}
\{ \lceil \frac{d_G(v)}{k}\rceil +1\},	&\text{when $v\in V_{0}$};\\
\{\lfloor\frac{d_G(v)}{k} \rfloor-1\},	&\text{when $v\in V(G)\setminus V_0$},
\end {cases}$$
 if and only if one of the following conditions holds:
\begin{enumerate}{
\item [$\bullet$] $\sum_{v\in V_0} \lfloor d_G(v)/k\rfloor +\sum_{v\in V(G)\setminus V_0}\lceil d_G(v)/k\rceil$ is even.
\item [$\bullet$] 
$\sum_{v\in V_0}[d_G(v)]_k +\sum_{v\in V(G)\setminus V_0}[k-d_G(v)]_k\ge k-1$.
}\end{enumerate}
In addition, under this assumption we can have $||E(G_i)|-|E(G)|/k|<1$ for all factors $G_i$.
}\end{thm}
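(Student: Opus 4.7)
The plan is to reduce the problem to finding a modulo-$k$ orientation of $G$ with prescribed out-degree residues, and then read off the factorization via Lemma~\ref{lem:factorization:main:directed:original}. For necessity, suppose $G_1,\ldots,G_k$ is such a factorization, and set
\[T_i:=\sum_{v\in V_0}\bigl(d_{G_i}(v)-\lfloor d_G(v)/k\rfloor\bigr)+\sum_{v\notin V_0}\bigl(\lceil d_G(v)/k\rceil-d_{G_i}(v)\bigr)\ge 0.\]
Writing $\sum_v d_{G_i}(v)=2|E(G_i)|$ as $S$ plus a signed combination of the two summands of $T_i$ gives $T_i\equiv S\pmod 2$ for every $i$, while summing $T_i$ over $i$ yields $R_0+R_1:=\sum_{v\in V_0}[d_G(v)]_k+\sum_{v\notin V_0}[k-d_G(v)]_k$. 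So if $S$ is odd every $T_i\ge 1$ and $R_0+R_1\ge k$, giving (b); otherwise (a) holds.

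For sufficiency I seek a mapping $p:V(G)\to\mathbb{Z}_k$ with
\[p(v)\in\{0,1,\ldots,[d_G(v)]_k\}\text{ when }v\in V_0,\qquad p(v)\in\{0\}\cup\{[d_G(v)]_k,\ldots,k-1\}\text{ when }v\notin V_0,\]
and $\sum_v p(v)\equiv|E(G)|\pmod k$. Theorem~\ref{thm:modulo:2k-2:3k-3} then supplies a $p$-orientation of $G$, and a direct case-check on the relative sizes of $[d^+_G(v)]_k$ and $[d_G(v)]_k$ inside Lemma~\ref{lem:factorization:main:directed:original} turns this orientation into a factorization in which $d_{G_i}(v)$ lies in the prescribed membership set; the companion bound $||E(G_i)|-|E(G)|/k|<1$ is inherited from the stronger form of Lemma~\ref{lem:factorization:main:directed:original} used inside its proof.

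The crux is to produce such $p$ under (a) or (b). Taking the baseline $p(v)=0$ on $V_0$ and $p(v)=[d_G(v)]_k$ on $V\setminus V_0$, the admissible change at each vertex with $[d_G(v)]_k>0$ contributes, modulo $k$, a value in a contiguous interval $\{0,\ldots,s_v\}$, with $s_v=[d_G(v)]_k$ for $v\in V_0$ and $s_v=[k-d_G(v)]_k$ for $v\notin V_0$. Hence the total reachable shift modulo $k$ is $\{0,1,\ldots,R_0+R_1\}\bmod k$, and we must hit $\delta:=[|E(G)|-\sum_{v\notin V_0}[d_G(v)]_k]_k$. Under (b), $R_0+R_1\ge k-1$ already covers $\mathbb{Z}_k$. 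Under (a), the identity
\[R_0+R_1=kS-2\bigl(|E(G)|-R_0\bigr),\]
obtained by expanding via $2|E(G)|=k\sum_v\lfloor d_G(v)/k\rfloor+\sum_v[d_G(v)]_k$, together with $\delta+[|E(G)|-R_0]_k\equiv 0\pmod k$, yields $R_0+R_1\equiv 2\delta\pmod{2k}$ whenever $S$ is even. Since $R_0+R_1\ge 0$ and $\delta<k$, this forces $R_0+R_1\in\{2\delta,2\delta+2k,\ldots\}$, so in particular $R_0+R_1\ge 2\delta\ge\delta$ and $\delta$ is reachable.

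The main obstacle is establishing the identity above and upgrading it from $\pmod k$ to $\pmod{2k}$ via the parity of $S$; this is the step that converts condition (a) into shift achievability. A subtle point is the treatment of vertices $v$ with $[d_G(v)]_k=0$: these must be assigned $p(v)=0$ (for $v\notin V_0$ any other choice would force some $d_{G_i}(v)=\lceil d_G(v)/k\rceil+1$ via Lemma~\ref{lem:factorization:main:directed:original}, violating the prescribed set), so they contribute no shift flexibility and the analysis is really only at $U=\{v:[d_G(v)]_k>0\}$. Once these points are pinned down, the conclusion follows by routine combination of Theorem~\ref{thm:modulo:2k-2:3k-3} and Lemma~\ref{lem:factorization:main:directed:original}.
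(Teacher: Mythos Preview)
Your argument is correct and rests on the same two tools the paper uses, Theorem~\ref{thm:modulo:2k-2:3k-3} and Lemma~\ref{lem:factorization:main:directed:original}: necessity is identical to the paper's, and sufficiency in both cases amounts to producing an admissible out-degree residue vector $p$ with $\sum_v p(v)\equiv |E(G)|\pmod k$. Where you and the paper differ is in \emph{how} the existence of $p$ is shown. The paper works through the auxiliary Lemma~\ref{lem:equitable-z}: it greedily selects $Z_0\subseteq V_0$ maximal with $\sum_{v\in Z_0}[d_G(v)]_k<[|E(G)|]_k$, and if $Z_0\subsetneq V_0$ picks $z\in V_0\setminus Z_0$ and sets $p(v)=d_G(v)$ on $Z_0$, $p(z)=m$, and $p=0$ elsewhere; a symmetric greedy choice handles $V_1$, and when both choices exhaust their sides a short computation forces a contradiction with conditions (a) and (b). Your approach instead fixes a baseline $p$, identifies the full interval $\{0,\ldots,R_0+R_1\}$ of achievable shifts, and verifies $\delta$ lies in it via the identity $R_0+R_1=kS-2(|E(G)|-R_0)$ upgraded modulo $2k$ by the parity of $S$. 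Your route is a little more transparent about \emph{why} the two conditions are exactly what is needed, while the paper's route is more explicitly constructive (it names the vertex $z$ that absorbs the residual). One small remark: the size condition $||E(G_i)|-|E(G)|/k|<1$ is already part of the statement of Lemma~\ref{lem:factorization:main:directed:original}, so you need not appeal to anything ``inside its proof''.
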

\begin{proof}
{For notational simplicity, let us define $V_1=V(G)\setminus V_0$. We first prove the necessity. Let $G_1,\ldots, G_k$ be a factorization of $G$ satisfying the theorem.
If $\sum_{v\in V_0} \lfloor d_G(v)/k\rfloor +\sum_{v\in V_1}\lceil d_G(v)/k\rceil$ is odd, then 
since $\sum_{v\in V(G)}d_{G_i}(v)$ is even, for a vertex $v\in V_0$, 
$d_{G_i}(v) \neq \lfloor d_G(v)/k\rfloor$ or for a vertex $v\in V_1$, $d_{G_i}(v) \neq \lceil d_G(v)/k\rceil$.
According to the assumption, for the first case, we must have $d_{G_i}(v) \ge \lfloor d_G(v)/k\rfloor +1$, and for the second case,
we must have $d_{G_i}(v) \le \lceil d_G(v)/k\rceil-1$.
On the other hand, it is easy to see that for all vertices~$v$,
\begin{equation}\label{eq:essential:identity}
k\lfloor d_{G}(v)/k\rfloor+[d_{G}(v)]_k=
d_{G}(v)=k\lceil d_{G}(v)/k\rceil - [k-d_{G}(v)]_k,
\end{equation}
which implies that
$$\sum_{v\in V_0}[d_G(v)]_k+\sum_{v\in V_1}[k-d_G(v)]_k=
\sum_{1\le i\le k} \Big(\sum_{v\in V_0}(d_{G_i}(v)-\lfloor d_G(v)/k\rfloor)+\sum_{v\in V_1}(\lceil d_G(v)/k\rceil-d_{G_i}(v))\Big)\ge k.$$
Now, we prove the sufficiency. We may assume that $[|E(G)|]_k\neq 0$ according to Lemma~\ref{lem:m=0}
(by setting $Z=\emptyset$).
Let $Z_0$ be a subset of $V_0$ satisfying 
 $\sum_{v\in Z_0}[d_G(v)]_k< [|E(G)|]_k$ with the maximum $\sum_{v\in Z_0}[d_G(v)]_k$.
If there exists a vertex $z\in V_0\setminus Z_0$, then by the maximality property of $Z_0$,
 $[|E(G)|]_k \le \sum_{v\in Z_0}[d_G(v)]_k+[d_G(z)]_k$. 
Thus $[|E(G)| -\sum_{v\in Z_0}d_G(v)]_k=[|E(G)|]_k -\sum_{v\in Z_0}[d_G(v)]_k\le [d_G(z)]_k$.
In this case, the assertion follows from Lemma~\ref{lem:equitable-z} (by setting $Z=Z_0$).
Hence $Z_0=V_0$.
Let $Z_1$ be a subset of $V_1$ satisfying 
 $\sum_{v\in Z_1}[k-d_G(v)]_k< [k-|E(G)|]_k$ with the maximum $\sum_{v\in Z_1}[k-d_G(v)]_k$.
If there exists a vertex $z\in V_1\setminus Z_1$, then by the maximality property of $Z_1$,
 $[k-|E(G)|]_k \le \sum_{v\in Z_1}[k-d_G(v)]_k+[k-d_G(z)]_k$. 
Thus $[\sum_{v\in Z_0}(k-d_G(v))-|E(G)|]_k=[k-|E(G)|]_k -\sum_{v\in Z_0}[k-d_G(v)]_k\le [k-d_G(z)]_k$.
In this case, the assertion follows from Lemma~\ref{lem:equitable-z} (by setting $Z=Z_1$).
Hence $Z_1=V_1$.
Therefore, 
$$\sum_{v\in V_0}[d_G(v)]_k+\sum_{v\in V_1}[k-d_G(v)]_k \le [|E(G)|]_k-1+[k-|E(G)|]_k-1= k-2.$$
Thus by the assumption, $\sum_{v\in V_0} \lfloor d_G(v)/k\rfloor +\sum_{v\in V_1}\lceil d_G(v)/k\rceil$ must be even.
Then, by Equation~(\ref{eq:essential:identity}), one can conclude that
$$|E(G)|=
\sum_{v\in V_0}\frac{d_G(v)}{2}+\sum_{v\in V_1}\frac{d_G(v)}{2}
\stackrel{k}{\equiv}
\frac{1}{2}(\sum_{v\in V_0}[d_G(v)]_k-\sum_{v\in V_1}[k-d_G(v)]_k).$$ 
If $\sum_{v\in V_0}[d_G(v)]_k\ge \sum_{v\in V_1}[k-d_G(v)]_k$, then $[|E(G)|]_k\le \frac{1}{2}\sum_{v\in V_0}[d_G(v)]_k\le 
\sum_{v\in V_0}[d_G(v)]_k<[|E(G)|]_k $, which is a contradiction. Otherwise, $[k-|E(G)|]_k\le \frac{1}{2}\sum_{v\in V_0}[k-d_G(v)]_k\le \sum_{v\in V_0}[k-d_G(v)]_k<[k-|E(G)|]_k $, which is again a contradiction.
Consequently, the proof is completed.
}\end{proof}
When $V_0$ is equal to $V(G)$ or the empty set, the theorem becomes simpler as the following corollary.
\begin{cor}\label{cor:factorization:main}
{Let $G$ be a $(3k-3)$-edge-connected or $(2k-2)$-tree-connected graph.
Then $G$ can be edge-decomposed into $k$ factors $G_1,\ldots, G_k$ such that 
for each $v\in V(G_i)$ with $1\le i\le k$, $d_{G_i}(v)\ge \lfloor \frac{1}{k} d_G(v)\rfloor$, if and only if one of the following conditions holds:
\begin{enumerate}{
\item [$\bullet$] $\sum_{v\in V(G)}\lfloor d_G(v)/k\rfloor$ is even.
\item [$\bullet$] 
$\sum_{v\in V(G)}[d_G(v)]_k\ge k-1$.
}\end{enumerate}
Moreover, $G$ can be edge-decomposed into $k$ factors $G_1,\ldots, G_k$ such that 
for each $v\in V(G_i)$ with $1\le i\le k$, $d_{G_i}(v)\le \lceil \frac{1}{k} d_G(v)\rceil$, if and only if one of the following conditions holds:
\begin{enumerate}{
\item [$\bullet$] $\sum_{v\in V(G)}\lceil d_G(v)/k\rceil$ is even.
\item [$\bullet$] 
$\sum_{v\in V(G)}[k-d_G(v)]_k\ge k-1$.
}\end{enumerate}
}\end{cor}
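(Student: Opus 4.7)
The plan is to derive both halves of Corollary~\ref{cor:factorization:main} as direct specializations of Theorem~\ref{thm:factorization:main}: take $V_0 = V(G)$ for the lower-bound statement and $V_0 = \emptyset$ for the upper-bound statement. In both cases the two parts are dual under edge complementation (modulo re-indexing and the identity $[k - d_G(v)]_k = k - [d_G(v)]_k$ when $k \nmid d_G(v)$), so it suffices to write out the argument for the first part and then observe that the second is symmetric.

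For the sufficiency in the lower-bound version, I set $V_0 = V(G)$ in Theorem~\ref{thm:factorization:main}. The hypothesis of the corollary is exactly the hypothesis of the theorem in this case, and the conclusion of the theorem then gives factors $G_1, \ldots, G_k$ with $d_{G_i}(v) \in \{\lfloor d_G(v)/k \rfloor, \lceil d_G(v)/k \rceil, \lceil d_G(v)/k \rceil + 1\}$ for all $v$, which in particular satisfies $d_{G_i}(v) \geq \lfloor d_G(v)/k \rfloor$.

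The necessity needs an additional (short) parity argument because the corollary's degree constraint is weaker than the theorem's (no upper bound is imposed). Assume a factorization $G_1, \ldots, G_k$ with $d_{G_i}(v) \geq \lfloor d_G(v)/k \rfloor$ exists, and suppose that $\sum_{v} \lfloor d_G(v)/k \rfloor$ is odd. Since $\sum_{v} d_{G_i}(v) = 2|E(G_i)|$ is even, the sum $\sum_{v} \bigl( d_{G_i}(v) - \lfloor d_G(v)/k \rfloor \bigr)$ is odd and, being a sum of nonnegative integers, is at least $1$ for every index $i$. Summing over $i$ and using $\sum_{i} d_{G_i}(v) = d_G(v)$ gives
\[
\sum_{v \in V(G)} [d_G(v)]_k \;=\; \sum_{i=1}^{k} \sum_{v} \bigl( d_{G_i}(v) - \lfloor d_G(v)/k \rfloor \bigr) \;\geq\; k \;\geq\; k-1,
\]
which is the second condition.

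The upper-bound part is completely analogous with the role of $\lfloor \cdot \rfloor$ replaced by $\lceil \cdot \rceil$ and $[d_G(v)]_k$ replaced by $[k - d_G(v)]_k$: apply Theorem~\ref{thm:factorization:main} with $V_0 = \emptyset$ for sufficiency, and for necessity use the identity $\sum_{v}\bigl(\lceil d_G(v)/k\rceil - d_{G_i}(v)\bigr) \equiv \sum_v \lceil d_G(v)/k\rceil \pmod 2$. I do not expect any real obstacle here; the whole proof is essentially bookkeeping, with the only mildly nontrivial point being the parity-to-modular-sum step sketched above.
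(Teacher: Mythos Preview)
Your proof is correct and follows the same approach as the paper: sufficiency comes from Theorem~\ref{thm:factorization:main} with $V_0=V(G)$ (resp.\ $V_0=\emptyset$), and necessity is the parity argument from the first paragraph of that theorem's proof, specialized to the case $V_1=\emptyset$ (resp.\ $V_0=\emptyset$). Your observation that the necessity direction only uses the one-sided bound $d_{G_i}(v)\ge\lfloor d_G(v)/k\rfloor$ is exactly right and is implicit in the paper's instruction to ``repeat the first part of the proof of Theorem~\ref{thm:factorization:main}.''
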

\begin{proof}
{For proving the necessity, it is enough to repeat the first part of the proof of Theorem~\ref{thm:factorization:main}.
For prove the sufficiency, it is enough to apply Theorem~\ref{thm:factorization:main} with setting $V_0=V(G)$ or $V_0=\emptyset$, respectively.
}\end{proof}
\subsection{Factorizations with bounded minimum degrees}
The following lemma shows an application of factorizations with the same lower bound on minimum degrees.
\begin{lem}\label{lem:factorization:minimum-degree}
{Let $p$ be an odd positive integer and let $\delta_1,\ldots,\delta_m$ be positive integers with $\delta_i\ge p-1$ and $kp=\delta_1+\cdots+ \delta_m$. 
Then $G$ contains a factorization $G_1,\ldots, G_m$ satisfying $\delta(G_i)\ge \delta_i$ for all $i\in \{1,\ldots, m\}$, 
if $G$ contains a factorization $H_1,\ldots, H_k$ satisfying 
 $\delta(H_j)\ge p$ for all $j\in J$, and
$$\sum_{ j\in \{1,\ldots, k\}\setminus J}\delta(H_{j})\ge (k-|J|) p,$$
where $J\subseteq \{1.\ldots, k\}$ and $|J|\ge |\{i: 1\le i\le m \text{ and }\delta_i \text{ is odd}\}|$.
}\end{lem}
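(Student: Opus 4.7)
The plan is to combine the given factors $H_1,\ldots,H_k$ into $m$ groups that will form the $G_i$'s, possibly further refining some $H_j$'s. The argument proceeds in three stages.

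First, I would absorb the factors outside $J$ into the pool of good ones. Set $H^{\ast} = \bigcup_{j\notin J} H_j$; by hypothesis $\delta(H^{\ast})\ge (k-|J|)p$, so one can think of $H^{\ast}$ as supplying $k-|J|$ units of minimum-degree-$p$ worth. Formally, I would apply Corollary~\ref{cor:factorization:main} to $H^{\ast}$ with $k-|J|$ in place of $k$ to re-decompose it into $k-|J|$ edge-disjoint sub-factors each of minimum degree $\ge p$ (arranging the required parity by swapping a single edge with one of the original good $H_j$'s if necessary). After this refinement we may assume that $\delta(H_j)\ge p$ for every $j\in\{1,\ldots,k\}$.

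Next, I would reserve seeds for the odd targets. Let $O = \{i : \delta_i \text{ is odd}\}$. Since $p$ is odd and $\delta_i\ge p-1$, every $i\in O$ in fact has $\delta_i\ge p$. The hypothesis $|J|\ge |O|$ now lets me choose, for each $i\in O$, a distinct index $j_i\in\{1,\ldots,k\}$ and put $H_{j_i}$ into the group $S_i$ that will define $G_i$. This seeding already guarantees $\delta(G_i)\ge p$ for every $i\in O$, leaving an \emph{even} residual demand $\delta_i-p$ at those groups; for $i\notin O$ the demand $\delta_i=(p-1)+2a_i$ is also even. The total residual demand is $(k-|O|)p$, which matches the total minimum-degree supply of the remaining $k-|O|$ good factors.

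Finally, I would distribute these remaining factors among the $m$ groups so that each group's partial sum meets its even residual demand. When $k-|O|\ge m$ a direct partition suffices; when $k-|O|<m$, some of the good factors must be split across several $G_i$'s, and here I would apply Corollary~\ref{cor:[2a,2b]-factorization} to split individual $H_j$'s into pieces of even minimum degree that can be redistributed. The main obstacle lies precisely in this last packing step: when the $\delta_i$'s are heterogeneous and $m$ is large compared with $k-|O|$, one must argue (by recursion on the number of required splits, or by a tailored Kano-type application) that the residual demands can always be met simultaneously. The role of the hypothesis $|J|\ge|O|$ is exactly to make every residual demand even, which is what permits Corollary~\ref{cor:[2a,2b]-factorization} to be invoked without incurring parity conflicts.
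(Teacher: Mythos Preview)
Your first stage contains a genuine gap. You invoke Corollary~\ref{cor:factorization:main} on $H^{\ast}=\bigcup_{j\notin J}H_j$ to cut it into $k-|J|$ pieces of minimum degree $\ge p$, but that corollary requires the graph to be $(3(k-|J|)-3)$-edge-connected or $(2(k-|J|)-2)$-tree-connected. The lemma places no connectivity hypothesis on $G$ (and hence none on $H^{\ast}$), so this application is illegitimate. In fact, decomposing a graph with $\delta\ge (k-|J|)p$ into $k-|J|$ factors each with $\delta\ge p$ (for $p$ odd) is precisely the sort of statement the lemma is meant to feed into later---you cannot assume it here. Your third stage then inherits this problem and, as you yourself note, the packing argument is left unresolved.

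The paper sidesteps both difficulties with one clean move: instead of trying to cut $G'=\bigcup_{j\notin J}H_j$ into pieces of minimum degree $\ge p$, it cuts $G'$ into $n=\tfrac{1}{2}(k-|J|)p$ pieces $F_1,\ldots,F_n$ each of minimum degree $\ge 2$, via Corollary~\ref{cor:[2a,2b]-factorization} with $a=1$ (which needs no connectivity at all, only $\delta(G')\ge 2n$). After reducing $J$ to exactly the set of odd $\delta_i$, every residual demand $\delta_j-p$ (for $j\in J$) or $\delta_j$ (for $j\notin J$) is a nonnegative even integer, and these demands sum to exactly $2n$. Thus the $F_i$'s can be partitioned among the $m$ groups by a direct count---group $j$ receives $\tfrac{1}{2}(\delta_j-p)$ or $\tfrac{1}{2}\delta_j$ of them---with no recursion, no further splitting, and no packing obstacle. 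The key idea you are missing is to make the building blocks as small as possible (minimum degree $2$, not $p$), so that both the decomposition step and the redistribution step become trivial.
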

\begin{proof}
{Without loss of generality, we may assume that $J= \{i:\delta_i \text{ is odd}\}\subseteq \{1,\ldots, m\}$ 
by deleting some indices from $J$ (if necessary). Let $n= \frac{1}{2}(k-|J|)p$.
Since $p$ is odd, $k \stackrel{2}{\equiv}
 \delta_1+\cdots+ \delta_m
\stackrel{2}{\equiv}\sum_{j\in J}\delta_j \stackrel{2}{\equiv}|J|$ which implies that $n$ is a positive integer. 
By the assumption, 
$\delta(G')\ge \sum_{ j\in \{1,\ldots, k\}\setminus J}\delta(H_{j})\ge 2n$, 
where $G'=\cup_{ j\in \{1,\ldots, k\}\setminus J} H_{j}$.
Thus by Corollary~\ref{cor:[2a,2b]-factorization}, the graph $G'$ 
can be edge-decomposed into factors $F_1,\ldots, F_{n}$ satisfying $\delta(F_i)\ge 2$, where $1\le i\le n$.
Let $\mathcal{J}=\{1,\ldots, m\}\setminus J$.
Notice that $\delta_j-p$ is an even nonnegative integer for all $j\in J$ and $\delta_j$ is an even nonnegative integer for all $j\in \mathcal{J}$.
Since $\sum_{j\in J}(\delta_j-p)+\sum_{j\in \mathcal{J}}\delta_j = kp-|J|p=2n$,
there is another factorization $F'_1,\ldots, F'_m$ of $G'$ such that 
$F'_j$ is the union of $\frac{1}{2}(\delta_j-p)$ factors $F_i$ for all $j\in J$,
and $F'_j$ is the union of $\frac{1}{2}\delta_j$ factors $F_i$ for all $j \in \mathcal{J}$. 
If $j\in J$, we set $G_j=H_j\cup F'_j$ so that 
$\delta(G_j)\ge p+(\delta_j-p)= \delta_j$, and 
if $j\in \mathcal{J}$, we set $G_j=F'_j$ so that $\delta(G_j)\ge \delta_j$.
This completes the proof.
}\end{proof}
A generalization of Theorems~\ref{intro:thm:delta} is given in the following result.
\begin{thm}\label{thm:delta1-deltam}
{Let $k$ be a positive integer, let $p$ be an odd integer with $p\ge 3$, and let $\delta_1,\ldots,\delta_m$ be positive integers with $\delta_i\ge p-1$ and $kp=\delta_1+\cdots+ \delta_m$.
Let $G$ be a $V_0$-partially $(3k-3)$-edge-connected or $(2k-2)$-tree-connected graph satisfying $\delta(G)\ge \delta_1+\cdots+ \delta_m$, where $V_0=\{v\in V(G): d_G(v)\le kp+k-2\}$.
Then $G$ can be edge-decomposed into factors $G_1,\ldots, G_m$ satisfying $\delta(G_i)\ge \delta_i$ for all $i$ with $1\le i \le m$, 
if and only if one of the following conditions holds:
\begin{enumerate}{
\item [$\bullet$] 
$|V(G)|$ is even.
\item [$\bullet$] 
 $\sum_{v\in V(G)}(d_G(v)-(\delta_1+\cdots+ \delta_m))\ge |\{i:\delta_i \text{ is odd}\}|-1$ (for example, $V_0\neq V(G)$).
}\end{enumerate}
}\end{thm}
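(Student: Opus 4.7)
Given any factorization $G=G_1\cup\cdots\cup G_m$ with $\delta(G_i)\ge\delta_i$, set $e_i(v):=d_{G_i}(v)-\delta_i\ge 0$, so that $\sum_i e_i(v)=d_G(v)-kp$ while each $\sum_v d_{G_i}(v)=\delta_i|V(G)|+\sum_v e_i(v)$ is even. When $|V(G)|$ is odd, every index $i$ with $\delta_i$ odd therefore forces $\sum_v e_i(v)\ge 1$, so $\sum_v(d_G(v)-kp)\ge t$; since this sum and $t$ share the parity of $k$, the stated bound $\ge t-1$ is equivalent to $\ge t$ and is therefore necessary.

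\textbf{Sufficiency strategy.} In every case the plan is to build an intermediate $k$-factorization $G=H_1\cup\cdots\cup H_k$ satisfying the hypothesis of Lemma~\ref{lem:factorization:minimum-degree}, namely a set $J\subseteq\{1,\ldots,k\}$ with $|J|\ge t$, $\delta(H_j)\ge p$ for $j\in J$, and $\sum_{j\notin J}\delta(H_j)\ge(k-|J|)p$, and then to invoke that lemma. Two regimes are straightforward. If $V_0\ne V(G)$, I apply Theorem~\ref{thm:Z0}: combined with $d_G(v)\ge kp$ for $v\in V_0$ and $d_G(v)\ge kp+k-1$ for $v\notin V_0$, the bounds furnished by that theorem force $d_{H_j}(v)\ge p$ at every vertex, so $J=\{1,\ldots,k\}$ works; moreover a single vertex outside $V_0$ already contributes $k-1\ge t-1$ to the second bullet, which is therefore automatic here. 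If instead $V_0=V(G)$ and $|V(G)|$ is even, then $G$ is $(3k-3)$-edge-connected, $\sum_v\lfloor d_G(v)/k\rfloor=p|V(G)|$ is even, and the first bullet of Corollary~\ref{cor:factorization:main} produces a $k$-factorization with every $\delta(H_j)\ge p$.

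\textbf{Main obstacle: $V_0=V(G)$ and $|V(G)|$ odd.} Here the parity of $\sum_v d_{H_j}(v)$ at the vertices with $d_G(v)=kp$ prevents every $H_j$ from having $\delta(H_j)\ge p$, and Corollary~\ref{cor:factorization:main} would require the strictly stronger condition $\sum\ge k-1$ in place of the assumed $\sum\ge t-1$. My plan is instead to extract $t$ edge-disjoint \emph{near-$p$-factors} $H_1,\ldots,H_t$, each satisfying $d_{H_j}(v)=p$ for all $v\ne z_j$ and $d_{H_j}(z_j)=p+1$, where the excess vertices $z_1,\ldots,z_t$ are chosen with multiplicities $n_v$ so that each $v$ plays the role of $z_j$ at most $d_G(v)-kp$ times. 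Feasibility of this choice is precisely the hypothesis $\sum_v(d_G(v)-kp)\ge t$. The remainder $G'=G\setminus\bigcup_{j\le t}H_j$ then satisfies $\delta(G')\ge(k-t)p$, which is exactly the condition used inside the proof of Lemma~\ref{lem:factorization:minimum-degree} to assemble the final $G_1,\ldots,G_m$ with $J=\{1,\ldots,t\}$. The hardest step, and where I expect the bulk of the technical work, is proving existence of these $t$ edge-disjoint near-$p$-factors in a $(3k-3)$-edge-connected graph; I plan to do this by applying Theorem~\ref{thm:modulo:2k-2:3k-3} to orient $G$ with out-degrees prescribed modulo $k$ that encode the excess pattern $(n_v)_{v\in V(G)}$, and then invoking Lemma~\ref{lem:factorization:main:directed:original} (or Theorem~\ref{thm:factorization:main:directed}) to realize that orientation as the required factorization.
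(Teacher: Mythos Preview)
Your necessity argument and the two easy sufficiency cases ($V_0\neq V(G)$ via Theorem~\ref{thm:Z0}, and $V_0=V(G)$ with $|V(G)|$ even via Corollary~\ref{cor:factorization:main}) match the paper exactly.

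The gap is in the hard case. Your plan is to spread the ``excess'' over $t$ vertices $z_1,\ldots,z_t$ and extract $t$ edge-disjoint near-$p$-factors via a modular orientation followed by Lemma~\ref{lem:factorization:main:directed:original} or Theorem~\ref{thm:factorization:main:directed}. The obstruction is that these factorization lemmas control only the \emph{sizes} $|I_0(v)|,|I_1(v)|,|I_2(v)|$ at each vertex, not which specific indices lie in each set. As soon as more than one vertex carries a nonzero modular shift $\eta(v)$, each such vertex produces its own set $I_0(v)$ of indices $j$ with $d_{H_j}(v)=p-1$, and nothing forces these sets to be disjoint from one another or to pair off against the corresponding $I_2(v)$'s. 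Consequently you cannot select a single set $J$ of size $\ge t$ that simultaneously avoids all the ``$p-1$'' indices and keeps the complement balanced at every special vertex. In particular, the clean picture of $t$ near-$p$-factors each with a single vertex of degree $p+1$ is not something the orientation-plus-factorization machinery delivers.

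The paper's fix is to concentrate the entire modular deficit at a \emph{single} vertex $z$: orient with $d_G^+(v)\equiv d_G(v)$ for $v\neq z$ and $d_G^+(z)\equiv d_G(z)+n$, where $n=\tfrac12\bigl(k-\sum_v[d_G(v)]_k\bigr)$. Then every $v\neq z$ satisfies $d_{H_j}(v)\in\{p,p+1\}$ automatically, and only $z$ has a nontrivial tripartition $J_0,J_1,J_2$ from Theorem~\ref{thm:factorization:main:directed}, with $|J_0|\le n$ and $|J_0|\le|J_2|$. Pair $J_0$ with an equal-sized $J'_2\subseteq J_2$ and take $J=\{1,\ldots,k\}\setminus(J_0\cup J'_2)$; then $|J|=k-2|J_0|\ge k-2n=\sum_v[d_G(v)]_k\ge t$, and at $z$ the degrees $p-1$ and $p+1$ within $J_0\cup J'_2$ cancel to give $\sum_{j\notin J}d_{H_j}(z)=(k-|J|)p$, which is precisely what Lemma~\ref{lem:factorization:minimum-degree} needs. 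So the missing idea is not the distributed excess pattern but the opposite: collapse everything onto one vertex so that there is no alignment problem at all.
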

\begin{proof}
{We first prove the necessity.
Assume that $G_1,\ldots, G_m$ is a factorization of $G$ satisfying $\delta(G_i)\ge \delta_i$ for all $i$ with $1\le i \le m$. If $|V(G)|$ is odd, then $\sum_{v\in V(G_i)}d_{G_i}(v)\ge \delta_i|V(G)|+1$ when $\delta_i$ is odd. 
This implies that $\sum_{v\in V(G)}d_{G}(v)=\sum_{1\le i\le m} \sum_{v\in V(G_i)}d_{G_i}(v)\ge 
(\delta_1+\cdots + \delta_m)|V(G)|+|\{i:\delta_i \text{ is odd}\}|$.
Now, we prove the sufficiency. 
If $V_0\neq V(G)$, then by Theorem~\ref{thm:Z0},
 there is a factorization $H_1,\ldots, H_k$ of $G$ such that 
 for all $v\in V(H_j)\setminus V_0$, $ d_{H_j}(v) \ge \lfloor \frac{1}{k} (d_G(v)+1)\rfloor-1\ge p$, and 
for all $v\in V_0$, $ d_{H_j}(v) \ge \lfloor \frac{1}{k} d_G(v)\rfloor \ge p$
In this case, the assertion follows from Lemma~\ref{lem:factorization:minimum-degree} immediately.
So, suppose $V_0= V(G)$.
If $|V(G)|$ is even or $\sum_{v\in V(G)} [d_G(v)]_k \ge k$, then 
 a combination of Corollary~\ref{cor:factorization:main} and Lemma~\ref{lem:factorization:minimum-degree} can similarly complete the proof.

We may therefore assume that 
for all vertices $v$, $\lfloor d_G(v)/k\rfloor=p$ and $[d_G(v)]_k=d_G(v)-kp$,  $|V(G)|$ is odd, and $\sum_{v\in V(G)} [d_G(v)]_k < k$.
Since $d_{G}(v)=[d_{G}(v)]_k +k\lfloor d_{G}(v)/k\rfloor$, one can conclude that
$\sum_{v\in V(G)}[d_G(v)]_k$ and $k$ have the same parity and
 $|E(G)|=\frac{1}{2}d_G(v)\stackrel{k}{\equiv }\frac{1}{2}(k+\sum_{v\in V(G)}[d_G(v)]_k)$.
Pick $z\in V(G)$.
Let $n=\frac{1}{2}(k-\sum_{v\in V(G)}[d_G(v)]_k)\le \frac{1}{2}(k-[d_G(z)]_k)$ so that $n>0$. 
According to Theorem~\ref{thm:modulo:2k-2:3k-3}, the graph $G$ admits an orientation such that for each
 $v\in V(G)\setminus \{z\}$, $d^+_G(v)\stackrel{k}{\equiv}d_G(v)$, 
and $d^+_G(z)\stackrel{k}{\equiv}d_G(z)+n$.
Thus by Theorem~\ref{thm:factorization:main:directed},
 there is a factorization $H_1,\ldots, H_k$ of $G$ such that 
for all $v\in V(H_j)\setminus \{z\}$, $ d_{H_j}(v) \ge \lfloor \frac{1}{k} d_G(v)\rfloor \ge p$.
In addition, there is a tripartition $J_0,J_1,J_2$ of $\{1,\ldots, k\}$ such that
$ d_{H_j}(z) \ge \lfloor \frac{1}{k} d_G(z)\rfloor-1\ge p-1$ when $j\in J_0$,
 $ d_{H_j}(z) \ge \lfloor \frac{1}{k} d_G(z)\rfloor\ge p$ when $j\in J_1$,
and $ d_{H_j}(z) \ge \lfloor \frac{1}{k} d_G(z)\rfloor+1\ge p+1$ when $j\in J_2$. 
 In particular, $|J_0|\le 
k-[d^-_G(z)]_k=k-[k-n]_k
= n$ and $|J_0| \le |J_2|$. Note that $[d^+_G(z)]_k=[d_G(z)]_k+n> [d_G(z)]_k$.
Let $J'_2$ be a subset of $J_2$ with $|J'_2|=|J_0|$.
Obviously, $\sum_{j\in J_0\cup J'_2}\delta(H_j)\ge (p-1)|J_0|+(p+1)|J'_2|= p|J_0\cup J'_2|= (k-|J|)p$, 
where $J=\{1,\ldots, k\}\setminus (J_0\cup J'_2)$.
Since $\sum_{v\in V(G)} (d_G(v)-kp)\stackrel{2}{\equiv}\sum_{1\le i\le k}\delta_i \stackrel{2}{\equiv}|\{i:\delta_i \text{ is odd}\}|$, 
by the assumption, we must have
$|J|\ge k-2n= \sum_{v\in V(G)}[d_G(v)]_k=\sum_{v\in V(G)} (d_G(v)-kp)\ge |\{i:\delta_i \text{ is odd}\}|$.
Therefore, by applying Lemma~\ref{lem:factorization:minimum-degree}, the proof can be completed.
}\end{proof}
\subsection{Factorizations with bounded maximum degrees}
The following lemma shows an application of factorizations with the same upper bound on maximum degrees.
\begin{lem}\label{lem:factorization:maximum-degree}
{Let $p$ be an odd positive integer and let $\Delta_1,\ldots,\Delta_m$ be positive integers with $\Delta_i\ge p-1$ and $kp=\Delta_1+\cdots+ \Delta_m$. 
Then $G$ contains a factorization $G_1,\ldots, G_m$ satisfying $\Delta(G_i)\le \Delta_i$ for all $i\in \{1,\ldots, m\}$, 
if $G$ contains a factorization $H_1,\ldots, H_k$ satisfying 
 $\Delta(H_j)\le p$ for all $j\in J$, and
$$\sum_{ j\in \{1,\ldots, k\}\setminus J}\Delta(H_{j})\le (k-|J|) p,$$
where $J\subseteq \{1.\ldots, k\}$ and $|J|\ge |\{i: 1\le i\le m \text{ and }\Delta_i \text{ is odd}\}|$.
}\end{lem}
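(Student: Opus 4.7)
The argument will mirror the proof of Lemma~\ref{lem:factorization:minimum-degree} almost verbatim, with every lower-bound inequality flipped to an upper-bound inequality. First I would shrink $J$ and re-index the factors $H_1,\ldots,H_k$ so that $J=\{i:1\le i\le m,\ \Delta_i \text{ odd}\}\subseteq \{1,\ldots,m\}$; shrinking $J$ is harmless because moving an index $j$ out of $J$ merely adds the number $\Delta(H_j)\le p$ to $\sum_{j\notin J}\Delta(H_j)$, preserving the bound $\sum_{j\notin J}\Delta(H_j)\le (k-|J|)p$. Since $p$ is odd and $kp=\sum_i\Delta_i$, one has $k\equiv \sum_i\Delta_i\equiv |J|\pmod 2$, so $n:=\tfrac{1}{2}(k-|J|)p$ is a nonnegative integer.

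Next I would set $G'=\bigcup_{j\notin J}H_j$ and observe $\Delta(G')\le \sum_{j\notin J}\Delta(H_j)\le(k-|J|)p=2n$. Invoking Corollary~\ref{cor:[2a,2b]-factorization} with $a=0$, $b=1$, and $k$ replaced by $n$ yields an edge-decomposition of $G'$ into $n$ factors $F_1,\ldots,F_n$ with $\Delta(F_i)\le 2$. Writing $\mathcal{J}=\{1,\ldots,m\}\setminus J$, I note that $\Delta_j-p$ is a nonnegative even integer for $j\in J$ (the hypothesis $\Delta_j\ge p-1$ together with $\Delta_j,p$ both odd forces $\Delta_j\ge p$), while $\Delta_j$ is a nonnegative even integer for $j\in \mathcal{J}$. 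The key counting identity
\[
\sum_{j\in J}(\Delta_j-p)+\sum_{j\in \mathcal{J}}\Delta_j=kp-|J|p=2n
\]
shows that their halves exactly partition $\{1,\ldots,n\}$.

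Finally I would choose pairwise disjoint index sets $S_j\subseteq \{1,\ldots,n\}$ with $|S_j|=\tfrac{1}{2}(\Delta_j-p)$ for $j\in J$ and $|S_j|=\tfrac{1}{2}\Delta_j$ for $j\in \mathcal{J}$, form $F'_j=\bigcup_{i\in S_j}F_i$, and define $G_j=H_j\cup F'_j$ for $j\in J$ and $G_j=F'_j$ for $j\in \mathcal{J}$. Then $\Delta(G_j)\le \Delta(H_j)+\Delta(F'_j)\le p+(\Delta_j-p)=\Delta_j$ for $j\in J$, and $\Delta(G_j)\le 2\cdot\tfrac{1}{2}\Delta_j=\Delta_j$ for $j\in \mathcal{J}$, as required. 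The whole argument is a mechanical dualization of the minimum-degree lemma; I do not foresee any essential obstacle beyond the parity bookkeeping of $\Delta_j-p$ already highlighted above.
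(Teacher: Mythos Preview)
Your proposal is correct and follows the paper's proof essentially step for step: shrink $J$ to the set of indices with $\Delta_i$ odd, use the parity of $k-|J|$ to define $n=\tfrac12(k-|J|)p$, apply Corollary~\ref{cor:[2a,2b]-factorization} to $G'=\bigcup_{j\notin J}H_j$ to get $n$ factors of maximum degree $\le 2$, and then bundle them according to the counting identity $\sum_{j\in J}(\Delta_j-p)+\sum_{j\in\mathcal J}\Delta_j=2n$. Your added justifications (why shrinking $J$ preserves the hypothesis, and why $\Delta_j\ge p$ when $\Delta_j$ is odd) are in fact slightly more explicit than what the paper writes.
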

\begin{proof}
{Without loss of generality, we may assume that $J= \{i:\Delta_i \text{ is odd}\}\subseteq \{1,\ldots, m\}$ 
by deleting some indices from $J$ (if necessary). Let $n= \frac{1}{2}(k-|J|)p$.
Since $p$ is odd, $k \stackrel{2}{\equiv}
 \Delta_1+\cdots+ \Delta_m
\stackrel{2}{\equiv}\sum_{j\in J}\Delta_j \stackrel{2}{\equiv}|J|$ which implies that $n$ is a positive integer. 
By the assumption, 
$\Delta(G')\le \sum_{ j\in \{1,\ldots, k\}\setminus J}\Delta(H_{j})\le 2n$, 
where $G'=\cup_{ j\in \{1,\ldots, k\}\setminus J} H_{j}$.
Thus by Corollary~\ref{cor:[2a,2b]-factorization}, the graph $G'$ 
can be edge-decomposed into factors $F_1,\ldots, F_{n}$ satisfying $\Delta(F_i)\le 2$, where $1\le i\le n$.
Let $\mathcal{J}=\{1,\ldots, m\}\setminus J$.
Notice that $\Delta_j-p$ is an even nonnegative integer for all $j\in J$ and $\Delta_j$ is an even nonnegative integer for all $j\in \mathcal{J}$.
Since $\sum_{j\in J}(\Delta_j-p)+\sum_{j\in \mathcal{J}}\Delta_j = kp-|J|p=2n$,
there is another factorization $F'_1,\ldots, F'_m$ of $G'$ such that 
$F'_j$ is the union of $\frac{1}{2}(\Delta_j-p)$ factors $F_i$ for all $j\in J$,
and $F'_j$ is the union of $\frac{1}{2}\Delta_j$ factors $F_i$ for all $j \in \mathcal{J}$. 
If $j\in J$, we set $G_j=H_j\cup F'_j$ so that 
$\Delta(G_j)\le p+(\Delta_j-p)= \Delta_j$, and 
if $j\in \mathcal{J}$, we set $G_j=F'_j$ so that $\Delta(G_j)\le \Delta_j$.
This completes the proof.
}\end{proof}
A generalization of Theorems~\ref{intro:thm:Delta} is given in the following result.
\begin{thm}\label{thm:Delta1-Deltam}
{Let $k$ be a positive integer, let $p$ be an odd integer with $p\ge 3$, and let $\Delta_1,\ldots,\Delta_m$ be positive integers with $\Delta_i\ge p-1$ and $kp=\Delta_1+\cdots+ \Delta_m$.
Let $G$ be a $V_0$-partially $(3k-3)$-edge-connected or $(2k-2)$-tree-connected graph satisfying $\Delta(G)\le \Delta_1+\cdots+ \Delta_m$, where $V_0=\{v\in V(G): d_G(v)\ge kp-k+2\}$.
Then $G$ can be edge-decomposed into factors $G_1,\ldots, G_m$ satisfying $\Delta(G_i)\le \Delta_i$ for all $i$ with $1\le i \le m$, 
if and only if one of the following conditions holds:
\begin{enumerate}{
\item [$\bullet$] 
$|V(G)|$ is even.
\item [$\bullet$] 
 $\sum_{v\in V(G)}(\Delta_1+\cdots+ \Delta_m-d_G(v))\ge |\{i:\Delta_i \text{ is odd}\}|-1$ (for example, $V_0\neq V(G)$).
}\end{enumerate}
}\end{thm}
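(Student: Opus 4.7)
The statement is the maximum-degree counterpart of Theorem~\ref{thm:delta1-deltam}, and the plan is to mirror that proof step by step: replace $\delta_i$ by $\Delta_i$, $\lfloor d_G(v)/k\rfloor$ by $\lceil d_G(v)/k\rceil$, $[d_G(v)]_k$ by $[k-d_G(v)]_k$, reverse every orientation, and substitute Lemma~\ref{lem:factorization:maximum-degree} for Lemma~\ref{lem:factorization:minimum-degree} at the assembly step. Necessity is a short handshake argument: when $|V(G)|$ is odd and $\Delta_i$ is odd, the parity constraint forces $\sum_v d_{G_i}(v)\le \Delta_i|V(G)|-1$, and summing over $i$ gives $\sum_v\bigl(\Delta_1+\cdots+\Delta_m-d_G(v)\bigr)\ge |\{i:\Delta_i\text{ odd}\}|$, which is stronger than the stated condition.

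For sufficiency I would split on the position of $V_0$. If $V_0\neq V(G)$, Theorem~\ref{thm:Z0} produces a factorization $H_1,\ldots,H_k$ of $G$ with $d_{H_j}(v)\le\lceil d_G(v)/k\rceil\le p$ for $v\in V_0$ (using $d_G(v)\le kp$) and $d_{H_j}(v)\le\lceil(d_G(v)-1)/k\rceil+1\le p$ for $v\notin V_0$ (using $d_G(v)\le kp-k+1$ there); thus $\Delta(H_j)\le p$ for every $j$, and Lemma~\ref{lem:factorization:maximum-degree} with $J=\{1,\ldots,k\}$ delivers the desired factorization, the side condition $|\{i:\Delta_i\text{ odd}\}|\le k$ being automatic from $|\{i:\Delta_i\text{ odd}\}|\cdot p\le\sum_i\Delta_i=kp$. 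When $V_0=V(G)$ every vertex has $d_G(v)\in[kp-k+2,kp]$ and $\lceil d_G(v)/k\rceil=p$; in that regime the $|V(G)|$-even sub-case (since $p|V(G)|$ is even) and the $\sum_v[k-d_G(v)]_k\ge k-1$ sub-case are both dispatched by Corollary~\ref{cor:factorization:main}, yielding $\Delta(H_j)\le p$ and feeding into the same assembly step.

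The real work is the remaining sub-case: $V_0=V(G)$, $|V(G)|$ odd, and $\sum_v[k-d_G(v)]_k\le k-2$. I would set $n=\frac{1}{2}\bigl(k-\sum_v[k-d_G(v)]_k\bigr)\ge 1$ and fix any $z\in V(G)$. Using that $|V(G)|$ and $p$ are odd one checks $2|E(G)|=kp|V(G)|-\sum_v[k-d_G(v)]_k\equiv k-\sum_v[k-d_G(v)]_k\pmod{2k}$, hence $|E(G)|\equiv n\pmod k$; by Theorem~\ref{thm:modulo:2k-2:3k-3} there is an orientation of $G$ with $d_G^+(v)\stackrel{k}{\equiv}0$ for $v\neq z$ and $d_G^+(z)\stackrel{k}{\equiv}n$. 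Theorem~\ref{thm:factorization:main:directed} then produces $H_1,\ldots,H_k$ with $d_{H_j}(v)\le p$ for every $v\neq z$, together with a tripartition $I_0(z),I_1(z),I_2(z)$ of $\{1,\ldots,k\}$ such that $|I_2(z)|=n$, $|I_0(z)|\ge n$, and $d_{H_j}(z)$ equals $p-1$, $p$, or $p+1$ according as $j$ lies in $I_0(z)$, $I_1(z)$, or $I_2(z)$. Choosing a size-$n$ subset $I'_0\subseteq I_0(z)$ and letting $J=\{1,\ldots,k\}\setminus(I'_0\cup I_2(z))$, the indices outside $J$ contribute total degree $n(p-1)+n(p+1)=(k-|J|)p$ at $z$ and at most $|I'_0\cup I_2(z)|\cdot p=(k-|J|)p$ elsewhere, so $\Delta\bigl(\bigcup_{j\notin J}H_j\bigr)\le(k-|J|)p$, which is the strength that Lemma~\ref{lem:factorization:maximum-degree} actually uses when combining the $H_j$'s.

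The principal obstacle is then verifying $|J|\ge|\{i:\Delta_i\text{ odd}\}|$, and this is also the reason the theorem carries the slack ``$-1$'' in its hypothesis. I would close the gap by a parity identity: both $\sum_v[k-d_G(v)]_k=kp|V(G)|-2|E(G)|$ and $|\{i:\Delta_i\text{ odd}\}|\equiv\sum_i\Delta_i=kp$ are congruent to $k$ modulo $2$ (using $p$ and $|V(G)|$ odd), so the assumed bound $\sum_v[k-d_G(v)]_k\ge|\{i:\Delta_i\text{ odd}\}|-1$ automatically upgrades to $\sum_v[k-d_G(v)]_k\ge|\{i:\Delta_i\text{ odd}\}|$, which is exactly $|J|=k-2n\ge|\{i:\Delta_i\text{ odd}\}|$ as required.
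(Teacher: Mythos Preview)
Your proposal is correct and follows essentially the same architecture as the paper's proof: the same case split on $V_0\neq V(G)$ versus $V_0=V(G)$, the same appeal to Theorem~\ref{thm:Z0} and Corollary~\ref{cor:factorization:main} in the easy cases, and in the hard case the same modulo-$k$ orientation plus Theorem~\ref{thm:factorization:main:directed} feeding into Lemma~\ref{lem:factorization:maximum-degree}. Your orientation $d_G^+(v)\stackrel{k}{\equiv}0$ for $v\neq z$, $d_G^+(z)\stackrel{k}{\equiv}n$ is simply the reversal of the paper's choice $d_G^+(v)\stackrel{k}{\equiv}d_G(v)$, $d_G^+(z)\stackrel{k}{\equiv}d_G(z)-n$, and since the tripartition sizes in Theorem~\ref{thm:factorization:main:directed} are symmetric in $d^+,d^-$ the two yield the same $I_0,I_1,I_2$. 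One point where you are actually cleaner than the paper: you correctly observe that what Lemma~\ref{lem:factorization:maximum-degree} really uses is $\Delta\bigl(\bigcup_{j\notin J}H_j\bigr)\le(k-|J|)p$, and you verify this vertex by vertex; the paper instead writes $\sum_{j\in J'_0\cup J_2}\Delta(H_j)\le(p-1)|J'_0|+(p+1)|J_2|$, which is not literally true since $\Delta(H_j)$ can be $p$ (not $p-1$) for $j\in J'_0$ at vertices other than $z$.
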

\begin{proof}
{We first prove the necessity.
Assume that $G_1,\ldots, G_m$ is a factorization of $G$ satisfying $\Delta(G_i)\le \Delta_i$ for all $i$ with $1\le i \le m$.
 If $|V(G)|$ is odd, then $\sum_{v\in V(G_i)}d_{G_i}(v)\le \Delta_i|V(G)|-1$ when $\Delta_i$ is odd. 
This implies that $\sum_{v\in V(G)}d_{G}(v)=\sum_{1\le i\le m} \sum_{v\in V(G_i)}d_{G_i}(v)\le 
(\Delta_1+\cdots + \Delta_m)|V(G)|-|\{i:\Delta_i \text{ is odd}\}|$.
Now, we prove the sufficiency. 
If $V_0\neq V(G)$, then by Theorem~\ref{thm:Z0},
 there is a factorization $H_1,\ldots, H_k$ of $G$ such that 
 for all $v\in V(H_j)\setminus Z_0$, $ d_{H_j}(v) \le \lceil \frac{1}{k} (d_G(v)-1)\rceil+1\le p$, and 
for all $v\in V_0$, $ d_{H_j}(v) \le \lceil \frac{1}{k} d_G(v)\rceil \le p$.
In this case, the assertion follows from Lemma~\ref{lem:factorization:maximum-degree} immediately.
So, suppose $V_0= V(G)$.
If $|V(G)|$ is even or $\sum_{v\in V(G)} [k-d_G(v)]_k \ge k$, then 
 a combination of Corollary~\ref{cor:factorization:main} and Lemma~\ref{lem:factorization:maximum-degree} can similarly complete the proof.

We may therefore assume that 
for all vertices $v$, $\lceil d_G(v)/k\rceil=p$ and $[k-d_G(v)]_k=kp-d_G(v)$, $|V(G)|$ is odd, and
 $\sum_{v\in V(G)} [k-d_G(v)]_k <k$.
Since $[k-d_{G}(v)]_k=k\lceil d_{G}(v)/k\rceil - d_{G}(v)$, one can conclude that
$\sum_{v\in V(G)}[k-d_G(v)]_k$ and $k$ have the same parity and
 $|E(G)|=\frac{1}{2}d_G(v)\stackrel{k}{\equiv }\frac{1}{2}(k-\sum_{v\in V(G)}[k-d_G(v)]_k)$.
Pick $z\in V(G)$.
Let $n=\frac{1}{2}(k-\sum_{v\in V(G)}[k-d_G(v)]_k)\le \frac{1}{2}(k-[k-d_G(z)]_k)$ so that $n>0$.
According to Theorem~\ref{thm:modulo:2k-2:3k-3}, the graph $G$ admits an orientation such that for each
 $v\in V(G)\setminus \{z\}$, $d^+_G(v)\stackrel{k}{\equiv}d_G(v)$, 
and $d^+_G(z)\stackrel{k}{\equiv}d_G(z)-n$.
Thus by Theorem~\ref{thm:factorization:main:directed},
 there is a factorization $H_1,\ldots, H_k$ of $G$ such that 
for each $v\in V(H_j)\setminus \{z\}$ with $1\le j\le k$, $ d_{H_j}(v) \le \lceil \frac{1}{k} d_G(v)\rceil \le p$.
In addition, there is a tripartition $J_0,J_1,J_2$ of $\{1,\ldots, k\}$ such that
$ d_{H_j}(z) \le \lceil \frac{1}{k} d_G(z)\rceil-1\le p-1$ when $j\in J_0$,
 $ d_{H_j}(z) \le \lceil \frac{1}{k} d_G(z)\rceil\le p$ when $j\in J_1$,
and $ d_{H_j}(z) \le \lceil \frac{1}{k} d_G(z)\rceil+1\le p+1$ when $j\in J_2$. 
 In particular, $|J_2|\le 
[d^-_G(z)]_k= n$ and $ |J_2| \le |J_0| $. Note that $[d^+_G(z)]_k\le [d_G(z)]_k$ or $[d_G(z)]_k=0$.
Let $J'_0$ be a subset of $J_0$ with $|J'_0|=|J_0|$.
Obviously, $\sum_{j\in J'_0\cup J_2}\Delta(H_j)\le (p-1)|J'_0|+(p+1)|J_2|= p|J'_0\cup J_2|= (k-|J|)p$, 
where $J=\{1,\ldots, k\}\setminus (J'_0\cup J_2)$.
Since $\sum_{v\in V(G)} (kp-d_G(v))\stackrel{2}{\equiv}\sum_{1\le i\le k}\Delta_i \stackrel{2}{\equiv}|\{i:\Delta_i \text{ is odd}\}|$, 
by the assumption, we must have $|J|\ge k-2n= \sum_{v\in V(G)}[k-d_G(v)]_k= \sum_{v\in V(G)}(d_G(v)-kp)\ge |\{i:\Delta_i \text{ is odd}\}$.
Therefore, by applying Lemma~\ref{lem:factorization:maximum-degree}, the proof can be completed.
}\end{proof}
\subsection{Graphs of even order: odd-edge-connected graphs}
Our aim in this subsection to improve the edge-connectivity need in Theorems~\ref{thm:delta1-deltam} and~\ref{thm:Delta1-Deltam} for graphs of even order which is motivated by Theorem 5 in~\cite{Thomassen-1980} and Corollary 5.4 in~\cite{EquitableFactorizations-2022}. Our proof is based on the following refined version of Theorem~\ref{thm:modulo:2k-2:3k-3} for odd and even $k$. 
\begin{lem}{\rm (\cite{Lovasz-Thomassen-Wu-Zhang-2013})}\label{lem:balanced-modulok:odd}
{Let $k$ be an odd positive integer. 
If $G$ is an odd-$(3k-3)$-edge-connected graph, 
then it admits an orientation such that for each vertex $v$, $d_G^+(v)\stackrel{k}{\equiv}d_G(v)/2$ or 
$d_G^+(v)\stackrel{k}{\equiv}d_G(v)/2+k/2$.
}\end{lem}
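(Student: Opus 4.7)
The plan is to reduce the lemma to a $p$-orientation problem and then upgrade the edge-connectivity hypothesis in Theorem~\ref{thm:modulo:2k-2:3k-3} from $(3k-3)$-edge-connectivity to odd-$(3k-3)$-edge-connectivity via a parity-sensitive argument. Since $k$ is odd, $2$ is invertible in $\mathbb{Z}_k$, and the disjunction
$$d_G^+(v)\stackrel{k}{\equiv} d_G(v)/2\quad\text{or}\quad d_G^+(v)\stackrel{k}{\equiv}d_G(v)/2+k/2$$
collapses to the single modular requirement $d_G^+(v)\stackrel{k}{\equiv} p(v)$ with $p(v):=d_G(v)\cdot 2^{-1}\bmod k$. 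The compatibility condition $\sum_v p(v)\stackrel{k}{\equiv}|E(G)|$ follows at once from $2|E(G)|=\sum_v d_G(v)$, so for a merely $(3k-3)$-edge-connected $G$ the conclusion is an immediate application of Theorem~\ref{thm:modulo:2k-2:3k-3}. The entire content of the lemma is therefore the weakening from all cuts to only odd cuts.

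The structural reason this weakening is available is that the balanced target $p(v)=d_G(v)/2$ is already consistent with every even cut: for any $X\subseteq V(G)$, the sum $\sum_{v\in X}p(v)$ is congruent modulo~$k$ to $\tfrac{1}{2}d_G(X)$ plus the number of edges inside $X$, so an even cut imposes no further modular obstruction on the orientation and the only potentially dangerous cuts are the odd ones, which are exactly the cuts controlled by the hypothesis. To turn this observation into a proof I would follow the Lov\'asz--Thomassen--Wu--Zhang strategy and induct on $|E(G)|$ via a parity-respecting Mader-type splitting off: pick a vertex $v$ of high degree and lift a pair of edges $uv,vw$ at~$v$ to a single edge $uw$, obtaining a smaller graph $G'$ in which the balanced target is unchanged at all vertices other than~$v$ and decreases by $1$ modulo $k$ at~$v$, in agreement with $d_{G'}(v)=d_G(v)-2$.

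The main obstacle, and the place where odd-edge-connectivity is exploited crucially, is proving that an admissible lifting exists. One has to show that the family of odd cuts of size strictly less than $3k-1$ that would be violated by some lifting at~$v$ is laminar and cannot exhaust all $\binom{d_G(v)}{2}$ pairs of edges at~$v$; this is precisely the parity-sensitive version of Mader's splitting lemma and is the technical heart of the proof. Once admissibility is in hand, the inductive step closes by applying the induction hypothesis to $G'$ to obtain its orientation and then lifting back to~$G$ by orienting $uv$ and $vw$ so as to reproduce the direction of $uw$; the modular condition at $u$ and $w$ is preserved automatically, and at~$v$ it is preserved because the two reinstated edges contribute symmetrically to $d_G^+(v)$ and $d_G^-(v)$. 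The base case, in which every vertex has bounded degree, is handled directly from Theorem~\ref{thm:modulo:2k-2:3k-3} applied to a small auxiliary graph, completing the induction.
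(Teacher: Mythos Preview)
The paper does not prove this lemma at all: it is quoted as a black-box result from \cite{Lovasz-Thomassen-Wu-Zhang-2013} and no argument is supplied. There is therefore nothing in the paper to compare your sketch against.

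As a standalone sketch your reduction is correct. Since $k$ is odd, $2$ is invertible in $\mathbb{Z}_k$, and the two displayed alternatives name the same residue class, so the statement is precisely a $p$-orientation problem with $p(v)=2^{-1}d_G(v)$; the compatibility $\sum_v p(v)\stackrel{k}{\equiv}|E(G)|$ is immediate from the handshake identity. Your heuristic for why only odd cuts matter (the balanced target makes the constraint at an even cut automatically achievable) is the right intuition, and the splitting-off induction you describe is indeed the skeleton of the argument in \cite{Lovasz-Thomassen-Wu-Zhang-2013}.

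Two places deserve caution. First, the sentence ``the family of odd cuts of size strictly less than $3k-1$ that would be violated by some lifting is laminar and cannot exhaust all pairs'' is exactly the technical core of \cite{Lovasz-Thomassen-Wu-Zhang-2013}; you have located it accurately but not carried it out, so what you have is a plan rather than a proof. Second, your base-case handling does not close: after splitting off terminates, the residual graph is still only odd-$(3k-3)$-edge-connected, not $(3k-3)$-edge-connected, so you cannot simply invoke Theorem~\ref{thm:modulo:2k-2:3k-3} on it (that would be circular in any case, since Theorem~\ref{thm:modulo:2k-2:3k-3} is itself the main theorem of \cite{Lovasz-Thomassen-Wu-Zhang-2013}). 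In the original source the terminal configurations are dispatched by a direct structural analysis, not by appeal to a pre-existing edge-connectivity theorem. Since the present paper treats the lemma as a citation, this level of sketch suffices for \emph{using} the result, but a self-contained proof would need both gaps filled.
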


\begin{lem}{\rm (\cite{ModuloBounded})}\label{lem:balanced-modulok:even}
{Let $k$ be an even positive integer, let $G$ be an Eulerian graph, and let $Q\subseteq V(G)$ with $|Q|$ even.
If $d_G(X)\ge 3k-3$ for every $X\subseteq V(G)$ with $|X\cap Q|$ odd,
then $G$ admits an orientation such that for each $v\in V(G)\setminus Q$, 
$d_G^+(v)\stackrel{k}{\equiv}d_G(v)/2$, and 
for each $v\in Q$, $d_G^+(v) \stackrel{k}{\equiv}d_G(v)/2+k/2$.
}\end{lem}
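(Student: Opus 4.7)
The plan is to realize the desired orientation as a $p$-orientation modulo $k$ (in the sense of Theorem~\ref{thm:modulo:2k-2:3k-3}) for the map $p:V(G)\to\mathbb{Z}_k$ defined by $p(v)=d_G(v)/2$ for $v\in V(G)\setminus Q$ and $p(v)=d_G(v)/2+k/2$ for $v\in Q$. Both values are integers because $G$ is Eulerian and $k$ is even. The required global congruence $\sum_{v\in V(G)} p(v)\stackrel{k}{\equiv}|E(G)|$ is immediate from $\sum_v d_G(v)=2|E(G)|$ combined with $(|Q|/2)k\stackrel{k}{\equiv}0$, which uses the hypothesis that $|Q|$ is even.

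To apply Theorem~\ref{thm:modulo:2k-2:3k-3} one needs $(3k-3)$-edge-connectivity on \emph{every} cut, not just on $Q$-odd ones, so the partial hypothesis must be upgraded. I would first pair up the vertices of $Q$ arbitrarily into $|Q|/2$ pairs $\{q_i,q_i'\}$ and, for each pair, adjoin $k$ new parallel edges between $q_i$ and $q_i'$; call the resulting Eulerian graph $G^*$. Any cut $X$ with $|X\cap Q|$ odd is crossed by an odd (hence nonzero) number of these bundles, so $d_{G^*}(X)\ge d_G(X)+k\ge 4k-3$. Second, to deal with the remaining cuts---those with $|X\cap Q|$ even and no bundle crossing---I would iteratively perform Lov\'asz--Mader splitting-off at every vertex $v\in V(G)\setminus Q$ with $d_G(v)\ge 4$, replacing pairs of incident edges $vu,vw$ by single edges $uw$ in a manner that preserves the local edge-connectivity between every pair of $Q$-vertices. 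Both the Eulerian property and the $Q$-odd $(3k-3)$-edge-connectivity persist through each such operation, and the process eventually yields a graph $G^{**}$ on vertex set $Q$, where every cut $X$ automatically satisfies $|X\cap Q|=|X|$.

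At this reduction stage the partial hypothesis becomes a standard one: $G^{**}$ is Eulerian and every odd cut has size at least $3k-3$. Theorem~\ref{thm:modulo:2k-2:3k-3} (applied to $G^{**}$ with the appropriate $p$) then produces the $p$-orientation of $G^{**}$, and this orientation lifts back through the splittings and the additional parallel edges to an orientation of $G$ satisfying the required congruences at every vertex. The principal technical obstacle is to ensure that each splitting step preserves not merely local edge-connectivity (as in the classical Lov\'asz--Mader setting) but the stronger $Q$-odd $(3k-3)$-edge-connectivity; the right way to handle this is via a case analysis of tight cuts tracking the parity of $|X\cap Q|$ under admissible splittings, and this is where the delicate part of the argument lies.
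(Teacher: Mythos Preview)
The paper does not prove this lemma at all: it is quoted from \cite{ModuloBounded} and used as a black box, so there is no ``paper's own proof'' to compare against. That said, your proposed argument has a genuine gap beyond the one you flag.

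The step ``Theorem~\ref{thm:modulo:2k-2:3k-3} applied to $G^{**}$'' does not go through. After adding your $k$-fold bundles and splitting off every vertex outside $Q$, the graph $G^{**}$ lives on $Q$, so the hypothesis ``$|X\cap Q|$ odd $\Rightarrow d(X)\ge 3k-3$'' becomes ``$|X|$ odd $\Rightarrow d_{G^{**}}(X)\ge 3k-3$''. But Theorem~\ref{thm:modulo:2k-2:3k-3} needs $(3k-3)$-edge-connectivity across \emph{all} cuts, and nothing in your construction controls cuts $X\subseteq Q$ with $|X|$ even: the original hypothesis says nothing about $Q$-even cuts of $G$, the bundles you add only cross $Q$-odd cuts, and Lov\'asz--Mader splitting cannot manufacture connectivity that was never there. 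So $G^{**}$ can have arbitrarily small even-sided cuts, and Theorem~\ref{thm:modulo:2k-2:3k-3} is inapplicable. Replacing it by an Eulerian orientation of $G^{**}$ does not help either: that would give $d^+_{G^{**}}(q)=d_{G^{**}}(q)/2$, and when you strip the $k$ bundle edges you would need all $k$ of them oriented the same way to land in the right residue class, which an arbitrary Eulerian orientation does not guarantee.

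The obstacle you do acknowledge---that admissible splittings must preserve $Q$-odd (not merely local) edge-connectivity---is also real and not routine; but even granting it, the endgame above still fails. A workable route has to exploit from the start that for $Q$-even cuts the target out-flow $\sum_{v\in X}p(v)$ automatically equals $e(X)+d_G(X)/2$, so the ``hard'' constraints live only on $Q$-odd cuts; the proof in \cite{ModuloBounded} is built around this observation rather than trying to force global $(3k-3)$-edge-connectivity.
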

These two lemmas are also helpful to refine the edge-connectivity needed in Theorem~\ref{thm:factorization:main} as the following version.
\begin{thm}\label{thm:factorization:main:odd-edge-connectivity}
{Let $G$ be a graph and let $V_0$ and $V_1$ be two disjoint subsets of $V(G)$ excluding vertices $v$ satisfying
 $d_G(v)\not \stackrel{k}{\equiv}k-1 $ or $d_G(v)\not \stackrel{k}{\equiv}1 $, respectively.
 Assume that $V_0\cup V_1\neq V(G)$ or $|Q|$ is even, where 
$$Q=\{v\in V_0: \lfloor d_{G}(v)/k \rfloor \text{ is odd}\}\cup
 \{v\in V_1: \lceil d_{G}(v)/k \rceil \text{ is odd}\}.$$ 
If $G$ is $(V_0\cup V_1)$-partially odd-$(3k-3, Q)$-edge-connected,
then $G$ can be edge-decomposed into $k$ factors $G_1,\ldots, G_k$ such that 
for each graph $G_i$, $||E(G_i)|-|E(G)|/k|<1$, and for all vertices $v$, $|d_{G_i}(v)-d_G(v)/k|<2$. 
In particular,
$$d_{G_i}(v) \in\big\{\lfloor \frac{1}{k}d_G(v) \rfloor, \lceil \frac{1}{k} d_G(v)\rceil \big\}\cup \begin{cases}
\{ \lceil \frac{d_G(v)}{k}\rceil +1\},	&\text{when $v\in V_{0}$ or $d_G(v) \stackrel{k}{\equiv}k-1$};\\
\{\lfloor\frac{d_G(v)}{k} \rfloor-1\},	&\text{when $v\in V_1$ or $d_G(v) \stackrel{k}{\equiv}1$}.
\end {cases}$$
}\end{thm}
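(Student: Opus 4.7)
My plan is to produce an orientation of $G$ whose out-degree residues modulo $k$ are tailored to trigger the appropriate case of Theorem~\ref{thm:factorization:main:directed} at each vertex, and then apply that theorem to extract the factorization. Specifically, I aim for $[d_G^+(v)]_k$ to land strictly inside $(0, [d_G(v)]_k)$ for each $v\in V_0$ (so that case~2 of Theorem~\ref{thm:factorization:main:directed} fires, permitting $d_{G_i}(v)=\lceil d_G(v)/k\rceil+1$) and strictly inside $([d_G(v)]_k, k)$ for each $v\in V_1$ (so that case~3 fires, permitting $d_{G_i}(v)=\lfloor d_G(v)/k\rfloor-1$); for vertices $v\not\in V_0\cup V_1$ whose degree is not $\stackrel{k}{\equiv}\pm 1$, I want $[d_G^+(v)]_k\in\{0,[d_G(v)]_k\}$ so that the tight case~1 fires. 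Vertices $v\not\in V_0\cup V_1$ with $d_G(v)\stackrel{k}{\equiv}\pm 1$ inherit their extra option automatically, since with $[d_G(v)]_k\in\{1,k-1\}$ only case~1 or the ``good'' case~2/3 can ever occur regardless of $[d_G^+(v)]_k$.

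To build this orientation, I would first contract $V(G)\setminus(V_0\cup V_1)$ into a single super-vertex $z^*$, forming $G^*=G/(V(G)\setminus(V_0\cup V_1))$. Because $Q\subseteq V_0\cup V_1$ and $z^*\notin Q$, the $(V_0\cup V_1)$-partial odd-$(3k-3,Q)$-edge-connectivity of $G$ upgrades to full odd-$(3k-3,Q)$-edge-connectivity of $G^*$. The parity hypothesis (either $V_0\cup V_1\neq V(G)$, which supplies an adjustable super-vertex $z^*$, or $|Q|$ even from the start) then allows me to invoke Lemma~\ref{lem:balanced-modulok:even} when $k$ is even (after Eulerizing $G^*$ by auxiliary edges at $z^*$, which exist by a handshake argument) or Lemma~\ref{lem:balanced-modulok:odd} when $k$ is odd. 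The resulting balanced orientation of $G^*$ places, for $v\in V_0$, the residue $[d_G^+(v)]_k$ at either $(k-1)/2$ or its $k/2$-shift, both strictly inside $(0,k-1)=(0,[d_G(v)]_k)$, and symmetrically for $V_1$, as required.

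The third step is to extend the orientation from $G^*$ to $G$ by orienting the edges lying entirely inside $V(G)\setminus(V_0\cup V_1)$ so that every interior $v$ achieves $[d_G^+(v)]_k\in\{0,[d_G(v)]_k\}$. I expect this to be the main obstacle, since no connectivity assumption is imposed on the subgraph $G[V(G)\setminus(V_0\cup V_1)]$ and Theorem~\ref{thm:modulo:2k-2:3k-3} is therefore not directly available. My approach would be to first orient the internal edges canonically, then rotate orientations along directed cycles inside the complement to adjust the residue at each interior vertex; the two-option flexibility $\{0,[d_G(v)]_k\}$ at each such $v$, combined with a global parity check and the slack provided by the super-vertex $z^*$, should make the adjustments consistent.

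Finally, applying Theorem~\ref{thm:factorization:main:directed} to the completed orientation delivers a factorization $G_1,\ldots,G_k$ with $||E(G_i)|-|E(G)|/k|<1$. The residue analysis above places each vertex into the correct case of Theorem~\ref{thm:factorization:main:directed}, yielding both the global bound $|d_{G_i}(v)-d_G(v)/k|<2$ and the refined ``in particular'' bounds stated in the theorem.
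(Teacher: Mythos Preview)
Your plan has two genuine gaps.

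First, you invoke Theorem~\ref{thm:factorization:main:directed} in the final step and claim it ``delivers a factorization $G_1,\ldots,G_k$ with $||E(G_i)|-|E(G)|/k|<1$.'' It does not: that theorem (as the paper explicitly notes just before its statement) sharpens the values $|I_q(v)|$ \emph{at the cost of losing} the equitable size restriction. The tool that carries the size bound is Lemma~\ref{lem:factorization:main:directed:original}, and that is what the paper applies. Fortunately Lemma~\ref{lem:factorization:main:directed:original} is all you need: its case analysis already gives the refined degree options once $[d_G^+(v)]_k$ is on the correct side of $[d_G(v)]_k$.

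Second, and more seriously, for odd $k$ you cannot invoke Lemma~\ref{lem:balanced-modulok:odd} directly on $G^*$. That lemma requires genuine odd-$(3k-3)$-edge-connectivity (a lower bound on every cut of odd size), whereas your hypothesis is only odd-$(3k-3,Q)$-edge-connectivity (a lower bound on cuts $X$ with $|X\cap Q|$ odd). These are not the same, and ``upgrading'' to $G^*$ does not convert one into the other. The paper's key device, which your sketch omits, is to insert an auxiliary matching $M$ pairing a carefully chosen set $S$ of vertices so that in the modified graph $G_0$ one has $Q=\{v\in V_0\cup V_1:d_{G_0}(v)\text{ is odd}\}$; this identity is exactly what turns the $Q$-parity hypothesis into the ordinary odd-edge-connectivity required by Lemma~\ref{lem:balanced-modulok:odd}. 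A similar matching (pairing \emph{all} odd vertices of $G$, not just adding edges at the super-vertex) is needed in the even-$k$ case to make $G_0$ Eulerian before Lemma~\ref{lem:balanced-modulok:even} applies---your ``auxiliary edges at $z^*$'' would neither Eulerize $G^*$ in general nor handle the case $V_0\cup V_1=V(G)$ where no $z^*$ exists.

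Finally, your third step is a self-imposed obstacle. The theorem only asserts $|d_{G_i}(v)-d_G(v)/k|<2$ for vertices outside $V_0\cup V_1$ (the ``in particular'' clause does not constrain them further), and Lemma~\ref{lem:factorization:main:directed:original} delivers that bound for \emph{any} orientation. So the paper simply orients the edges inside $V(G)\setminus(V_0\cup V_1)$ arbitrarily, and you should too; the cycle-rotation argument you sketch is neither needed nor, as written, sufficient.
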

\begin{proof}
{Let $Z_0=V(G)\setminus (V_0\cup V_1)$. 
First, assume that $k$ is odd. Let 
$S_0=\{v\in V_0: d_G(v)\not \stackrel{2}{\equiv} \lfloor d_G(v)/k\rfloor \}$ and
 $S_1=\{v\in V_1:  d_G(v)\not \stackrel{2}{\equiv} \lceil d_{G}(v)/k \rceil\}$.
If $|S_0\cup S_1|$ is odd, we set $S=S_0\cup S_1\cup \{Z_0\}$. 
Since $\sum_{v\in V(G)}d_G(v)$ is even, by the assumption, if $Z_0= \emptyset$, then $Q$ is even and so is $|S_0\cup S_1|$.
If $|S_0\cup S_1|$ is even, we set $S=S_0\cup S_1$.
Consequently, for both cases, $|S|$ is even.
Let $G_0$ be a graph obtained from $G$ by inserting a new matching $M$ paring vertices of $S$. 
According to the construction, 
$ V_0=\{v\in V_0\setminus S_0: d_{G_0}(v)=d_G(v) \stackrel{2}{\equiv} \lfloor d_G(v)/k\rfloor= \lfloor d_{G_0}(v)/k\rfloor\}\cup
 \{v\in S_0: d_{G_0}(v)=d_G(v)+1 \stackrel{2}{\equiv} \lfloor d_G(v)/k\rfloor =\lfloor d_{G_0}(v)/k\rfloor\}=
\{v\in V_0: d_{G_0}(v)\stackrel{2}{\equiv} \lfloor d_{G_0}(v)/k\rfloor\}$.
In addition, 
$V_1=\{v\in V_1\setminus S_1: d_{G_0}(v)=d_G(v) \stackrel{2}{\equiv} \lceil d_G(v)/k\rceil= \lceil d_{G_0}(v)/k\rceil\}\cup
 \{v\in S_1: d_{G_0}(v)=d_G(v)+1 \stackrel{2}{\equiv} \lceil d_G(v)/k\rceil= \lceil d_{G_0}(v)/k\rceil\}\}=
\{v\in V_1: d_{G_0}(v)\stackrel{2}{\equiv} \lceil d_{G_0}(v)/k\rceil\}$.
Thus $Q=\{v\in V_0\cup V_1: d_{G_0}(v) \text{ is odd}\}$.
This implies that for every subset $X\subseteq V_0\cup V_1$, $|X\cap Q|$ is odd if and only if $d_{G_0}(X)$ is odd, and so $G_0/Z_0$ is odd-$(3k-3)$-edge-connected.
Thus by applying to Lemma~\ref{lem:balanced-modulok:odd} to the contracted graph $G_0/Z_0$, similar to the proof of Theorem~\ref{thm:Z0}, one can conclude that $G_0$ admits an orientation such that for each vertex $v$, $d_{G_0}^+(v)\stackrel{k}{\equiv}d_{G_0}(v)/2$ or 
$d_{G_0}^+(v)\stackrel{k}{\equiv}d_{G_0}(v)/2+k/2$. 
This implies that 
$d_{G_0}^+(v)\stackrel{k}{\equiv}\frac{k}{2}[d_{G_0}(v)+\lfloor \frac{d_{G_0(v)}}{k}\rfloor ]_2+\frac{1}{2}[d_{G_0}(v)]_k$.
If $v\in V_0$, then $d^+_{G_0}(v)\stackrel{k}{\equiv}\frac{1}{2}[d_{G_0}(v)]_k\le [d_{G_0}(v)]_k$ regardless of $v \in Q$ or not.
If $v\in V_1$, then $d^+_{G_0}(v)\stackrel{k}{\equiv}\frac{k}{2}+\frac{1}{2}[d_{G_0}(v)]_k\ge 
 \frac{1}{2}[d_{G_0}(v)]_k$ regardless of $v \in Q$ or not.
We consider the orientation of $G$ obtained from this orientation.
Obviously, for each $v\in V(G)\setminus S$, $d^+_G(v)=d^+_{G_0}(v)$ and $d_G(v)=d_{G_0}(v)$, and 
for each $v\in S$, $d^+_G(v)\in \{d^+_{G_0}(v), d^+_{G_0}(v)- 1\}$ and $d_G(v)=d_{G_0}(v)-1$.
Consequently, if $v\in V_0$ or $d_G(v) \stackrel{k}{\equiv}k-1$, then $[d^+_{G}(v)]_k\le [d_{G}(v)]_k$.
In addition, if $v\in V_1$ or $d_G(v) \stackrel{k}{\equiv}1$, then 
$[d^+_G(v)]_k=0$ or $[d^+_G(v)]_k\ge [d_G(v)]_k$.
Hence the assertion follows from Lemma~\ref{lem:factorization:main:directed:original}.

Now, assume that $k$ is even. 
Let $G_0$ be an even graph obtained from $G$ by inserting a new matching $M$ paring odd vertices of $G$. 
If $|Q|$ is odd, we set $Q_0=Q\cup \{Z_0\}$. 
By the assumption, if $Z_0= \emptyset$, then $Q$ is even.
If $|Q|$ is even, we set $Q_0=Q$.
Consequently, for both cases, $|Q_0|$ is even.
Thus by applying Lemma~\ref{lem:balanced-modulok:even} to the contracted graph $G_0/Z_0$ (with the vertex set $Q_0$), one can conclude that the graph $G_0$ has an orientation such that for each $v\in Q$, $d_{G_0}^+(v) \stackrel{k}{\equiv}d_{G_0}(v)/2+k/2$ and
for each $v\in V(G)\setminus Q$, 
$d_{G_0}^+(v)\stackrel{k}{\equiv}d_G(v)/2$. 
If $v\in V_0$, then $d^+_{G_0}(v)\stackrel{k}{\equiv}\frac{1}{2}[d_{G_0}(v)]_k$ regardless of $v\in Q$ or not.
If $v\in V_1$, then $[d^+_{G_0}(v)]_k=0$ or $d^+_{G_0}(v)\stackrel{k}{\equiv}\frac{k}{2}+\frac{1}{2}[d_{G_0}(v)]_k> k/2$ regardless of $v \in Q$ or not.
We consider the orientation of $G$ obtained from this orientation.
Obviously, for each vertex $v$ with $d_G(v)$ even, $d^+_G(v)=d^+_{G_0}(v)$ and $d_G(v)=d_{G_0}(v)$, and 
for each vertex $v$ with $d_G(v)$ odd, $d^+_G(v)\in \{d^+_{G_0}(v), d^+_{G_0}(v)- 1\}$ and $d_G(v)=d_{G_0}(v)-1$.
Consequently, if $v\in V_0$ or $d_G(v) \stackrel{k}{\equiv}k-1$, then $[d^+_{G}(v)]_k\le [d_{G}(v)]_k$.
In addition, if $v\in V_1$ or $d_G(v) \stackrel{k}{\equiv}1$, then 
$[d^+_G(v)]_k=0$ or $[d^+_G(v)]_k\ge [d_G(v)]_k$.
Hence the assertion follows from Lemma~\ref{lem:factorization:main:directed:original}.
}\end{proof}
The following corollary slightly improves the edge-connectivity needed in Theorem 5 in~\cite{Thomassen-1980} and generalizes it to graphs of even order.
\begin{cor}
{Let $k$ be a positive integer, let $p$ be an odd integer with $p\ge 3$, and let $\delta_1,\ldots,\delta_m$ be positive integers with $\delta_i\ge p-1$ and $kp=\delta_1+\cdots+ \delta_m$.
Let $G$ be a graph of even order satisfying $\delta(G)\ge \delta_1+\cdots+ \delta_m$ and let $V_0=\{v\in V(G): d_G(v)\le kp+k-2\}$.
If $G$ is $V_0$-partially odd-$(3k-3, V_0)$-edge-connected, then $G$ can be edge-decomposed into factors $G_1,\ldots, G_m$ satisfying $\delta(G_i)\ge \delta_i$ for all $i$ with $1\le i \le m$.
}\end{cor}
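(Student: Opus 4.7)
The plan is to mirror the proof of Theorem~\ref{thm:delta1-deltam} but to replace its use of Theorem~\ref{thm:Z0} with Theorem~\ref{thm:factorization:main:odd-edge-connectivity}. Concretely, I would first produce a decomposition $H_1,\ldots,H_k$ of $G$ into $k$ factors each of minimum degree at least $p$, and then invoke Lemma~\ref{lem:factorization:minimum-degree} to amalgamate the $H_j$'s into the desired factorization $G_1,\ldots,G_m$ with $\delta(G_i)\ge\delta_i$. The even-order hypothesis is precisely the ingredient that lets us pay only odd-edge-connectivity rather than full edge-connectivity.

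I would apply Theorem~\ref{thm:factorization:main:odd-edge-connectivity} with the theorem's parameter set to the corollary's $V_0=\{v:d_G(v)\le kp+k-2\}$, and with $V_1=\emptyset$. Since $\delta(G)\ge kp$, for every $v\in V_0$ we have $d_G(v)\in\{kp,kp+1,\ldots,kp+k-2\}$; in particular $d_G(v)\not\equiv k-1\pmod k$ (meeting the admissibility restriction on $V_0$) and $\lfloor d_G(v)/k\rfloor=p$. Because $p$ is odd, the set $Q=\{v\in V_0:\lfloor d_G(v)/k\rfloor\text{ odd}\}$ appearing in Theorem~\ref{thm:factorization:main:odd-edge-connectivity} coincides with $V_0$, so the edge-connectivity hypothesis ``$V_0$-partially odd-$(3k-3,V_0)$-edge-connected'' matches the theorem's requirement exactly. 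The auxiliary parity condition ``$V_0\cup V_1\ne V(G)$ or $|Q|$ even'' is also satisfied: if $V_0\ne V(G)$ it holds trivially, while if $V_0=V(G)$ then $|Q|=|V(G)|$ is even by hypothesis.

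A short case check on residues modulo $k$ then shows $\delta(H_j)\ge p$ for every $j$. For $v\in V_0$ the conclusion of Theorem~\ref{thm:factorization:main:odd-edge-connectivity} gives $d_{H_j}(v)\ge\lfloor d_G(v)/k\rfloor=p$. For $v\notin V_0$ with $d_G(v)\equiv k-1\pmod k$ we have $d_G(v)\ge kp+k-1$ so $\lfloor d_G(v)/k\rfloor\ge p$, and the ``$d_G(v)\equiv k-1$'' clause of the theorem grants $d_{H_j}(v)\ge\lfloor d_G(v)/k\rfloor\ge p$. For every remaining $v\notin V_0$, $d_G(v)\ge kp+k$, forcing $\lfloor d_G(v)/k\rfloor\ge p+1$ and hence even the worst bound $\lfloor d_G(v)/k\rfloor-1\ge p$. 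Applying Lemma~\ref{lem:factorization:minimum-degree} with $J=\{1,\ldots,k\}$ finishes the proof: the cardinality hypothesis $|J|\ge|\{i:\delta_i\text{ odd}\}|$ is free since $|J|=k\ge m$, and the sum condition over $\{1,\ldots,k\}\setminus J$ is vacuous. The main technical hurdle is aligning the parameter $Q$ in Theorem~\ref{thm:factorization:main:odd-edge-connectivity} with the corollary's odd-edge-connectivity assumption, and this alignment works precisely because $p$ is odd and the vertices of $V_0$ are populated only by degrees in $[kp,kp+k-2]$.
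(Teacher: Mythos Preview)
Your approach is exactly the paper's: apply Theorem~\ref{thm:factorization:main:odd-edge-connectivity} with $V_1=\emptyset$ and $Q=V_0$ to get a factorization into $k$ pieces each of minimum degree at least $p$, then feed this into Lemma~\ref{lem:factorization:minimum-degree}. Your parameter checks (that $Q=V_0$ because $\lfloor d_G(v)/k\rfloor=p$ is odd on $V_0$, and that $|Q|$ is even when $V_0=V(G)$ by the even-order hypothesis) and your degree case analysis for $v\notin V_0$ are correct and simply spell out what the paper leaves implicit.

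One small slip: your justification ``$|J|=k\ge m$'' in the last step is not valid in general (e.g.\ $p=3$, $k=2$, $\delta_1=\delta_2=\delta_3=2$ gives $m=3>k$). What you actually need is $k\ge |\{i:\delta_i\text{ odd}\}|$, and this does hold: since $p$ is odd, $p-1$ is even, so every odd $\delta_i$ satisfies $\delta_i\ge p$; summing only the odd ones gives $|\{i:\delta_i\text{ odd}\}|\cdot p\le \sum_i\delta_i=kp$. With this correction the argument goes through.
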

\begin{proof}
{By applying Theorem~\ref{thm:factorization:main} with setting $V_1=\emptyset$ and $Q=V_0$, the graph $G$ admits a factorization $H_1,\ldots, H_k$ satisfying $\delta(H_i)\ge p$. Now, it is enough to apply Lemma~\ref{lem:factorization:minimum-degree}. 
}\end{proof}
The following corollary slightly improves the edge-connectivity needed in Corollary 5.4 in~\cite{EquitableFactorizations-2022}.
\begin{cor}
{Let $k$ be a positive integer, let $p$ be an odd integer with $p\ge 3$, and let $\Delta_1,\ldots,\Delta_m$ be positive integers with $\Delta_i\ge p-1$ and $kp=\Delta_1+\cdots+ \Delta_m$.
Let $G$ be a graph of even order satisfying $\Delta(G)\le \Delta_1+\cdots+ \Delta_m$ and let $V_1=\{v\in V(G): d_G(v)\ge kp-k+2\}$.
If $G$ is $V_1$-partially odd-$(3k-3, V_1)$-edge-connected, then $G$ can be edge-decomposed into factors $G_1,\ldots, G_m$ satisfying $\Delta(G_i)\le \Delta_i$ for all $i$ with $1\le i \le m$.
}\end{cor}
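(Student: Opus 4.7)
The plan is to parallel the proof of the preceding (minimum-degree) corollary by first invoking Theorem~\ref{thm:factorization:main:odd-edge-connectivity} to produce a factorization $H_1,\ldots,H_k$ of $G$ with each $\Delta(H_i)\le p$, and then assembling these into the required $m$ factors via Lemma~\ref{lem:factorization:maximum-degree}. Concretely, I would apply the theorem with its $V_0=\emptyset$ and its $V_1$ equal to the set $V_1=\{v\in V(G):d_G(v)\ge kp-k+2\}$ from the statement of the corollary, so that the resulting $Q$ is $\{v\in V_1:\lceil d_G(v)/k\rceil\text{ odd}\}\subseteq V_1$. Each $d_{H_i}(v)$ will then lie in $\{\lfloor d_G(v)/k\rfloor,\lceil d_G(v)/k\rceil\}$, possibly joined by $\lceil d_G(v)/k\rceil+1$ when $d_G(v)\stackrel{k}{\equiv}k-1$, or by $\lfloor d_G(v)/k\rfloor-1$ when $v\in V_1$ or $d_G(v)\stackrel{k}{\equiv}1$.

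Two hypotheses must be verified before invoking the theorem. The edge-connectivity requirement reduces to the hypothesis ``$G$ is $V_1$-partially odd-$(3k-3,V_1)$-edge-connected'' assumed by the corollary. For the parity alternative ``$V_0\cup V_1\ne V(G)$ or $|Q|$ even'', either $V_1\ne V(G)$ and the first clause already holds, or $V_1=V(G)$, in which case every vertex has $d_G(v)\in\{kp-k+2,\ldots,kp\}$, forcing $\lceil d_G(v)/k\rceil=p$; since $p$ is odd, $Q=V(G)$, and its cardinality is even by the hypothesis that $|V(G)|$ is even.

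Next, I would check $\Delta(H_i)\le p$ for every $i$. For $v\in V_1$, $d_G(v)\le kp$ yields $\lceil d_G(v)/k\rceil\le p$, and the additional allowed value $\lfloor d_G(v)/k\rfloor-1$ is at most $p-1$. For $v\notin V_1$, $d_G(v)\le kp-k+1$ gives $\lceil d_G(v)/k\rceil\le p$; the exceptional value $\lceil d_G(v)/k\rceil+1$ arises only when $d_G(v)\stackrel{k}{\equiv}k-1$, but such a vertex then satisfies $d_G(v)\le kp-k-1$, so $\lceil d_G(v)/k\rceil+1\le p$. A single application of Lemma~\ref{lem:factorization:maximum-degree} with $J=\{i:\Delta_i\text{ is odd}\}$ then assembles $H_1,\ldots,H_k$ into the desired factors $G_1,\ldots,G_m$ satisfying $\Delta(G_i)\le\Delta_i$.

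The main (though ultimately short) obstacle in this plan is the parity check that $|Q|$ is even in the boundary case $V_1=V(G)$; this is precisely where both the oddness of $p$ (which forces every $\lceil d_G(v)/k\rceil$ to equal the odd value $p$) and the evenness of $|V(G)|$ enter the argument. All other steps are a routine transcription of the minimum-degree template once Theorem~\ref{thm:factorization:main:odd-edge-connectivity} and Lemma~\ref{lem:factorization:maximum-degree} are in hand.
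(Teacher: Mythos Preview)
Your proposal is correct and follows exactly the paper's approach: apply Theorem~\ref{thm:factorization:main:odd-edge-connectivity} with $V_0=\emptyset$ and $V_1$ as in the corollary to obtain $H_1,\ldots,H_k$ with $\Delta(H_i)\le p$, then invoke Lemma~\ref{lem:factorization:maximum-degree}. One point worth making explicit is that in fact $Q=V_1$ (not merely $Q\subseteq V_1$), since every $v\in V_1$ has $kp-k+2\le d_G(v)\le kp$ and hence $\lceil d_G(v)/k\rceil=p$ is odd; this equality is what makes the edge-connectivity hypothesis of the theorem coincide with the one assumed in the corollary.
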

\begin{proof}
{By applying Theorem~\ref{thm:factorization:main} with setting $V_0=\emptyset$ and $Q=V_1$, the graph $G$ admits a factorization $H_1,\ldots, H_k$ satisfying $\Delta(H_i)\le p$. Now, it is enough to apply Lemma~\ref{lem:factorization:maximum-degree}. 
}\end{proof}
%
%
%
%
%
%
%
\section{A sufficient edge-connectivity condition for the existence of a partial parity factor}
\label{sec:epsilon-parity-factor}
In this section, we completely confirm Conjecture~\ref{intro:conj:new} for the special case $k=2$ (see Corollary~\ref{cor:even-factor:atleast2/3}). In addition, we improve the edge-connectivity needed in Theorems~\ref{intro:thm:delta} and \ref{intro:thm:Delta} for the special case $m=2$. Our proof is based on the following lemma due to Kano and Matsuda (2001) which is a generalization of two well-known results due to Lov{\'a}sz (1970, 1972) \cite{Lovasz-1970, Lovasz-1972} for the existence of $(g,f)$-factors and parity $(g,f)$-factors.
\begin{lem}{\rm (\cite{Kano-Matsuda-2001})}\label{lem:Kano-Matsuda-2001}
{Let $G$ be a connected graph with $V_0\subseteq V(G)$ and let $g$ and $f$ be two integer-valued functions on $V(G)$ with $g\le f$ satisfying $g(v)\stackrel{2}{\equiv}f(v)$ for all $v\in V_0$ and $g(v)<f(v)$ for all $v\in V(G)\setminus V_0$. 
Then $G$ has a $V_0$-partial parity $(g,f)$-factor if and only if 
for all disjoint subsets $A$ and $B$ of $V(G)$,
$$\omega_{f, V_0}(G, A,B)<1+ \sum_{v\in A} f(v)+\sum_{v\in B} (d_{G}(v)-g(v))-d_G(A,B),$$
where $\omega_{f, V_0}(G, A,B)$ denotes the number of components $G[X]$ 
of $G\setminus (A\cup B)$ 
satisfying $X\subseteq V_0$ and $\sum_{v\in X}f(v)\stackrel{2}{\not\equiv}d_G(X,B)$ 
(except those sets that $G[X]$ is bipartite and $g(v)=f(v)$ for all $v\in X$).
}\end{lem}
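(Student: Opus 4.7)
The plan is to prove both directions separately: a direct counting argument for necessity, and a gadget-style reduction to Lov\'asz's classical $(g,f)$-factor theorem for sufficiency.

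For the necessity, I would fix disjoint $A,B\subseteq V(G)$ and a $V_0$-partial parity $(g,f)$-factor $F$, and start from
$$\sum_{v\in A}f(v)+\sum_{v\in B}(d_G(v)-g(v))\ge \sum_{v\in A}d_F(v)+\sum_{v\in B}(d_G(v)-d_F(v)).$$
Expanding the right-hand side through $\sum_{v\in A}d_F(v)=2|E(F[A])|+d_F(A,B)+\sum_{X}d_F(A,X)$ and the analogous identity on $B$ (the sums running over the components $G[X]$ of $G\setminus(A\cup B)$), and using $d_F(A,B)\le d_G(A,B)$, reduces the inequality to
$$\sum_{v\in A}f(v)+\sum_{v\in B}(d_G(v)-g(v))-d_G(A,B)\ge \sum_{X}\bigl(d_F(A,X)+d_G(B,X)-d_F(B,X)\bigr).$$
For a component $X\subseteq V_0$, the parity hypothesis $d_F(v)\stackrel{2}{\equiv}f(v)$ on $V_0$ combined with $\sum_{v\in X}d_F(v)\stackrel{2}{\equiv}d_F(X,A)+d_F(X,B)$ yields $d_F(X,A)+d_F(X,B)\stackrel{2}{\equiv}\sum_{v\in X}f(v)$, so whenever $\sum_{v\in X}f(v)\stackrel{2}{\not\equiv}d_G(X,B)$ the summand $d_F(A,X)+d_G(B,X)-d_F(B,X)$ is odd and hence at least $1$. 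Summing over all such bad components produces the $\omega_{f,V_0}(G,A,B)$ term, and the bipartite/$g=f$ exception only weakens the lower bound on $\omega$, so the strict inequality is preserved.

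For the sufficiency, I would build an enlarged graph $G^{*}$ by replacing each vertex $v\in V_0$ with a local gadget that rigidly encodes the parity constraint $d_F(v)\stackrel{2}{\equiv}f(v)$ (for instance, a small bipartite auxiliary cluster of dummy vertices attached to $v$ with finely tuned $(g^{*},f^{*})$-values, in the spirit of Lov\'asz's 1972 reduction for parity $(g,f)$-factors), while leaving each vertex $v\in V(G)\setminus V_0$ with its original bounds. The aim is to arrange matters so that every $(g^{*},f^{*})$-factor of $G^{*}$ restricts to a $V_0$-partial parity $(g,f)$-factor of $G$ and vice versa. An application of Lov\'asz's $(g,f)$-factor theorem to $G^{*}$ then produces the desired factor, provided the resulting deficiency condition on $G^{*}$ can be translated back to $G$: odd components of $G^{*}\setminus(A^{*}\cup B^{*})$ should correspond precisely to components $G[X]$ with $X\subseteq V_0$ and $\sum_{v\in X}f(v)\stackrel{2}{\not\equiv}d_G(X,B)$, while bipartite components with $g=f$ on $X$ should correspond to gadgets that admit an internal parity balance and are thus eliminated from the count.

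The hard part will be the sufficiency, and in particular designing the gadget so that the deficiency count on $G^{*}$ matches $\omega_{f,V_0}(G,A,B)$ on the nose, including the subtle bipartite/$g=f$ exception. I would tackle this by first verifying the reduction in two boundary cases, namely $V_0=\emptyset$ (recovering Lov\'asz's classical $(g,f)$-factor theorem directly) and $V_0=V(G)$ with $g=f$ (recovering Lov\'asz's $f$-parity factor theorem), before gluing the vertex-local gadgets together in the general case and checking that no spurious odd components are created in $G^{*}$ and that the bipartite/$g=f$ components can be handled by an exact local $f$-factor on $G[X]$ that absorbs the boundary edges into $F$ without creating a parity obstruction.
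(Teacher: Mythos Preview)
The paper does not prove this lemma; it is quoted from Kano and Matsuda (2001) and used as a black box in the subsequent theorem on partial parity factors. So there is no proof in the paper to compare your attempt against.

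That said, your necessity argument is correct and is the standard one: the identity $d_F(A,B)+d_{G\setminus F}(A,B)=d_G(A,B)$ together with the parity observation on components $X\subseteq V_0$ gives exactly the inequality $\omega_{f,V_0}(G,A,B)\le \sum_{v\in A}f(v)+\sum_{v\in B}(d_G(v)-g(v))-d_G(A,B)$, which is the integer form of the stated strict bound. Your sufficiency sketch via a gadget reduction to Lov\'asz's $(g,f)$-factor theorem is the natural line of attack and is indeed how results of this type are typically established, but as written it is only a plan: you have not specified the gadget, and the verification that the deficiency condition on $G^{*}$ translates back to $\omega_{f,V_0}(G,A,B)$---including the bipartite/$g=f$ exception---is where all the real work lies. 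If you want a complete argument you should consult the original Kano--Matsuda paper, where the reduction is carried out in full; the mixed constraints (parity on $V_0$, strict inequality $g<f$ off $V_0$) make the bookkeeping somewhat delicate.
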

The following theorem provides a partial parity version for Theorem 6.2 in \cite{EquitableFactorizations-2022} on partially edge-connected graphs. Note that some edge-connected versions of this theorem were formerly studied in~\cite{Anstee-Nam, Egawa-Kano, Kano-1985}.
\begin{thm}\label{thm:parity:version}
{Let $G$ be a connected graph, let $V_0, V_1, V'_1$ be three disjoint vertex subsets of $V(G)$, 
let $f:V_0\rightarrow \mathbb{Z}_2$ be a mapping, and let $\varepsilon$ be a real number with
 $0< \varepsilon < 1$. Assume that
$ \sum_{v\in V_0}f(v)\stackrel{2}{\equiv}0$ or $V_0\neq V(G)$.
If for every nonempty proper subset $X$ of $V_0$ (except those sets that $G[X]$ is disconnected or bipartite and $\varepsilon d_G(v)$ is an integer with the same parity of $f(v)$ for all $v\in X$), 
 $$d_G(X)\ge \begin{cases}
1/\varepsilon,	&\text{when $\sum_{v\in X}f(v)$ is odd};\\
1/(1-\varepsilon),	&\text{when $\sum_{v\in X}(d_G(v)-f(v))$ is odd},
\end {cases}$$
then $G$ has a $V_0$-partial $f$-parity factor $F$ with the following properties:
\begin{enumerate}{

\item [$\bullet$] 
For all $v\in V_0$, 
$ \lfloor \varepsilon d_G(v)\rfloor -1 \le d_{F}(v) \le \lceil \varepsilon d_G(v)\rceil +1$ and $d_F(v)\stackrel{2}{\equiv}f(v)$.

\item [$\bullet$] 
For all $v\in V_1$, 
$ \lceil \varepsilon d_G(v)\rceil -1\le d_{F}(v) \le \lceil \varepsilon d_G(v)\rceil$, and
for all $v\in V'_1$, 
$\lfloor \varepsilon d_G(v)\rfloor \le d_{F}(v) \le \lfloor \varepsilon d_G(v)\rfloor +1$.
}\end{enumerate}
Furthermore, for an arbitrary given vertex $z\in V_0$, 
we can have $d_{F}(z) \ge \lceil \varepsilon d_G(z)\rceil-1$
or $d_{F}(z) \le \lfloor \varepsilon d_G(z)\rfloor +1$.
}\end{thm}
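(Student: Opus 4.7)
The plan is to apply Lemma~\ref{lem:Kano-Matsuda-2001} with carefully chosen bound functions $g, h : V(G) \to \mathbb{Z}$ that encode the prescribed degree ranges. For $v \in V_0$, let $g(v)$ (resp.\ $h(v)$) be the largest (resp.\ smallest) integer of parity $f(v)$ satisfying $g(v) \le \varepsilon d_G(v) \le h(v)$; for $v \in V_1$, set $g(v) = \lceil \varepsilon d_G(v) \rceil - 1$ and $h(v) = \lceil \varepsilon d_G(v) \rceil$; for $v \in V'_1$, set $g(v) = \lfloor \varepsilon d_G(v) \rfloor$ and $h(v) = \lfloor \varepsilon d_G(v) \rfloor + 1$; for the remaining vertices take $g(v) = 0$ and $h(v) = d_G(v)$. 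With these choices, $g(v) \le \varepsilon d_G(v) \le h(v)$ holds for every $v$, so a $V_0$-partial parity $(g, h)$-factor automatically meets the claimed bounds on $d_F(v)$ and the parity constraint on $V_0$.

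I would then verify the Kano--Matsuda condition
$$\omega := \omega_{h, V_0}(G, A, B) < 1 + \sum_{v \in A} h(v) + \sum_{v \in B}(d_G(v) - g(v)) - d_G(A, B)$$
for all disjoint $A, B \subseteq V(G)$. Expanding $\sum_{v \in A} d_G(v) = 2e(A) + d_G(A,B) + d_G(A,C)$ (and symmetrically for $B$), with $C = V(G) \setminus (A \cup B)$, and discarding the non-negative slack terms $h(v) - \varepsilon d_G(v)$ for $v \in A$ and $\varepsilon d_G(v) - g(v)$ for $v \in B$, the right-hand side is bounded below by
$$1 + 2\varepsilon\, e(A) + 2(1-\varepsilon)\, e(B) + \varepsilon\, d_G(A, C) + (1-\varepsilon)\, d_G(B, C).$$
The core step is to show that each bad component $C_i$ of $G[C]$ counted in $\omega$ contributes $\varepsilon\, d_G(A, C_i) + (1-\varepsilon)\, d_G(B, C_i) \ge 1$, so that the full lower bound reaches $1 + \omega$ and the required strict inequality follows from integrality.

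To establish the contribution bound, I would split on the parities of $d_G(C_i, A)$ and $d_G(C_i, B)$, using the defining bad-component congruence $\sum_{v \in C_i} f(v) \not\equiv d_G(C_i, B) \pmod 2$ to trigger the appropriate hypothesis bound $d_G(C_i) \ge 1/\varepsilon$ (when $\sum f$ is odd) or $d_G(C_i) \ge 1/(1-\varepsilon)$ (when $\sum(d - f)$ is odd). When both $d_G(C_i, A)$ and $d_G(C_i, B)$ are odd, each is at least $1$ and $\varepsilon + (1-\varepsilon) = 1$ suffices; when exactly one is odd, one of the two hypothesis bounds applies and a short argument splits according to whether the even side equals $0$ or is at least $2$; when both are even, parity forces both hypothesis bounds to apply simultaneously, so $d_G(C_i) \ge \max\{1/\varepsilon, 1/(1-\varepsilon)\}$, and the bound is immediate in each remaining sub-case. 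The degenerate case $A = B = \emptyset$ with $C_i = V(G)$ is precisely what the standing assumption $\sum_{V_0} f \equiv 0$ or $V_0 \ne V(G)$ rules out.

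For the furthermore clause at the chosen $z \in V_0$, I would further tighten $g(z)$ (to force $d_F(z) \ge \lceil \varepsilon d_G(z) \rceil - 1$) or $h(z)$ (to force $d_F(z) \le \lfloor \varepsilon d_G(z) \rfloor + 1$), effectively removing the smaller or larger parity-$f(z)$ value from the allowed range; the verification above carries through because the adjustment at a single vertex perturbs the slack only by a bounded amount absorbed by the already-strict integer inequality. The main obstacle is the parity case analysis on bad components: one must juggle three parities (of $\sum_{v \in C_i} f(v)$, of $d_G(C_i, A)$, and of $d_G(C_i, B)$) simultaneously, choose between the two distinct edge-connectivity hypotheses, and handle the borderline situation where $\min(\varepsilon, 1-\varepsilon)$ is small so that no single bound on $d_G(C_i)$ is strong enough on its own.
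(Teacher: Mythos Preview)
Your proposal is correct and matches the paper's proof: both choose bound functions $g_0,f_0$ sandwiching $\varepsilon d_G(v)$ and verify Lemma~\ref{lem:Kano-Matsuda-2001} by showing each bad component $X$ contributes $\varepsilon\,d_G(X,A)+(1-\varepsilon)\,d_G(X,B)\ge 1$. The paper's case split is slightly cleaner---it partitions bad components into $P_a=\{X:d_G(X,A)=0\}$, $P_b=\{X:d_G(X,B)=0\}$, and $P_c$ (both positive), so that in $P_c$ one has $\varepsilon+(1-\varepsilon)=1$ immediately and in $P_a$ (resp.\ $P_b$) the full boundary weight lands on the single applicable hypothesis bound, avoiding your parity sub-cases.
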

\begin{proof}{
For each $v\in V_0$, let us define $g_0(v)\in
 \{\lfloor \varepsilon d_G(v)\rfloor-1,
 \lfloor \varepsilon d_G(v)\rfloor\}$ 
and $f_0(v)\in \{\lceil \varepsilon d_G(v)\rceil,\lceil \varepsilon d_G(v)\rceil+1 \}$
 such that 
$g_0(v)\stackrel{2}{\equiv}f_0(v)\stackrel{2}{\equiv} f(v)$.
For the vertex $z$ in which $\varepsilon d_G(z)$ is not integer, we can 
arbitrarily have $g_0(z)\in
 \{ \lfloor \varepsilon d_G(z)\rfloor,\lfloor \varepsilon d_G(z)\rfloor+1\}$ 
or $f_0(z)\in \{\lceil \varepsilon d_G(z)\rceil-1,\lceil \varepsilon d_G(z)\rceil\}$.
For each vertex $v\in V_1$, we define $g_0(v)=\lceil \varepsilon d_G(v)\rceil -1$
and $f_0(v)=\lceil \varepsilon d_G(v)\rceil$,
and for each vertex $v\in V'_1$, we define $g_0(v)=\lfloor \varepsilon d_G(v)\rfloor$
and $f_0(v)=\lfloor \varepsilon d_G(v)\rfloor+1$.
Note that for each $v\in V_1\cup V'_1$, we have $g_0(v)< f_0(v)$.
Let $A$ and $B$ be two disjoint vertex subsets of $V(G)$ with $A\cup B\neq \emptyset$. 
By the definition of $g_0$ and $f_0$, we must have 
\begin{equation}\label{eq:1:thm:epsilon:parity}
{\sum_{v\in A} \varepsilon d_G(v)+\sum_{v\in B} (1-\varepsilon) d_G(v)
< 1+ \sum_{v\in A} f_0(v)+\sum_{v\in B} (d_{G}(v)-g_0(v)),
}\end{equation}
whether $z\in A\cup B$ or not.
Take $P$ to be the collection of all vertex subsets $X\subseteq V_0$ such that $G[X]$
 is a component of $G\setminus (A\cup B)$ satisfying $\sum_{v\in X}f(v)\stackrel{2}{\not\equiv} d_G(X,B)$ (except those sets that $G[X]$ is bipartite and $g_0(v)=f_0(v)$ for all $v\in X$).
It is easy to check that
\begin{equation}\label{eq:2:thm:epsilon:parity}
{\sum_{X\in P}(\varepsilon d_G(X,A)+(1-\varepsilon) d_G(X,B))
\le \sum_{v\in A} \varepsilon d_G(v)+\sum_{v\in B} (1-\varepsilon) d_G(v)\, -d_G(A,B).
}\end{equation}
Define 
$P_c=\{X\in P: d_G(X,A)>0 \text{ and }d_G(X,B)>0\}$. Obviously, 
$$|P_c|= \sum_{X\in P_c}(\varepsilon+(1-\varepsilon))\le 
\sum_{X\in P_c}(\varepsilon d_G(X,A)+(1-\varepsilon) d_G(X,B)).$$
Set $P_a=\{X\in P: d_G(X,A)=0\}$ and 
$P_b=\{X\in P: d_G(X,B)=0\}$.
According to the definition, for every $X\in P_a$, we have 
$$\sum_{v\in X}f(v)\stackrel{2}{\not\equiv} 
d_G(X,B)=d_G(X)\stackrel{2}{\equiv}\sum_{v\in X}d_G(v),$$
which implies that $\sum_{v\in X}(d_G(v)-f(v))$ is odd.
Similarly, for every $X\in P_b$, $\sum_{v\in X}f(v)$ must be odd.
Thus by the assumption, we must have
$$|P_a|\le \sum_{X\in P_a}(1-\varepsilon) d_G(X) =
 \sum_{X\in P_a}(\varepsilon d_G(X,A)+(1-\varepsilon) d_G(X,B)),$$
and also
$$|P_b|\le \sum_{X\in P_b}\varepsilon d_G(X) =
 \sum_{X\in P_b}(\varepsilon d_G(X,A)+(1-\varepsilon) d_G(X,B)).$$
Therefore, 
\begin{equation}\label{eq:3:thm:epsilon:parity}
{|P|=|P_a|+|P_b|+|P_c|
\le \sum_{X\in P}(\varepsilon d_G(X,A)+(1-\varepsilon) d_G(X,B)).
}\end{equation}
According to Inequations~(\ref{eq:1:thm:epsilon:parity}), (\ref{eq:2:thm:epsilon:parity}), and (\ref{eq:3:thm:epsilon:parity}), 
one can conclude that 
$$\omega_{f,V_0}(G, A,B)=|P|<1+ \sum_{v\in A} f_0(v)+\sum_{v\in B} (d_{G}(v)-g_0(v))-d_G(A,B).$$
When both of the sets $A$ and $B$ are empty, the above-mentioned inequality must automatically hold, 
because $\omega_{f, V_0}(G, \emptyset,\emptyset)=0$ when $V_0=V(G)$. Thus the assertion follows from Lemma~\ref{lem:Kano-Matsuda-2001}.
}\end{proof}
\begin{remark}
{Note that the special case $V_0=V(G)$ of Theorem~\ref{thm:parity:version} alrerady studied in \cite[Theorem 6.2]{EquitableFactorizations-2022} and it can conclude Theorem 10 in \cite{Furuya-Kano-2024} with a stronger version. In fact, if $\varepsilon d_G(v)$ is an integer with the same parity of $f(v)$, then we must have $d_F(v)=\varepsilon d_G(v)$.
The special case $V_0=\emptyset$ of Theorem~\ref{thm:parity:version} can also conclude Theorem 5 in \cite{Furuya-Kano-2024}. 
}\end{remark}
The following corollary improves the edge-connectivity needed in Theorem~\ref{thm:delta1-deltam} for the special case $m=2$. Moreover, this is an improvement a result in \cite[Theorem 1.5]{Qin-Wu-2022} due to Qin and Wu (2022) who proved this corollary for $(\delta_1+\delta_2-1)$-edge-connected simple graphs. The special case $V_0=\emptyset$ of this result was already proved in \cite{Gupta-1978, Thomassen-1980}. 
\begin{cor}\label{parity:delta:k=2}
{Let $\delta_1$ and $\delta_2$ be two positive integers.
Let $G$ be a connected graph and let $V_0=\{v\in V(G): d_G(v)=\delta_1+\delta_2\}$.
If $\delta(G)\ge \delta_1+\delta_2$, then $G$ can be edge-decomposed into two factors $G_1$ and $G_2$ satisfying $\delta(G_1)\ge \delta_1$ and $\delta(G_2)\ge \delta_2$
provided that one of the following conditions holds:
\begin{enumerate}{

\item [$\bullet$] 
$\delta_1$ and $\delta_2$ are even.

\item [$\bullet$] 
$\delta_1$ is odd and $\delta_2$ is even, and $G$ is $V_0$-partially odd-$\lceil \frac{\delta_1+\delta_2}{\delta_1 }\rceil$-edge-connected.

\item [$\bullet$] 
$\delta_1$ and $\delta_2$ are odd, and $|V(G)|$ is even or $V_0\neq V(G)$, and $G$ is 
$V_0$-partially odd-$( \lceil \frac{\delta_1+\delta_2}{\min \{\delta_1,\delta_2\} }\rceil, V_0)$-edge-connected.
}\end{enumerate}
}\end{cor}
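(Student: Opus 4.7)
The plan is to apply Theorem~\ref{thm:parity:version} with $\varepsilon = \delta_1/(\delta_1+\delta_2)$, the given set $V_0$, parity target $f(v) \equiv \delta_1 \pmod{2}$ on $V_0$, $V_1 = \emptyset$, and $V'_1 = V(G)\setminus V_0$. Writing $F$ for the resulting $V_0$-partial $f$-parity factor, I set $G_1 = F$ and $G_2 = G - E(F)$; it then remains to verify the degree bounds $\delta(G_i) \geq \delta_i$ and to show that the hypotheses of Theorem~\ref{thm:parity:version} follow from those of the corollary in each parity case.

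The degree bounds are almost automatic. For $v \in V_0$ one has $\varepsilon d_G(v) = \delta_1 \in \mathbb{Z}$, so the theorem forces $d_F(v) \in \{\delta_1 - 1,\, \delta_1,\, \delta_1 + 1\}$ with $d_F(v) \equiv \delta_1 \pmod{2}$, hence $d_F(v) = \delta_1$ and $d_{G_2}(v) = \delta_2$. For $v \in V'_1$ we have $d_G(v) \geq \delta_1 + \delta_2 + 1$, which gives $\varepsilon d_G(v) > \delta_1$ and $(1-\varepsilon) d_G(v) > \delta_2$; since $d_F(v) \in \{\lfloor \varepsilon d_G(v)\rfloor,\, \lfloor \varepsilon d_G(v)\rfloor + 1\}$, the lower bound yields $d_{G_1}(v) \geq \lfloor \varepsilon d_G(v)\rfloor \geq \delta_1$, and integrality of $d_G(v) - \lfloor \varepsilon d_G(v)\rfloor$ combined with $(1-\varepsilon) d_G(v) > \delta_2$ yields $d_{G_2}(v) \geq d_G(v) - \lfloor \varepsilon d_G(v)\rfloor - 1 \geq \delta_2$.

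The main work is matching the edge-connectivity hypothesis case by case; this is where I expect the main obstacle to lie, as the three cases correspond to genuinely different parity behaviours. Since $\varepsilon d_G(v) = \delta_1$ is an integer with the right parity on $V_0$, the only excluded subsets in Theorem~\ref{thm:parity:version} are those $X \subsetneq V_0$ with $G[X]$ bipartite, and for any $X \subseteq V_0$ a direct computation gives $\sum_{v\in X} f(v) \equiv |X|\delta_1 \pmod{2}$ and $\sum_{v\in X}(d_G(v) - f(v)) \equiv |X|\delta_2 \pmod{2}$. When both $\delta_i$ are even, both congruences vanish identically so neither parity condition is ever triggered and no connectivity is needed, while $\sum_{V_0} f \equiv 0$ takes care of the side condition. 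When $\delta_1$ is odd and $\delta_2$ is even, only the first condition fires, and does so exactly when $|X|$ is odd; then $d_G(X) \equiv |X|(\delta_1+\delta_2) \equiv |X| \pmod{2}$ is odd, so the bound $d_G(X) \geq (\delta_1+\delta_2)/\delta_1$ is precisely the $V_0$-partial odd-$\lceil (\delta_1+\delta_2)/\delta_1 \rceil$-edge-connectivity, and the side condition $\sum_{V_0} f \equiv 0$ or $V_0 \neq V(G)$ is automatic because $V_0 = V(G)$ would force every vertex to have odd degree $\delta_1+\delta_2$ and hence $|V_0|$ even by handshaking. When both $\delta_i$ are odd, both conditions fire whenever $|X|$ is odd, but now $d_G(X)$ is even, which is exactly why the stronger $(\lambda, V_0)$-edge-connectivity form with $\lambda = \lceil (\delta_1+\delta_2)/\min\{\delta_1,\delta_2\} \rceil = \lceil \max(1/\varepsilon,\, 1/(1-\varepsilon)) \rceil$ is required; the assumption ``$|V(G)|$ is even or $V_0 \neq V(G)$'' supplies the remaining side condition of the theorem.
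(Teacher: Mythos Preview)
Your proposal is correct and follows essentially the same route as the paper: apply Theorem~\ref{thm:parity:version} with $\varepsilon=\delta_1/(\delta_1+\delta_2)$, $f\equiv\delta_1$ on $V_0$, and $V'_1=V(G)\setminus V_0$, then read off $G_1=F$ and $G_2=G\setminus E(F)$. Your case analysis of the parity and edge-connectivity hypotheses is in fact more explicit than the paper's (which handles the side condition ``$\sum_{v\in V_0}f(v)$ even or $V_0\neq V(G)$'' in a single sentence without spelling out the handshaking argument in the mixed-parity case).
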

\begin{proof}
{Let $\varepsilon=\delta_1/(\delta_1+\delta_2)$ and define $f(v)=\delta_1$ for all $v\in V_0$.
Let $X$ be a subset of $V_0$. If $\sum_{v\in X}f(v)$ is odd, then $\delta_1$ and $|X|$ are odd, and so $d_G(X)\ge \lceil 1/\varepsilon \rceil=\lceil \frac{\delta_1+\delta_2}{\delta_1 }\rceil$. Notice that $d_G(X)$ is also odd when $\delta_1+\delta_2$ is odd. 
Likewise, if $\sum_{v\in X}(d_G(v)-f(v))$ is odd, then $\delta_2$ and $|X|$ are odd,
and so $d_G(X)\ge \lceil 1/(1-\varepsilon) \rceil=\lceil \frac{\delta_1+\delta_2}{\delta_2 }\rceil$.
Moreover, by the assumption, $\sum_{v\in V_0}f(v)$ is even or $V_0\neq V(G)$, regardless of both of $\delta_1$ and $\delta_2$ are odd or not.
Thus by applying Theorem~\ref{thm:parity:version},
the graph $G$ has a factor $G_1$ such that for each $v\in V_0$, $ d_{G_1}(v)= \varepsilon d_G(v) =\delta_1$, and 
for each $v\in V(G)\setminus V_0$, 
$\delta_1 \le \lfloor \varepsilon d_G(v) \rfloor \le d_{G_1}(v)\le \lfloor \varepsilon d_G(v) \rfloor +1$.
If we put $G_2=G \setminus E(G_1)$, then 
 for each $v\in V_0$, $ d_{G_2}(v)= (1-\varepsilon) d_G(v) =\delta_2$, and 
for each $v\in V(G)\setminus V_0$, 
$\delta_2 \le \lceil (1-\varepsilon) d_G(v)) \rceil-1 \le d_{G_2}(v)\le \lceil (1-\varepsilon) d_G(v) \rceil$ using the inequality $d_G(v)> \delta_1+\delta_2$. Hence the proof is completed.
}\end{proof}
The following corollary improves the edge-connectivity needed in Theorem~\ref{thm:Delta1-Deltam} for the special case $m=2$.
 The special case $V_0=\emptyset$ was already proved in \cite{Lovasz-1970}; see \cite{Correa-Matamala}. For simple graphs, it
can easily be derived using Vizing's Theorem, see \cite[Section 1]{Qin-Wu-2022}.
\begin{cor}\label{parity:Delta:k=2}
{Let $\Delta_1$ and $\Delta_2$ be two positive integers.
Let $G$ be a connected graph and let $V_0=\{v\in V(G): d_G(v)=\Delta_1+\Delta_2\}$.
If $\Delta(G)\ge \Delta_1+\Delta_2$, then $G$ can be edge-decomposed into two factors $G_1$ and $G_2$ satisfying $\Delta(G_1)\ge \Delta_1$ and $\Delta(G_2)\ge \Delta_2$
provided that one of the following conditions holds:
\begin{enumerate}{

\item [$\bullet$] 
$\Delta_1$ and $\Delta_2$ are even.

\item [$\bullet$] 
$\Delta_1$ is odd and $\Delta_2$ is even, and $G$ is $V_0$-partially odd-$\lceil \frac{\Delta_1+\Delta_2}{\Delta_1 }\rceil$-edge-connected.

\item [$\bullet$] 
$\Delta_1$ and $\Delta_2$ are odd, and $|V(G)|$ is even or $V_0\neq V(G)$, and $G$ is 
$V_0$-partially odd-$( \lceil \frac{\Delta_1+\Delta_2}{\min \{\Delta_1,\Delta_2\} }\rceil, V_0)$-edge-connected.
}\end{enumerate}
}\end{cor}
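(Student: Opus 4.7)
The plan is to mirror the proof of Corollary~\ref{parity:delta:k=2} essentially verbatim, swapping $\delta_i$ for $\Delta_i$ and invoking Theorem~\ref{thm:parity:version} with the same splitting. Set $\varepsilon = \Delta_1/(\Delta_1+\Delta_2)$ and define $f(v) = \Delta_1$ for every $v\in V_0$, taking $V_1'=V(G)\setminus V_0$ in the theorem (so that the near-floor bound applies off $V_0$). Then $G_1$ will be the factor produced by Theorem~\ref{thm:parity:version} and $G_2 = G\setminus E(G_1)$.

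First I would verify the connectivity hypothesis of Theorem~\ref{thm:parity:version}. For any nonempty $X\subseteq V_0$, each $v\in X$ has $d_G(v)=\Delta_1+\Delta_2$, so $d_G(X)$ has the parity of $|X|(\Delta_1+\Delta_2)$ and $\sum_{v\in X}f(v)=\Delta_1|X|$, $\sum_{v\in X}(d_G(v)-f(v))=\Delta_2|X|$. Hence $\sum f$ is odd only when $\Delta_1$ and $|X|$ are both odd, and in that case $d_G(X)$ is odd (since $\Delta_1+\Delta_2$ must then be odd); the hypothesis supplies $d_G(X)\ge\lceil (\Delta_1+\Delta_2)/\Delta_1\rceil = \lceil 1/\varepsilon\rceil$. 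Symmetrically, $\sum(d-f)$ is odd only when $\Delta_2$ and $|X|$ are odd, and the hypothesis gives $d_G(X)\ge\lceil(\Delta_1+\Delta_2)/\Delta_2\rceil=\lceil 1/(1-\varepsilon)\rceil$. The side condition ``$\sum_{v\in V_0}f(v)\equiv 0\pmod 2$ or $V_0\neq V(G)$'' splits by the three bullets: both $\Delta_i$ even makes $\Delta_1|V_0|$ even trivially; for exactly one $\Delta_i$ odd, if $V_0=V(G)$ then handshaking forces $|V_0|$ even (since $\Delta_1+\Delta_2$ is odd), giving $\Delta_1|V_0|$ even; and in the both-odd bullet the dichotomy is assumed.

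Applying Theorem~\ref{thm:parity:version}, the factor $G_1$ satisfies, for each $v\in V_0$, the constraint $\lfloor\varepsilon d_G(v)\rfloor-1\le d_{G_1}(v)\le\lceil\varepsilon d_G(v)\rceil+1$ with $d_{G_1}(v)\equiv\Delta_1\pmod 2$; but $\varepsilon d_G(v)=\Delta_1$ is an integer with that parity, so $d_{G_1}(v)=\Delta_1$ exactly and $d_{G_2}(v)=\Delta_2$ exactly. For $v\in V(G)\setminus V_0$ we get $\lfloor\varepsilon d_G(v)\rfloor\le d_{G_1}(v)\le\lfloor\varepsilon d_G(v)\rfloor+1$.

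It remains to check $\Delta(G_1)\ge\Delta_1$ and $\Delta(G_2)\ge\Delta_2$. If $V_0\neq\emptyset$, any $v\in V_0$ gives $d_{G_1}(v)=\Delta_1$ and $d_{G_2}(v)=\Delta_2$, witnessing both bounds at once. Otherwise pick $v^*$ with $d_G(v^*)=\Delta(G)$; since $V_0$ is empty but $\Delta(G)\ge\Delta_1+\Delta_2$, in fact $d_G(v^*)>\Delta_1+\Delta_2$. Then $\varepsilon d_G(v^*)>\Delta_1$ yields $d_{G_1}(v^*)\ge\lfloor\varepsilon d_G(v^*)\rfloor\ge\Delta_1$, and $(1-\varepsilon)d_G(v^*)>\Delta_2$ yields $d_{G_2}(v^*)\ge d_G(v^*)-\lfloor\varepsilon d_G(v^*)\rfloor-1=\lceil(1-\varepsilon)d_G(v^*)\rceil-1\ge\Delta_2$. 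The main source of care is the bookkeeping of the two odd-edge-connectivity parameters ($Q=V(G)$ in the mixed-parity bullet versus $Q=V_0$ in the both-odd bullet) and the handling of the bipartite-component exception in Theorem~\ref{thm:parity:version}, which is vacuous here because the parity constraint on $V_0$ pins $d_{G_1}$ to a single value at every vertex of $V_0$.
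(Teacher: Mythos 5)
Your use of Theorem~\ref{thm:parity:version} with $\varepsilon=\Delta_1/(\Delta_1+\Delta_2)$ and $f\equiv\Delta_1$ on $V_0$, the parity analysis of the cuts $d_G(X)$ for $X\subseteq V_0$ under the three bullets, and the handshaking argument for the side condition when $V_0=V(G)$ all coincide with the paper's own argument. The problem is the endgame. The corollary as printed carries sign typos: judging from Theorem~\ref{intro:thm:Delta} and Theorem~\ref{thm:Delta1-Deltam} (which this corollary is stated to refine for $m=2$), from the cited antecedents (Lov\'asz; Vizing for simple graphs), and from the paper's own proof (which assumes $d_G(v)<\Delta_1+\Delta_2$ for $v\notin V_0$ and concludes $d_{G_1}(v)\le\Delta_1$, $d_{G_2}(v)\le\Delta_2$), the intended statement is $\Delta(G)\le\Delta_1+\Delta_2$ with conclusion $\Delta(G_1)\le\Delta_1$ and $\Delta(G_2)\le\Delta_2$. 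You instead prove the literal reading with $\ge$, and your final step, which picks $v^*$ with $d_G(v^*)>\Delta_1+\Delta_2$, is incompatible with the intended hypothesis. A warning sign: the literal reading is trivially true with no parity or odd-edge-connectivity assumptions at all (assign $\Delta_1$ edges at a vertex of degree at least $\Delta_1+\Delta_2$ to $G_1$ and all remaining edges to $G_2$), so the heavy machinery would be pointless for it.

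The repair is small and keeps your setup intact. Either place $V(G)\setminus V_0$ into $V_1$ (the ceiling window), as the paper does, so that for $v\notin V_0$ one gets $d_{G_1}(v)\le\lceil\varepsilon d_G(v)\rceil\le\Delta_1$ and $d_{G_2}(v)\le d_G(v)-\lceil\varepsilon d_G(v)\rceil+1=\lfloor(1-\varepsilon)d_G(v)\rfloor+1\le\Delta_2$, using $d_G(v)<\Delta_1+\Delta_2$; or keep your choice $V_1'=V(G)\setminus V_0$ (floor window) and note that $\varepsilon d_G(v)<\Delta_1$ and $(1-\varepsilon)d_G(v)<\Delta_2$ off $V_0$ give $d_{G_1}(v)\le\lfloor\varepsilon d_G(v)\rfloor+1\le\Delta_1$ and $d_{G_2}(v)\le d_G(v)-\lfloor\varepsilon d_G(v)\rfloor=\lceil(1-\varepsilon)d_G(v)\rceil\le\Delta_2$. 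On $V_0$ your parity argument already pins $d_{G_1}(v)=\Delta_1$ and $d_{G_2}(v)=\Delta_2$, exactly as in the paper, and the maximum-degree bounds then hold at every vertex, so no separate witness vertex is needed.
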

\begin{proof}
{Let $\varepsilon=\Delta_1/(\Delta_1+\Delta_2)$ and define $f(v)=\Delta_1$ for all $v\in V_0$.
Let $X$ be a subset of $V_0$. If $\sum_{v\in X}f(v)$ is odd, then $\Delta_1$ and $|X|$ are odd, and so $d_G(X)\ge \lceil 1/\varepsilon \rceil=\lceil \frac{\Delta_1+\Delta_2}{\Delta_1 }\rceil$. Notice that $d_G(X)$ is also odd when $\Delta_1+\Delta_2$ is odd. 
Likewise, if $\sum_{v\in X}(d_G(v)-f(v))$ is odd, then $\Delta_2$ and $|X|$ are odd,
and so $d_G(X)\ge \lceil 1/(1-\varepsilon) \rceil=\lceil \frac{\Delta_1+\Delta_2}{\Delta_2 }\rceil$.
Moreover, by the assumption, $\sum_{v\in V_0}f(v)$ is even or $V_0\neq V(G)$, regardless of both of $\Delta_1$ and $\Delta_2$ are odd or not.
Thus by applying Theorem~\ref{thm:parity:version},
the graph $G$ has a factor $G_1$ such that 
and for each $v\in V_0$, $ d_{G_1}(v)= \varepsilon d_G(v) =\Delta_1$, and
for each $v\in V(G)\setminus V_0$, 
$ \lceil \varepsilon d_G(v) \rceil -1 \le d_{G_1}(v)\le \lceil \varepsilon d_G(v) \rceil \le \Delta_1$.
If we set $G_2=G \setminus E(G_1)$, then 
 for each $v\in V_0$, $ d_{G_2}(v)= (1-\varepsilon) d_G(v) =\Delta_2$, and 
for each $v\in V(G)\setminus V_0$, 
$ \lfloor (1-\varepsilon) d_G(v)) \rfloor \le d_{G_2}(v)\le \lfloor (1-\varepsilon) d_G(v) \rfloor +1\le \Delta_2$
using the inequality $d_G(v)<\Delta_1+\Delta_2$. Hence the proof is completed.
}\end{proof}
\begin{cor}{\rm (\cite{Hoffman})}
{Let $\varepsilon $ be a real number with $0\le \varepsilon\le 1$. If $G$ is a bipartite graph,
then it has a factor $F$ such that for each vertex $v$, 
 $ | d_{F}(v)-\varepsilon d_G(v)|< 1.$
}\end{cor}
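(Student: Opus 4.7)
\begin{proof}
{The plan is to derive this directly from the Folkman--Fulkerson criterion (Theorem~\ref{thm:Folkman:Fulkerson--1970}) by choosing the obvious upper and lower bound functions. Let $(X,Y)$ denote the bipartition of $G$ and define, for all $v\in V(G)$,
$$g(v)=\lfloor \varepsilon d_G(v)\rfloor \qquad \text{and}\qquad f(v)=\lceil \varepsilon d_G(v)\rceil.$$
Any $(g,f)$-factor $F$ then satisfies $\lfloor \varepsilon d_G(v)\rfloor \le d_F(v)\le \lceil \varepsilon d_G(v)\rceil$, which immediately gives $|d_F(v)-\varepsilon d_G(v)|< 1$ (when $\varepsilon d_G(v)$ is an integer this forces $d_F(v)=\varepsilon d_G(v)$ exactly, and otherwise $d_F(v)$ must be one of the two consecutive integers straddling $\varepsilon d_G(v)$).

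It therefore suffices to verify the Folkman--Fulkerson inequality. Let $A$ and $B$ be disjoint subsets of $V(G)$ lying on opposite sides of the bipartition. Since $f(v)\ge \varepsilon d_G(v)$ and $d_G(v)-g(v)\ge (1-\varepsilon)d_G(v)$, and using the trivial bounds $d_G(A,B)\le \sum_{v\in A}d_G(v)$ and $d_G(A,B)\le \sum_{v\in B}d_G(v)$, the key step is the convex combination
$$d_G(A,B)=\varepsilon d_G(A,B)+(1-\varepsilon)d_G(A,B)\le \varepsilon \sum_{v\in A}d_G(v)+(1-\varepsilon)\sum_{v\in B}d_G(v)\le \sum_{v\in A}f(v)+\sum_{v\in B}(d_G(v)-g(v)).$$
Thus the hypothesis of Theorem~\ref{thm:Folkman:Fulkerson--1970} is satisfied, and $G$ admits the desired $(g,f)$-factor.
}\end{proof}

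There is essentially no obstacle: the bipartite structure does all the work by ensuring that $d_G(A,B)$ is bounded simultaneously by the degree sums on each side of $(A,B)$, which is precisely what lets the weights $\varepsilon$ and $1-\varepsilon$ combine cleanly. An alternative route via Theorem~\ref{thm:parity:version} (taking $V_0=\emptyset$ and $V_1=V(G)$) yields only $|d_F(v)-\varepsilon d_G(v)|\le 1$ rather than the strict inequality, which is why I prefer the direct Folkman--Fulkerson argument above.
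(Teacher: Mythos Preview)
Your proof is correct, but it takes a different route from the paper's. You apply the Folkman--Fulkerson criterion (Theorem~\ref{thm:Folkman:Fulkerson--1970}) directly with $g=\lfloor \varepsilon d_G\rfloor$ and $f=\lceil \varepsilon d_G\rceil$, and the convex-combination bound on $d_G(A,B)$ is clean and self-contained. The paper instead invokes its own Theorem~\ref{thm:parity:version}, but not with $V_0=\emptyset$ as you speculated: it takes $V_0=\{v:\varepsilon d_G(v)\in\mathbb{Z}\}$ and $f(v)=\varepsilon d_G(v)$ on $V_0$, then observes that since $G$ is bipartite no induced subgraph $G[X]$ is non-bipartite, so the edge-connectivity hypothesis of Theorem~\ref{thm:parity:version} is vacuous. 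The parity constraint on $V_0$ combined with $|d_F(v)-\varepsilon d_G(v)|<2$ forces $d_F(v)=\varepsilon d_G(v)$ exactly at those vertices, recovering the strict inequality everywhere. Your argument is more elementary and avoids the heavier partial-parity machinery; the paper's argument is there to illustrate that the corollary falls out of its general framework. Your side remark about $V_0=\emptyset$ yielding only $\le 1$ is accurate, but note that the paper's actual choice of $V_0$ sidesteps that issue.
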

\begin{proof}
{Since there is no vertex set $X$ that $G[X]$ is non-bipartite, 
by applying Theorem~\ref{thm:parity:version} with setting $f(v)=\varepsilon d_G(v)$ for all $v\in V_0=\{v\in V(G): \varepsilon d_G(v)\text{ is an integer}\}$, the graph $G$ has a $V_0$-partial $f$-parity factor $F$ such that for each $v\in V_0$, $| d_{F}(v)-\varepsilon d_G(v)|<2$ which implies that $d_{F}(v)=\varepsilon d_G(v)$,
and for each $v\in V(G)\setminus V_0$, $| d_{F}(v)-\varepsilon d_G(v)|<1$. Hence the assertion holds.
}\end{proof}
\begin{cor}{\rm (\cite{Kano-Saito})}
{Let $\varepsilon $ be a real number with $0\le \varepsilon\le 1$. 
If $G$ is a graph, then it has a factor $F$ such that for each vertex $v$, 
 $| d_{F}(v)-\varepsilon d_G(v)|\le 1$.
}\end{cor}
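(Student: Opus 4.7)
The plan is to derive this as the simplest specialization of Theorem~\ref{thm:parity:version}, with $V_0$ taken to be empty. The boundary values $\varepsilon=0$ and $\varepsilon=1$ are trivial: take $F$ to be the spanning edgeless subgraph or $F=G$, respectively. So I assume $0<\varepsilon<1$.

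Since Theorem~\ref{thm:parity:version} is stated for connected graphs, I would apply it separately to each connected component $C$ of $G$. For each such $C$, set $V_0=\emptyset$, $V_1=V(C)$, and $V_1'=\emptyset$ (any partition of $V(C)$ into $V_1, V_1'$ would work equally well). With $V_0=\emptyset$, the parity hypothesis $\sum_{v\in V_0}f(v)\stackrel{2}{\equiv}0$ holds vacuously (empty sum), and the edge-connectivity hypothesis on nonempty proper subsets of $V_0$ is also vacuous. Therefore the theorem applies with no side conditions and produces a factor $F_C$ of $C$ such that for every $v\in V(C)$,
\[
\lceil \varepsilon d_G(v)\rceil -1 \le d_{F_C}(v) \le \lceil \varepsilon d_G(v)\rceil.
\]

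Taking $F$ to be the union of the factors $F_C$ over all components, I obtain a factor of $G$ with the desired property: since $\varepsilon d_G(v)\le \lceil \varepsilon d_G(v)\rceil \le \varepsilon d_G(v)+1$, the two-sided bound above gives $|d_F(v)-\varepsilon d_G(v)|\le 1$ for every $v\in V(G)$. Isolated vertices cause no trouble because $d_G(v)=0$ forces $d_F(v)=0$, which is consistent with the bounds.

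There is essentially no obstacle here; once Theorem~\ref{thm:parity:version} is in hand, Kano and Saito's result is an immediate consequence, obtained by discarding all parity requirements (i.e.\ taking $V_0=\emptyset$) and retaining only the degree-ratio bounds on $V_1$. The contrast with the preceding Hoffman corollary is instructive: there, bipartiteness allows every integer-valued $\varepsilon d_G(v)$ to be placed in $V_0$ so as to force equality $d_F(v)=\varepsilon d_G(v)$ and yield a strict inequality globally; here we simply surrender that refinement and accept the weaker bound $\le 1$ with no structural hypothesis on $G$ at all.
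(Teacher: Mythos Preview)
Your proof is correct and takes essentially the same approach as the paper, which simply writes ``Apply Theorem~\ref{thm:parity:version} with $V_0=\emptyset$.'' Your added care with the boundary values of $\varepsilon$, the reduction to connected components, and the explicit choice $V_1=V(C)$ is all sound and fills in details the paper leaves implicit.
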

\begin{proof}
{Apply Theorem~\ref{thm:parity:version} with $V_0=\emptyset$.
}\end{proof}
 The following corollary was studied in \cite{Modulo-Factors-Bounded} for the special case $V_0=V(G)$.
\begin{cor}\label{cor:1/2:modulo:V0}
{Let $G$ be a connected graph with $V_0\subseteq V(G)$ and let $f:V_0\rightarrow \mathbb{Z}_2$ be a mapping. Assume that
$ \sum_{v\in V_0}f(v)\stackrel{2}{\equiv}0$ when $V_0=V(G)$. If $G$ is $V_0$-partially $2$-edge-connected, 
then it has a factor $F$ such that for each vertex $v$, 
\begin{enumerate}{
\item [$\bullet$] For all $v\in V(G)\setminus V_0$, $\lfloor d_G(v)/2 \rfloor \le d_{F}(v)\le \lfloor d_G(v)/2\rfloor + 1$.
\item [$\bullet$] For all $v\in V_0$, $d_{F}(v)\stackrel{2}{\equiv}f(v)$ and $ | d_{F}(v)- d_G(v)/2| < 2$.
}\end{enumerate}
}\end{cor}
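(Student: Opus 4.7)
The plan is to derive this directly from Theorem~\ref{thm:parity:version} by specializing $\varepsilon=1/2$, $V_1=\emptyset$, and $V'_1=V(G)\setminus V_0$. Since $1/\varepsilon=1/(1-\varepsilon)=2$, the edge-connectivity hypothesis required by Theorem~\ref{thm:parity:version} becomes precisely $d_G(X)\ge 2$ for every nonempty proper subset $X$ of $V_0$ (outside the exceptional bipartite or disconnected cases, which impose no condition). I would verify that this is supplied by the assumed $V_0$-partial $2$-edge-connectivity: given any nonempty $X\subsetneq V_0$, either $V_0\subsetneq V(G)$ and then $X\subseteq V_0\subsetneq V(G)$, or $V_0=V(G)$ and then $X\subsetneq V_0=V(G)$; in both situations $X\neq V(G)$, so the definition of $V_0$-partial $2$-edge-connectivity applies and gives $d_G(X)\ge 2$. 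The parity hypothesis of Theorem~\ref{thm:parity:version} (namely $\sum_{v\in V_0}f(v)\stackrel{2}{\equiv}0$ or $V_0\neq V(G)$) matches the corollary's assumption verbatim.

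Next I would simply read off the conclusions. For $v\in V'_1=V(G)\setminus V_0$, Theorem~\ref{thm:parity:version} gives $\lfloor d_G(v)/2\rfloor\le d_F(v)\le \lfloor d_G(v)/2\rfloor+1$, which is the first bullet. For $v\in V_0$, it yields $d_F(v)\stackrel{2}{\equiv}f(v)$ together with $\lfloor d_G(v)/2\rfloor-1\le d_F(v)\le \lceil d_G(v)/2\rceil+1$. A trivial case split on the parity of $d_G(v)$ then shows $|d_F(v)-d_G(v)/2|\le 1$ when $d_G(v)$ is even and $|d_F(v)-d_G(v)/2|\le 3/2$ when $d_G(v)$ is odd, in either case strictly less than $2$; this is the second bullet.

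There is essentially no obstacle: the corollary is a clean specialization of Theorem~\ref{thm:parity:version}. The only bookkeeping concerns the exceptional subsets $X$ that Theorem~\ref{thm:parity:version} allows to violate the edge-connectivity bound (where $G[X]$ is bipartite or disconnected and every $d_G(v)/2$ is an integer of the same parity as $f(v)$); these contribute no requirement, so they may be disregarded. Hence the entire proof reduces to checking the above dictionary of hypotheses and conclusions.
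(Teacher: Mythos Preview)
Your proposal is correct and follows exactly the paper's approach: the paper's proof is the single line ``Apply Theorem~\ref{thm:parity:version} with $\varepsilon=1/2$,'' and you have simply spelled out the verification of hypotheses and conclusions in detail. Your choice $V_1=\emptyset$, $V'_1=V(G)\setminus V_0$ and the parity case split are the natural way to unpack that one-line citation.
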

\begin{proof}
{Apply Theorem~\ref{thm:parity:version} with $\varepsilon=1/2$.
}\end{proof}
A generalized version of Corollaries 6.8 and 6.9 in \cite{EquitableFactorizations-2022} is given in the following corollary which completely confirms Conjecture~\ref{intro:conj:new} for the case $k=2$.
\begin{cor}\label{cor:even-factor:atleast2/3}
{Let $\varepsilon $ be a real number with $0\le \varepsilon< 1$.
Let $G$ be a graph with $V_0\subseteq V(G)$.
If $G$ is $V_0$-partially odd-$\lceil 1/(1-\varepsilon)\rceil $-edge-connected, then it has a factor $F$ such that
\begin{enumerate}{
\item [$\bullet$] For all $v\in V(G)\setminus V_0$, $\lfloor \varepsilon d_G(v) \rfloor \le d_{F}(v)\le \lfloor \varepsilon d_G(v) \rfloor + 1$.
\item [$\bullet$] For all $v\in V_0$, $d_{F}(v)$ is even and $ | d_{F}(v)-\varepsilon d_G(v)| < 2$.
}\end{enumerate}
}\end{cor}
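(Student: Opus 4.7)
The plan is to deduce this directly from Theorem~\ref{thm:parity:version} by matching the hypotheses carefully. First, handle the edge case $\varepsilon=0$ trivially by taking $F=\emptyset$: then $\lfloor \varepsilon d_G(v)\rfloor=0=d_F(v)$ for $v\in V(G)\setminus V_0$, and $d_F(v)=0$ is even with $|d_F(v)-\varepsilon d_G(v)|=0<2$ for $v\in V_0$. So assume $0<\varepsilon<1$. We may also assume $G$ is connected, since a factor of $G$ is a union of factors of its components and the partial odd-edge-connectivity hypothesis passes to each component (with $V_0$ restricted to that component).

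With these reductions, I would invoke Theorem~\ref{thm:parity:version} with the choices
\[
V_1=\emptyset,\qquad V'_1=V(G)\setminus V_0,\qquad f\equiv 0 \text{ on } V_0.
\]
The disjointness requirement on $V_0,V_1,V'_1$ is clear, and $\sum_{v\in V_0}f(v)=0\stackrel{2}{\equiv}0$, so the preamble of that theorem is satisfied regardless of whether $V_0=V(G)$. It remains to check the edge-connectivity hypothesis on each nonempty proper subset $X$ of $V_0$. Since $f\equiv 0$, the quantity $\sum_{v\in X}f(v)$ is always even, making the first branch ($d_G(X)\ge 1/\varepsilon$) vacuous. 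The second branch requires $d_G(X)\ge 1/(1-\varepsilon)$ whenever $\sum_{v\in X}(d_G(v)-f(v))=\sum_{v\in X}d_G(v)$ is odd, but $\sum_{v\in X}d_G(v)\equiv d_G(X)\pmod 2$, so this is precisely the case $d_G(X)$ odd. The assumed $V_0$-partial odd-$\lceil 1/(1-\varepsilon)\rceil$-edge-connectivity then delivers $d_G(X)\ge\lceil 1/(1-\varepsilon)\rceil\ge 1/(1-\varepsilon)$, as required.

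Theorem~\ref{thm:parity:version} thus produces a $V_0$-partial $f$-parity factor $F$ with $\lfloor\varepsilon d_G(v)\rfloor-1\le d_F(v)\le\lceil\varepsilon d_G(v)\rceil+1$ and $d_F(v)\equiv 0\pmod 2$ for $v\in V_0$, and $\lfloor\varepsilon d_G(v)\rfloor\le d_F(v)\le\lfloor\varepsilon d_G(v)\rfloor+1$ for $v\in V'_1=V(G)\setminus V_0$. The latter is exactly the first desired bound. For the former, $d_F(v)$ is even, and using $\lceil x\rceil-x<1$ and $x-\lfloor x\rfloor<1$ (with equality to $0$ precisely when $x\in\mathbb{Z}$) yields $|d_F(v)-\varepsilon d_G(v)|<2$. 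There is no real obstacle to this proof; the only work is to notice that choosing $f\equiv 0$ kills the first branch of the connectivity condition in Theorem~\ref{thm:parity:version}, so that the single odd-edge-connectivity threshold $\lceil 1/(1-\varepsilon)\rceil$ suffices.
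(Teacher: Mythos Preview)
Your proof is correct and follows essentially the same approach as the paper's own proof: reduce to connected $G$ and apply Theorem~\ref{thm:parity:version} with $f\equiv 0$, noting that this makes the first branch of the edge-connectivity condition vacuous while the second branch is handled by the assumed $V_0$-partial odd-$\lceil 1/(1-\varepsilon)\rceil$-edge-connectivity. Your treatment is slightly more explicit than the paper's (you spell out the choice $V_1=\emptyset$, $V'_1=V(G)\setminus V_0$ and separately dispose of the boundary case $\varepsilon=0$, which Theorem~\ref{thm:parity:version} excludes), but the argument is the same.
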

\begin{proof}
{We may assume that $G$ is connected. Hence it is enough to apply Theorem~\ref{thm:parity:version} with setting $f=0$. Note that for all $X\subseteq V_0$, $\sum_{v\in X}f(v)$ is even, and $\sum_{v\in X}(d_G(v)-f(v))$ and $d_G(X)$ have the same parity. 
}\end{proof}
When the main graph has no odd edge-cuts in $V_0$, one can derive the following simpler version of 
Corollary~\ref{cor:even-factor:atleast2/3}.
\begin{cor}\label{cor:Eulerian}
{Let $\varepsilon $ be a real number with $0< \varepsilon< 1$.
If $G$ is a graph and $V_0$ is a set of some with even vertices, then it admits a factor $F$ such that for each vertex $v$, 
\begin{enumerate}{
\item [$\bullet$] For all $v\in V(G)\setminus V_0$, $\lfloor \varepsilon d_G(v) \rfloor \le d_{F}(v)\le \lfloor \varepsilon d_G(v) \rfloor + 1$.
\item [$\bullet$] For all $v\in V_0$, $d_{F}(v)$ is even and $ | d_{F}(v)-\varepsilon d_G(v)| < 2$.
}\end{enumerate}
}\end{cor}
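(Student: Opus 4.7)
The plan is to derive this as an immediate consequence of Corollary~\ref{cor:even-factor:atleast2/3}, whose hypothesis is that $G$ be $V_0$-partially odd-$\lceil 1/(1-\varepsilon)\rceil$-edge-connected. The key observation is that under the assumption that every vertex of $V_0$ has even degree, this odd-edge-connectivity hypothesis becomes vacuous, so no actual edge-connectivity assumption on $G$ is needed.

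More precisely, I would first recall the definition: $G$ is $V_0$-partially odd-$\lambda$-edge-connected if $d_G(A)\ge \lambda$ for every nonempty $A\subseteq V_0$ with $A\neq V(G)$ and $d_G(A)$ odd. I would then show that under the hypothesis of the corollary, no such set $A$ exists. Indeed, for any $A\subseteq V_0$, the handshake-type identity
\[
d_G(A)=\sum_{v\in A}d_G(v)-2\,|E(G[A])|
\]
shows that $d_G(A)$ has the same parity as $\sum_{v\in A}d_G(v)$. Since each $v\in V_0$ satisfies $d_G(v)\equiv 0\pmod 2$, the right-hand side is even, and hence $d_G(A)$ is even for every $A\subseteq V_0$. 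Consequently, the $V_0$-partially odd-$\lceil 1/(1-\varepsilon)\rceil$-edge-connectivity condition is trivially satisfied.

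With this observation in hand, Corollary~\ref{cor:even-factor:atleast2/3} applies directly and produces a factor $F$ with the desired properties: for all $v\in V(G)\setminus V_0$, $\lfloor \varepsilon d_G(v)\rfloor \le d_F(v)\le \lfloor \varepsilon d_G(v)\rfloor+1$, and for all $v\in V_0$, $d_F(v)$ is even and $|d_F(v)-\varepsilon d_G(v)|<2$. There is no real obstacle here; the statement is a convenient reformulation in the special case that $V_0$ consists only of even-degree vertices, emphasizing that in this setting the odd-edge-connectivity hypothesis of Corollary~\ref{cor:even-factor:atleast2/3} can be dropped entirely.
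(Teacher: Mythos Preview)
Your proposal is correct and is essentially identical to the paper's own argument: the paper simply says it is enough to apply Corollary~\ref{cor:even-factor:atleast2/3} and use the fact that for any $X\subseteq V_0$, $d_G(X)$ is not odd. You have merely spelled out the parity computation that justifies this fact.
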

\begin{proof}
{It is enough to apply Corollary~\ref{cor:even-factor:atleast2/3}, and use the fact that for any $X\subseteq V_0$, $d_G(X)$ is not odd.
}\end{proof}

%
%
%
%
%
%
%
\section{Applications to almost even factorizations}
\label{thm:almost even factorizations}
In this section, we are going to partially confirm weaker versions of Conjecture~\ref{intro:conj:new} by replacing another bound on vertex degrees (see Corollaries~\ref{cor:V1:V2:epsiloni} and~\ref{cor:G0:edge-decomposition:Eulerian}).
We begin with the following theorem which is inspired by Theorem 4 in \cite{Correa-Goemans}.
\begin{thm}\label{thm:6}
{Let $\varepsilon_0,\ldots,\varepsilon_k$ be $k+1$ nonnegative real numbers with $\varepsilon_0+\cdots +\varepsilon_k=1$.
Let $G$ be a graph and let $V_1,V_2$ be a bipartition of $V(G)$. 
If $G_0$ is a factor of $G$ satisfying $|d_{G_0}(v)-\varepsilon_0 d_G(v)|\le q$ for all $v\in V_q$, then $G\setminus E(G_0)$ can be edge-decomposed into $k$ factors $G_1,\ldots, G_k$ satisfying the following properties:
\begin{enumerate}{
\item [$\bullet$] For all $v\in V_q$, $|d_{G_i}(v)-\varepsilon_i d_G(v)|<3q$, where $1\le i\le k$.
\item [$\bullet$] For any $v\in V_2$, there exists at most one index $j_v\in \{1,\ldots, k\}$ with $d_{G_{j_v}}(v)$ odd.
}\end{enumerate}
}\end{thm}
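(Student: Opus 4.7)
The plan is to construct the $k$ factors by iterated parity-preserving almost-equitable extraction from $G' := G\setminus E(G_0)$. First dispose of the trivial case $\varepsilon_0=1$: the hypothesis then forces $d_{G'}(v)\le q<3q$, so any edge-decomposition of $G'$ into $k$ factors satisfies the bound and trivially the parity condition. So assume $\varepsilon_0<1$ and set $\varepsilon'_i := \varepsilon_i/(1-\varepsilon_0)$ for $i=1,\ldots,k$; then $\sum_{i\ge 1}\varepsilon'_i = 1$ and $d_{G'}(v) = (1-\varepsilon_0)d_G(v) \pm q$ for every $v\in V_q$, which will let us translate bounds for $\varepsilon'_i d_{G'}(v)$ back to bounds for $\varepsilon_i d_G(v)$.

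To enforce the parity condition at $V_2$, I would construct an auxiliary multigraph $\tilde G$ by splitting each $v\in V_2$ in $G'$ into $\lfloor d_{G'}(v)/2\rfloor$ pair-vertices of degree $2$, plus one extra degree-$1$ vertex whenever $d_{G'}(v)$ is odd; the edges of $\tilde G$ are in bijection with those of $G'$, $V_1$-vertices are untouched, and the pair-vertices form a set $V_0^*$ of even-degree vertices. I would then iteratively extract factors $\tilde G_1,\ldots,\tilde G_{k-1}$ by applying Corollary~\ref{cor:Eulerian} (which requires no edge-connectivity once the distinguished set consists only of even-degree vertices) to the current remainder with extraction weight $\alpha_i := \varepsilon'_i/(1-\varepsilon'_1-\cdots-\varepsilon'_{i-1})$ and distinguished set $V_0^*$; the invariant that every pair-vertex has even degree in the remainder is preserved because each $\tilde G_i$ has even $V_0^*$-degrees. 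Set $\tilde G_k$ to be the final remainder and unsplit to recover factors $G_1,\ldots,G_k$ of $G'$.

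The parity condition then follows from the construction: each pair-vertex contributes $0$ or $2$ to each $d_{G_i}(v)$, so at every $v\in V_2$ with $d_{G'}(v)$ even all $d_{G_i}(v)$ are even, while at $v\in V_2$ with $d_{G'}(v)$ odd the single degree-$1$ leftover vertex contributes $1$ to exactly one factor $G_{j_v}$. The hard part will be verifying the discrepancy bound $|d_{G_i}(v)-\varepsilon_i d_G(v)|<3q$ uniformly in $k$: per-step Corollary~\ref{cor:Eulerian} gives error $<1$ at $V_1$-vertices and $<2$ at each individual pair-vertex, but for $v\in V_2$ the aggregate $d_{G_i}(v)$ is a sum over many pair-vertices, and naive iteration would accumulate error of order $k$ at $V_1$-vertices rather than the target $O(1)$. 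Achieving the correct bound will require either exploiting the telescoping identity $\varepsilon'_i = \alpha_i\prod_{j<i}(1-\alpha_j)$ to force cancellation of per-step errors, or replacing the iteration by a single global construction --- for instance, a Beck--Fiala style discrepancy argument applied to the vertex-edge incidence matrix of $\tilde G$ (where every edge lies in exactly two vertex-incidence sets, yielding per-vertex discrepancy $<3$), or an orientation-based application of Theorem~\ref{thm:factorization:main:directed} to a suitably oriented $\tilde G$ realizing all $k$ weights simultaneously.
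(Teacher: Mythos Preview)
Your proposal has two genuine gaps, the first of which you already flag but do not resolve.

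\textbf{Sequential extraction cannot reach the $3q$ bound.} With $\alpha_i=\varepsilon'_i/(1-\varepsilon'_1-\cdots-\varepsilon'_{i-1})$ the identity $\varepsilon'_i=\alpha_i\prod_{j<i}(1-\alpha_j)$ is correct, but the per-step errors do not cancel: writing $d_{G_i}(v)=\varepsilon'_i d_{G'}(v)+e_i-\alpha_i\sum_{j<i}\prod_{l=j+1}^{i-1}(1-\alpha_l)\,e_j$ with $|e_j|\le q$, one finds for equal weights $\varepsilon'_i=1/k$ that $\alpha_i\prod_{l=j+1}^{i-1}(1-\alpha_l)=1/(k-j)$, so the error bound is $q\,H_{k-1}=\Theta(q\log k)$, not $3q$. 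The paper's proof avoids this via the Correa--Goemans balanced binary-tree recursion: at each internal node labelled $N$ one splits into $I$ and $N\setminus I$ minimising $|\varepsilon(I)-\varepsilon(N\setminus I)|$, and applies Corollary~\ref{cor:Eulerian} with ratio $\varepsilon(I)/\varepsilon(N)$. The balancing forces $\varepsilon(A_s)>2^{t-s-1}\varepsilon_i$ along any root-to-leaf path $A_0\supset\cdots\supset A_t=\{i\}$, so the error coefficient $1+\sum_{s\ge 1}\varepsilon(A_t)/\varepsilon(A_s)$ is dominated by a geometric series summing to less than $3$. This balanced recursion is the missing idea; none of your suggested fixes (telescoping cancellation, Beck--Fiala on pair-vertices, Theorem~\ref{thm:factorization:main:directed}) supplies it---in particular Theorem~\ref{thm:factorization:main:directed} handles only the equal-weight case $\varepsilon_i=1/k$.

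\textbf{The pair-vertex splitting destroys degree control at $V_2$.} When $v\in V_2$ is split into $m=\lfloor d_{G'}(v)/2\rfloor$ degree-$2$ pair-vertices $w$, Corollary~\ref{cor:Eulerian} applied to $\tilde G$ only says each $d_{\tilde G_i}(w)\in\{0,2\}$; nothing links the pair-vertices of the same $v$, so the aggregate $d_{G_i}(v)=\sum_w d_{\tilde G_i}(w)$ is uncontrolled and could range over all even values in $[0,2m]$. The paper handles parity without any splitting: at each node it applies Corollary~\ref{cor:Eulerian} to $G_N$ itself with $V_0=\{v\in V_2:d_{G_N}(v)\text{ even}\}$, yielding per-step error $\le q$ at every $v\in V_q$, and the single odd parity (if $d_{G_L}(v)$ is odd) propagates down exactly one root-to-leaf path, landing in a unique leaf $G_{j_v}$.
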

\begin{proof}
{We repeat the arguments stated in the proof of Theorem 4 in \cite{Correa-Goemans} and improve it by applying Corollary~\ref{cor:Eulerian} in its process. We may assume that all $\varepsilon_i$ with $1\le i\le k$ are positive (by applying an induction on $k$). 
Let $L_0 = \{0,\ldots,k\}$. We construct a binary tree $T$ with $k $ internal nodes
and $k+1$ leaves, each node being labelled by a subset of $L_0$. The root is labelled with $L$,
and the $k$ leaves are labelled by a distinct singleton subset of $L_0$.
The root is labelled with $L_0$,
and the $k$ leaves are labelled by a distinct singleton subset of $L_0$. If an internal node is
labelled with $N$, then its two children are labelled with $I$ and
and $N \setminus I$, where $I$, $N \setminus I$ is
the most balanced number partition of $N$; that means, $I$ is such that 
$| \varepsilon (I )-\varepsilon (N \setminus I )|$
is minimized (for a set $S$, $\varepsilon (S )$ denotes $\sum_{ i \in S} \varepsilon_i $).

With every node with label $I$, we also associate a factor $G_I$ of $G$. We first set
$G_{L_0}= G$ and $G_L= G\setminus E(G_0)$. 
Given $G_N$ for an internal node $N$, we obtain $G_I$ and its complement $G_{N \setminus I}$ in $G_N$
corresponding to both children of $G_N$ by applying Corollary~\ref{cor:Eulerian} to the graph $G_N$ with setting
$\varepsilon = \varepsilon (I )/\varepsilon (N )$.
Note that for each $v\in V_q$, $|d_{G_I}(v)-\frac{\varepsilon (I )}{\varepsilon (N )} d_{G_N}(v)|\le q$ and 
$|d_{G_{N \setminus I}}(v)-\frac{\varepsilon (N \setminus I)}{\varepsilon (N )} d_{G_N}(v)|\le q$.
In addition, if $v\in V_2$, then $d_{G_I}(v)$ and $d_{G\setminus I}(v)$ are even when $d_{G_N}(v)$ is even,
and exactly one of them is odd when $d_{G_N}(v)$ is odd.
The leaves are thus associated with subgraphs
$G_{\{i\}}$ which make a factorization of $G$.
 We claim that $G_{\{i\}}$ satisfies the required properties for $G_i $.

Fix a vertex $v \in V_q$ 
and an index $i \in L_0\setminus \{0\}$. Let $\{ i\} = A_t \subsetneq A_{t-1} \subsetneq \cdots \subsetneq A_0=L_0$ be the labels on the path from
the leaf $\{i\}$ to the root. We now derive an upper bound on $d_{G_i}(v )$ (and we could
proceed similarly for the lower bound). From Corollary~\ref{cor:Eulerian}, we have that
\begin{eqnarray*}
d_{G_i}(v ) = d_{G_{A_t}}(v) & \le & 
\frac{\varepsilon (A_t)}{\varepsilon (A_{t-1})} d_{G_{A_{t-1}}}(v) +q,\\
& \le & 
\frac{\varepsilon (A_t)}{\varepsilon (A_{t-1})} \big(\frac{\varepsilon (A_{t-1})}{\varepsilon (A_{t-2})} d_{G_{A_{t-2}}}(v) +q\big)+q,\\
& \le & 
\frac{\varepsilon (A_t)}{\varepsilon (A_{t-1})} \Big(\frac{\varepsilon (A_{t-1})}{\varepsilon (A_{t-2})}
 \Big(\cdots \Big( \frac{\varepsilon (A_1)}{\varepsilon (A_{0})} d_{G }(v) +q\Big) \cdots \Big)+q\Big)+q,\\
& = & 
\frac{\varepsilon (A_t)}{\varepsilon (A_{0})} d_{G }(v) +
\big(\frac{\varepsilon (A_t)}{\varepsilon (A_1)}+\frac{\varepsilon (A_t)}{\varepsilon (A_2)}+\cdots + 
\frac{\varepsilon (A_t)}{\varepsilon (A_{t-1})}+1\big)q,\\
& = & 
\varepsilon_i d_{G }(v) +c_i (v )q,
\end{eqnarray*}
where $c_i (v )=\frac{\varepsilon (A_t)}{\varepsilon (A_1)}+\frac{\varepsilon (A_t)}{\varepsilon (A_2)}+\cdots + 
\frac{\varepsilon (A_t)}{\varepsilon (A_{t-1})}+1$. 
Let $\varepsilon_j=\min _{i' \in A_{t-1}} \varepsilon_{i'}$ that $j\in A_{t-1}$.
Note that $\varepsilon_i= \varepsilon(A_t) \ge \varepsilon_j $.
Thus we have $\varepsilon (A_{t - 1}) \ge \varepsilon_i + \varepsilon_j$. 
In general, when considering $A_s$ with $0 < s< t$, we split it into $A_{s+1}$ and $A_s \setminus A_{s+1}$,
 while we could have split it into $A_{s+1} \setminus\{ j \}$ and the rest.
This implies that $\varepsilon (A_s ) \ge 2\varepsilon (A_{s+1}) - \varepsilon_j$; otherwise, we have
$|\varepsilon (A_s \setminus A_{s+1})-\varepsilon (A_{s+1})|=|\varepsilon (A_s)-2\varepsilon (A_{s+1})| >
|\varepsilon (A_s)-2\varepsilon (A_{s+1}) +2\varepsilon_j| =|\varepsilon ((A_s \setminus A_{s+1}) \cup \{j\})-\varepsilon (A_{s+1}\setminus \{j\})|$, which is a contradiction.
Using this repeatedly, we get $\varepsilon (A_{t-2}) \ge 2\varepsilon_i + \varepsilon_j$, $\varepsilon (A_{t-3}) \ge 4\varepsilon_i+ \varepsilon_j$,
and, generally, $\varepsilon (A_{s}) \ge 2^{t-s-1} \varepsilon_i +\varepsilon_j > 2^{t-s-1} \varepsilon_i $ when $s>0$. 
Note also that 
$\varepsilon (A_{1})\ge \varepsilon (A_{2})\ge 2^{t-s-1} \varepsilon_i +\varepsilon_j > 2^{t-3} \varepsilon_i $.
Thus the bound becomes
$$c_i (v ) <\frac{\varepsilon_i}{ 2^{t-3}\varepsilon_i}q+
(\frac{\varepsilon_i}{2^{t-3}\varepsilon_i}+
\cdots+\frac{\varepsilon_i}{4\varepsilon_i}+\frac{\varepsilon_i}{2\varepsilon_i}+\frac{\varepsilon_i}{\varepsilon_i}+1)q = 3q.$$
A proof of the lower bound on $d_{G_i}(v )$ is identical. This completes the proof.
}\end{proof}
Now, we are in a position to partially confirm Conjecture~\ref{intro:conj:new} for highly odd-edge-connected graphs based on Corollary~\ref{cor:even-factor:atleast2/3}.
\begin{cor}\label{cor:V1:V2:epsiloni}
{Let $\varepsilon_1,\ldots,\varepsilon_k$ be $k$ nonnegative real numbers with $\varepsilon_1+\cdots +\varepsilon_k=1$.
If $G$ is a graph and $V_1,V_2$ is a bipartition of $V(G)$, then $G$ can be edge-decomposed into $k$ factors $G_1,\ldots, G_k$ satisfying the following properties:
\begin{enumerate}{
\item [$\bullet$] For all $v\in V_q$, $|d_{G_i}(v)-\varepsilon_i d_G(v)|<3q$, where $1\le i\le k$.
\item [$\bullet$] For any $v\in V_2$, there exists at most one index $j_v\in \{1,\ldots, k\}$ with $d_{G_{j_v}}(v)$ odd.
}\end{enumerate}
Furthermore, if $G$ is odd-$\lceil 1/\varepsilon_1\rceil $-edge-connected, then $j_v=1$ for all odd-degree vertices $v$.
}\end{cor}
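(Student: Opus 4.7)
The plan is to prove this corollary in two stages: the two main bulleted properties will follow from a direct invocation of Theorem~\ref{thm:6}, while the ``furthermore'' claim requires first isolating the odd-parity contribution into $G_1$ by means of Corollary~\ref{cor:even-factor:atleast2/3}, and then applying Theorem~\ref{thm:6} to decompose the remaining even factor into $G_2,\ldots,G_k$.

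For the first stage, I would apply Theorem~\ref{thm:6} with the $k+1$ weights $\varepsilon_0=0,\varepsilon_1,\ldots,\varepsilon_k$, taking $G_0$ to be the spanning subgraph of $G$ with no edges. The hypothesis $|d_{G_0}(v)-\varepsilon_0 d_G(v)|=0\le q$ is trivially satisfied, so Theorem~\ref{thm:6} edge-decomposes $G\setminus E(G_0)=G$ into factors $G_1,\ldots,G_k$ with $|d_{G_i}(v)-\varepsilon_i d_G(v)|<3q$ for $v\in V_q$ together with the ``at most one odd index'' property on $V_2$, establishing the two bulleted claims.

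For the furthermore, assuming $G$ is odd-$\lceil 1/\varepsilon_1\rceil$-edge-connected, I would first apply Corollary~\ref{cor:even-factor:atleast2/3} to $G$ with $V_0:=V(G)$ and $\varepsilon:=1-\varepsilon_1$; the required edge-connectivity hypothesis $\lceil 1/(1-\varepsilon)\rceil=\lceil 1/\varepsilon_1\rceil$ matches the assumption. This yields a factor $F$ with $d_F(v)$ even for every $v$ and $|d_F(v)-(1-\varepsilon_1)d_G(v)|<2$. Setting $G_1:=G\setminus E(F)$ gives $d_{G_1}(v)\equiv d_G(v)\pmod 2$ (so $d_{G_1}(v)$ is odd precisely when $d_G(v)$ is odd) together with $|d_{G_1}(v)-\varepsilon_1 d_G(v)|<2$. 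I would then apply Theorem~\ref{thm:6} with $\varepsilon_0:=\varepsilon_1$ and $G_0:=G_1$ to decompose $F=G\setminus E(G_1)$ into the remaining $k-1$ factors $G_2,\ldots,G_k$. Since $F$ is even, the ``at most one odd index'' output of that theorem forces $d_{G_i}(v)$ to be even for every $v$ and every $i\ge 2$: the number of $i\ge 2$ with $d_{G_i}(v)$ odd is at most one, yet it must have the same parity as $d_F(v)\equiv 0\pmod 2$, hence is zero. Consequently $G_1$ is the only factor that can carry odd degree at any odd-degree vertex $v$, giving $j_v=1$.

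The main technical obstacle will be matching the slack produced by the extraction of $G_1$ with the slack permitted by Theorem~\ref{thm:6} relative to the \emph{original} bipartition $V_1,V_2$. Theorem~\ref{thm:6} requires $|d_{G_0}(v)-\varepsilon_0 d_G(v)|\le q$ for $v\in V_q$, but Corollary~\ref{cor:even-factor:atleast2/3} only delivers the universal bound $<2$, which is fine for $q=2$ on $V_2$ but may exceed $q=1$ on $V_1$. Resolving this demands refining the extraction so that $|d_F(v)-(1-\varepsilon_1)d_G(v)|\le 1$ whenever $v\in V_1$ while retaining the even-parity constraint; this can be arranged by invoking Theorem~\ref{thm:parity:version} directly in place of Corollary~\ref{cor:even-factor:atleast2/3}, assigning each $v\in V_1$ to the set $V_1$ or $V_1'$ of that theorem according to the parity of $\lfloor (1-\varepsilon_1)d_G(v)\rfloor$, so that the tight range supplied by the theorem automatically lands on an even integer at distance at most one from $(1-\varepsilon_1)d_G(v)$.
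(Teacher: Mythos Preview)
Your approach matches the paper's exactly: apply Theorem~\ref{thm:6} with $\varepsilon_0=0$ and empty $G_0$ for the main statement, and for the ``furthermore'' extract a factor $G_0$ with $d_{G_0}(v)\equiv d_G(v)\pmod 2$ and $|d_{G_0}(v)-\varepsilon_1 d_G(v)|<2$ via Corollary~\ref{cor:even-factor:atleast2/3}, then feed $G_0$ back into Theorem~\ref{thm:6} with $k-1$ weights. You in fact go further than the paper by noticing a subtlety it glosses over: Theorem~\ref{thm:6} demands $|d_{G_0}(v)-\varepsilon_0 d_G(v)|\le 1$ on $V_1$, whereas Corollary~\ref{cor:even-factor:atleast2/3} only delivers $<2$.

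Your proposed repair via Theorem~\ref{thm:parity:version} is the right idea, but the stated reasoning is off. Choosing between the sets $V_1$ and $V_1'$ of that theorem does \emph{not} force the parity of $d_F(v)$: each option merely pins $d_F(v)$ to one of two consecutive integers, and the theorem does not promise which. More importantly, you do not need $d_F(v)$ even on the corollary's $V_1$ at all, since the parity conclusion (the second bullet and the ``furthermore'') concerns only $V_2$. The clean fix is simply to apply Theorem~\ref{thm:parity:version} with $V_0:=V_2$, $f\equiv 0$, and the corollary's $V_1$ placed entirely into (say) the theorem's $V_1$. This yields $|d_F(v)-(1-\varepsilon_1)d_G(v)|\le 1$ on $V_1$ and $d_F(v)$ even with $|d_F(v)-(1-\varepsilon_1)d_G(v)|<2$ on $V_2$; setting $G_1:=G\setminus E(F)$ then meets the hypothesis $|d_{G_1}(v)-\varepsilon_1 d_G(v)|\le q$ of Theorem~\ref{thm:6} for both $q=1,2$, and the rest of your argument goes through unchanged.
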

\begin{proof}
{The first assertion is the same Theorem~\ref{thm:6} by setting $\varepsilon_0=0$ and $E(G_0)=\emptyset$. If $G$ is odd-$\lceil 1/\varepsilon_1\rceil $-edge-connected, then by applying Corollary~\ref{cor:even-factor:atleast2/3} with setting $V_0=V(G)$, the graph $G$ admits a factor $G_0$ (the complement of $F$) such that for all $v\in V(G)$, $d_{G_0}(v)\stackrel{2}{\equiv}d_G(v)$ and $ | d_{G_0}(v)-\varepsilon_1 d_G(v)|< 2$. It is enough to apply Theorem~\ref{thm:6} with replacing $k-1$ instead of $k$.
}\end{proof}
\begin{remark}
{Note that if $G$ is $\max\{\lceil 1/\varepsilon_1\rceil, \lceil 1/(1-\varepsilon_1)\rceil $-edge-connected, then Corollary~\ref{cor:V1:V2:epsiloni} (or Corollary~\ref{cor:G0:edge-decomposition:Eulerian}) can be improved by imposing the condition $j_v=1$ for all $v\in Q$, and $j_v\neq 1$ for all $v\in Q'$, where $Q$ and $Q'$ are two disjoint sets of odd-degree vertices of $V_2$ with even size. For this purpose, we only need to apply Theorem~\ref{thm:parity:version} to find a factor $G_0$ such that for each vertex $v\in V_2$, $d_{G_0}(v)$ is odd if and only if $v\in Q$.
}\end{remark}
\begin{cor}{\rm (\cite{EquitableFactorizations-2022})}\label{intro:even-graph:epsilon:2}
{Let $\varepsilon_1,\ldots,\varepsilon_k$ be $k$ positive real numbers with $\varepsilon_1+\cdots +\varepsilon_k=1$.
If $G$ is an even graph, then it can be edge-decomposed into $k$ even factors $G_1,\ldots, G_k$ such that for each $v\in V(G_i)$,
$1\le i\le k$, $|d_{G_i}(v)-\varepsilon_i d_G(v)|<6$.
}\end{cor}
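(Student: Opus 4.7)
The plan is to derive this directly from Corollary~\ref{cor:V1:V2:epsiloni} by choosing the bipartition so as to force the parity constraint to yield even factors at every vertex.

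First, I will apply Corollary~\ref{cor:V1:V2:epsiloni} to the graph $G$ with the bipartition $V_1=\emptyset$ and $V_2=V(G)$. This produces an edge-decomposition $G_1,\ldots,G_k$ of $G$ such that for every $v\in V(G)$ and every $i\in\{1,\ldots,k\}$,
\[
|d_{G_i}(v)-\varepsilon_i d_G(v)|<3\cdot 2=6,
\]
which is precisely the quantitative bound demanded by the corollary. In addition, the second bullet of Corollary~\ref{cor:V1:V2:epsiloni} guarantees that for each vertex $v$ there is at most one index $j_v$ with $d_{G_{j_v}}(v)$ odd.

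The only remaining point is to check that in fact \emph{no} such index $j_v$ exists, so that every $G_i$ is even. This is where the hypothesis that $G$ is an even graph enters: since $d_G(v)=\sum_{i=1}^{k} d_{G_i}(v)$ and $d_G(v)$ is even, the number of indices $i$ for which $d_{G_i}(v)$ is odd must itself be even. Combined with the upper bound of one from Corollary~\ref{cor:V1:V2:epsiloni}, this number must be $0$, so $d_{G_i}(v)$ is even for every $i$. Hence each $G_i$ is an even factor.

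There is no real obstacle here: the statement is essentially a specialization of Corollary~\ref{cor:V1:V2:epsiloni}, with the parity bookkeeping being the only step requiring a comment. The constant $6$ in the bound is exactly the constant $3q$ from that corollary applied with $q=2$, reflecting the choice to place all vertices into $V_2$ so that the parity restriction is enforced at every vertex.
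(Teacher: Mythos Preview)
Your proof is correct and is essentially identical to the paper's own argument, which simply applies Corollary~\ref{cor:V1:V2:epsiloni} with $V_1=\emptyset$ and $V_2=V(G)$. The parity bookkeeping you spell out (an even degree sum forces the number of odd-degree indices to be even, hence zero) is left implicit in the paper but is exactly the reasoning needed.
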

\begin{proof}
{Apply Corollary~\ref{cor:V1:V2:epsiloni} with setting $V_1=\emptyset$ and $V_2=V(G)$.
}\end{proof}
\begin{cor}{\rm (\cite{Correa-Matamala, Correa-Goemans})}\label{cor:Correa-Goemans}
{Let $\varepsilon_1,\ldots,\varepsilon_k$ be $k$ nonnegative real numbers with $\varepsilon_1+\cdots +\varepsilon_k=1$.
If $G$ is a graph, then it can be edge-decomposed into $k$ factors $G_1,\ldots, G_k$ such that for each $v\in V(G_i)$, $1\le i\le k$, $|d_{G_i}(v)-\varepsilon_i d_G(v)|<3$.
}\end{cor}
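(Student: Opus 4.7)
The plan is to derive this as an immediate consequence of Corollary~\ref{cor:V1:V2:epsiloni}. First I would invoke that corollary with the choice $V_1 = V(G)$ and $V_2 = \emptyset$. This produces a factorization $G_1,\ldots,G_k$ of $G$ such that for every $v \in V_1 = V(G)$ we have $|d_{G_i}(v) - \varepsilon_i d_G(v)| < 3q = 3$, since $q=1$ for vertices of $V_1$. That bound is exactly what the statement requires.

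The second conclusion of Corollary~\ref{cor:V1:V2:epsiloni} (existence of at most one index $j_v$ with $d_{G_{j_v}}(v)$ odd, for $v\in V_2$) is vacuously satisfied because $V_2$ is empty, and the optional odd-edge-connectivity refinement is not needed here since the statement of Corollary~\ref{cor:Correa-Goemans} imposes no parity constraint.

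There is essentially no obstacle: the only thing to verify is that the hypotheses of Corollary~\ref{cor:V1:V2:epsiloni} permit some of the $\varepsilon_i$ to be zero and impose no edge-connectivity assumption on $G$, both of which hold as written. Thus the proof reduces to a single application of the preceding corollary, with the partitioning choice $V_1=V(G)$, $V_2=\emptyset$ being the key (and only) idea.
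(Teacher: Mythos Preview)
Your proposal is correct and is exactly the paper's own proof: apply Corollary~\ref{cor:V1:V2:epsiloni} with $V_1=V(G)$ and $V_2=\emptyset$, so that the bound $|d_{G_i}(v)-\varepsilon_i d_G(v)|<3q$ with $q=1$ gives the desired conclusion.
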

\begin{proof}
{Apply Corollary~\ref{cor:V1:V2:epsiloni} with setting $V_1=V(G)$ and $V_2=\emptyset$ .
}\end{proof}
We shall below show that the degree restriction in Corollary~\ref{cor:Correa-Goemans} cannot replaced by $|d_{G_i}(v)-\varepsilon_i d_G(v)|\le 1$. More precisely, we give negative answers to the following question raised by Correa and Matamala (2008).
In fact, these examples contain multiple edges and small edge-cuts, so
 it remains to decide whether the answer is true for simple graphs or highly edge-connected graphs together with an order restriction.
\begin{que}{\rm (\cite[Section~4]{Correa-Matamala})}\label{intro:que}
{Let $\varepsilon_1,\ldots,\varepsilon_k$ be $k$ nonnegative real numbers with $\varepsilon_1+\cdots +\varepsilon_k=1$.
 Can a given graph $G$ be edge-decomposed into $k$ factors $G_1,\ldots, G_k$ satisfying $|d_{G_i}(v)-\varepsilon_i d_G(v)|\le 1$ for all $v\in V(G_i)$ with $1\le i\le k$?
}\end{que}
We would like to introduce a more general structure to avoid the existence of such factorizations in the following observation.
To see some primary examples, one can consider the general graph consisting of a single vertex incident with $\lceil (k+1)/2\rceil$ loops or graphs having three vertices with degree $k+1$ for which there are at most two edges with exactly one end in them. 
\begin{observ}
{Let $G$ be a graph and let $X$ be a set of vertices with degree not divisible by $k$. 
If there exists a bipartition $V_0, V_1$ of $X$ with 
$\sum_{v\in V_0}\lfloor d_G(v)/k\rfloor +\sum_{v\in V_1}\lceil d_G(v)/k\rceil$ odd, and 
$$\sum_{v\in V_0}[d_G(v)]_k+\sum_{v\in V_1}[k-d_G(v)]_k\le k-1-d_G(X),$$
then $G$ cannot be edge-decomposed into $k$ factors $G_1,\ldots, G_k$ satisfying $|d_{G_i}(v)- d_G(v)/k|\le 1$ for all $v\in X$ 
with $1\le i\le k$. 
}\end{observ}
\begin{proof}
{Suppose, to the contrary, that $G$ can be edge-decomposed into $k$ factors $G_1,\ldots, G_k$ satisfying $|d_{G_i}(v)- d_G(v)/k|\le 1$ for all $v\in X$ with $1\le i\le k$. Since $\sum_{v\in V_0}\lfloor d_G(v)/k\rfloor +\sum_{v\in V_1}\lceil d_G(v)/k\rceil$ is odd, 
if $d_{G_i}(X)=0$ for a factor $G_i$, then there exists a vertex $v\in V_0$ with $d_{G_i}(v)\neq \lfloor d_G(v)/k\rfloor $ or
 there is a vertex $v\in V_1$ with $d_{G_i}(v)\neq \lceil d_G(v)/k\rceil $.
Therefore, $\sum_{v\in V_0}[d_G(v)]_k+\sum_{v\in V_1}[k-d_G(v)]_k=\sum_{1\le i\le k} (\sum_{v\in V_0}(d_{G_i}(v)- \lfloor d_G(v)/k\rfloor)+\sum_{v\in V_1}(\lceil d_G(v)/k\rceil -d_{G_i}(v)))\ge k-d_G(X)$. This is a contradiction.
}\end{proof}
In the following theorem, we invigorate Theorem~\ref{thm:6} when almost all of $\varepsilon_i$ are the same number.
This result enables us to improve Corollary~\ref{cor:Hilton:generalized} for odd-edge-connected graphs based on Corollary~\ref{cor:even-factor:atleast2/3} or even inductively prove it based on Corollary~\ref{cor:Eulerian}. 
\begin{thm}\label{thm:epsilon:k}
{Let $G$ be a graph and let $\varepsilon$ be a real number with $0< \varepsilon < 1$. 
If $G_0$ is a factor of $G$ satisfying $|d_{G_0}(v)-\varepsilon d_G(v)|< 2$ for all $v\in V(G)$, then $G\setminus E(G_0)$ can be edge-decomposed into $k$ factors $G_1,\ldots, G_k$ satisfying the following properties:
\begin{enumerate}{
\item [$\bullet$] For all $v\in V(G)$, $|d_{G_i}(v)-\frac{1-\varepsilon} {k}d_G(v)|<2$, where $1\le i\le k$.
\item [$\bullet$] For any $v\in V(G)$, there exists at most one index $j_v\in \{1,\ldots, k\}$ with $d_{G_{j_v}}(v)$ odd.
}\end{enumerate}
}\end{thm}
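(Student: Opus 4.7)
The plan is to apply Theorem~\ref{thm:factorization:main:directed} to $H := G\setminus E(G_0)$ equipped with a balanced orientation, i.e.\ one satisfying $|d_H^+(v)-d_H^-(v)|\le 1$ at every vertex; such an orientation always exists and is exactly the orientation used in the proof of Corollary~\ref{cor:Hilton:generalized}. This produces the factorization $G_1,\ldots,G_k$ of $H$ together with the tripartition $I_0(v),I_1(v),I_2(v)$ of $\{1,\ldots,k\}$, whose sizes are controlled by $[d_H^+(v)]_k$ and $[d_H^-(v)]_k$. The parity conclusion of the theorem---at most one $j_v$ with $d_{G_{j_v}}(v)$ odd---is inherited verbatim from Corollary~\ref{cor:Hilton:generalized}: balanced-ness forces $|d_H^+(v)-d_H^-(v)|\le 1$, making all $d_{G_i}(v)$ even when $d_H(v)$ is even and forcing exactly one $d_{G_i}(v)$ to be odd when $d_H(v)$ is odd.

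The only thing left is the strict degree bound. Set $\eta(v):=\varepsilon d_G(v)-d_{G_0}(v)$, so $|\eta(v)|<2$ and $d_H(v)=(1-\varepsilon)d_G(v)+\eta(v)$. By the triangle inequality
$$\bigl|d_{G_i}(v)-\tfrac{1-\varepsilon}{k}d_G(v)\bigr|\;\le\;\bigl|d_{G_i}(v)-d_H(v)/k\bigr|+|\eta(v)|/k,$$
and since $|\eta(v)|/k<2/k$ strictly, it suffices to establish the sharpening
$$\bigl|d_{G_i}(v)-d_H(v)/k\bigr|\;\le\;2-2/k,$$
which is strictly tighter than the general bound $<2$ provided by Theorem~\ref{thm:factorization:main:directed}.

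This sharpening I would prove by case analysis on Case A ($[d_H^+(v)]_k\le[d_H(v)]_k$) versus Case B ($[d_H^+(v)]_k>[d_H(v)]_k$). In Case A the largest possible deviation $\lceil d_H/k\rceil+1-d_H/k = 1+(k-[d_H]_k)/k$ is attained only for $i\in I_2(v)$, which forces $\min([d_H^+]_k,[d_H^-]_k)\ge 1$; combined with the identity $[d_H^+]_k+[d_H^-]_k\in\{[d_H]_k,[d_H]_k+k\}$ and the Case A hypothesis $[d_H^+]_k\le[d_H]_k$, this rules out $[d_H]_k\le 1$, so $[d_H]_k\ge 2$ and the deviation is $\le 2-2/k$. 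Case B is dual: the extreme value $\lfloor d_H/k\rfloor-1$ in $I_0(v)$ yields deviation $1+[d_H]_k/k$, but Case B forces $[d_H]_k<[d_H^+]_k\le k-1$, hence $[d_H]_k\le k-2$ and again deviation $\le 2-2/k$.

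The main obstacle is precisely this modular-arithmetic bookkeeping: the raw bound from Theorem~\ref{thm:factorization:main:directed} can reach $2-1/k$ when $[d_H(v)]_k\in\{1,k-1\}$ and the exceptional index $i$ sits in $I_2(v)$ or $I_0(v)$, and one must exploit the balanced orientation together with the Case A/B dichotomy to certify that these extremal sub-configurations actually cannot occur for those particular residues. Once the sharpening $|d_{G_i}(v)-d_H(v)/k|\le 2-2/k$ is established, the strict inequality $|\eta(v)|<2$ immediately yields the desired strict inequality $<2$ in the conclusion.
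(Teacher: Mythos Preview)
Your proof is correct, but the paper establishes the degree bound by a different (and shorter) argument. Both approaches start the same way: apply Corollary~\ref{cor:Hilton:generalized} (equivalently Theorem~\ref{thm:factorization:main:directed} with a balanced orientation) to $H=G\setminus E(G_0)$, and inherit the parity conclusion verbatim. The divergence is in the degree bound. You prove the sharpening $|d_{G_i}(v)-d_H(v)/k|\le 2-2/k$ by case analysis on the tripartition structure of Theorem~\ref{thm:factorization:main:directed}, and then combine with $|\eta(v)|/k<2/k$ via the triangle inequality; this modular bookkeeping is correct as you outlined (in Case~A, $I_2(v)\ne\emptyset$ forces $[d_H^+]_k+[d_H^-]_k=[d_H]_k\ge 2$; in Case~B, $[d_H]_k<[d_H^+]_k\le k-1$). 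The paper instead uses only the black-box conclusion of Corollary~\ref{cor:Hilton:generalized} that $|d_{G_i}(v)-d_{G_j}(v)|\le 2$ for all $i,j$, and then runs a summation contradiction: if some $d_{G_j}(v)\ge \tfrac{1-\varepsilon}{k}d_G(v)+2$ then every $d_{G_i}(v)\ge \tfrac{1-\varepsilon}{k}d_G(v)$, whence $\sum_{i\ge 1}d_{G_i}(v)\ge(1-\varepsilon)d_G(v)+2$, contradicting $d_{G_0}(v)>\varepsilon d_G(v)-2$; the lower bound is symmetric. The paper's route is cleaner and avoids re-opening Theorem~\ref{thm:factorization:main:directed}, while your route yields the intermediate fact $|d_{G_i}(v)-d_H(v)/k|\le 2-2/k$, which is of independent interest and in fact holds for \emph{any} orientation (your case analysis never invokes balancedness for the degree bound).
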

\begin{proof}
{By Corollary~\ref{cor:Hilton:generalized}, there is a factorization $G_1,\ldots, G_k$ of $G\setminus E(G_0)$
 such that for each vertex $v$, $ | d_{G_i}(v) - d_{G_j}(v)| \le 2$, where $i, j\in \{1,\ldots, k\}$.
Moreover, there is at most one index $j_v\in \{1,\ldots, k\}$ with $d_{G_{j_v}}(v)$ odd.
If there exists an index $j\in \{1,\ldots, k\}$ such that
$d_{G_j}(v) \ge \frac{1-\varepsilon}{k}d_G(v)+2$,
then for all graphs $G_i$ with $i\in \{1,\ldots, k\}$, $d_{G_i}(v)\ge d_{G_j}(v)-2\ge \frac{1-\varepsilon}{k}d_G(v)$.
Therefore, we must have
$ d_G(v)= (1-\varepsilon) d_G(v)+2+\varepsilon d_G(v)-2
< \sum_{i=1,\ldots, k}d_{G_{i}}(v)+d_{G_0}(v)= d_G(v)$,
which is impossible.
If there exists an index $j\in \{1,\ldots, k\}$ such that
$d_{G_j}(v) \le \frac{1-\varepsilon}{k}d_G(v)-2$,
then for all graphs $G_i$ with $i\in \{1,\ldots, k\}$, $d_{G_i}(v)\le d_{G_j}(v)+2\le \frac{1-\varepsilon}{k}d_G(v)$.
Therefore, we must have
$ d_G(v)= (1-\varepsilon) d_G(v)-2+\varepsilon d_G(v)+2
> \sum_{i=1,\ldots, k}d_{G_{i}}(v)+d_{G_0}(v)= d_G(v)$,
which is again impossible.
Hence the assertion holds.
}\end{proof}
The following corollary completely confirm Conjecture~\ref{intro:conj:new} when almost all of $\varepsilon_i$ are the same number.
\begin{cor}\label{cor:G0:edge-decomposition:Eulerian}
{Let $\varepsilon_1,\ldots,\varepsilon_k$ be $k$ nonnegative real numbers with $\varepsilon_1+\cdots +\varepsilon_k=1$.
If $G$ is a graph and $\varepsilon_2=\cdots =\varepsilon_k$, then 
$G$ can be edge-decomposed into factors $G_1,\ldots, G_k$ such that
\begin{enumerate}{
\item [$\bullet$] For all $v\in V(G)$, $|d_{G_i}(v)-\varepsilon_i d_G(v)|<2$, where $1\le i\le k$.

\item [$\bullet$] 
For each $v\in V(G)$, there is at most one index $j_v$ with $d_{G_{j_v}}(v)$ odd.
}\end{enumerate}
Furthermore, if $G$ is odd-$\lceil 1/\varepsilon_1\rceil $-edge-connected, then $j_v=1$ for all odd-degree vertices $v$.
}\end{cor}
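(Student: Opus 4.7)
The plan is to reduce to Theorem~\ref{thm:epsilon:k} by first extracting a suitable factor $G_1$ approximating the $\varepsilon_1$-fraction of each degree, and then invoking Theorem~\ref{thm:epsilon:k} to split the complement into $k-1$ equally proportioned pieces; the assumption $\varepsilon_2=\cdots=\varepsilon_k$ forces the common value to equal $(1-\varepsilon_1)/(k-1)$, which is exactly the target ratio produced by that theorem when applied with $k-1$ in place of $k$.

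In both parts of the statement I would construct $G_1$ via Corollary~\ref{cor:even-factor:atleast2/3}, applied to $G$ with parameter $\varepsilon:=1-\varepsilon_1$ and a carefully chosen set $V_0$. The corollary outputs a factor $F$ of $G$ satisfying $|d_F(v)-(1-\varepsilon_1)d_G(v)|<2$ for every vertex and with $d_F(v)$ \emph{even} at every $v\in V_0$. Setting $G_1:=G\setminus E(F)$ yields $|d_{G_1}(v)-\varepsilon_1 d_G(v)|=|d_F(v)-(1-\varepsilon_1)d_G(v)|<2$, while $d_{G_1}(v)\equiv d_G(v)\pmod 2$ at every $v\in V_0$. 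I then feed $G_1$ into Theorem~\ref{thm:epsilon:k} as the factor $G_0$, with $\varepsilon:=\varepsilon_1$ and $k$ replaced by $k-1$, obtaining factors $G_2,\ldots,G_k$ of $G\setminus E(G_1)$ satisfying $|d_{G_i}(v)-\varepsilon_i d_G(v)|<2$ for $2\le i\le k$, with at most one of $d_{G_2}(v),\ldots,d_{G_k}(v)$ odd at each $v$.

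For the first bullet, unqualified by any edge-connectivity, I take $V_0:=\{v\in V(G):d_G(v)\text{ even}\}$. Every cut $d_G(A)$ with $A\subseteq V_0$ is even because $\sum_{v\in A}d_G(v)$ is, so the hypothesis ``$V_0$-partially odd-$\lceil 1/\varepsilon_1\rceil$-edge-connected'' of Corollary~\ref{cor:even-factor:atleast2/3} is vacuous, and the construction goes through for any graph. The at-most-one-odd property then follows by parity accounting: if $d_G(v)$ is even then $d_{G_1}(v)$ is even and $\sum_{i\ge 2}d_{G_i}(v)$ is even, so at-most-one-odd plus an even sum forces all of $d_{G_2}(v),\ldots,d_{G_k}(v)$ even, giving zero odd indices in total; if $d_G(v)$ is odd, either possible parity of $d_{G_1}(v)$ yields exactly one odd index overall.

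For the ``furthermore'' assertion I take $V_0:=V(G)$ instead; the hypothesis of Corollary~\ref{cor:even-factor:atleast2/3} then becomes exactly odd-$\lceil 1/\varepsilon_1\rceil$-edge-connectivity of $G$. Now $F$ is Eulerian and $d_{G_1}(v)\equiv d_G(v)\pmod 2$ everywhere, so $j_v=1$ whenever $d_G(v)$ is odd. The previous parity argument sharpens: at every $v$, $\sum_{i\ge 2}d_{G_i}(v)=d_F(v)$ is even, and the at-most-one-odd conclusion forces all of $d_{G_2}(v),\ldots,d_{G_k}(v)$ to be even, as required. The whole proof is a short assembly of existing tools; the only non-mechanical point is the observation that choosing $V_0$ to be the even-degree vertices makes Corollary~\ref{cor:even-factor:atleast2/3} applicable without any edge-connectivity assumption, thereby supplying the parity-compatible $G_1$ needed in the basic case.
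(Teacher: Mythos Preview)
Your proof is correct and follows essentially the same route as the paper: extract a factor approximating $\varepsilon_1 d_G(v)$ with the right parity (via Corollary~\ref{cor:even-factor:atleast2/3} with $V_0$ chosen as the even-degree vertices in the basic case and as $V(G)$ in the odd-edge-connected case), then feed it into Theorem~\ref{thm:epsilon:k} with $k-1$ in place of $k$. The only cosmetic difference is that for the first part the paper quotes Corollary~\ref{cor:Eulerian} directly (which is precisely Corollary~\ref{cor:even-factor:atleast2/3} restricted to sets of even-degree vertices), whereas you re-derive that special case inline by observing the odd-cut hypothesis is vacuous; the arguments are otherwise identical.
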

\begin{proof}
{For the first assertion, by applying Corollary~\ref{cor:Eulerian} with setting $V_0=\{v\in V(G): d_G(v) \text { is even}\}$, the graph $G$ admits a factor $G_0$ such that for all $v\in V(G)$, $ | d_{G_0}(v)-\varepsilon_1 d_G(v)|< 2$, and for all $v\in V_0$, $d_{G_0}(v) $  is even.
Now, it is enough to apply Theorem~\ref{thm:epsilon:k} with replacing $k-1$ instead of $k$. If $G$ is odd-$\lceil 1/\varepsilon_1\rceil $-edge-connected, by applying Corollary~\ref{cor:even-factor:atleast2/3} with setting $V_0=V(G)$, the graph $G$ admits a factor $G_0$ (the complement of $F$) such that for all $v\in V(G)$, $d_{G_0}(v)\stackrel{2}{\equiv}d_G(v)$ and $ | d_{G_0}(v)-\varepsilon_1 d_G(v)|< 2$.
Again, it is enough to apply Theorem~\ref{thm:epsilon:k} with replacing $k-1$ instead of $k$.
}\end{proof}
The next corollary improves Corollary~\ref{cor:Hilton:generalized} for highly odd-edge-connected graphs by fixing the index $j_v$.

\begin{cor}\label{intro:thm:Hilton:generalized}
{Every graph $G$ can be edge-decomposed into $k$ factors $G_1,\ldots, G_k$ satisfying the following properties:
\begin{enumerate}{
\item [$\bullet$] For each $v\in V(G_i)$, $|d_{G_i}(v)-d_{G}(v)/k|<2$.

\item [$\bullet$] For each $v\in V(G)$, there is at most one index $j_v$ with $d_{G_{j_v}}(v)$ odd (in particular, there is not such index when $d_G(v)$ is even). 
}\end{enumerate}
Furthermore, if $G$ is odd-$k$-edge-connected, then $j_v=1$ for all odd-degree vertices $v$.
}\end{cor}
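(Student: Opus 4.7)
The plan is to derive this corollary as a direct specialization of Corollary~\ref{cor:G0:edge-decomposition:Eulerian}, which was explicitly designed to handle the situation where almost all of the $\varepsilon_i$ coincide. More precisely, I would set $\varepsilon_1=\cdots=\varepsilon_k=1/k$, so that $\varepsilon_2=\cdots=\varepsilon_k$, which is exactly the hypothesis of that corollary.

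With this choice, Corollary~\ref{cor:G0:edge-decomposition:Eulerian} immediately produces a factorization $G_1,\ldots,G_k$ of $G$ satisfying $|d_{G_i}(v)-\varepsilon_i d_G(v)|=|d_{G_i}(v)-d_G(v)/k|<2$ for every $v\in V(G)$ and every $i$, which is the first bullet. The second bullet, that there is at most one index $j_v$ with $d_{G_{j_v}}(v)$ odd, is also delivered verbatim. The parenthetical "in particular, there is not such index when $d_G(v)$ is even" is automatic: if $d_G(v)$ is even and exactly one $d_{G_{j_v}}(v)$ were odd, then $\sum_i d_{G_i}(v) = d_G(v)$ would be odd, a contradiction.

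For the "furthermore" clause, I would simply observe that $\lceil 1/\varepsilon_1\rceil = \lceil k\rceil = k$, so the additional edge-connectivity hypothesis in Corollary~\ref{cor:G0:edge-decomposition:Eulerian} becomes precisely odd-$k$-edge-connectedness. The conclusion $j_v=1$ for all odd-degree vertices $v$ then transfers directly.

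Since the entire statement is subsumed by Corollary~\ref{cor:G0:edge-decomposition:Eulerian} under the uniform choice $\varepsilon_i=1/k$, there is really no obstacle; the proof is just a one-line invocation. The only thing worth verifying carefully is that the inequality $|d_{G_i}(v)-d_G(v)/k|<2$ is strict (as stated), which matches the strict inequality in the hypothesis $|d_{G_0}(v)-\varepsilon d_G(v)|<2$ used inside the proof of Theorem~\ref{thm:epsilon:k}, so nothing is lost.
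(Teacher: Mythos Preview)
Your proposal is correct and matches the paper's own proof exactly: the paper also simply applies Corollary~\ref{cor:G0:edge-decomposition:Eulerian} with $\varepsilon_i=1/k$. Your additional remarks verifying the parenthetical clause and the strictness of the inequality are fine but unnecessary, as they are already implicit in that corollary.
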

\begin{proof}
{Apply Corollary~\ref{cor:G0:edge-decomposition:Eulerian} by setting $\varepsilon_i=1/k$.
}\end{proof}
%
%
%
%
%
%
%
%
%


\begin{thebibliography}{10}

\bibitem{MR2816613}
 J.~Akiyama and M.~Kano, Factors and factorizations of graphs,
Springer, Heidelberg, 2011.

\bibitem{Anstee-1996}
R.P. Anstee, Dividing a graph by degrees. J. Graph Theory 23 (1996)~377--384.

\bibitem{Anstee-Nam}
R.P. Anstee and Y. Nam, More sufficient conditions for a graph to have factors, Discrete Math. 184 (1998)~15--24.


\bibitem{Correa-Goemans}
J.R. Correa and M.X. Goemans, Improved bounds on nonblocking $3$-stage Clos net-works,
SIAM J. Comput. 37 (2007)~87--894. 

\bibitem{Correa-Matamala}
 J.R. Correa and M. Matamala, Some remarks about factors of graphs. J. Graph Theory 57 (2008)~265--274.

\bibitem{Egawa-Kano}
Y. Egawa and M. Kano, Sufficient conditions for graphs to have $(g, f)$-factors, Discrete Math. 151 (1996)~87--90.

\bibitem{Feige-Singh}
U.~Feige and M.~Singh, 
 Edge coloring and decompositions of weighted graphs, 
in Algorithms - ESA 2008, D.~Halperin and K.~Mehlhorn, eds., Berlin,
 Heidelberg, 2008, Springer Berlin Heidelberg, pp.~405--416.

\bibitem{Folkman-Fulkerson-1970}
J. Folkman and D.R. Fulkerson, Flows in infinite graghs, J. Combin. Theory
8 (1970) 30--44.

\bibitem{Furuya-Kano-2024}
Furuya and Kano, Degree factors with red-blue coloring of regular graphs, Electron. J. Combin. 31 (2024) Paper P1.40.

\bibitem{Gupta-1978}
R. P. Gupta, An edge coloring theorem for bipartite graphs with applications, Discrete Math 23 (1978), 229--233.


\bibitem{EquitableFactorizations-2022}
M. Hasanvand, Equitable factorizations of edge-connected graphs, Discrete Appl. Math. 317 (2022) 136–145.

\bibitem{ModuloBounded}
M. Hasanvand, Modulo orientations with bounded out-degrees, Discrete Math. 347 (2024) 113634, arXiv:1702.07039.

\bibitem{Modulo-Factors-Bounded}
M. Hasanvand, Modulo factors with bounded degrees, arXiv:2205.09012.


\bibitem{B}
M. Hasanvand, Bipartite partition-connected factors with small degrees, arXiv:1905.12161.


\bibitem{HILTON2008645}
A.J.W.~Hilton, $(r,r+1)$-factorizations of $(d,d+1)$-graphs, Discrete Math. 308 (2008)~645--669.

\bibitem{Hilton1982}
A.J.W. Hilton, 
 Canonical edge-colourings of locally finite graphs, Combinatorica 2 (1982)~37--51.

\bibitem{Hilton-Werra}
A.J.W. Hilton and D. de Werra, A sufficient condition for equitable edge-colourings of simple
graphs. Discrete Math. 128 (1994)~179--201.

 \bibitem{Hoffman}
A.J. Hoffman, Generalization of a theorem of K\"onig, J. Washington Acad. Sci. 46 (1956)~211--212.


 \bibitem{Kano-1985}
M. Kano, $[a, b]$-factorization of a graph, J. Graph Theory 9 (1985)~129--146.

 \bibitem{Kano-Matsuda-2001}
M. Kano and H. Matsuda, Partial parity $(g,f)$-factors and subgraphs covering given vertex subsets, Graphs Combin.,
 17 (2001) 501--510.

 \bibitem{Kano-Saito}
M. Kano and A. Saito, $[a, b]$-factors of graphs, Discrete Math. 47 (1983)~113--116.

\bibitem{Konig}
D.~K{\"o}nig, {\"U}ber graphen und ihre anwendung auf
 determinantentheorie und mengenlehre, 
Mathematische Annalen 77 (1916)~453--465.


\bibitem{Liu-2015:factorizations}
G. Liu, $(g, f)$-factors and $(g, f)$-factorizations of graphs, Acta. Mathematica Sinica 37 (1994) 230--237.

\bibitem{Lovasz-1970}
L. Lov{\'a}sz, Subgraphs with prescribed valencies, J. Combin. Theory. 8 (1970) 391--416.

\bibitem{Lovasz-1972}
L. Lov{\'a}sz, The factorization of graphs II, Acta Math. Acad. Sci. Hungar. 23 (1972)~223--246.


\bibitem{Lovasz-Thomassen-Wu-Zhang-2013}
 L.M. Lov{\'a}sz, C.~Thomassen, Y.~Wu, and C.-Q. Zhang, 
 Nowhere-zero
 3-flows and modulo {$k$}-orientations, J. Combin. Theory Ser. B 103 (2013)~587--598.




\bibitem{Thomassen-2020}
 C.~Thomassen, Factorizing regular graphs, J. Combin. Theory Ser. B 141 (2020) 343--351.

\bibitem{Thomassen-1980}
C. Thomassen, A remark on the factor theorems of Lov{\'a}sz and Tutte, J. Graph Theory 5 (1980), 441--442.


\bibitem{Werra}
D. de Werra, Equitable colorations of graphs, Rev. Fran. Inf. Rech. Oper. 5 (1971)~3--8.

\bibitem{Zhang-Liu}
X. Zhang and G. Liu, Equitable edge-colorings of simple graphs. J. Graph Theory 66 (2011) 175--197.


\bibitem{Qin-Wu-2022}
X. Qin and B. Wu, Decomposition of graphs with constraint on minimum degree, Discrete Appl. Math. 321 (2022) 64--71.


\end{thebibliography}

\end{document}